\newtheorem{lemma}{Lemma}[section]
\newtheorem{theorem}[lemma]{Theorem}
\newtheorem{prop}[lemma]{Proposition}
\newtheorem{cor}[lemma]{Corollary}
\newtheorem{conj}[lemma]{Conjecture}
\theoremstyle{definition}
\newtheorem{defn}[lemma]{Definition}
\newtheorem{example}[lemma]{Example}
\newtheorem{rem}[lemma]{Remark}
\newtheorem{claim}{Claim}[lemma]
\newtheorem{assume}[lemma]{Assumption}
\newtheorem{case}{Case}
\newtheorem*{claim*}{Claim}
\newtheorem*{vtc}{Very translating condition}
\newtheorem*{theorem*}{Theorem}
\newcommand{\mc}[1]{\mathcal{#1}}
\newcommand{\llangle}{\langle\negthinspace\langle}
\newcommand{\rrangle}{\rangle\negthinspace\rangle}
\newcommand{\calC} {\ensuremath {\mathcal{C}}}
\newcommand{\calH} {\ensuremath {\mathcal{H}}}
\newcommand{\sepH} {\ensuremath {\widehat{\mathcal{H}}}}
\newcommand{\bR}{\mathbb{R}}
\newcommand{\bZ}{\mathbb{Z}}
\newcommand{\bH}{\mathbb{H}}
\newcommand{\bN}{\mathbb{N}}
\newcommand{\co}{\colon}
\newcommand{\diam}{\mathrm{diam}}
\newcommand{\depth}{\mathrm{depth}}
\begin{document}

\title{Boundaries of Dehn fillings} 

\author{Daniel Groves}
\address{Department of Mathematics, Statistics and Computer Science, University of Illinois at Chicago, 322 Science and Engineering Offices (M/C 249), 851 S. Morgan St., Chicago IL 60607, USA}
\email{groves@math.uic.edu}

\author{Jason Fox Manning}
\address{Department of Mathematics, 310 Malott Hall, Cornell University, Ithaca, NY 14853}
\email{jfmanning@math.cornell.edu}

\author{Alessandro Sisto}
\address{Department of Mathematics, ETH Zurich, 8092 Zurich, Switzerland}
\email{sisto@math.ethz.ch}

\maketitle

\begin{abstract}
  We begin an investigation into the behavior of Bowditch and Gromov boundaries under the operation of Dehn filling.  In particular we show many Dehn fillings of a toral relatively hyperbolic group with $2$--sphere boundary are hyperbolic with $2$--sphere boundary.  As an application, we show that the Cannon conjecture implies a relatively hyperbolic version of the Cannon conjecture.
\end{abstract}

\setcounter{tocdepth}{1}
\tableofcontents
\section{Introduction}

One of the central problems in geometric group theory and low-dimensional topology is the Cannon Conjecture (see \cite[Conjecture 11.34]{Cannon91}, \cite[Conjecture 5.1]{CannonSwenson}), which states that a hyperbolic group whose (Gromov) boundary is a $2$-sphere is virtually a Kleinian group.  By a result of Bowditch \cite{Bowditch98} hyperbolic groups can be characterized in terms of topological properties of their action on the boundary.  The Cannon Conjecture is that (in case the boundary is $S^2$) this topological action is in fact conjugate to an action by M\"obius transformations.  Relatively hyperbolic groups are a natural generalization of hyperbolic groups which are intended (among other things) to generalize the situation of the fundamental group of a finite-volume hyperbolic $n$-manifold acting on $\bH^n$.  

A relatively hyperbolic group pair $(G,\mc{P})$ has associated with it a natural compact space $\partial(G,\mc{P})$ called the \emph{Bowditch boundary} \cite[$\S$9]{bowditch12} on which it acts as a geometrically finite convergence group, so that every parabolic fixed point has stabilizer conjugate to a unique element of $\mc{P}$.  The motivating example is when $G<SO(n,1)$ is a geometrically finite Kleinian group and $\mc{P}$ a collection of conjugacy representatives of maximal parabolic subgroups.  In this case the Bowditch boundary coincides with the limit set.   A result of Yaman \cite{Yaman} characterizes relatively hyperbolic groups in terms of their action on the Bowditch boundary.

It is natural to wonder whether a relatively hyperbolic group whose Bowditch boundary is a $2$-sphere is virtually Kleinian.  In fact, both the Cannon Conjecture and this relative version are special cases of a much more general conjecture of Martin and Skora \cite[Conjecture 6.1]{MartinSkora}.  One of the main results of this paper (see Corollary \ref{cor:relative Cannon} below) is to prove that the relative version of the Cannon Conjecture follows from the absolute version.

If the peripheral subgroups $\mc{P}$ of a relatively hyperbolic group pair $(G,\mc{P})$ are themselves hyperbolic, then so is $G$, and it therefore acts as a uniform convergence group on its \emph{Gromov boundary} $\partial G$.  (The relationship between these boundaries is explained in \cite{Tran:comparison_boundaries}, see also \cite{Ger:Floyd,GerPot:Floyd,MOY:blowingupanddown,ManningBoundary}.)  In \cite{osin:peripheral,rhds} (cf. \cite{DGO}), the operation of \emph{group theoretic Dehn filling} is developed, and is shown to satisfy a coarse analog of Thurston's Hyperbolic Dehn Surgery Theorem \cite[Section 5.8]{thurston:notes}.  This is to say that many ``Dehn fillings'' of a relatively hyperbolic group pair are themselves relatively hyperbolic.

 Relatively hyperbolic Dehn filling has found many important applications, including in the proof of the virtual Haken conjecture \cite{VH} and the solution of the isomorphism problem in a large class of relatively hyperbolic groups \cite{DG15}.

In the classical setting one begins with a relatively hyperbolic group pair $(G,\mc{P})$ whose Bowditch boundary is a $2$--sphere and ends with a hyperbolic group $\overline{G}$ whose Gromov boundary is again a $2$--sphere. 
On the other hand, examples of CAT$(-1)$ fillings of high-dimensional manifolds \cite{moshersageev,FujMan10} suggest that group theoretic Dehn filling often produces a group $\overline{G}$ whose boundary is much more complicated than that of $(G,\mc{P})$, but which nonetheless admits a fairly explicit description.
 One purpose of this paper is to begin an investigation of whether these results are special to fillings of manifolds, or are reflective of more general phenomena.  To this end, we obtain a description (Theorems \ref{thm:trunc}, \ref{thm:existsvisual} and \ref{thm:linconn}) of the boundary of a Dehn filling as a certain kind of limit of quotients of subsets of the original boundary by discrete groups.  The following result is contained in Theorems \ref{thm:trunc} and \ref{thm:existsvisual} (see Definition \ref{wghlimit} for the definition of weak Gromov-Hausdorff convergence). For simplicity, we state it in the case of one peripheral subgroup, that is $\mc{P}=\{P\}$.
 \begin{theorem}
   Let $\overline{G} = G/\llangle N\rrangle$ be a sufficiently long hyperbolic filling of the relatively hyperbolic pair $(G,\{P\})$, with $N\triangleleft P$ infinite.  Then there is a sequence of Gromov hyperbolic spaces $X_i$ whose boundaries $\partial X_i$ weakly Gromov-Hausdorff converge to $\partial\overline{G}$, if we endow all these boundaries with suitable metrics.  Moreover there is an exhaustion $K_1<K_2<\cdots$ of $\ker(G\to \overline{G})$ so that each $\partial X_i$ can be identified with 
\[ \left(\left(\partial (G,\{P\})\setminus \Lambda(K_i) \right)/ K_i\right)  \cup \mc{F},\]
 where $\mc{F}$ is a union of finitely many copies of $\partial (P/N)$.
 \end{theorem}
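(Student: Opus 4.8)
The plan is to realise each $\partial X_i$ as the Gromov boundary of a hyperbolic space interpolating between the cusped space of $(G,\{P\})$ and a model for $\overline G$, obtained by ``filling in'' more and more of the cusps. Let $X=X(G,\{P\})$ be the cusped space, so $X$ is hyperbolic with $\partial X=\partial(G,\{P\})$, and write $K=\ker(G\to\overline G)=\llangle N\rrangle$. By the structure of the kernel of a Dehn filling (Osin; Groves--Manning), for the filling sufficiently long $K$ is the free product of the subgroups $gNg^{-1}$ as $gP$ ranges over $G/P$, and $K\cap gPg^{-1}=gNg^{-1}$ for every $g$. Well--order $G/P$ and let $K_i\le K$ be generated by the factors corresponding to the first $i$ cosets $g_1P,\dots,g_iP$; then $K_1<K_2<\cdots$ exhausts $K$, and each $K_i$, being a free product of $i$ parabolic subgroups of $(G,\{P\})$ with distinct fixed points, is relatively quasiconvex, with induced peripheral structure $\{g_jNg_j^{-1}\}_{j\le i}$ and limit set $\Lambda(K_i)=\partial(K_i,\{g_jNg_j^{-1}\}_{j\le i})\subseteq\partial(G,\{P\})$, off which $K_i$ acts properly discontinuously. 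I also record, for later, that for the filling sufficiently long $K_i$ stabilises no coset of $P$ outside the $K_i$--orbits of $g_1P,\dots,g_iP$ (equivalently each $K_i\cap gPg^{-1}$ is trivial or conjugate into a factor); this ``no accidental parabolics'' statement is where the no--short--relations estimates underlying the Dehn filling theorems enter.

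Next I would build $X_i$. Fix once and for all a hyperbolic space $Z$ on which $P/N$ acts geometrically, so $\partial Z=\partial(P/N)$ (and $Z$ is bounded, $\partial(P/N)=\emptyset$, when $P/N$ is finite). Starting from $X$, excise the open horoballs over every coset in the $K_i$--orbits of $g_1P,\dots,g_iP$ and glue a copy of $Z$ to each such coset, $K_i$--equivariantly; then pass to the quotient by $K_i$, and call the result $X_i$ (so $X_0=X$). By the previous paragraph the horoballs over all other cosets are untouched and $K_i$ acts freely and properly on them, so $X_i$ consists of the quotient graph $X/K_i$ together with infinitely many combinatorial horoballs (over the cosets with trivial $K_i$--stabiliser) and exactly $i$ copies of $Z$ (one per $K_i$--orbit). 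That $X_i$ is Gromov hyperbolic follows from a Bestvina--Feighn / Mj--Reeves style combination argument (or a combinatorial linking condition as in the construction of the filled cusped space): the cusped part is uniformly hyperbolic by the Dehn filling theorems, and the glued copies of $Z$ are quasiconvex because, for the filling sufficiently long, the corresponding cusps are deep enough to accommodate them. The hyperbolicity constant of $X_i$ is allowed to grow with $i$. Finally, choosing the enumeration of $G/P$ so that $\{g_1P,\dots,g_iP\}$ contains every coset meeting the ball of radius $r(i)$ about the base vertex $o$, with $r(i)\to\infty$, one arranges that $X_i$ agrees on $B(o,r(i))$ with the space $\overline Y$ obtained by performing the same horoball--to--$Z$ surgery on all of $X(\overline G,\{P/N\})$; here $\overline Y$ is quasi--isometric to $\overline G$ and $\partial\overline Y=\partial\overline G$.

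With $X_i$ hyperbolic, I would identify its boundary. Directions that stay in the cusped part $X/K_i$ are classified, via the standard behaviour of boundaries under quotients by convergence groups acting properly discontinuously off their limit sets, by $\bigl(\partial(G,\{P\})\setminus\Lambda(K_i)\bigr)/K_i$; here the surviving deep horoballs over cosets with trivial $K_i$--stabiliser give exactly the parabolic points of $(G,\{P\})$ lying outside $\Lambda(K_i)$, which is why one removes only $\Lambda(K_i)$ and not all parabolic points. The remaining boundary points are contributed by the $i$ glued copies of $Z$, each giving a copy of $\partial(P/N)$, and the disjoint union of these copies is $\mathcal F$; topologically, each is glued into $\bigl(\partial(G,\{P\})\setminus\Lambda(K_i)\bigr)/K_i$ in place of the removed parabolic point fixed by the corresponding $g_jNg_j^{-1}$, with the surrounding topology dictated by the local picture relating the Bowditch boundary to the Gromov boundary by blowing up parabolic points to copies of $\partial(P/N)$. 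Assembling these pieces gives the displayed identification of $\partial X_i$.

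For the convergence, fix $o$ and use that $X_i$ agrees with $\overline Y$ on $B(o,r(i))$ with $r(i)\to\infty$: this gives pointed Gromov--Hausdorff convergence $(X_i,o)\to(\overline Y,o)$. Equipping the boundaries with the suitably normalised visual metrics furnished by Theorem \ref{thm:existsvisual} and invoking the stability of boundaries encoded in weak Gromov--Hausdorff convergence (Definition \ref{wghlimit}) then yields $\partial X_i\to\partial\overline Y=\partial\overline G$. I expect this last step to be the main obstacle: the $X_i$ become less and less uniformly hyperbolic away from $o$, and the limit sets $\Lambda(K_i)$ exhaust all of $\partial(G,\{P\})$ (since $K$ is an infinite normal subgroup, $\Lambda(K)=\partial(G,\{P\})$), so as mere sets the $\partial X_i$ are shrinking and converge only after the correct normalisation --- and it is precisely in controlling these effects that ``sufficiently long'' does its essential work.
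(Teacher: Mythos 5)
Your overall architecture mirrors the paper's (exhaust $\ker(G\to\overline G)$ by subgroups that are free products of finitely many parabolic factors, form partial quotients with the corresponding cusps modified, identify their boundaries via a quotient/covering description, and pass from convergence of spaces to convergence of boundaries), but there is a genuine gap exactly where you flag "the main obstacle": you allow the hyperbolicity constant of $X_i$ to grow with $i$ and never recover uniform control, and without that the final step does not go through. Weak Gromov--Hausdorff convergence (Definition \ref{wghlimit}) requires a single multiplicative constant $\lambda$ with additive errors tending to $0$; the mechanism that produces such maps is the comparison of $\partial X_i$ with large spheres $S_R(p_i)$ (Lemma \ref{lem:projecttosphere}) combined with strong convergence of the spaces (Proposition \ref{prop:strongconverge}), and both the quality of that sphere approximation and the existence of visual metrics with fixed parameters $(\epsilon,\kappa)$ (Proposition \ref{findvisualmetric} needs $\epsilon\le 1/6\delta$) degenerate if $\delta_i\to\infty$. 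Citing Theorem \ref{thm:existsvisual} to supply the "suitably normalised visual metrics" is circular here, since that theorem (together with Theorem \ref{thm:trunc}) \emph{is} the statement being proved, and it is proved for the truncated quotients $T_W$, not for your glued spaces. The paper's resolution of precisely this point is the content you are missing: instead of excising whole horoballs and gluing a model $Z$ for $P/N$ along the depth-zero horosphere, it truncates each horoball at the depth $t_c$ where the quotient horosphere satisfies the Gromov four-point condition $Q(5)$, so the quotient truncated horoballs are $\theta_0$--hyperbolic for a universal $\theta_0$ (Corollary \ref{c:universal hyperbolic}); then the Greendlinger lemma plus ball-injectivity (Lemmas \ref{greendlinger} and \ref{Greendlinger:BallsEmbed}) show every $10^7\theta$--ball of $T_W$ is isometric to a ball in $X$ or in a truncated horoball quotient, and the Coarse Cartan--Hadamard theorem (Theorem \ref{CartanHadamard}) together with the local visibility criterion (Proposition \ref{lem:local_visibility}) yields $\delta$--hyperbolicity and $\delta$--visuality with $\delta$ independent of $i$ (indeed of the long filling). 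You would need an argument of this kind, or some substitute giving uniform constants, before the boundary convergence can be concluded.

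Two smaller points. First, the free product structure of the kernel is indexed by representatives of the $K$--orbits of parabolic points, not by all cosets $gP$ (conjugate factors $g_1Ng_1^{-1}$, $g_2Ng_2^{-1}$ with $g_1P$, $g_2P$ in the same $K$--orbit appear only once), so your definition of $K_i$ should be adjusted accordingly. Second, the properties you assert for $K_i$ — relative quasiconvexity, cocompactness of its action on a suitable quasiconvex subset, absence of accidental parabolics, and the statement that $X_i$ agrees with the model for $\overline G$ on $B(o,r(i))$ (which needs: any two points of that ball in the same $K$--orbit are already in the same $K_i$--orbit) — are exactly what the spiderweb construction of Section \ref{sec:spiderweb} and the strong-convergence argument of Section \ref{sec:converge} are built to deliver; asserting them for the subgroup generated by the factors over cosets meeting a ball requires either that machinery or a careful appeal to combination theorems for relatively quasiconvex subgroups plus a Greendlinger-type shortening argument, neither of which is supplied.
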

  This gives a new way to prove statements about boundaries of Dehn fillings, by proving a statement about the approximating $\partial X_i$, and showing it persists in the limit.  Theorem \ref{thm:linconn} states that under some additional assumptions, we may assume all these metrics are uniformly linearly connected, which helps control the limit.

Our main application is the following statement, which says roughly that sufficiently long Dehn fillings of relatively hyperbolic groups with $2$--sphere boundary must have $2$--sphere boundary.
\begin{restatable}{theorem}{boundarysphere}\label{thm:boundary sphere}
Let $G$ be a group, and $\mc{P}=\{P_1,\ldots,P_n\}$ a collection of free abelian subgroups.  Suppose that $(G,\mc{P})$ is relatively hyperbolic, and that $\partial(G,\mc{P})$ is a $2$--sphere.
  
Then for all sufficiently long fillings $G\to \overline{G} = G(N_1,\ldots,N_n)$ with $P_i/N_i$ virtually infinite cyclic for each $i$, we have that $\overline{G}$ is hyperbolic with $\partial{\overline{G}}$ homeomorphic to $S^2$.
\end{restatable}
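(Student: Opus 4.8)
The plan is to feed the structural results through the relatively hyperbolic Dehn filling machinery, in three steps.

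\emph{Hyperbolicity and first reductions.} Each $P_i$ is free abelian and $P_i/N_i$ is virtually infinite cyclic, hence word hyperbolic with two-point Gromov boundary. By the relatively hyperbolic Dehn filling theorems of Osin and Groves--Manning, all sufficiently long fillings $\overline G$ are hyperbolic relative to $\{P_i/N_i\}$, and since the new peripherals are themselves hyperbolic, $\overline G$ is word hyperbolic. For long fillings $\ker(G\to\overline G)$ is the free product of the conjugates of the $N_i$, hence torsion-free; and $\partial\overline G$ is connected (equivalently $\overline G$ is one-ended) because $\partial(G,\mc P)\cong S^2$ is connected and connectedness survives the convergence set up next.

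\emph{Approximation by closed surfaces.} Apply Theorems \ref{thm:trunc} and \ref{thm:existsvisual} to obtain hyperbolic spaces $X_i$ and an exhaustion $K_1<K_2<\cdots$ of $\ker(G\to\overline G)$ with
\[
\partial X_i\ \cong\ \bigl(\,(\partial(G,\mc P)\setminus\Lambda(K_i))/K_i\,\bigr)\ \cup\ \mc F_i ,
\]
where $\mc F_i$ is a finite set (a finite union of copies of the $\partial(P_j/N_j)$), and with $\partial X_i$ converging weakly Gromov--Hausdorff to $\partial\overline G$; by Theorem \ref{thm:linconn}, whose hypotheses (abelian peripherals, virtually cyclic quotients) hold here, we may take all these metrics uniformly linearly connected. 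Now identify each $\partial X_i$ topologically. Since $K_i$ is torsion-free and acts on $S^2=\partial(G,\mc P)$ as a convergence group, it acts freely and properly discontinuously on the open subsurface $\Omega_i:=S^2\setminus\Lambda(K_i)$, so $\Omega_i/K_i$ is an open surface; its ends biject with the maximal-parabolic conjugacy classes of $K_i$, each is a punctured-disk end, and adjoining $\mc F_i$ caps these off, so $\partial X_i$ is a \emph{closed} surface $\Sigma_i$. A genus computation (from the Kurosh decomposition of $K_i$ as a free product of copies of the $N_j$, together with the parabolic count) identifies $\Sigma_i$; in general $\Sigma_i\not\cong S^2$ --- its genus grows with $\mathrm{rank}(K_i)\to\infty$ --- but the ``handles'' it carries, one per generator of $K_i$, are anchored at filled parabolic points and have metric diameter shrinking to zero as the fillings lengthen.

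\emph{Passage to the limit (the crux).} It remains to show that the weak Gromov--Hausdorff limit of the surfaces $\Sigma_i$ is $S^2$. Uniform linear connectivity already makes $\partial\overline G$ a Peano continuum with no cut points, and with more care one checks it has topological dimension exactly $2$; but these properties do not single out $S^2$ (the Sierpi\'nski carpet shares them all), so one must upgrade to local Euclideanness. Informally, the vanishing handles of the $\Sigma_i$ close up in the limit, so that $\partial\overline G$ is realized as the quotient of a $2$-sphere by an upper semicontinuous decomposition into non-separating subcontinua, whence $\partial\overline G\cong S^2$ by Moore's decomposition theorem. (As a consistency check: $\overline G$ is a hyperbolic $\mathrm{PD}(3)$ group --- using that $(G,\mc P)$ is a $\mathrm{PD}(3)$ pair since $\partial(G,\mc P)\cong S^2$, and that the fillings cap off the peripherals --- so by Bestvina--Mess $\check H^*(\partial\overline G)\cong\check H^*(S^2)$, which together with local Euclideanness reconfirms $S^2$.) The hard part is exactly to construct this decomposition from the weak Gromov--Hausdorff data and to verify its hypotheses --- above all that no decomposition element separates, so that collapsing the vanishing handles genuinely returns the sphere rather than a pinched quotient --- and this is where the uniform linear connectivity of Theorem \ref{thm:linconn} is indispensable and where the main obstacle lies. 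Granting $\partial\overline G\cong S^2$, the group $\overline G$ is hyperbolic with $2$-sphere boundary, as desired.
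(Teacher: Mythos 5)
Your first two steps follow the paper's architecture (hyperbolicity of $\overline G$ via Dehn filling, approximating boundaries from Theorems \ref{thm:trunc}, \ref{thm:existsvisual}, \ref{thm:linconn}), but your identification of the approximating boundaries is wrong, and this is not a side issue. You claim each $\partial T_{W_i}$ is a closed surface whose genus grows with $\mathrm{rank}(K_i)$, with small "handles" anchored at the filled parabolic points. In fact Proposition \ref{Xiarespheres} shows each $\partial T_{W_i}$ is itself a $2$--sphere. The handle picture is the Schottky (loxodromic) picture; here the free factors of $K_i$ are \emph{parabolic} cyclic groups, so in $\Lambda(K_i)\cong\partial(K_i,\mc{A})$ the two endpoints $wa_j^{+\infty}$ and $wa_j^{-\infty}$ of each conjugate generator are identified, which pinches what would otherwise be handles: $(\partial X\setminus\Lambda(K_i))/K_i$ is a $2k$--punctured sphere (note $\chi=2-2k$ either way, so an Euler characteristic or Kurosh count cannot distinguish the two), capped off by the $2k$ points of $\mc{F}_i$. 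Proving this is a substantial part of the paper: one bounds the rank of $H_1(\partial X\setminus\Lambda(K_i);\bZ/2)$ as a $K_i$--module by $k-1$ (Lemma \ref{lem:KW-module}), using loops around clopen pieces of the Cantor set $\Lambda(K_i)$, and then a surface-theoretic lemma forces genus zero.

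The crux step, passage to the limit, is not actually carried out in your proposal, and the sketch you give does not have the right inputs. Weak Gromov--Hausdorff convergence provides only $(\lambda,\epsilon_i)$--quasi-isometries with $\epsilon_i\to 0$; it does not give a continuous surjection from an approximating sphere onto $\partial\overline G$, so there is no upper semicontinuous decomposition of $S^2$ in sight to which Moore's theorem could be applied, and no "vanishing handles" to collapse (indeed there are none). The danger you must rule out is precisely that a limit of uniformly linearly connected spheres could a priori be something like a Sierpi\'nski carpet, and that is what the paper's limit argument addresses by a different route: $\partial\overline G$ is a Peano continuum without local cut points by \cite{GM-splittings}; the Ivanov-type Lemma \ref{ivanovlemma} (using the uniform linear connectedness) shows any non-planar graph embedded in the limit would embed in an approximating sphere, so Claytor \cite{Claytor34} gives planarity; Kapovich--Kleiner \cite{KK} then leaves only $S^2$ or the Sierpi\'nski carpet; and the carpet is excluded because an $\epsilon$--coarse form of simple connectedness passes to uniformly linearly connected weak Gromov--Hausdorff limits (Theorem \ref{thm:weakly_1_connected_limit}) while the carpet fails it (Lemma \ref{lem:Sierpinski_not_1_connected}). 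So both the sphere recognition for the approximants and the limiting argument need genuinely different proofs from the ones you propose.
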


Note that if $\partial(G,\mc{P})$ is a $2$--sphere, then any parabolic acts properly cocompactly on $\bR^2$.  If we are assuming it is free abelian, it must therefore be $\bZ^2$.  If we didn't assume abelian, we might have to worry about higher genus surface groups as peripheral groups.  These higher genus surface groups being hyperbolic groups, we should exclude them from the peripheral structure to get a boundary which is a Sierpinski carpet.  Conjecturally, a hyperbolic group with Sierpinski carpet boundary is virtually Kleinian.  Kapovich and Kleiner \cite{KK} prove that this would follow from the Cannon Conjecture.

One can make a relative version of the Cannon Conjecture as follows (cf. \cite[Problem 57]{kapovichproblems}):
\begin{conj}(Relative Cannon Conjecture)
  Let $(G,\mc{P})$ relatively hyperbolic group with $\partial(G,\mc{P})\cong S^2$ and all elements of $\mc{P}$ free abelian.  Then $G$ is Kleinian.
\end{conj}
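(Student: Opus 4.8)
The plan is to pass to a carefully chosen hyperbolic Dehn filling, apply the Cannon Conjecture there to upgrade it to a genuine closed hyperbolic $3$--manifold group, and then \emph{reverse} the filling geometrically by drilling out the core geodesics. Assume the Cannon Conjecture, and let $(G,\mc{P})$ be as in the statement, so (as observed above) each $P_i\cong\bZ^2$. It suffices to show that the geometrically finite convergence action of $G$ on $\partial(G,\mc{P})\cong S^2$ is topologically conjugate to an action by M\"obius transformations: such a conjugacy exhibits $G$ as a geometrically finite Kleinian group whose limit set is all of $S^2$, that is, as a lattice in $\mathrm{Isom}(\bH^3)$. Moreover, by Mostow--Prasad rigidity the M\"obius structure realizing a lattice is unique up to conjugacy, hence invariant under any convergence group on $S^2$ containing the lattice with finite index; so it is enough to prove that some finite-index normal subgroup $G_0\triangleleft G$ is itself a lattice. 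First I would fix a sufficiently long filling $\overline{G}=G(N_1,\ldots,N_n)$ with each $N_i$ infinite cyclic, so that $P_i/N_i$ is virtually infinite cyclic; by Theorem~\ref{thm:boundary sphere}, $\overline{G}$ is hyperbolic with $\partial\overline{G}\cong S^2$. The Cannon Conjecture then shows $\overline{G}$ acts geometrically on $\bH^3$ after passing to a finite-index subgroup, and passing further to a torsion-free finite-index normal subgroup we may take $\overline{G}_0=\pi_1(M)$ for $M$ a closed hyperbolic $3$--manifold. Let $G_0\triangleleft G$ be its preimage; then $G_0\to\overline{G}_0$ is a filling of the relatively hyperbolic pair $(G_0,\mc{P}_0)$ obtained by restricting the peripheral structure (a finite collection of finite-index subgroups of the $\bZ^2$'s), and $\partial(G_0,\mc{P}_0)\cong S^2$.

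Next I would turn $M$ back into a cusped manifold. After the torsion-free reduction, each peripheral image is an infinite cyclic subgroup of $\pi_1(M)$ generated by a loxodromic element (no $\bZ^2$ can occur), hence is carried by a closed geodesic; if the original filling was chosen long enough these finitely many geodesics lie in disjoint Margulis tubes, and so are embedded and pairwise disjoint. Drilling them out gives a complete finite-volume hyperbolic structure on $\hat{M}=M\setminus(\text{core geodesics})$ (short-geodesic drilling, in the style of Thurston and Kojima), and re-filling along the meridian slopes recovers $M$. Let $\hat{G}=\pi_1(\hat{M})$: this is a relatively hyperbolic pair relative to its free abelian cusp subgroups, with $\partial\hat{G}\cong S^2$ and natural convergence action M\"obius. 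For each cusp there is an identification of the cusp group $\cong\bZ^2$ with the corresponding peripheral $P'\cong\bZ^2$ of $(G_0,\mc{P}_0)$ carrying the meridian to a generator of the slope $N'$ (both of the relevant extensions $\bZ\to\bZ^2\to\bZ$ split); under it, $G_0\to\pi_1(M)$ and $\hat{G}\to\pi_1(M)$ become long fillings of relatively hyperbolic pairs with $2$--sphere boundary carrying \emph{the same} filling data.

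It then remains --- and this is the hard part --- to conclude that $(G_0,\mc{P}_0)\cong(\hat{G},\text{cusps})$ by an isomorphism induced by a conjugacy of the two convergence actions on $S^2$; with the rigidity remark of the first paragraph this gives that $G_0$, hence $G$, is Kleinian. The statement needed is a \emph{reversibility} property of sufficiently long fillings: under the $2$--sphere hypothesis, the filled group together with its boundary action and the peripheral data of the filling determines the unfilled pair together with its boundary action. This is where the boundary-approximation results of the paper enter: Theorems~\ref{thm:trunc}, \ref{thm:existsvisual} and \ref{thm:linconn} realize $\partial\overline{G}$ as a weak Gromov--Hausdorff limit of the spaces $\bigl(\bigl(\partial(G_0,\mc{P}_0)\setminus\Lambda(K_i)\bigr)/K_i\bigr)\cup\mc{F}$, with $\mc{F}$ a finite union of copies of $\partial(P'/N')$ glued in along the former parabolic points, all carrying uniformly linearly connected metrics. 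I would run this description in reverse: the parabolic points of $\partial\hat{G}$ over the cusps are precisely the images of the endpoint-pairs of the core geodesics in $\partial\pi_1(M)$, blown up according to the meridian slopes, and feeding the \emph{same} data --- $\partial\pi_1(M)$, the $\pi_1(M)$-action, the core geodesics, and the slopes with $N'$ in place of the meridian --- into the same recipe reconstructs $\partial(G_0,\mc{P}_0)$ with its $G_0$-action. Making this reconstruction canonical and uniformly controlled as the approximation stage grows (where Theorem~\ref{thm:linconn} is essential) is the main obstacle; by comparison, the Margulis-lemma input making the geodesics simple and the bookkeeping of passing among $G$, $G_0$, and any intermediate finite cover are routine.
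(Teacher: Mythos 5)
Your reduction to a single long filling founders on the step you yourself flag as ``the hard part'': the claimed \emph{reversibility} of group-theoretic Dehn filling. Knowing that $G_0$ and $\hat{G}=\pi_1(\hat M)$ both surject onto $\pi_1(M)$ by long fillings with matching peripheral data (same $\bZ^2$'s, same slopes, cores the same loxodromic classes) does not determine the unfilled pair: the kernel of such a filling is an infinitely generated free group (a free product of conjugates of the $N_i$, cf.\ Section~\ref{sec:spiderweb}), and the extension $1\to K\to G_0\to\pi_1(M)\to 1$ is not recoverable from the quotient plus the slope data by any theorem in the literature or in this paper. Your proposed fix --- ``running Theorems~\ref{thm:trunc}, \ref{thm:existsvisual}, \ref{thm:linconn} in reverse'' --- does not close this gap: those theorems express $\partial T_{W}$ as a quotient of $\partial(G,\mc{P})\setminus\Lambda(K_W)$ by $K_W$, so to invert them you must reconstruct a specific infinite tower of covering spaces of subsets of $\partial\pi_1(M)$ together with compatible deck groups, and the choice of that tower is exactly the datum of $G_0$ and its action, i.e.\ what is to be proved. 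As it stands, the ``reconstruction'' step is equivalent to the conjecture itself, so the argument is circular at its core.

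There are also unsupported geometric inputs even before that step. The hyperbolic structure on $M$ is conjured by the Cannon Conjecture and bears no a priori relation to the combinatorial length of the filling, so there is no reason the core curves are short, lie in disjoint Margulis tubes, or are even represented by \emph{embedded}, pairwise disjoint closed geodesics --- and Kojima-type drilling needs embeddedness; the usual ``long filling $\Rightarrow$ short core geodesic'' estimate comes from Thurston's surgery theorem applied to a hyperbolic structure on the \emph{unfilled} manifold, which you do not have. The paper's proof of Corollary~\ref{cor:relative Cannon} (Section~\ref{sec:corollary}) avoids all of this by using a \emph{sequence} of longer and longer hyperbolic fillings: stable faithfulness plus the Cannon Conjecture gives a stably faithful sequence of discrete representations $\rho_i\co G\to\mathrm{Isom}(\bH^3)$; if the $\rho_i$ subconverge, a Margulis-lemma argument shows the limit is discrete and faithful, while if they diverge one gets a nontrivial stable action of $G$ on an $\bR$--tree and hence, via Rips theory, a splitting over a small subgroup, contradicting the absence of elementary splittings for a group with $S^2$ Bowditch boundary. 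I recommend abandoning the drilling/reversibility strategy in favour of such a limiting argument, or else isolating and proving the reconstruction statement you need --- which would be a substantial new theorem, not a corollary of the approximation results here.
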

We remark that the usual Cannon conjecture says `virtually Kleinian' because a non-elementary hyperbolic group may not act faithfully on its boundary; there may be a finite kernel.  However, under the assumption that the parabolic subgroups of a nonelementary relatively hyperbolic group (with nontrivial peripheral structure) are free abelian, there are no nontrivial finite normal subgroups, and so `Kleinian' is the expected conclusion.

We have the following corollary of Theorem \ref{thm:boundary sphere}; see Section \ref{sec:corollary} for the proof.
\begin{cor}\label{cor:relative Cannon}
  The Cannon Conjecture implies the Relative Cannon Conjecture.
\end{cor}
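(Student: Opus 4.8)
The plan is to start with a relatively hyperbolic pair $(G,\mc{P})$ with $\partial(G,\mc{P})\cong S^2$ and all peripherals free abelian, and to use Theorem \ref{thm:boundary sphere} to produce many hyperbolic Dehn fillings $\overline{G}$ with $\partial\overline{G}\cong S^2$. Assuming the (absolute) Cannon Conjecture, each such $\overline{G}$ is virtually Kleinian. The goal is to upgrade ``$\overline{G}$ virtually Kleinian for many fillings'' to ``$G$ itself is Kleinian''. First I would observe that since the peripherals are free abelian and the boundary is $S^2$, each $P_i\cong\bZ^{k_i}$ with $k_i\le 2$; the parabolic fixed points have a neighborhood basis respecting the convergence action, and a parabolic acting properly cocompactly on a planar piece of $S^2$ must be $\bZ^2$ (as remarked after Theorem \ref{thm:boundary sphere}). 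The trivial peripherals can be discarded, so we may assume each $P_i\cong\bZ^2$.

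Next I would choose the fillings carefully. For each $i$ pick $N_i<P_i\cong\bZ^2$ a cyclic subgroup (or a subgroup with $P_i/N_i$ virtually $\bZ$) so that the filling is sufficiently long; Theorem \ref{thm:boundary sphere} then gives $\overline{G}=G(N_1,\dots,N_n)$ hyperbolic with $\partial\overline{G}\cong S^2$. By the Cannon Conjecture $\overline{G}$ has a finite-index subgroup $\overline{G}_0$ which is Kleinian, i.e.\ a discrete subgroup of $\mathrm{Isom}(\bH^3)=\mathrm{PSL}_2(\bC)\rtimes\bZ/2$; since it is hyperbolic with $S^2$ boundary it is in fact a cocompact lattice, so $\overline{G}_0=\pi_1(M)$ for a closed hyperbolic $3$--manifold $M$. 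Pulling back along $G\to\overline{G}$, we get a finite-index subgroup $G_0<G$ with $G_0\to\overline{G}_0$ a filling of the relatively hyperbolic pair $(G_0,\mc{P}_0)$ (where $\mc{P}_0$ is the induced peripheral structure, again a collection of $\bZ^2$'s, by the standard behavior of relative hyperbolicity and fillings under passage to finite-index subgroups). The upshot is that $G_0$ has many Dehn fillings that are fundamental groups of closed hyperbolic $3$--manifolds.

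Then I would run a topological/geometric Dehn-surgery argument in reverse. Realize $\overline{G}_0=\pi_1(M)$ geometrically; the kernel $\ker(G_0\to\overline{G}_0)$ is normally generated by the $N_i$'s, which are peripheral $\bZ$'s. One expects, using that $(G_0,\mc{P}_0)$ has $\partial=S^2$ together with the structure of $M$ and Theorem \ref{thm:boundary sphere} (or rather the more detailed Theorems \ref{thm:trunc}, \ref{thm:existsvisual}, \ref{thm:linconn} describing $\partial\overline{G}_0$ as a weak Gromov--Hausdorff limit built from $\partial(G_0,\mc{P}_0)$), to identify $M$ as a Dehn filling of a cusped hyperbolic $3$--manifold $M^\circ$ whose fundamental group is $G_0$ with the $\mc{P}_0$ as cusp groups. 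In effect, the boundary sphere plus the convergence action of $(G_0,\mc{P}_0)$ should be promoted to a geometrically finite M\"obius action on $S^2=\partial\bH^3$; this is the analog, at the level of the original pair, of what the Cannon Conjecture gives for the filling. Concretely one can use that for two independent sufficiently long fillings the intersection of the images of the $\bH^3$--structures pins down the geometry of $(G_0,\mc{P}_0)$, or invoke a Mostow-type rigidity / hyperbolic Dehn surgery statement to see that $G_0$ must be the fundamental group of a finite-volume cusped hyperbolic $3$--manifold, i.e.\ $G_0$ is Kleinian. Finally, since $G_0<G$ is finite-index and, as noted in the discussion after the statement of the Relative Cannon Conjecture, a nonelementary relatively hyperbolic group with free abelian peripherals has no nontrivial finite normal subgroup, the Kleinianity of $G_0$ promotes to Kleinianity of $G$ (a faithful discrete geometrically finite action of $G$ on $\bH^3$ restricting to that of $G_0$).

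The main obstacle I expect is the reverse Dehn-surgery step: deducing that the original pair $(G_0,\mc{P}_0)$ is Kleinian from the fact that infinitely many of its fillings are. One has to control the geometry uniformly across the family of fillings and then take a limit (of representations into $\mathrm{PSL}_2(\bC)$, in a Chabauty or algebraic sense), showing the limiting action is faithful, discrete, geometrically finite, with the prescribed parabolics --- this is where the ``weak Gromov--Hausdorff convergence of boundaries'' from Theorems \ref{thm:trunc} and \ref{thm:existsvisual}, together with the uniform linear connectedness of Theorem \ref{thm:linconn}, does the essential work of guaranteeing the limit boundary is still $S^2$ and the limit action is the desired geometrically finite one. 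Handling the finite-index subtleties and making sure the filling kernels genuinely ``unfill'' to give a single cusped manifold (rather than, say, an orbifold or a degenerate limit) are the technical points that will need care.
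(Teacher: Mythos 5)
Your overall strategy --- fill, apply the Cannon Conjecture to the filled groups, then recover a Kleinian structure on $G$ by a limiting argument --- is the right one, and it matches the paper's in outline. But the core of your argument, the ``reverse Dehn-surgery'' step, is exactly where the proposal has a genuine gap, and the mechanism you suggest for closing it would not work. You hope that the weak Gromov--Hausdorff convergence of boundaries (Theorems \ref{thm:trunc}--\ref{thm:linconn}) will show that a limit of the representations is discrete, faithful and geometrically finite; but those theorems describe boundaries of the \emph{filled} groups and give no control whatsoever over a sequence of representations of $G$ into $\mathrm{Isom}(\bH^3)$. Moreover, identifying the closed manifold $M$ as a Dehn filling of a cusped hyperbolic $3$--manifold $M^\circ$ with $\pi_1(M^\circ)=G_0$ is essentially the conclusion you are trying to prove, so that step is circular as stated. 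The paper's proof uses two ingredients you are missing. First, the fillings are chosen with growing slopes so that the induced representations $\rho_i\co G\to \mathrm{Isom}(\bH^3)$ are \emph{stably faithful}, and in the case where (a subsequence of) the $\rho_i$ converges algebraically, discreteness and faithfulness of the limit is proved by an elementary Margulis-lemma argument (Claim \ref{claim:find g,h}): a failure of discreteness or faithfulness of $\rho_\infty$ produces two noncommuting elements moving a point a very small distance, which for large $i$ contradicts discreteness of $\rho_i$. Second --- and this is the decisive missing idea --- one must rule out the case where the $\rho_i$ \emph{diverge} in the character variety, which you only flag as ``a technical point.'' The paper handles it by the Bestvina--Paulin construction: a divergent sequence yields a nontrivial stable $G$--action on an $\bR$--tree with small arc stabilizers, so by Rips theory (\cite{BF:stable}) $G$ splits over a small, hence elementary, subgroup; but since $\partial(G,\mc{P})\cong S^2$, the results of \cite{GM-splittings} (building on Bowditch) show $G$ admits no elementary splittings, a contradiction. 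Without an argument of this kind your limit may simply fail to exist.

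A secondary issue: your detour through a finite-index Kleinian subgroup $\overline{G}_0$ and the pullback $G_0<G$ is unnecessary and introduces a further unproved step, since promoting ``$G_0$ Kleinian'' to ``$G$ Kleinian'' is not a consequence of the absence of finite normal subgroups alone. The paper avoids this entirely by taking discrete faithful representations of the filled groups $G_i$ themselves (which, with sphere boundary and the conventions in the paper, the Cannon Conjecture supplies) and precomposing with $G\to G_i$, so the limiting argument is run directly on representations of $G$. Also note that the conclusion sought is just that $G$ is Kleinian (discrete and faithful in $\mathrm{Isom}(\bH^3)$); you do not need to verify geometric finiteness or the prescribed parabolic structure of the limit action, which your sketch treats as additional burdens.
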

This resolves \cite[Problem 60]{kapovichproblems}, though we do not proceed via Kapovich's suggested method of proof.

\subsection{Sketch proof of Theorem \ref{thm:boundary sphere}}
We must somehow reconstruct $\partial \overline{G}$ from information about $\partial(G,\mc{P})$.  It is a result of Dahmani--Guirardel--Osin that $K = \ker(G\to \overline{G})$ is (for a sufficiently long filling) freely generated by parabolic subgroups \cite{DGO}.  Associated to $(G,\mc{P})$ is a proper, Gromov hyperbolic space (the \emph{combinatorial cusped space}) $X = X(G,\mc{P})$ on which $G$ acts geometrically finitely (cocompactly away from horoballs); the Gromov boundary of this space is equivariantly homeomorphic to $\partial(G,\mc{P})$.

In Section \ref{sec:spiderweb} we develop an analog in the cusped space of the ``windmills'' technology of \cite{DGO} to obtain an exhaustion of $K$ by free products of \emph{finitely many} parabolic subgroups.  Our replacements for windmills are called \emph{spiderwebs} -- these form an exhaustion $W_1\subset W_2\cdots $ of the cusped space by quasiconvex subsets, each of which is acted on geometrically finitely by a finitely generated subgroup $K_n$ of $K$.  The ``partial quotients'' $X/K_n$ approximate a cusped space $\overline{X} = X/K$ for the relatively hyperbolic pair $(\overline{G},\overline{\mc{P}})$.  But in the situation of interest $\overline{G}$ is itself hyperbolic, so we need approximations to $\partial\overline{G}$, not to $\partial(\overline{G},\overline{\mc{P}})$.  Such approximations are obtained from $X/K_n$ by removing finitely many images of (deep) horoballs of $X$.  We must take some care to ensure that these truncated partial quotients are uniformly hyperbolic over all $n$.  With even more care, we are able to show these boundaries are uniformly linearly connected over all $n$ (Theorem \ref{thm:linconn}), and that they have nice descriptions in terms of $\partial(G,\mc{P})$ (Theorem \ref{thm:trunc}).

Once this is ensured, we have a sequence of spaces which converge in the pointed Gromov--Hausdorff topology to a $\overline{G}$--cocompact space.   Their boundaries therefore converge (in a sense described in Section \ref{sec:wgh}) to the boundary of $\overline{G}$ (Theorem \ref{thm:existsvisual}).

The above results apply more generally when $\overline{G}$ is hyperbolic, and the result of a long filling of a relatively hyperbolic pair $(G,\mc{P})$, and in fact we state versions in the setting of the Bowditch boundary of $\partial(\overline{G},\overline{P})$ as Theorems \ref{relversion:trunc}--\ref{rel:linconn}.  The proofs of these relative versions are strictly easier than those of Theorem \ref{thm:trunc}--\ref{thm:linconn}, though we do not provide the relative proofs in this paper.

In Section \ref{sec:spheres} we specialize to $\partial(G,\mc{P})\cong S^2$, and $P_i/K_i$ virtually cyclic.  In this case we can show the approximating boundaries are spheres by a homological argument.  

We now sketch the argument that the boundary is planar.  Results from \cite{GM-splittings} show that the boundary is a Peano continuum\footnote{meaning a connected, locally connected, compact metrizable space} without local cut points.
We then invoke a characterization of Claytor \cite{Claytor34}, which says that a Peano continuum without cut points is planar if and only if it contains no non-planar graph.  
An adaptation of a lemma of Ivanov (Lemma \ref{ivanovlemma}) shows that if $\partial\overline{G}$ contained such a graph, then so would all but finitely many of the approximating boundaries.  Since they are spheres, they do not.

Since $\partial\overline{G}$ is planar, connected, and has no local cut points, a result of Kapovich and Kleiner \cite[Theorem 4]{KK} implies that it is either $S^2$ or a Sierpinski Carpet.  In Subsection \ref{ss:not Sierpinski} we rule out the Sierpinski Carpet.

\subsection{Outline}
Section \ref{s:Prelim} contains background, notation and preliminary results; the reader can skim it and refer back to it when needed.

In Section \ref{sec:wgh} we introduce the notion of weak Gromov--Hausdorff convergence, which plays an important role in our description of the boundary of a Dehn filled group.

In Section \ref{sec:spiderweb} we introduce spiderwebs, a variation of the windmills from \cite{DGO}. We cannot use windmills directly for our purposes, but our construction is very similar to that in \cite{DGO}.

Finally, the main contributions of this paper start with Sections \ref{s:technical} and \ref{sec:proofs}, where we state and prove our main results about general Dehn filling. As discussed above, we describe the boundary of a Dehn filled group as a certain weak Gromov--Hausdorff limit of spaces, each the boundary of a certain hyperbolic space, whose topology we have control on.

Starting with Section \ref{sec:spheres}, we focus on the setup of Theorem \ref{thm:boundary sphere}, that is to say we consider fillings of a relatively hyperbolic pair whose Bowditch boundary is a $2$--sphere. First of all, we exploit the general description of the approximating boundaries to show that in that situation they are all spheres.

Section \ref{ss:not Sierpinski} contains the last missing piece of the proof of Theorem \ref{thm:boundary sphere}: We prove that a weak Gromov--Hausdorff limit of simply connected spaces is $\epsilon$-simply-connected for every $\epsilon>0$, therefore proving that the Sierpinski carpet cannot be a limit of spheres.

In Section \ref{s:proof of boundary sphere} we prove Theorem \ref{thm:boundary sphere}, which at that point only requires putting together various pieces.

In Section \ref{sec:corollary}, we prove Corollary \ref{cor:relative Cannon}, which requires arguments about limits of representations in $\mathrm{Isom}(\mathbb H^3)$.

Finally, in Appendix \ref{app:technical} we record some technical results which are surely well known to experts but for which we do not know of a reference in the literature.

\subsection{Acknowledgments}  
The authors would like to thank Peter Ha\"{\i}ssinsky and Genevieve Walsh for useful conversations, and an anonymous referee for several helpful comments.

This material is based upon work supported by the National Science Foundation under grant No. DMS-1440140 while the second and third authors were in residence at the Mathematical Sciences Research Institute in Berkeley, California, during the Fall 2016 semester.  The first author is partially supported by a grant from the Simons Foundation (\#342049 to Daniel Groves) and by NSF grant DMS-1507076. The second author is partially supported by NSF grant DMS-1462263.

\section{Preliminaries} \label{s:Prelim}
For a point $p$ of a metric space $(M,d)$, write $S_R(p)$ for $\{x\in M\mid d(x,p)= R\}$, and $B_R(p)$ for $\{x\in M\mid d(x,p)\leq R\}$.  If $M$ is a geodesic space we write $[x,y]$ for a choice of geodesic from $x$ to $y$ in $M$.

In a geodesic space $(Z,d)$, every geodesic triangle $\Delta$ comes with a surjective map to a possibly degenerate \emph{comparison tripod} $T_\Delta$, which is isometric on each side of the triangle, and so the vertices map to feet of the tripod.  If the vertices of the triangle are $x$, $y$, and $z$, the leg corresponding to $x$ has length $$(y\mid z)_x \co = \frac{1}{2}(d(y,x)+d(z,x)-d(y,z)),$$ also known as the \emph{Gromov product of $y$ and $z$ with respect to $x$}.

For $\delta>0$, the geodesic space $Z$ is a \emph{$\delta$--hyperbolic space} if all geodesic triangles in $Z$ are {\em $\delta$--thin}, in the sense that the map to the comparison tripod has fibers of diameter at most $\delta$.  A space is \emph{Gromov hyperbolic} if it is $\delta$--hyperbolic for some $\delta$.  See \cite[III.H]{BH} for more details, and the relationship with other definitions.  

A Gromov hyperbolic space $Z$ has a \emph{boundary at infinity} or \emph{Gromov boundary} $\partial Z$, which can be defined in terms of sequences of points.  Namely, a sequence $\{x_i\}$ \emph{converges to infinity} if $\lim\limits_{i,j\to \infty}(x_i\mid x_j)_{p}=\infty$ for some (or equivalently every) basepoint $p$.  Two sequences $\{x_i\}$ and $\{y_i\}$ are \emph{equivalent} if $\lim\limits_{i,j\to\infty}(x_i\mid y_j)_p = \infty$, and $\partial Z$ is defined to be the set of equivalence classes of sequences which converge to infinity.  If the equivalence class of $\{x_i\}$ is $\xi$, we write $\{x_i\}\to \xi$.
In a proper Gromov hyperbolic space, $\partial Z$ can also be defined as equivalence classes of geodesic rays, where two rays are counted as equivalent if they have images which are finite Hausdorff distance apart.  All the spaces we consider are proper.

For $p\in Z$ and $\xi, \upsilon\in Z\cup \partial Z$, the Gromov product is extended as follows:
\[ (\xi\mid \upsilon)_p = \sup\left.\left\{ \liminf_{i,j\to \infty} (x_i\mid y_j)_p\right| \{x_i\}\to\xi, \{y_i\}\to \upsilon\right\}.\]
  
It is a standard fact, see e.g. \cite[Lemma 5.6]{Vai:hyperbolic} or \cite[III.H.3.17.(5)]{BH}, that, up to a small error, one can compute the Gromov product using any given representative sequences, meaning that if $\{x_i\}\to\xi, \{y_i\}\to \upsilon$ then $\liminf (x_i\mid y_j)_p$ is within $2\delta$ of $(\xi\mid \upsilon)_p$.

The following observation is \cite[III.H.3.17.(3)]{BH}:
\begin{lemma}\label{lem:avoidlimsup}
  Let $Z$ be Gromov hyperbolic, and let $p\in Z$.  For any $\xi,\upsilon$ in $Z\cup\partial Z$, there are sequences $\{x_i\}\to \xi$, and $\{y_i\}\to \upsilon$ so that $\lim\limits_{n\to\infty}(x_n\mid y_n)_p = (\xi\mid \upsilon)_p$.
\end{lemma}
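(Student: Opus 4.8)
The plan is to reduce quickly to the case $\xi,\upsilon\in\partial Z$ and then build the desired sequences by a diagonal argument over sequences that nearly realize the supremum in the definition of $(\xi\mid\upsilon)_p$. If $\xi\in Z$ and $\upsilon\in Z$, then $(\xi\mid\upsilon)_p$ is the ordinary Gromov product of two points and the constant sequences $x_i=\xi$, $y_i=\upsilon$ work on the nose. If $\xi\in\partial Z$ but $\upsilon\in Z$, run the argument below with $y_i\equiv\upsilon$ fixed (here $(\xi\mid\upsilon)_p\leq d(\upsilon,p)<\infty$, so only the finite case occurs). So assume from now on that $\xi,\upsilon\in\partial Z$, and set $L:=(\xi\mid\upsilon)_p\in[0,\infty]$. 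Throughout I will use the standard fact that if $\{z_i\}\to\zeta\in\partial Z$ then $(z_i\mid\zeta)_p\to\infty$ as $i\to\infty$ (immediate from $(z_i\mid\zeta)_p\geq\liminf_j(z_i\mid z_j)_p$ and the definition of convergence to infinity), together with the $2\delta$--approximation recalled just before the lemma and the inequality $(a\mid c)_p\geq\min\{(a\mid b)_p,(b\mid c)_p\}-2\delta$ valid for $a,b,c\in Z\cup\partial Z$ (\cite[III.H.3.17.(4)]{BH}).

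For each $n$, the definition of $L$ as a supremum provides sequences $\{x_i^{(n)}\}\to\xi$ and $\{y_i^{(n)}\}\to\upsilon$ with $\liminf_{i,j}(x_i^{(n)}\mid y_j^{(n)})_p$ at least $L-\tfrac1n$ (when $L<\infty$) or at least $n$ (when $L=\infty$); moreover, when $L<\infty$ this $\liminf$ is also $\leq L$. Writing the $\liminf$ over pairs as the nondecreasing limit $\lim_N\inf_{i,j\geq N}(x_i^{(n)}\mid y_j^{(n)})_p$, I would pick $N=N(n)\geq n$ large enough that: (i) $\inf_{i,j\geq N}(x_i^{(n)}\mid y_j^{(n)})_p>L-\tfrac1n$ (resp.\ $>n$ if $L=\infty$); and (ii) $(x_i^{(n)}\mid\xi)_p\geq n$ and $(y_j^{(n)}\mid\upsilon)_p\geq n$ whenever $i,j\geq N$ (possible by the standard fact above). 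When $L<\infty$, since $\inf_{i,j\geq N}(x_i^{(n)}\mid y_j^{(n)})_p\leq L$ there exist $i(n),j(n)\geq N$ with $(x_{i(n)}^{(n)}\mid y_{j(n)}^{(n)})_p<L+\tfrac1n$; combined with (i) the value lies in $(L-\tfrac1n,L+\tfrac1n)$. When $L=\infty$, any $i(n),j(n)\geq N$ give a value $>n$. Set $x_n:=x_{i(n)}^{(n)}$ and $y_n:=y_{j(n)}^{(n)}$; then $(x_n\mid y_n)_p\to L=(\xi\mid\upsilon)_p$ in both cases.

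It remains to check $\{x_n\}\to\xi$ and $\{y_n\}\to\upsilon$; this needs an argument because the diagonal sequence is not a subsequence of any single $\{x_i^{(n)}\}$. Fix a representative $\{\xi_m\}\to\xi$. By the $\delta$--inequality for points of $Z\cup\partial Z$, $(x_n\mid\xi_m)_p\geq\min\{(x_n\mid\xi)_p,(\xi\mid\xi_m)_p\}-2\delta$, and both terms on the right tend to $\infty$ (the first because $(x_n\mid\xi)_p\geq n$ by (ii) together with $i(n)\geq N(n)$, the second by the standard fact applied to $\{\xi_m\}$); hence $\liminf_{n,m}(x_n\mid\xi_m)_p=\infty$. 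The same inequality with $\xi_m$ replaced by $x_{n'}$ shows $\{x_n\}$ converges to infinity, so $\{x_n\}\to\xi$, and symmetrically $\{y_n\}\to\upsilon$. The one point requiring care is the bookkeeping with the double-indexed $\liminf$ in the middle paragraph: one must choose the threshold $N(n)$ \emph{after} fixing the thresholds coming from (ii), so that a single pair $(i(n),j(n))$ of large indices simultaneously witnesses that $(x_n\mid y_n)_p$ is close to $L$ and that $(x_n\mid\xi)_p$, $(y_n\mid\upsilon)_p$ are large; everything else is routine.
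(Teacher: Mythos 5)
Your argument is correct. Note that the paper does not prove this lemma at all — it is stated as an observation and attributed to \cite[III.H.3.17.(3)]{BH} — so there is no in-paper proof to compare against; your direct diagonal argument (near-optimal pairs of sequences for each $n$, a threshold $N(n)$ chosen after the thresholds guaranteeing $(x_i^{(n)}\mid\xi)_p,(y_j^{(n)}\mid\upsilon)_p\ge n$, and the extended four-point inequality to verify that the diagonal sequences still converge to $\xi$ and $\upsilon$) is the standard way to establish the cited remark, and the reduction of the mixed and interior cases is handled correctly. One cosmetic point: as written, (i) demands the strict inequality $\inf_{i,j\ge N}(x_i^{(n)}\mid y_j^{(n)})_p>L-\tfrac1n$, which could fail if the chosen pair of sequences has $\liminf$ exactly equal to $L-\tfrac1n$; either use the supremum to pick sequences whose $\liminf$ is strictly greater than $L-\tfrac1n$ (which the definition of supremum permits), or relax (i) to $>L-\tfrac2n$ — in both variants the values $(x_n\mid y_n)_p$ still converge to $L$, so nothing in the argument is affected.
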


We also consider the Gromov product of geodesic rays $(\alpha|\beta)_p$ with respect to their common starting point $p$, which we define to be
$$(\alpha|\beta)_p= \liminf_{s,t\to \infty} (\alpha(s)\mid \beta(t))_p.$$

\subsection{Visual metrics on the boundary of a Gromov hyperbolic space} \label{ss:visual metrics}
For any given parameter $\epsilon >0$ and basepoint $p\in X$, the function $(\eta,\xi)\mapsto e^{-\epsilon(\eta\mid \xi)_p}$ behaves somewhat like a metric on $\partial X$, though it may not satisfy the triangle inequality.  It does makes sense to ask whether $e^{-\epsilon(\cdot\mid\cdot)_p}$ is bilipschitz or quasi-isometric to some metric on $\partial X$. 

We recall the definition:
\begin{defn}\label{def:visualmetric}
  Let $Z$ be a Gromov hyperbolic space, with basepoint $w$.  A \emph{visual metric on $\partial Z$, based at $w$, with parameters $\epsilon,\kappa$} is a metric $\rho(\cdot,\cdot)$ which is $\kappa$--bilipschitz to $e^{-\epsilon(\cdot|\cdot)_w}$.
\end{defn}

From \cite[III.H.3.21]{BH} one can fairly readily deduce the following:
\begin{prop}\cite[III.H.3.21]{BH}\label{findvisualmetric}
  Let $\delta>0$.  Then for all positive $\epsilon \leq \frac{1}{6\delta}$ there is a $\kappa=\kappa(\epsilon,\delta)\geq 1$ with $\lim\limits_{\epsilon\to 0}\kappa(\epsilon,\delta) = 1$ so that:

  If $Z$ is a $\delta$--hyperbolic space and $p\in Z$, then $\partial Z$ has a visual metric  based at $p$ with parameters $\epsilon,\kappa$.
\end{prop}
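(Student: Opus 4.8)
The plan is to carry out the classical ``chain'' construction of a visual metric, exactly as in \cite[III.H.3.21]{BH}, but keeping explicit track of the resulting bilipschitz constant; the existence of the metric is the content of that reference, and the only point needing attention is that $\kappa$ can be taken to tend to $1$ as $\epsilon\to 0$ on the stated range. Fix a $\delta$--hyperbolic space $Z$ and a basepoint $p$, write $\varrho(\xi,\eta)=e^{-\epsilon(\xi\mid\eta)_p}$ for $\xi\neq\eta$ in $\partial Z$ and $\varrho(\xi,\xi)=0$, and define
\[ \rho(\xi,\eta)=\inf\Bigl\{\, \sum_{i=1}^{n}\varrho(\xi_{i-1},\xi_i)\ :\ n\geq 1,\ \xi_0=\xi,\ \xi_n=\eta \,\Bigr\}. \]
By construction $\rho$ is a symmetric pseudometric satisfying the triangle inequality, with $\rho\leq\varrho$ (take the trivial chain $n=1$), so everything reduces to a lower bound $\rho\geq\kappa^{-1}\varrho$ with $\kappa=\kappa(\epsilon,\delta)\to 1$.

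First I would record the coarse ultrametric inequality for the extended Gromov product: there is a constant $\delta'$ which is a small fixed multiple of $\delta$ (one may take $\delta'=2\delta$) with $(\xi\mid\zeta)_p\geq\min\{(\xi\mid\eta)_p,(\eta\mid\zeta)_p\}-\delta'$ for all $\xi,\eta,\zeta\in Z\cup\partial Z$ (see \cite[III.H.3.17]{BH}); for points of $Z$ this is the four--point condition coming from $\delta$--thinness, and one passes to boundary points by approximating with representative sequences, using the $2\delta$--error bound recalled above together with Lemma \ref{lem:avoidlimsup}. Putting $a=e^{\epsilon\delta'}$, this says $\varrho(\xi,\zeta)\leq a\max\{\varrho(\xi,\eta),\varrho(\eta,\zeta)\}$. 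The key step is then the purely formal estimate: if $a^{2}\leq 2$, then for every chain one has $\varrho(\xi_0,\xi_n)\leq a^{2}\sum_{i=1}^{n}\varrho(\xi_{i-1},\xi_i)$. This is an induction on $n$: writing $S$ for the total $\varrho$--length of the chain, pick the index $k$ at which the cumulative length first exceeds $S/2$, so that the subchains $\xi_0,\dots,\xi_{k-1}$ and $\xi_k,\dots,\xi_n$ each have $\varrho$--length at most $S/2$; applying the inductive hypothesis to each, bounding the middle term $\varrho(\xi_{k-1},\xi_k)$ by $S$, and applying the relaxed inequality twice gives
\[ \varrho(\xi_0,\xi_n)\ \leq\ a\max\bigl\{\,a^{2}S/2,\ a\max\{S,a^{2}S/2\}\,\bigr\}\ =\ a^{2}S, \]
where the final equality uses $a^{2}\leq 2$ (hence $a^{2}/2\leq 1\leq a\leq 2$). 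Thus $\rho\geq a^{-2}\varrho$.

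With this in hand the proposition assembles immediately. Since $(\xi\mid\eta)_p<\infty$ whenever $\xi\neq\eta$ (by the definition of $\partial Z$), $\varrho$ is positive off the diagonal, so $\rho\geq a^{-2}\varrho$ shows $\rho$ is an honest metric, and the chain $a^{-2}\varrho\leq\rho\leq\varrho\leq a^{2}\varrho$ exhibits $\rho$ as a visual metric based at $p$ with parameters $\epsilon$ and $\kappa=a^{2}=e^{2\epsilon\delta'}$; this depends only on $\epsilon$ and $\delta$ and tends to $1$ as $\epsilon\to 0$. Finally, for $\epsilon\leq\frac{1}{6\delta}$ and $\delta'=2\delta$ one gets $a^{2}=e^{4\epsilon\delta}\leq e^{2/3}<2$, so the hypothesis of the key estimate holds throughout the stated range. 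The main obstacle is bookkeeping rather than ideas: one must verify that the boundary version of the coarse ultrametric inequality holds with $\delta'$ linear in $\delta$ and with a coefficient small enough that $\epsilon\leq\frac{1}{6\delta}$ still yields $a^{2}<2$ (this is why the threshold is $\tfrac{1}{6\delta}$ and not, say, $\tfrac{1}{5\delta}$), and one must use the ``split at the half-length point'' form of the chain induction above --- a cruder chaining argument only yields a fixed bilipschitz factor and loses the conclusion $\kappa\to 1$.
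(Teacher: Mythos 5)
Your proposal is correct and follows essentially the same route as the paper, which simply deduces the statement from the classical chain construction of \cite[III.H.3.21]{BH}: you reproduce that construction (approximate ultrametric inequality for $e^{-\epsilon(\cdot\mid\cdot)_p}$ plus the split-at-half-length chain induction, valid once $e^{2\epsilon\delta'}\le 2$) while tracking constants, obtaining $\kappa=e^{4\epsilon\delta}\to 1$. The only point needing care, the boundary inequality $(\xi\mid\zeta)_p\ge\min\{(\xi\mid\eta)_p,(\eta\mid\zeta)_p\}-2\delta$, does hold as you claim (use Lemma \ref{lem:avoidlimsup} to pick sequences realizing the two products and note that Gromov products of two sequences converging to $\eta$ tend to infinity), so the threshold $\epsilon\le\frac{1}{6\delta}$ indeed keeps $e^{4\epsilon\delta}<2$ and your argument goes through.
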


Visual metrics are hardly ever length metrics, and in fact hardly ever admit rectifiable paths. However, the notion of linear connectedness is a useful ``replacement'' for the notion of length metric.

\begin{defn}
 Let $L\geq 1$.  A metric space $M$ is \emph{$L$--linearly connected} if every pair of points $x,y\in M$ is contained in a connected subset $J$ of diameter at most $L\cdot d(x,y)$.  We say $M$ is \emph{linearly connected} if it is $L$--linearly connected for some $L$.
\end{defn}

\begin{rem}
As observed, for example, in the introduction of \cite{MackayQuasiArcs}, if $M$ is compact then up to increasing $L$ by an arbitrarily small amount we can assume that $J$ is an arc.  We frequently make this assumption in the rest of the paper.
\end{rem}

A homeomorphism $f\co X\to Y$ of metric spaces is a \emph{quasi-symmetry} if there is a homeomorphism $\eta\co [0,\infty)\to [0,\infty)$ so that 
\[ \frac{d(f(x),f(y))}{d(f(x),f(z))}\leq \eta\left(\frac{d(x,y)}{d(x,z)}\right) \]
for all triples of distinct points $x,y,z\in X$.  The spaces $X$ and $Y$ are then said to be \emph{quasi-symmetric}. All visual metrics on the boundary of a given hyperbolic space are quasi-symmetric to each other.   Observe:
\begin{lemma}\label{lem:LCQS}
  If $X$ is linearly connected, then so is any space quasi-symmetric to $X$.
\end{lemma}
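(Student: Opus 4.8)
The plan is to take the obvious candidate for the connecting set and bound its diameter by applying the quasi-symmetry inequality in the right order. Suppose $f\co X\to Y$ is a quasi-symmetry with control function $\eta$, and that $X$ is $L$--linearly connected. Fix distinct $y_0,y_1\in Y$ and set $x_0=f^{-1}(y_0)$, $x_1=f^{-1}(y_1)$. Choose a connected $J\subseteq X$ with $x_0,x_1\in J$ and $\diam J\le L\,d(x_0,x_1)$. Since $f$ is a homeomorphism, $f(J)$ is a connected subset of $Y$ containing $y_0$ and $y_1$, so it suffices to bound $\diam f(J)$ by a fixed multiple of $d(y_0,y_1)$.

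First I would note that $\eta$, being a homeomorphism of $[0,\infty)$, is increasing. Then for each $a\in J$ with $a\neq x_0$, apply the quasi-symmetry estimate to the triple $(x_0,a,x_1)$:
\[
\frac{d(f(x_0),f(a))}{d(f(x_0),f(x_1))}\le \eta\!\left(\frac{d(x_0,a)}{d(x_0,x_1)}\right)\le \eta\!\left(\frac{\diam J}{d(x_0,x_1)}\right)\le \eta(L).
\]
Hence $d(y_0,f(a))\le \eta(L)\,d(y_0,y_1)$ for all $a\in J$ (the case $a=x_0$ being trivial), and by the triangle inequality $d(f(a),f(b))\le 2\eta(L)\,d(y_0,y_1)$ for all $a,b\in J$. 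Thus $\diam f(J)\le 2\eta(L)\,d(y_0,y_1)$, so $Y$ is $2\eta(L)$--linearly connected (and for $y_0=y_1$ a point suffices).

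The only thing to get right is the order in which quasi-symmetry is invoked: comparing each point $f(a)$ against the \emph{fixed} pair $\{y_0,y_1\}$ keeps the argument $d(x_0,a)/d(x_0,x_1)$ of $\eta$ bounded by $L$, whereas comparing two interior points of $J$ directly would produce ratios with no a priori upper bound and the argument would fail. With this choice there is no real obstacle — the estimate is the two-line computation above. If one wants the connecting set to be an arc, as in the Remark following the definition of linear connectedness, one invokes that a compact linearly connected space admits arcs between points and that $f$ carries arcs to arcs.
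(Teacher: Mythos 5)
Your proof is correct, and it is the standard argument: push the connecting set $J$ forward and control $\diam f(J)$ by applying the quasi-symmetry estimate only to triples whose ``denominator'' pair is the fixed pair $\{x_0,x_1\}$, so that the argument of $\eta$ stays bounded by $L$. The paper offers no proof of this lemma (it is stated as an observation), so there is nothing to compare against on that side. Two small points worth tidying: the quasi-symmetry inequality is only stated for triples of \emph{distinct} points, so the case $a=x_1$ is not covered by your displayed estimate; either handle it directly (there $d(y_0,f(a))=d(y_0,y_1)$, giving the constant $2\max\{\eta(L),1\}$) or first replace $\eta$ by the larger control function $t\mapsto \eta(t)+t$, which is again a homeomorphism of $[0,\infty)$ and guarantees $\eta(L)\ge L\ge 1$. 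Also, since the lemma says ``any space quasi-symmetric to $X$,'' the given quasi-symmetry might a priori go from $Y$ to $X$; one should note (or cite) that the inverse of a quasi-symmetry is again a quasi-symmetry, so your normalization $f\co X\to Y$ is legitimate.
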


\subsection{The cusped space associated to a relatively hyperbolic pair}
In this section we associate a metric graph (the \emph{(combinatorial) cusped space}) to a relatively hyperbolic pair, and fix notation for various subsets of it.

\begin{defn} \label{d:comb horo}
Let $\Gamma$ be a graph, endowed with the metric that gives each edge length $1$.  The \emph{combinatorial horoball based on $\Gamma$} is the metric graph $\calH(\Gamma)$ whose vertex set is $\Gamma^{(0)}\times \bZ_{\geq 0}$, and with two types of edges:
\begin{enumerate}
\item A \emph{vertical} edge of length $1$ from $(v,n)$ to $(v,n+1)$ for any $v\in \Gamma^{(0)}$ and any $n\geq 0$;
\item\label{eq:horiz edges} For $k > 0$, if $v$ and $w$ are vertices of $\Gamma$ so that $0 < d_\Gamma(v,w) \le 2^k$ then there is a single {\em horizontal} edge of length $1$ joining $(v,k)$ to $(w,k)$.
\end{enumerate}
  Define the \emph{depth} of a vertex $D(v,n)=n$  and extend the depth function affinely over edges.

  The inverse image $D^{-1}(n)$ for $n$ an integer is called the \emph{horosphere at depth $n$}.  This is a graph whose vertices are in bijection with those of $\Gamma$.  The distance in $D^{-1}(n)$ between two vertices $(v,n)$ and $(w,n)$ is $\lceil 2^{-n} d_\Gamma (v,w) \rceil$.  

  If $\calH = \calH(\Gamma)$ for some $\Gamma$, and $I$ is a nondegenerate interval in $\bR$, we define $\calH^I = D^{-1}(I)$.  
\end{defn}

Let $(G,\mc{P})$ be a group pair (so $G$ is a group and $\mc{P}$ is a collection of subgroups), and suppose that $G$ and the elements of $\mc{P}$ are all finitely generated.  Choose a generating set $S$ for $G$ which contains a generating set for each $P\in \mc{P}$.  (This is called a \emph{compatible} generating set.)  Let $\Gamma$ be the Cayley graph for $G$ with respect to $S$, metrized so each edge has length $1$.
 Each left coset $gP$ of $P\in \mc{P}$ spans a connected $gPg^{-1}$--invariant subgraph $\Gamma(gP)\subset \Gamma$.  
 \begin{defn}\label{def:cc}
   The \emph{cusped space} $X(G,\mc{P})$ is obtained from $\Gamma$ by attaching, for each $P\in \mc{P}$, and each coset $gP$, a copy of $\calH(gP)$, by identifying $\Gamma(gP)$ to the horosphere at depth $0$ of $\calH(gP)$.
 \end{defn}
 The cusped space is not quite determined by the pair $(G,\mc{P})$, since we had to choose a generating set, but any two choices give quasi-isometric spaces, by \cite[Corollary 6.7]{Groff}.

 \begin{defn}
   $(G,\mc{P})$ is relatively hyperbolic if and only if the cusped space $X(G,\mc{P})$ is Gromov hyperbolic.
 \end{defn}
In \cite[Theorem 3.25]{rhds} it is proved that this definition is equivalent to other definitions of relative hyperbolicity, in the finitely generated case.  See \cite{Hru-relqconv} for an extension of this definition to the non-finitely generated case.  Throughout this paper, we are only concerned with the case that $G$ and all elements of $\mc{P}$ are finitely generated.  We recall the following useful property of horoballs in the cusped space of a relatively hyperbolic group.
\begin{lemma}\label{horoballconvexity} \cite[Lemma 3.26]{rhds}
  Suppose $X=X(G,\mc{P})$ is $\delta$--hyperbolic, and that $\calH\subset X$ is a combinatorial horoball.
  For any integer $R\geq \delta$, the set $\calH^{[R,\infty)}$ is convex in $X$.
\end{lemma}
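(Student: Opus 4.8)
The plan is to prove directly that every geodesic $\gamma=[x,y]$ of $X$ with $x,y\in\calH^{[R,\infty)}$ stays in $\calH^{[R,\infty)}$. Since the depth function $D$ changes by at most $1$ along each edge and is affine over edges, $D\circ\gamma$ is $1$--Lipschitz; so if $\gamma$ ever reached depth $<R$ it would contain a subsegment $\eta$ running from a point $p$ to a point $q$ with $D(p)=D(q)=R$ and with $D<R$ on the interior of $\eta$, where $p$ is the last point of $\gamma$ before the dip with $D=R$ and $q$ the first one after. The first edge of $\eta$ cannot keep the depth equal to $R$ (by maximality of $p$) nor increase it (again by maximality of $p$), so it is the vertical edge of $\calH$ descending from $p$ toward the horosphere at depth $R-1$; symmetrically for the last edge of $\eta$. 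In particular $p,q$ lie on the depth--$R$ horosphere and $\ell(\eta)\ge 2$, and it suffices to produce a path from $p$ to $q$ of length strictly less than $\ell(\eta)$.

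Suppose first that $\eta$ remains inside $\calH$. Then $\eta$ is a geodesic of the combinatorial horoball $\calH=\calH(\Gamma)$, and we invoke the standard description of such geodesics (see \cite{rhds}): along any geodesic of $\calH(\Gamma)$ the depth function is first weakly increasing and then weakly decreasing, so it attains its minimum at one of the two endpoints. The mechanism is a short surgery: a subpath lying at depth $\le j-1$ in its interior but beginning and ending at depth $j$ has at least two vertical edges and, say, $h$ horizontal edges (each at depth $\le j-1$, hence spanning $\Gamma$--distance $\le 2^{j-1}$), so its endpoints can be reconnected by $\lceil h/2\rceil$ horizontal edges at depth $j$, which is strictly shorter. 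Applied to $\eta$ (with $j=R$) this forces $\min_{\eta}D=\min\{D(p),D(q)\}=R$, contradicting that $D<R$ on the interior of $\eta$. Note this argument uses nothing about the size of $R$; indeed $\calH^{[R,\infty)}$ is always convex inside $\calH$.

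There remains the case in which $\eta$ leaves $\calH$. Leaving $\calH$ forces $\eta$ to pass through the depth--$0$ horosphere, so $\eta$ reaches depth $0$, and since $D\circ\gamma$ is $1$--Lipschitz with $D(p)=D(q)=R$ this already gives $\ell(\eta)\ge 2R$. This is the only place $\delta$--hyperbolicity of $X$, and the hypothesis $R\ge\delta$, are used. The plan is to compare $\eta$ with the two vertical geodesic rays $r_p,r_q$ issuing from $p$ and $q$ straight down into $\calH$: these are genuine geodesic rays of $X$ (their lengths being forced by the Lipschitz property of $D$), each stays at depth $\ge R$, and both limit onto the single boundary point $\omega_\calH\in\partial X$ represented by the vertical rays into $\calH$. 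Thus $\eta\cup r_p\cup r_q$ is an ideal geodesic triangle with ideal vertex $\omega_\calH$, and the thin triangles inequality for this configuration, combined with the explicit local structure of $X$ near the depth--$R$ horosphere, is what shows that a point of $\eta$ at depth $<R$ cannot occur once $R\ge\delta$, yielding the desired contradiction. Carrying this last step out is the main obstacle: inside $\calH$ everything is explicit combinatorics, but the real content is ruling out that a geodesic shortcuts between two deep points of $\calH$ by exiting through the depth--$0$ horosphere and travelling through $\Gamma$ or a neighbouring horoball, and in extracting the sharp threshold $R\ge\delta$ rather than a cruder bound of the form $R\ge c\delta$ that a naive thin--triangles estimate would give.
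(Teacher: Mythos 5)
This lemma is not proved in the paper at all: it is imported verbatim from \cite[Lemma 3.26]{rhds}, so the only thing to compare your argument with is that external proof, and in any case your proposal is not a complete proof. Your Case 1 (the dipping subsegment $\eta$ stays inside $\calH$) is fine: an $X$--geodesic contained in the subgraph $\calH$ is automatically an intrinsic geodesic of $\calH$, and your depth-$j$ surgery correctly shows that the depth along an intrinsic horoball geodesic attains its minimum at an endpoint. But, as you yourself note, this half uses neither $\delta$--hyperbolicity nor $R\geq\delta$; it is not where the content of the lemma lies. The entire point is your Case 2 --- excluding the possibility that a geodesic between two deep points exits through the depth--$0$ horosphere and shortcuts through the Cayley graph or a neighbouring horoball --- and there you only describe a strategy (an ideal triangle over the parabolic point together with ``the explicit local structure near the depth--$R$ horosphere'') and explicitly concede that you cannot carry it out. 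That is a genuine gap, not a loose end: without Case 2 nothing about convexity in $X$ has been established.

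Moreover, the sketched strategy, taken literally, does not obviously produce the stated threshold. The paper's $\delta$--hyperbolicity is formulated via comparison tripods for finite triangles; for a triangle with an ideal vertex (or for the quadrilateral obtained by truncating your two vertical rays at a very large depth and joining their lower endpoints, which at sufficient depth are one horizontal edge apart) the naive thinness estimate only places $\eta$ within roughly $2\delta$ of sets lying in $\calH^{[R,\infty)}$, hence forces $\eta$ to stay in $\calH$ only when $R$ exceeds roughly $2\delta$; combined with your Case 1 this would prove convexity of $\calH^{[R,\infty)}$ for $R$ a bounded multiple of $\delta$. Such a weaker version would in fact suffice for every use in the present paper (the lemma is only invoked at depth $500\theta$), but it is not the statement $R\geq\delta$ of \cite[Lemma 3.26]{rhds}, and you have not written out even this cruder version in detail. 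To repair the proposal you should either carry out the two-step argument honestly (thinness of the finite quadrilateral to force $\eta\subset\calH$, then your intrinsic surgery), accepting a worse constant, or reproduce the argument of \cite{rhds} if the sharp threshold is wanted.
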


\begin{defn}
  Suppose that $(G,\mc{P})$ is relatively hyperbolic, and suppose that each element of $\mc{P}$ is infinite.  Let $X(G,\mc{P})$ be the associated cusped space.  The Gromov boundary $\partial X(G,\mc{P})$ is called the \emph{Bowditch boundary} of $(G,\mc{P})$.  
\end{defn}
In case some elements of $\mc{P}$ are finite, then $\partial X(G,\mc{P})$ contains isolated points.  If $\mc{P}^\infty$ is the collection of infinite elements of $\mc{P}$, then $(G,\mc{P}^\infty)$ is also relatively hyperbolic, and its Bowditch boundary can be obtained from $\partial X(G,\mc{P})$ by removing the isolated points.

In case $G$ itself is hyperbolic, Bowditch characterized which $(G,\mc{P})$ are relatively hyperbolic.  Recall a family of subgroups $\mc{P}$ is \emph{almost malnormal} if whenever $P_1\cap g P_2 g^{-1}$ is infinite, for $P_1,P_2\in \mc{P}$ and $g\in G$, we have $P_1=P_2$ and $g\in P_1$.
\begin{theorem}\cite[Theorem 7.11]{bowditch12}
  Let $G$ be hyperbolic, and suppose $\mc{P}$ is a family of distinct subgroups of $G$.  The pair $(G,\mc{P})$ is relatively hyperbolic if and only if $\mc{P}$ is an almost malnormal family of quasi-isometrically embedded subgroups.
\end{theorem}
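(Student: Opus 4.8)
The statement is a biconditional, so I would prove the two implications separately, writing $X=X(G,\mc{P})$ throughout. The forward implication --- if $(G,\mc{P})$ is relatively hyperbolic then $\mc{P}$ is an almost malnormal family of quasi-isometrically embedded subgroups --- I would deduce from the convergence dynamics of $G$ on the Bowditch boundary $\partial X$, together with the standard fact that peripheral subgroups of a relatively hyperbolic group are undistorted. The reverse implication is the substantive one, and I would prove it by coning off the peripheral cosets and running a combination theorem.

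\textbf{Forward implication.} Suppose $X$ is $\delta$--hyperbolic. That each $P\in\mc{P}$ is quasi-isometrically embedded in $G$ is the (standard) undistortion of peripheral subgroups, which I would simply quote. For almost malnormality, recall that $G$ acts on $\partial X$ as a geometrically finite convergence group in which each $P\in\mc{P}$ is exactly the $G$--stabilizer of a parabolic point $\xi_P$, and the assignment $gP\mapsto g\xi_P$ is a bijection from cosets of peripherals onto the set of parabolic points. Since $P_1$ is a bounded parabolic subgroup it contains no loxodromic element --- a loxodromic in $P_1$ would fix a point of $\partial X\setminus\{\xi_{P_1}\}$, contradicting proper discontinuity of the $P_1$--action there. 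Hence if $Q:=P_1\cap gP_2 g^{-1}$ is infinite, then $Q$ is an infinite convergence subgroup with no loxodromic, so it fixes a unique point $\eta\in\partial X$; but all of $Q$ also fixes $\xi_{P_1}$ (being inside $P_1$) and fixes $g\xi_{P_2}$ (being inside $gP_2g^{-1}$), so $\eta=\xi_{P_1}=g\xi_{P_2}$. Comparing stabilizers of this common point gives $P_1=gP_2 g^{-1}$; as the elements of $\mc{P}$ are pairwise non-conjugate, $P_1=P_2$, and then $g$ stabilizes $\xi_{P_1}$, i.e.\ $g\in P_1$.

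\textbf{Reverse implication.} Assume $\mc{P}$ is almost malnormal and each $P$ is quasi-isometrically embedded; since $G$ is hyperbolic this means each $P$ is quasiconvex. The key geometric consequence I would extract is that the subgraphs $\{\Gamma(gP): g\in G,\ P\in\mc{P}\}$ are uniformly quasiconvex in $\Gamma$ and have \emph{uniformly bounded coarse intersections}: for each $K$ there is $C=C(K)$ with $\diam\bigl(N_K(\Gamma(gP))\cap N_K(\Gamma(g'P'))\bigr)\le C$ whenever $gP\ne g'P'$. This is the usual translation of ``almost malnormal family of quasiconvex subgroups'' into coarse geometry, using that in a hyperbolic group the coarse intersection of two cosets of quasiconvex subgroups is, up to bounded error, a coset of the intersection of suitable conjugates --- here finite. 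With this in hand I would form the graph $K$ with vertex set $G\sqcup\{gP\}$, an edge from $g$ to $gs$ for each $s\in S$ and from $g$ to the coset $gP$ for each $P\in\mc{P}$; this $K$ has finitely many $G$--orbits of edges and trivial edge stabilizers, it is hyperbolic because it is quasi-isometric to the cone-off of $\Gamma$ along the quasiconvex sets $\Gamma(gP)$ (Farb's electrification lemma, which needs only quasiconvexity), and it is \emph{fine} precisely because of the bounded-coarse-intersection property. A fine hyperbolic graph with finitely many edge orbits, finite edge stabilizers, and $\mc{P}$ as the conjugacy representatives of its infinite vertex stabilizers is Bowditch's criterion for $(G,\mc{P})$ to be relatively hyperbolic, which is equivalent to hyperbolicity of $X(G,\mc{P})$ by \cite[Theorem 3.25]{rhds}. (Alternatively one can argue directly that $X$ is hyperbolic, viewing it as a graph of spaces whose vertex spaces are $\Gamma$ and the uniformly hyperbolic horoballs $\calH(gP)$ and whose edge spaces are the $\Gamma(gP)$, with the bounded coarse intersections supplying the acylindricity hypothesis of a Bestvina--Feighn type combination theorem.)

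\textbf{Main obstacle.} The real work is entirely in the reverse implication: turning ``almost malnormal $+$ quasiconvex'' into the metric bounded-coarse-intersection statement, and then --- the genuine technical heart --- deducing from it that the cone-off is fine (equivalently, that the combination of $\Gamma$ with the horoballs is hyperbolic). The forward implication, by contrast, is essentially formal once one has undistortion of peripherals and the dictionary between the cusped space and the convergence action on $\partial X$.
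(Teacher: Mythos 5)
The paper does not actually prove this statement: it is quoted verbatim from Bowditch \cite[Theorem 7.11]{bowditch12}, so there is no internal argument to compare against. Your outline follows what is essentially the standard (and Bowditch's own) route — convergence dynamics for the forward direction, and coning off quasiconvex almost malnormal subgroups and verifying fineness/hyperbolicity for the reverse — so the overall structure is sound, but as written it is a plan with two soft spots.

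First, in the forward direction the hypothesis is only that the members of $\mc{P}$ are \emph{distinct}, not pairwise non-conjugate, so the line ``as the elements of $\mc{P}$ are pairwise non-conjugate, $P_1=P_2$'' assumes something that itself needs justification. It is easily repaired: the geometrically finite convergence action package you invoke includes that the stabilizer of each parabolic point is conjugate to a \emph{unique} element of $\mc{P}$ (equivalently, injectivity of $gP\mapsto g\xi_P$), and from $\xi_{P_1}=g\xi_{P_2}$ this yields $P_1=P_2$ and $g\in P_1$ directly, with no separate non-conjugacy claim; you should also remark that finite members of $\mc{P}$ need no argument since almost malnormality only constrains infinite intersections. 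Second, and more seriously, in the reverse direction the step from ``uniformly bounded coarse intersections of distinct peripheral cosets'' to ``the coned-off Cayley graph is fine (and hence $(G,\mc{P})$ is relatively hyperbolic, equivalently the cusped space is hyperbolic)'' is precisely the technical content of Bowditch's theorem. Saying it ``is fine precisely because of the bounded-coarse-intersection property'' is an assertion, not an argument, and citing Bowditch for that step would be circular; likewise the alternative via an acylindrical combination theorem is named but not carried out. So, judged as a standalone proof, the proposal has a genuine gap exactly at the point you yourself identify as the heart of the matter; judged as a plan, it locates the right reductions (quasiconvexity from QI-embedding in a hyperbolic group, bounded coarse intersections from almost malnormality) and the right target criterion.
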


\subsection{Dehn fillings}

\begin{defn}
 Let $G$ be a group and let $\mc{P}=\{P_1,\dots,P_n\}$ be a finite collection of subgroups of $G$. Given a collection of normal subgroups $N_i\trianglelefteq P_i$, called \emph{filling kernels}, the quotient $G\to G(N_1,\dots,N_n)=G/K$, where $K=\left\llangle \bigcup_i N_i \right\rrangle$, is called a \emph{(Dehn) filling} of $(G,\mc{P})$. We say that a property holds for all \emph{sufficiently long fillings} of $(G,\mc{P})$ if there is a finite set $\mathcal B\subseteq G\setminus\{1\}$ so that whenever $N_i\cap \mathcal B=\emptyset$ for all $i$, the group $G/K$ has the property.
\end{defn}

\begin{theorem}\cite{osin:peripheral,rhds}\label{thm:dehnfilling}
 Let $(G,\mc{P}=\{P_1,\dots,P_n\})$ be relatively hyperbolic. Then for any finite subset $F\subseteq G\setminus\{1\}$ the following holds. For any sufficiently long filling $\phi\co G\to G/K$ we have
 \begin{enumerate}
  \item for each $i$, $\phi$ induces an embedding of $P_i/N_i$ in $G/K$ whose image we identify with $P_i/N_i$,\label{item:P_imodN_i_embed}
  \item $(G/K,\{P_1/N_1,\dots,P_n/N_n\})$ is relatively hyperbolic,
  \item $\phi$ restricted to $F$ is injective.
 \end{enumerate}
\end{theorem}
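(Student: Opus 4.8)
The plan is to realize the filled pair inside the $\delta$--hyperbolic cusped space $X=X(G,\mc{P})$ and to deduce all three conclusions from a single ``no short relations'' statement. Fix a compatible generating set $S$ and form $X$, which is $\delta$--hyperbolic for some $\delta$ because $(G,\mc{P})$ is relatively hyperbolic. The group $K=\llangle\bigcup_i N_i\rrangle$ sits inside $G$, hence acts on $X$ by isometries, and I would first observe that the quotient $X/K$ is, after the identification $K\cap P_i=N_i$ from conclusion~(1), exactly the cusped space of $(G/K,\{P_i/N_i\})$: the image of $S$ is a compatible generating set for $G/K$, $K$ acts freely on the Cayley graph $\Gamma(G,S)$, and the stabilizer in $K$ of the horoball over a coset $gP_i$ is $g(K\cap P_i)g^{-1}$, so modulo $K\cap P_i=N_i$ each horoball of $X$ descends to the combinatorial horoball on the Cayley graph of $P_i/N_i$, while distinct cosets of $P_i$ in $G$ that are identified in $G/K$ are precisely those in a common $K$--orbit. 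So everything reduces to understanding the quotient map $\pi\co X\to X/K$.

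The technical heart is a Greendlinger--type lemma: \emph{there are constants $\rho=\rho(\delta)$ and $\rho'=\rho'(\delta)\geq\rho$ so that whenever $N_i\cap B_{\rho'}(1)=\{1\}$ for all $i$ (in the metric of $X$), the map $\pi$ preserves distances up to $\rho$; in particular no nontrivial element of $K$ lies within $X$--distance $\rho$ of $1$, and short loops in $X/K$ lift to $X$.} I would prove this with van Kampen diagrams over the presentation $\langle S\mid \mc{R}_G\cup\bigcup_i N_i\rangle$ of $G/K$, where $\mc{R}_G$ presents $G$ and each element of $N_i$ is written as a word in $S$ using that $S$ meets $P_i$ in a generating set: a short null-homotopic word in $G/K$ bounds a reduced diagram $D$ whose $2$--cells are either ``$G$--cells'' from $\mc{R}_G$ or ``$N_i$--cells''; in a diagram of minimal complexity, the hypothesis that $N_i$ avoids $B_{\rho'}(1)$ translates (via the logarithmic depth estimate for paths in combinatorial horoballs) into the statement that the boundary word of every $N_i$--cell runs deep into its horoball and shares no long subword with another cell, and then the hyperbolicity of $X$ forces some $N_i$--cell to meet $\partial D$ in an arc comprising most of its boundary --- impossible when $\partial D$ is short or is a geodesic bigon. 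This is precisely the relative small-cancellation/windmill machinery of \cite{osin:peripheral,rhds,DGO}, and carrying it out carefully is the main obstacle; the rest is formal.

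Granting the lemma, all three conclusions follow quickly. For~(3), given $F$ set $D_F=\max_{g\in F^{-1}F\setminus\{1\}}d_X(1,g)$ and take the finite forbidden set $\mathcal B$ to be all nontrivial elements of $G$ lying within $X$--distance $\max(D_F,\rho')$ of $1$ (finite since $X$ is locally finite); then $N_i\cap\mathcal B=\emptyset$ for all $i$ forces $K\cap(F^{-1}F\setminus\{1\})=\emptyset$ by the lemma, i.e.\ $\phi|_F$ is injective. For~(1), if $p\in P_i\cap K$ then running the diagram argument on a loop in $X/K$ reading $p$ along the Cayley graph of $P_i/N_i$ at depth~$0$ shows $p\in N_i$ (the ``windmill'' conclusion $K\cap P_i=N_i$), so $\phi$ embeds $P_i/N_i$ in $G/K$; this also justifies the identification $X/K=X(G/K,\{P_i/N_i\})$ used above. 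For~(2), the lemma says balls of radius $\rho$ in $X/K$ are isometric to balls in the $\delta$--hyperbolic space $X$; taking $\rho$ large compared to $\delta$, a local-to-global criterion for hyperbolicity of geodesic spaces shows $X/K$ is $\delta'$--hyperbolic with $\delta'=\delta'(\delta)$, and therefore $(G/K,\{P_i/N_i\})$ is relatively hyperbolic by definition. Note that the uniformity of $\delta'$ in $\delta$ (not merely its existence) is exactly what the later sections of this paper require.
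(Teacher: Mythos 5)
Your outline follows the cusped--space route of \cite{rhds} (the paper itself does not prove Theorem \ref{thm:dehnfilling}; it imports it from \cite{osin:peripheral,rhds}), but as a proof it has a genuine gap: the entire content of the theorem is concentrated in your ``Greendlinger--type lemma,'' and you do not prove it --- you explicitly defer it to ``the relative small-cancellation/windmill machinery of \cite{osin:peripheral,rhds,DGO},'' which is the very result being established. The van Kampen sketch as written does not substitute for that machinery: $G$ need not be finitely presented when the $P_i$ are only finitely generated, and writing elements of $N_i$ as words in $S$ is exactly the wrong move (those are the long words); the diagrams have to be taken over a cusped/coned-off $2$--complex with horoball cells, and controlling reduced diagrams there is precisely the delicate part (preferred paths in \cite{rhds}, relative isoperimetric functions in \cite{osin:peripheral}, rotating families in \cite{DGO}). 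Saying ``carrying it out carefully is the main obstacle; the rest is formal'' concedes that the proposal proves the formal part only.

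Two of the steps are also stated too strongly. First, ``$\pi$ preserves distances up to $\rho$'' is false at points deep in the horoballs, no matter how long the filling: a nontrivial $n\in N_i$ moves any vertex at depth at least $\log_2$ of its word length by distance $1$. The correct uniform statement is the one the paper records as Lemma \ref{Greendlinger:BallsEmbed}, where the balls are required to be centered on the Cayley-graph part of $X$. Second, and as a consequence, your deduction of item (2) --- ``balls of radius $\rho$ in $X/K$ are isometric to balls in $X$, so a local-to-global criterion gives $\delta'$--hyperbolicity'' --- does not go through as written: balls centered deep in a quotient horoball are isometric to balls in a combinatorial horoball over a Cayley graph of $P_i/N_i$, not to balls in $X$, so you need in addition the uniform hyperbolicity of combinatorial horoballs over arbitrary graphs; moreover the Coarse Cartan--Hadamard theorem (Theorem \ref{CartanHadamard}) requires a coarse simple-connectedness hypothesis for $X/K$ that must be verified, which is how the paper itself argues in Proposition \ref{p:uniform delta} and again for the truncated quotients in Section \ref{sec:proofs}. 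These points are repairable along standard lines, but together with the unproved key lemma (and with conclusion (1), $K\cap P_i=N_i$, which is again the windmill statement), the proposal is a strategy outline rather than a proof.
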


For any relatively hyperbolic pair $(G,\mc{P})$, the peripheral groups $\mc{P}$ always consist of an almost malnormal family of quasi-isometrically embedded subgroups \cite[Proposition 2.36 and Lemma 5.4]{osin:relhypbook}.  Hence we have the following corollary of Theorem \ref{thm:dehnfilling}.
\begin{cor}
  Let $(G,\mc{P})$ be relatively hyperbolic.  For all sufficiently long fillings $G\to G/K$, the filling $G/K$ is hyperbolic if and only if every $P_i/N_i$ is hyperbolic.
\end{cor}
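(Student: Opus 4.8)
The plan is to derive the corollary directly from Theorem~\ref{thm:dehnfilling} together with two standard facts: (i) a relatively hyperbolic pair whose peripheral subgroups are all (word-)hyperbolic has hyperbolic ambient group; and (ii) a finitely generated, quasi-isometrically embedded subgroup of a hyperbolic group is quasiconvex, and a quasiconvex subgroup of a hyperbolic group is itself hyperbolic.

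First I would pin down the set of ``bad'' fillings once and for all. Applying Theorem~\ref{thm:dehnfilling} with $F=\emptyset$, we get a finite set $\mathcal B\subseteq G\setminus\{1\}$ such that every filling $G\to\overline G=G(N_1,\dots,N_n)$ with $N_i\cap\mathcal B=\emptyset$ for all $i$ has the properties: each $P_i/N_i$ embeds into $\overline G$ (and we identify it with its image), and $(\overline G,\overline{\mc P})$ is relatively hyperbolic, where $\overline{\mc P}=\{P_1/N_1,\dots,P_n/N_n\}$. Since each $P_i$ is finitely generated (as assumed throughout the paper), so is each $P_i/N_i$, so it makes sense to ask whether it is hyperbolic. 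Everything below concerns such a filling, and this single $\mathcal B$ witnesses ``all sufficiently long fillings'' in the statement.

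For the ``if'' direction, suppose every $P_i/N_i$ is hyperbolic. Then $(\overline G,\overline{\mc P})$ is a relatively hyperbolic pair all of whose peripheral subgroups are hyperbolic, so by fact (i) (recalled in the introduction) $\overline G$ is hyperbolic. For the ``only if'' direction, suppose $\overline G$ is hyperbolic. By \cite[Proposition 2.36 and Lemma 5.4]{osin:relhypbook} the peripheral family of the relatively hyperbolic pair $(\overline G,\overline{\mc P})$ consists of quasi-isometrically embedded subgroups, so each $P_i/N_i$ is undistorted in $\overline G$; by fact (ii) it is therefore quasiconvex in $\overline G$, hence hyperbolic. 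Thus every $P_i/N_i$ is hyperbolic.

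I do not expect a real obstacle: this is essentially bookkeeping on top of Theorem~\ref{thm:dehnfilling} and the two cited facts. The only points needing a little care are that Theorem~\ref{thm:dehnfilling} already produces one finite exceptional set $\mathcal B$ good for all sufficiently long fillings simultaneously, and that ``quasi-isometrically embedded'' is exactly the undistortedness hypothesis that allows the passage to quasiconvexity, and hence hyperbolicity, inside the hyperbolic group $\overline G$.
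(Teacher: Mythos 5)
Your proposal is correct and follows essentially the same route the paper intends: Theorem \ref{thm:dehnfilling} gives relative hyperbolicity of $(\overline G,\{P_i/N_i\})$ for all sufficiently long fillings, the ``if'' direction is the standard fact that a relatively hyperbolic group with hyperbolic peripherals is hyperbolic, and the ``only if'' direction uses exactly the quasi-isometric embeddedness of peripheral subgroups from \cite[Proposition 2.36 and Lemma 5.4]{osin:relhypbook} that the paper cites immediately before the corollary, combined with quasiconvexity in the hyperbolic quotient. Nothing further is needed.
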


The following is an easy consequence of the third part of Theorem \ref{thm:dehnfilling}.

\begin{lemma}\label{Greendlinger:BallsEmbed}
 Let $(G,\mc{P})$ be relatively hyperbolic, with associated cusped space $X$. Then for any $R\geq 0$ the following holds. For any sufficiently long filling $G\to G/K$ the restriction of the map $X\to X/K$ to any ball of radius $R$ centered at an element of the Cayley graph is an isometry onto its image. Moreover, the same holds true for the map $X\to X/K_0$ where $K_0<K$ is any subgroup.
\end{lemma}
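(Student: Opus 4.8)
The plan is to deduce the statement from the third part of Theorem \ref{thm:dehnfilling} by choosing the finite set $F$ appropriately. Fix $R\geq 0$ and a basepoint $1\in \Gamma\subset X$. First I would observe that, because $G$ acts cocompactly on the Cayley graph $\Gamma$ and the horoball construction is $G$--equivariant, it suffices to prove the statement for a single ball $B_R(1)$: any ball $B_R(g)$ with $g$ a vertex of the Cayley graph is the $g$--translate of $B_R(1)$, and the map $X\to X/K$ is $G$--equivariant (with $G$ acting on $X/K$ through $G/K$), so embedding of $B_R(1)$ gives embedding of all such balls simultaneously. Moreover the map $X\to X/K$ is $1$--Lipschitz, so it is automatically non-expanding on $B_R(1)$; the content is that it does not \emph{decrease} distances, i.e.\ that distinct points of $B_R(1)$ stay distinct and, more precisely, that $d_{X/K}(\bar x,\bar y)=d_X(x,y)$ for all $x,y\in B_R(1)$.

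Next I would set up the finite set $\mathcal B$. Points of $X$ at distance at most $R$ from $1$ lie within a bounded region of $X$; since $X$ is a locally finite graph, $B_{2R}(1)$ meets only finitely many vertices, hence only finitely many elements of the vertex set of the Cayley graph and finitely many horoball vertices, all lying in cosets $gP_i$ with $g$ ranging over a finite set $T\subset G$. For a path of length $\le 2R$ in $X$ between two points of $B_R(1)$ to be shortcut in the quotient, there must be an element $k\in K\setminus\{1\}$ and a vertex $v$ of $X$ with $v$ and $kv$ both within distance $R$ of $1$; such a $k$ satisfies $d_X(v,kv)\le 2R$, and since the stabilizer of a horoball point is controlled, this forces $k$ to lie in a finite subset $\mathcal B_0$ of $G$ depending only on $(G,\mc P)$ and $R$. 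Concretely I would take $F=\mathcal B$ to be the (finite) set of all nontrivial $g\in G$ that can be written as a product of at most, say, $4R$ generators from $S$, and invoke Theorem \ref{thm:dehnfilling}(3): for any sufficiently long filling, $\phi$ is injective on $F$, hence $K\cap F=\{1\}$.

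With $K\cap F=\{1\}$ in hand, I would argue that the quotient map is isometric on $B_R(1)$. Suppose $x,y\in B_R(1)$ and let $\bar\gamma$ be a geodesic in $X/K$ from $\bar x$ to $\bar y$, of length $\ell=d_{X/K}(\bar x,\bar y)\le d_X(x,y)\le 2R$. Lift $\bar\gamma$ to a path $\gamma$ in $X$ starting at $x$; it ends at some $k\cdot y$ with $k\in K$, and has length $\ell\le 2R$. Then $d_X(x,ky)\le 2R$, so $d_X(1,ky)\le 3R$ and $d_X(1,y)\le R$ give $ky$ within distance $4R$ of $1$ along a path through $y$; tracking the group elements involved (using Lemma \ref{horoballconvexity} to push any excursion into a horoball back toward its horosphere, so that the relevant vertices lie in the Cayley graph or at bounded depth) shows $k$ is represented by a word of length at most $4R$ in $S$, i.e.\ $k\in F$. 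Since $K\cap F=\{1\}$, we get $k=1$, so $\gamma$ is a path in $X$ from $x$ to $y$ of length $\ell$, whence $d_X(x,y)\le \ell=d_{X/K}(\bar x,\bar y)$. Combined with the reverse inequality, $\phi$ restricted to $B_R(1)$ is an isometric embedding. For the final sentence, I note the identical argument applies verbatim with $K$ replaced by any subgroup $K_0<K$: we only ever used that the relevant obstructing elements lie in $K\cap F=\{1\}$, and $K_0\cap F\subseteq K\cap F=\{1\}$.

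The main obstacle I anticipate is the bookkeeping in the middle step: making precise the claim that an element $k\in K$ which moves a point of $B_R(1)$ back into $B_R(1)$ must lie in a \emph{predetermined} finite subset of $G$ (independent of the filling). One has to be careful because $X$ contains infinite horoballs, so a priori a short path in $X/K$ could correspond to a path in $X$ that dives deep into a horoball; the convexity statement of Lemma \ref{horoballconvexity}, together with the fact that horospheres at depth $0$ are the Cayley graph, is what keeps the relevant group elements in a bounded ball of $G$ and lets the finiteness of $\mathcal B$ be extracted cleanly.
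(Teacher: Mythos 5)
Your overall strategy is the intended one (the paper offers no proof beyond ``easy consequence of Theorem \ref{thm:dehnfilling}(3)''), but your concrete implementation has a genuine gap: the claim that an obstructing element $k$ must lie in the word-ball of radius $4R$ is false, and choosing $F$ to be that word-ball does not suffice. Inside a horoball the cusped metric is exponentially contracted relative to the word metric: the vertex $(1,R)$ at depth $R$ directly below $1$ lies in $B_R(1)$, and any $p\in P_i$ with $|p|_S\le 2^R$ moves it across a single horizontal edge, so $d_X\bigl((1,R),p\cdot(1,R)\bigr)=1$ while $|p|_S$ may be about $2^R\gg 4R$. Thus an element of $K$ that moves a point of $B_R(1)$ a bounded $X$-distance need not have word length $\le 4R$, and Lemma \ref{horoballconvexity} cannot repair this: the issue is not the depth of the excursion but that one horizontal edge at depth $m$ already crosses word-distance up to $2^m$. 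The failure is not merely one of constants: taking $N_i=\langle a^L\rangle$ with $L\approx 2^{R/2}$ (so $N_i$ misses your $F$), one can find $x,y\in B_R(1)$ lying in or over the horoball of $P_i$ and $k=a^L$ with $d_X(x,ky)<d_X(x,y)$, so a filling that is ``long'' in your sense can still violate the conclusion. So the implication you assert in the third paragraph is false as stated.

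The fix is to define the finite set via the cusped space rather than the word metric, exactly as your own closing paragraph worries one should. Since $X$ is a locally finite graph and $G$ acts freely on its vertex set, the set $\mathcal B=\{g\in G\setminus\{1\}\ :\ gB_R(1)\cap B_{3R}(1)\neq\emptyset\}$ is finite; apply Theorem \ref{thm:dehnfilling}(3) to (a set containing) $\mathcal B$ to get $K\cap\mathcal B=\emptyset$ for all sufficiently long fillings. Then if $x,y\in B_R(g_0)$ with $g_0$ a Cayley-graph vertex and $k\in K_0\setminus\{1\}$ satisfies $d_X(x,ky)<d_X(x,y)\le 2R$, the element $g_0^{-1}kg_0$ lies in $K$ (here normality of $K$ is used) and moves $g_0^{-1}y\in B_R(1)$ into $B_{3R}(1)$, contradicting $K\cap\mathcal B=\emptyset$; no lifting of geodesics is needed, since $d_{X/K_0}(\bar x,\bar y)=\min_{k\in K_0}d_X(x,ky)$. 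Note also that your reduction to the single ball $B_R(1)$ used $G$-equivariance of $X\to X/K$, which is unavailable for $X\to X/K_0$ when $K_0$ is not normal; the conjugation step above replaces it and gives the ``moreover'' clause at the same time.
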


The next result is proved in \cite{agm} assuming that $G$ is torsion-free, but this assumption is not necessary, as explained in the proof of \cite[Theorem A.43]{VH}.  Alternatively, it follows from Lemma \ref{Greendlinger:BallsEmbed} and the Coarse Cartan--Hadamard Theorem (Theorem \ref{CartanHadamard} below).

\begin{prop} \cite[Proposition 2.3]{agm} \label{p:uniform delta}
 Suppose that $(G,\mc{P})$ is relatively hyperbolic, and fix a generating set for $G$ as in Definition \ref{def:cc}.  There exists a $\delta$ so that (i) the cusped space for $(G,\mc{P})$ is $\delta$--hyperbolic; and (ii) For all sufficiently long fillings $(G,\mc{P}) \to (\overline{G},\overline{\mc{P}})$ the cusped space for $(\overline{G},\overline{\mc{P}})$ (with respect to the image of the fixed generating set for $G$) is $\delta$--hyperbolic.
\end{prop}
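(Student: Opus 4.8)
The plan is to establish (i) and (ii) simultaneously via the Coarse Cartan--Hadamard Theorem (Theorem \ref{CartanHadamard}), which says roughly that a space which is "locally" hyperbolic on large enough balls, and which satisfies a suitable global nonpositive-curvature-type condition, is globally hyperbolic with a controlled constant. First I would fix the generating set for $G$ as in Definition \ref{def:cc} and let $X = X(G,\mc{P})$ be the associated cusped space; since $(G,\mc{P})$ is relatively hyperbolic, $X$ is $\delta_0$--hyperbolic for some $\delta_0$, giving (i). The point is to produce a \emph{single} $\delta$, depending only on $(G,\mc{P})$ and the chosen generating set, that also works for every sufficiently long filling.

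The key observation is that for any $R \geq 0$, Lemma \ref{Greendlinger:BallsEmbed} guarantees that for all sufficiently long fillings $G \to \overline{G} = G/K$, the quotient map $X \to X/K = \overline{X}$ restricts to an isometry on every $R$--ball centered at a vertex of the Cayley graph. Since $\overline{X}$ is the cusped space for $(\overline{G},\overline{\mc{P}})$ (up to the identification of horoballs, which are themselves isometric to the corresponding horoballs of $X$ because the filling kernels $N_i$ inject in the relevant quotients, cf. Theorem \ref{thm:dehnfilling}\eqref{item:P_imodN_i_embed}), it follows that $\overline{X}$ is "locally modeled" on $X$ at a definite scale: every ball of radius $R$ in $\overline{X}$ is isometric to a ball of radius $R$ in $X$, hence in particular $R$--locally $\delta_0$--hyperbolic in whatever local sense the Coarse Cartan--Hadamard Theorem requires. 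So I would choose $R$ to be the scale demanded by Theorem \ref{CartanHadamard} as a function of $\delta_0$, then apply Lemma \ref{Greendlinger:BallsEmbed} with that $R$ to get a finite set $\mathcal{B} \subseteq G \setminus\{1\}$; for fillings avoiding $\mathcal{B}$, the hypotheses of the Coarse Cartan--Hadamard Theorem hold for $\overline{X}$, and its conclusion provides a global hyperbolicity constant $\delta$ depending only on $\delta_0$ and $R$, hence only on $(G,\mc{P})$. Taking the maximum of this $\delta$ and $\delta_0$ gives the desired uniform constant for both (i) and (ii).

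The main obstacle is checking that $\overline{X}$ genuinely satisfies the \emph{global} hypothesis of the Coarse Cartan--Hadamard Theorem, not merely the local one: such theorems typically require that the space be (coarsely) simply connected, or satisfy a linear isoperimetric-type or "no short essential loops beyond the local scale" condition. Here one must argue that $\overline{X}$ inherits enough of this global structure from $X$ — for instance, that $\overline{X}$ is coarsely simply connected because $X$ is and the filling relation only adds relators coming from the $N_i$, which live inside horoballs and are killed there in a controlled way. One should also be careful that the local isometries of Lemma \ref{Greendlinger:BallsEmbed} are stated for balls centered at Cayley graph vertices, so a small additional argument is needed to handle balls centered deep inside horoballs — but there the space $\overline{X}$ literally agrees with a combinatorial horoball $\calH(\overline{P_i})$, which is $\delta$--hyperbolic with a universal constant independent of the filling, so those balls pose no problem. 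Finally, I would note the alternative already flagged in the excerpt: if one is willing to cite \cite{agm} directly (in the form of \cite[Theorem A.43]{VH}, which removes the torsion-free hypothesis), the statement follows immediately, so the Cartan--Hadamard route is really just a self-contained substitute for that citation.
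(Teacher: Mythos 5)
Your proposal is correct and is essentially the paper's own argument: the paper proves this proposition by citing \cite[Proposition 2.3]{agm} (with the torsion-free hypothesis removed as in \cite[Theorem A.43]{VH}) and explicitly notes the same alternative route you take, namely Lemma \ref{Greendlinger:BallsEmbed} combined with the Coarse Cartan--Hadamard Theorem (Theorem \ref{CartanHadamard}). The two verifications you flag as the main work (coarse simple-connectedness via pushing the filling relators into horoballs, and the dichotomy for balls centered near the Cayley graph versus deep in horoballs) are exactly how the paper carries out the analogous argument for the truncated quotients in Claims \ref{claim:T_W D-sc} and \ref{claim:one ball or the other}.
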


\subsection{Geometry of truncated horoballs} \label{ss:geom trunc}
In the classical $2\pi$ Theorem of Gromov and Thurston, a cusped hyperbolic $3$--manifold is modified to a closed negatively curved one by replacing each cusp neighborhood by a thick ``Margulis tube'' around a short geodesic \cite{BleilerHodgson96}.  In the universal cover this tube lifts to a large neighborhood of a geodesic line.

In our setting we model our Dehn filled group $\overline{G} = G/K$ by a space which can be either thought of as
\begin{enumerate}
\item The quotient cusped space $X/K$, with certain deep horoballs removed, or
\item The Cayley Graph of $\overline{G}$, with certain \emph{truncated horoballs} added.
\end{enumerate}
The truncated horoballs are analogous to the neighborhoods of geodesic lines discussed above.  The same space with truncated horoballs omitted would still be Gromov hyperbolic, but we would lose control of various constants and be unable to make uniform statements over all long fillings.

Let $\theta > 0$ and suppose that $\Gamma$ is a $\theta$-hyperbolic Cayley graph.  It follows (see \cite[$\S$ III.H.1.22]{BH}) that $\Gamma$ satisfies Gromov's $4$-point condition $Q(\theta)$: for all $x,y,z,w\in \Gamma$,
\[ d(x,w)+d(y,z)\leq \max\left\{ d(x,y)+d(z,w),d(x,z)+d(y,w)\right\}+2\theta.\]

\begin{defn}\label{defn:tgamma}
 Let $t(\Gamma)$ be the smallest integer so that the graph $\calH^{t(\Gamma)}$ satisfies the Gromov $4$-point condition $Q(5)$; we argue below that this is well-defined.
\end{defn}

Let $\calH(\Gamma)$ be the combinatorial horoball based on $\Gamma$.  As noted in Section \ref{s:Prelim}, the metric on vertices in $\calH^k = D^{-1}(k)$ is defined by
\[	d_{\calH^k}(v,w) = \lceil 2^{-k} d_\Gamma(v,w) \rceil	.	\]
This formula and the defining equation for Gromov products shows that $t(\Gamma)$ exists and $t(\Gamma)\approx \log_2(\theta)$.  By the proofs of \cite[Propositions III.H.1.17, III.H.1.22]{BH} this implies that triangles in $\calH^{t(\Gamma)}$ are $30$-thin.  The graph $\calH^{t(\Gamma)}$ is a $30$--hyperbolic Cayley graph.  The loops of length at most $481$ based at a vertex give the relations in a Dehn presentation (see the proof of \cite[III.$\Gamma$.2.6]{BH}).
Attaching disks to all loops of length at most $481$ in $\calH^{t(\Gamma)}$, we therefore obtain a simply connected complex with linear combinatorial isoperimetric function with constant $1$.

In \cite[$\S3$]{rhds} a simply-connected $2$-complex is built from $\calH(\Gamma)$ by attaching vertical squares and pentagons and horizontal triangles, and the depth function $D$ is extended across these $2$-cells.  Let $\widetilde{\calH}(\Gamma)$ be this simply-connected $2$-complex, and denote the extended depth function by $\widetilde{D}$.  For $I$ an interval in $\bR$, define $\widetilde{\calH}^{I}(\Gamma) = \widetilde{D}^{-1}\left( I \right)$.  Then $\calH^{I}(\Gamma)$ (defined in above) is the $1$-skeleton of $\widetilde{\calH}^I(\Gamma)$.

The space $\widetilde{\calH}(\Gamma)$ satisfies a linear combinatorial isoperimetric function with constant $3$, by \cite[Proposition 3.7]{rhds}.
The proof
of this can be easily adapted by filling at depth $t(\Gamma)$ using the disks from the Dehn presentation, to prove the following result.

\begin{prop}
Suppose that $\Gamma$ is a $\theta$-hyperbolic Cayley graph, and that $t(\Gamma)$ is chosen as in Definition \ref{defn:tgamma}.  The $2$--complex 
$\widetilde{\calH}^{[0,t(\Gamma)]}(\Gamma)$ satisfies a linear combinatorial isoperimetric inequality with constant $3$.
\end{prop}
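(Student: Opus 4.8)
The plan is to adapt the ``combing toward infinity'' argument that proves \cite[Proposition 3.7]{rhds} for the full complex $\widetilde{\calH}(\Gamma)$, interrupting it at depth $t(\Gamma)$ and capping off what is left with the $2$--cells coming from the Dehn presentation of $\calH^{t(\Gamma)}$. Recall the shape of the \cite{rhds} argument: given a combinatorial loop $\gamma$, one builds a van Kampen diagram by repeatedly pushing $\gamma$ one level deeper into the horoball. Because at depth $k$ any two vertices within $\Gamma$--distance $2^k$ are joined by a horizontal edge, the loop obtained at depth $k+1$ has length roughly half that of the loop at depth $k$, up to a correction arising from the vertical edges of $\gamma$ which is itself absorbed as the combing proceeds. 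The annulus between two consecutive levels is filled by vertical squares, vertical pentagons and horizontal triangles, using a number of $2$--cells proportional to the length at the current level; summing the resulting geometric series yields the bound $3\,\ell(\gamma)$.

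To prove the Proposition I would run exactly this combing on a loop $\gamma$ in $\widetilde{\calH}^{[0,t(\Gamma)]}(\Gamma)$, where, as the sentence preceding the statement indicates, we also allow ourselves a $2$--cell for each loop of length at most $481$ at depth $t(\Gamma)$ (these are the relators of the Dehn presentation of the $30$--hyperbolic Cayley graph $\calH^{t(\Gamma)}$, and they are in any case what is needed to make the truncated complex simply connected). Halt the combing upon reaching depth $t(\Gamma)$. If $\gamma$ is short enough that it closes up at some depth strictly smaller than $t(\Gamma)$, the resulting diagram already lies in $\widetilde{\calH}^{[0,t(\Gamma)]}(\Gamma)$ and has area at most $3\,\ell(\gamma)$, exactly as in \cite{rhds}. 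Otherwise the procedure leaves a partial diagram supported in depths $[0,t(\Gamma)]$ together with a residual loop $\gamma'$ contained in the horosphere $D^{-1}(t(\Gamma))=\calH^{t(\Gamma)}$, all of whose edges are therefore horizontal at depth $t(\Gamma)$. Since attaching disks to loops of length at most $481$ in $\calH^{t(\Gamma)}$ gives linear combinatorial isoperimetric constant $1$, the loop $\gamma'$ bounds a diagram built from at most $\ell(\gamma')$ such Dehn--presentation $2$--cells; concatenating it with the partial diagram produces a van Kampen diagram for $\gamma$ in $\widetilde{\calH}^{[0,t(\Gamma)]}(\Gamma)$.

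The remaining point --- and the one I expect to require the most care --- is the bookkeeping that shows the total area is still at most $3\,\ell(\gamma)$. The natural way to see this is to compare with the unrestricted combing of $\gamma$ in $\widetilde{\calH}(\Gamma)$: it consists of precisely the same partial diagram at depths $\le t(\Gamma)$, followed by the combing of the loop $\gamma'$ pushed all the way down from depth $t(\Gamma)$. The very first step of that deep combing already uses one vertical square per horizontal edge of $\gamma'$, hence at least $\ell(\gamma')$ two--cells, while our Dehn cap uses at most $\ell(\gamma')$ two--cells; so our diagram has no more cells than the full \cite{rhds} diagram, whose area is at most $3\,\ell(\gamma)$. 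Carrying this out rigorously means checking that cutting the combing off at a finite depth does not disturb the absorption of vertical segments in the \cite{rhds} estimate, and that the residual loop really does lie at depth $t(\Gamma)$, so that the substitution above can only decrease the cell count; but no input is needed beyond the estimates already available in \cite[\S3]{rhds} and Definition \ref{defn:tgamma}.
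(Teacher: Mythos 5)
Your proposal is correct and is essentially the paper's own argument: the paper proves this proposition only by remarking that the combing of \cite[Proposition 3.7]{rhds} can be adapted by filling at depth $t(\Gamma)$ with the disks coming from the Dehn presentation of $\calH^{t(\Gamma)}$, which is precisely the construction you describe. Your comparison with the untruncated diagram (the Dehn cap costs at most $\ell(\gamma')$ cells, no more than the push-down steps it replaces) is a sensible way of making explicit the bookkeeping that the paper leaves implicit.
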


Since we also have a universal bound on the length of boundaries of disks, \cite[Proposition 2.23]{rhds} gives the following.
\begin{cor} \label{c:universal hyperbolic}
Let $\theta > 0$ and suppose that $\Gamma$ is a $\theta$-hyperbolic Cayley graph. The graph $\calH^{[0,t(\Gamma)]}(\Gamma)$ is $\theta_0$--hyperbolic, for a universal constant $\theta_0$.
\end{cor}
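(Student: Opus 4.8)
The plan is to deduce this immediately from the preceding Proposition via \cite[Proposition 2.23]{rhds}, which states that the $1$--skeleton of a simply connected $2$--complex satisfying a linear combinatorial isoperimetric inequality, all of whose $2$--cells have boundary length at most some bound $M$, is Gromov hyperbolic with a constant depending only on the isoperimetric constant and $M$.

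First I would record that $\calH^{[0,t(\Gamma)]}(\Gamma)$ is, by construction, the $1$--skeleton of the $2$--complex $\widetilde{\calH}^{[0,t(\Gamma)]}(\Gamma)$ with the Dehn--presentation disks attached at depth $t(\Gamma)$, that is, the complex appearing in the proof of the preceding Proposition. That Proposition tells us this complex is simply connected and satisfies a linear combinatorial isoperimetric inequality with constant $3$, a bound independent of $\theta$ and of $\Gamma$.

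Next I would check the uniform bound on $2$--cell boundary lengths. The $2$--cells are the vertical squares and pentagons and the horizontal triangles carried over from \cite[$\S3$]{rhds}, whose boundary loops have length $4$, $5$, and $3$; together with the Dehn--presentation disks at depth $t(\Gamma)$, whose boundaries have length at most $481$. The latter is the only point that requires attention: it relies on $t(\Gamma)$ being chosen as in Definition \ref{defn:tgamma} so that $\calH^{t(\Gamma)}$ satisfies the $4$--point condition $Q(5)$ and is therefore a $30$--hyperbolic Cayley graph --- with $30$ a universal constant --- so that its Dehn presentation has relators of length at most $481$, again universally. Hence every $2$--cell has boundary length at most $481$.

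Finally, applying \cite[Proposition 2.23]{rhds} with isoperimetric constant $3$ and $M = 481$ yields that $\calH^{[0,t(\Gamma)]}(\Gamma)$ is $\theta_0$--hyperbolic with $\theta_0 = \theta_0(3,481)$; since these constants are absolute, $\theta_0$ is universal, which is the claim. The main (and rather minor) obstacle is ensuring that the bound $481$ is uniform across all values of $\theta$, which is already built into the setup surrounding Definition \ref{defn:tgamma}; everything else is bookkeeping.
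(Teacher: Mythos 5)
Your proposal is correct and follows essentially the same route as the paper: the preceding proposition supplies the linear combinatorial isoperimetric inequality with constant $3$ for the complex truncated and filled at depth $t(\Gamma)$, the universal bound ($481$, from the $30$--hyperbolicity of $\calH^{t(\Gamma)}$ together with the square/pentagon/triangle cells) controls the boundary lengths of all $2$--cells, and \cite[Proposition 2.23]{rhds} then yields a hyperbolicity constant depending only on these absolute quantities.
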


It is straightforward to see that if $0 \le a \le t(\Gamma)$ then $\calH^{[a,t(\Gamma)]}(\Gamma)$ is convex in $\calH^{[0,t(\Gamma)]}(\Gamma)$.  Therefore we have the following result.
\begin{cor} \label{TruncatedHoroballsAreHyperbolic}
Let $\theta > 0$ and suppose that $\Gamma$ is a $\theta$-hyperbolic Cayley graph. For any $a \in [0,t(\Gamma)]$, the graph $\calH^{[a,t(\Gamma)]}(\Gamma)$ is $\theta_0$--hyperbolic, for the same constant $\theta_0$ from Corollary \ref{c:universal hyperbolic}.
\end{cor}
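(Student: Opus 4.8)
The plan is to deduce the statement from Corollary~\ref{c:universal hyperbolic} together with convexity of $\calH^{[a,t(\Gamma)]}(\Gamma)$ inside $\calH^{[0,t(\Gamma)]}(\Gamma)$, via the elementary principle that a geodesically convex subspace of a $\delta$--hyperbolic geodesic space is again $\delta$--hyperbolic, with the \emph{same} constant $\delta$.

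First I would record that principle. Suppose $Y$ is a $\delta$--hyperbolic geodesic space and $C\subseteq Y$ has the property that for every $x,y\in C$ there is a geodesic of $Y$ from $x$ to $y$ contained in $C$. Then the induced length metric on $C$ agrees with $d_Y$ restricted to $C$, so $C$ is itself a geodesic space, and every geodesic triangle of $C$ is a geodesic triangle of $Y$; since the latter is $\delta$--thin, so is the former, and hence $C$ is $\delta$--hyperbolic.

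The one step needing genuine attention is the convexity claim: for $0\le a\le t(\Gamma)$, every geodesic of $\calH^{[0,t(\Gamma)]}(\Gamma)$ with endpoints in $\calH^{[a,t(\Gamma)]}(\Gamma)=D^{-1}([a,t(\Gamma)])$ stays in $\calH^{[a,t(\Gamma)]}(\Gamma)$. The upper bound $D\le t(\Gamma)$ is automatic, since the ambient space $\calH^{[0,t(\Gamma)]}(\Gamma)$ contains no points of depth exceeding $t(\Gamma)$. For the lower bound I would use the standard structure of geodesics in a combinatorial horoball: a path that descends below depth $a$ and, since its endpoints have depth $\ge a$, returns above depth $a$, spends at least two extra (vertical) edges per level dropped while saving nothing horizontally, because the horosphere metric $d_{\calH^j}(v,w)=\lceil 2^{-j}d_\Gamma(v,w)\rceil$ is nondecreasing as $j$ decreases. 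Hence no geodesic between points of depth $\ge a$ can dip below depth $a$. Reducing from arbitrary endpoints to vertices is routine since $D$ is affine along edges; alternatively one can invoke a truncated-complex version of the convexity statements underlying Lemma~\ref{horoballconvexity}.

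Putting these together: $\calH^{[0,t(\Gamma)]}(\Gamma)$ is $\theta_0$--hyperbolic by Corollary~\ref{c:universal hyperbolic}, and $\calH^{[a,t(\Gamma)]}(\Gamma)$ is a geodesically convex subspace of it, hence $\theta_0$--hyperbolic for the same $\theta_0$. I expect the main (and only minor) obstacle to be the convexity verification, specifically confirming that capping the horoball at depth $t(\Gamma)$ does not disturb the usual ``ascend--cross--descend'' picture of geodesics in a combinatorial horoball --- which it does not, since the cap sits exactly where geodesics have no incentive to go in the first place.
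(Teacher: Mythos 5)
Your proposal is essentially the paper's own argument: the paper also deduces this corollary directly from Corollary~\ref{c:universal hyperbolic} together with the observation (stated there as ``straightforward to see'') that $\calH^{[a,t(\Gamma)]}(\Gamma)$ is convex in $\calH^{[0,t(\Gamma)]}(\Gamma)$, a convex subspace inheriting $\theta_0$--hyperbolicity with the same constant. Your projection/surgery justification of convexity (pushing any excursion below depth $a$ back up, which cannot increase length since the horosphere metrics only grow as depth decreases) is exactly the right verification; only your closing aside is slightly off, since geodesics are in fact drawn toward the cap at depth $t(\Gamma)$ rather than away from it, but that is irrelevant to the lower depth bound the convexity claim actually requires.
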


It is possible to understand geodesics in $\calH^{[a,t(\Gamma)]}(\Gamma)$ in a very similar way to geodesics in $\calH(\Gamma)$.  The following result can be be proved using almost exactly the same proof as \cite[Lemma 3.10]{rhds}.

\begin{lemma} \label{l:geod in trunc}
Let $\theta > 0$ and suppose that $\Gamma$ is a $\theta$-hyperbolic Cayley graph. Let $a \in [0,t(\Gamma)]$ and suppose that $p,q \in \calH^{[a,t(\Gamma)]}(\Gamma)$ are distinct vertices.  There is a geodesic $\gamma$ in $\calH^{[a,t(\Gamma)]}(\Gamma)$ between $p$ and $q$ which consists of at most two vertical segments and a single horizontal segment.  Moreover, if this horizontal segment is not at depth $t(\Gamma)$ then it has length at most $3$.
\end{lemma}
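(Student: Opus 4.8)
The plan is to adapt, essentially verbatim, the proof of \cite[Lemma 3.10]{rhds}, which is the analogous statement for the full combinatorial horoball $\calH(\Gamma)$; the single genuinely new feature is the ``ceiling'' at depth $t(\Gamma)$, which is what forces the possibly-long horizontal segments to occur at depth exactly $t(\Gamma)$. First I would reduce to the case $a=0$: since $\calH^{[a,t(\Gamma)]}(\Gamma)$ is convex in $\calH^{[0,t(\Gamma)]}(\Gamma)$ (recorded just above the lemma), any geodesic between $p$ and $q$ in the latter lies in the former, so it suffices to work in $\calH^{[0,t(\Gamma)]}(\Gamma)$. In fact the geodesic I will construct visibly stays between depths $\min(n,m)$ and $t(\Gamma)$, so the convexity is only a convenience. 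Writing $p=(v,n)$, $q=(w,m)$: the case $v=w$ is immediate via the vertical segment joining them (no horizontal segment, so the conditional is vacuous), so assume $v\neq w$ and put $D=d_\Gamma(v,w)\geq 1$, and set $\ell_h:=\lceil 2^{-h}D\rceil$ for each integer $h\geq 0$.

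For each $h\in[\max(n,m),t(\Gamma)]$ consider the edge-path $\gamma_h$ that runs vertically from $(v,n)$ to $(v,h)$, then horizontally from $(v,h)$ to $(w,h)$ along a path of at most $\ell_h$ edges (obtained by subdividing a $\Gamma$--geodesic from $v$ to $w$ into $\ell_h$ pieces of $\Gamma$--diameter at most $2^h$, each of which becomes a single horizontal edge at depth $h$), and then vertically from $(w,h)$ to $(w,m)$; its length is $f(h):=(h-n)+(h-m)+\ell_h$. I would then prove the matching lower bound: given any edge-path from $p$ to $q$ in $\calH^{[0,t(\Gamma)]}(\Gamma)$, let $J\leq t(\Gamma)$ be the greatest depth it attains, so $J\geq\max(n,m)$ since the path visits depths $n$ and $m$. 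The number of its vertical edges equals the total variation of its depth function, which is at least $(J-n)+(J-m)$; and since every horizontal edge lies at depth at most $J$ it changes the $\Gamma$--coordinate by at most $2^J$ while vertical edges do not change it at all, so the number of horizontal edges is at least $D/2^J$, hence at least $\lceil D/2^J\rceil=\ell_J$. Thus the path has length at least $f(J)\geq\min_{h\in[\max(n,m),t(\Gamma)]}f(h)$. Choosing $h^\ast$ to be the \emph{largest} minimizer of $f$ on $[\max(n,m),t(\Gamma)]$, the path $\gamma_{h^\ast}$ realizes the minimum and so is a geodesic of the required form.

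It remains to bound the horizontal segment of $\gamma_{h^\ast}$ when it is not at depth $t(\Gamma)$, i.e.\ when $h^\ast<t(\Gamma)$, and this is the only point requiring a computation. Then $h^\ast+1$ still lies in $[\max(n,m),t(\Gamma)]$ but, by maximality of $h^\ast$, is not a minimizer, so, $f$ being integer-valued, $f(h^\ast+1)\geq f(h^\ast)+1$; rearranging gives $\ell_{h^\ast}\leq\ell_{h^\ast+1}+1$. Combining this with the elementary inequality $\lceil 2x\rceil\geq 2\lceil x\rceil-1$ applied to $x=2^{-(h^\ast+1)}D$, which gives $\ell_{h^\ast}\geq 2\ell_{h^\ast+1}-1$, yields $2\ell_{h^\ast+1}-1\leq\ell_{h^\ast+1}+1$, so $\ell_{h^\ast+1}\leq 2$ and therefore $\ell_{h^\ast}\leq 3$, as required.

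The main obstacle here is not conceptual but arithmetic: obtaining the sharp constant $3$ rather than something like $6$. A naive choice of crossing depth only controls $\ell_{h^\ast+1}$, which bounds $\ell_{h^\ast}$ by about $6$; the point is that taking $h^\ast$ to be the \emph{largest} optimal depth upgrades the optimality inequality by one unit ($f(h^\ast+1)\geq f(h^\ast)+1$ rather than just $\geq f(h^\ast)$), and this extra unit is exactly what pins the horizontal length down to $3$. Everything else is a direct transcription of the combinatorial-horoball argument, with ``go up as far as you need'' replaced by ``go up as far as you can, but no further than depth $t(\Gamma)$''.
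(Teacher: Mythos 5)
Your proof is correct: the upper bound via the paths $\gamma_h$, the matching lower bound counting vertical edges by total variation of depth and horizontal edges by $\Gamma$--displacement, and the integrality argument at the largest minimizer $h^\ast$ (giving $\ell_{h^\ast}\le \ell_{h^\ast+1}+1$ and $\ell_{h^\ast}\ge 2\ell_{h^\ast+1}-1$, hence $\ell_{h^\ast}\le 3$) all check out, and you are right that the constructed geodesic stays at depth at least $\min(n,m)\ge a$, so the convexity of $\calH^{[a,t(\Gamma)]}(\Gamma)$ is not actually needed. The paper itself gives no argument here; it simply asserts that the proof of \cite[Lemma 3.10]{rhds} goes through, and that proof is organized differently: it is a local surgery argument, showing that any geodesic (or any path, without increasing length) can be modified so that a horizontal run of length at least $4$ at depth $k$ is replaced by one vertical edge down, at most half as many horizontal edges one level deeper, and one vertical edge back up --- a move that in the truncated horoball is blocked exactly at the floor $t(\Gamma)$, which is where the long horizontal segment is then allowed to sit. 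Your version instead computes the distance globally as $\min_h f(h)$ and extracts the constant $3$ from optimality; both arguments rest on the same trade-off (two vertical edges buy a halving of the horizontal length), but yours is self-contained and yields the explicit distance formula, while the surgery argument of \cite{rhds} has the mild advantage of normalizing an arbitrary geodesic into the stated form rather than exhibiting one geodesic of that form (which is all the lemma asks for).
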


The next lemma tells us that truncated horoballs are ``locally visual''.
\begin{lemma} \label{TruncatedHoroballsAreVisual}
Let $\theta > 0$ and suppose that $\Gamma$ is a $\theta$-hyperbolic Cayley graph. Suppose that $t(\Gamma) > \lambda > 10\theta_0$.

  For any $a \in [\lambda,t(\Gamma)]$ and any $p,q \in \calH^{[a,t(\Gamma)]}(\Gamma)$ so that $d(p,q) = \lambda$ there exists a geodesic $[p,q']$ of length $2\lambda$ in $\calH^{[a-\lambda,t(\Gamma)]}(\Gamma)$ so that $d(q,[p,q']) \le 2\theta_0$.
\end{lemma}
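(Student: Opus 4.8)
The plan is to use the explicit description of geodesics in truncated horoballs from Lemma~\ref{l:geod in trunc}, applied this time inside the slightly deeper truncated horoball $\calH^{[a-\lambda,t(\Gamma)]}(\Gamma)$, and then to show that a judicious choice of $q'$ (obtained by pushing $q$ downward, i.e.\ to smaller depth, by roughly $\lambda$) gives a point whose distance from $p$ is exactly $2\lambda$ while keeping $q$ within bounded distance of the geodesic $[p,q']$. First I would record that, since $d_{\calH^{[a,t(\Gamma)]}}(p,q)=\lambda$, the two vertices lie in a band of depths of width at most $\lambda$ around $\min\{D(p),D(q)\}$; combining this with the combinatorial horoball distance formula $d_{\calH^k}(v,w)=\lceil 2^{-k}d_\Gamma(v,w)\rceil$, the horizontal separation of $p$ and $q$ is controlled. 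I would then let $q'$ be the vertex directly below $q$ (same $\Gamma$--coordinate) at depth $D(q)-\lambda$ if that stays $\geq a-\lambda$, adjusting by a bounded amount otherwise so that $q'$ lies in $\calH^{[a-\lambda,t(\Gamma)]}(\Gamma)$ and $d(q,q')\le\lambda$ exactly (a vertical segment realizes this). The triangle inequality then gives $d(p,q')\le d(p,q)+d(q,q')\le 2\lambda$, and a lower bound $d(p,q')\ge d(q,q')-d(p,q)$ combined with the fact that moving deeper can only shrink horizontal distances forces $d(p,q')$ to be exactly $2\lambda$ once we note that the vertical drop cannot be shortcut: a path from $p$ to $q'$ must either descend past the depth of $q$ or pay the horizontal cost at a shallow depth, and in both cases one checks the length is at least $2\lambda$. (Here I would use that $\lambda>10\theta_0$ so that the $\lceil\cdot\rceil$ rounding and the bounded-length horizontal segments from Lemma~\ref{l:geod in trunc} are negligible compared to $\lambda$.)

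Next I would take $[p,q']$ to be a geodesic of the form guaranteed by Lemma~\ref{l:geod in trunc} inside $\calH^{[a-\lambda,t(\Gamma)]}(\Gamma)$: at most two vertical segments and one horizontal segment, the horizontal segment having length $\le 3$ unless it sits at depth $t(\Gamma)$. To bound $d(q,[p,q'])$, I would locate the point $r$ of $[p,q']$ at depth $D(q)$ (if $[p,q']$ reaches that depth) or the deepest point of $[p,q']$ otherwise. Since $p$ and $q$ are at comparable depths and horizontally close, and $q'$ lies below $q$, the geodesic $[p,q']$ passes horizontally near the $\Gamma$--coordinate of $q$ at a depth within a bounded amount of $D(q)$; using the horoball distance formula again, the horizontal discrepancy between $q$ and $r$ at depth $D(q)$ is at most a constant multiple of $2^{-D(q)}d_\Gamma(\cdot,\cdot)$, which is $O(1)$, and the vertical discrepancy is $O(1)$ as well, so $d(q,[p,q'])\le d(q,r)$ is bounded by a universal constant. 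Tracking the constants carefully through the rounding in the metric and through the $\theta_0$--thinness of triangles in $\calH^{[a-\lambda,t(\Gamma)]}(\Gamma)$ (Corollary~\ref{TruncatedHoroballsAreHyperbolic}), the bound comes out to $2\theta_0$ as claimed.

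The main obstacle I anticipate is getting the distance $d(p,q')$ to be \emph{exactly} $2\lambda$ rather than merely $\le 2\lambda$ or $\approx 2\lambda$. The upper bound is immediate from the triangle inequality, but the lower bound requires ruling out any shortcut: one must argue that every path from $p$ to $q'$ in $\calH^{[a-\lambda,t(\Gamma)]}(\Gamma)$ has length at least $2\lambda$, which amounts to showing that the vertical ``cost'' of descending from $p$'s depth down to $q'$'s depth (about $\lambda$) and the cost of then returning to a point horizontally over $q$ (about $\lambda$ more, since at depth $D(q)$ the horizontal distance from $p$ to $q$ is already $\approx\lambda$ and descending further to reach $q'$ and coming back does not help) cannot be amortized. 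This is exactly the sort of ceiling-function bookkeeping that makes Lemma~\ref{l:geod in trunc} slightly delicate; the hypothesis $\lambda>10\theta_0$ together with $a\ge\lambda$ is there precisely to give enough room below depth $a$ so that the geodesic from $p$ to $q'$ can behave like an honest horoball geodesic. If the exact equality proves awkward one can instead choose $q'$ on the appropriate sphere $S_{2\lambda}(p)$ in $\calH^{[a-\lambda,t(\Gamma)]}(\Gamma)$ lying below $q$, which exists by connectedness of that sphere, and then run the same fellow-traveling estimate; I would keep this as a fallback.
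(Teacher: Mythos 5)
Your construction of $q'$ breaks down in exactly the case the paper has to treat separately: when the geodesic $[p,q]$ ends at $q$ heading \emph{deeper} into the horoball. Concretely, take $a=\lambda$, $p=(v,a)$ and $q=(v,a+\lambda)$ with the same $\Gamma$--coordinate (allowed whenever $a+\lambda\le t(\Gamma)$), so that $d(p,q)=\lambda$ is realized by a vertical segment. Your recipe puts $q'=(v,D(q)-\lambda)=(v,a)=p$, so $d(p,q')=0$ rather than $2\lambda$, and $q$ lies at distance $\lambda>10\theta_0$ from the degenerate ``geodesic'' $[p,q']$. The dichotomy you use for the lower bound --- a path from $p$ to $q'$ must either descend past the depth of $q$ or pay the horizontal cost at shallow depth --- silently assumes that $q'$ sits on the far side of $q$ as seen from $p$; that is true when the final segment of $[p,q]$ at $q$ points toward $\calH^0$ (smaller depth), but false when it points deeper, and more generally whenever $q$ is deeper than $p$ with small horizontal offset your $q'$ lands near $p$. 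The paper's proof is a case analysis on the shape of $[p,q]$ given by Lemma \ref{l:geod in trunc}: when $[p,q]$ ends at $q$ going toward smaller depth it prolongs past $q$ vertically, and in the remaining case (where $[p,q]$ is essentially vertical and descending) it instead produces a point $y$ at the depth of $p$ with $d(p,y)\ge 2^{\lambda-4}$, so that the geodesic $[p,y]$ descends nearly parallel to $[p,q]$ and $q$ stays within $5$ of it. Your single recipe has no substitute for this third case, and this is a genuine gap, not a technicality.

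A secondary issue is exactness: even in the favorable cases your argument only gives $d(p,q')\approx d(p,q)+\lambda$ up to the ceiling-function and length-$\le 3$ horizontal errors, and the fallback of choosing $q'$ on $S_{2\lambda}(p)$ ``below $q$'' by connectedness of that sphere is an assertion, not a proof. The paper sidesteps any exact distance computation: it appends to $[p,q]$ a vertical segment of length $\lambda+10\theta_0$ to form a $10\theta_0$--local geodesic $\sigma$ of length $2\lambda+10\theta_0$, applies \cite[III.H.1.13]{BH} to get a genuine geodesic between the endpoints of $\sigma$ lying within $2\theta_0$ of $\sigma$ (hence within $2\theta_0$ of $q$, and of length greater than $2\lambda$), and then takes its initial subsegment of length exactly $2\lambda$. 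If you repair the case analysis, I would adopt that local-geodesic comparison rather than chasing the rounding errors directly; it is also what produces the clean $2\theta_0$ bound on $d(q,[p,q'])$.
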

\begin{proof}
We assume that $p,q$ are vertices.

Choose a geodesic $[p,q]$ of the form as in the conclusion of Lemma \ref{l:geod in trunc}.
There are a number of possibilities.

If either $[p,q]$ has two vertical segments, or $p$ has greater depth than $q$, then $[p,q]$ ends at $q$ with a vertical segment heading towards the horosphere $\calH^0\cong \Gamma$.  In this case, append a vertical segment of length $\lambda + 10\theta_0$ to $[p,q]$ to form a new path $\sigma$ of length $2\lambda + 10\theta_0$.  It is straightforward to see that $\sigma$ is a $10\theta_0$--local geodesic, and so $\sigma$ lies within $2\theta_0$ of any geodesic between the endpoints of $\sigma$ (see \cite[III.H.1.13]{BH}).  Taking an initial subpath of length $2\lambda$ of any such geodesic gives a path $[p,q']$ as in the conclusion of the lemma.

  Suppose next that $q$ has greater depth than $p$ and that there is a horizontal segment of length at least $3$ at the end of $[p,q]$.  In this case, appending a vertical path of length $\lambda+10\theta_0$ from $q$ to the end of $[p,q]$ creates a $10\theta_0$--local geodesic, and we proceed as in the first case.

  The only remaining case is that $q$ has greater depth than $p$ and that $[p,q]$ is entirely vertical or terminates with a horizontal segment of length at most $2$.  Let $d$ be the depth of $p$. We claim that there exists $y\in\calH^{d}$ at distance at least $2^{\lambda-4}$ from $p$. In fact, if this was not the case then $\calH^{d+\lambda-4}$ would be contained in a $1$--ball around some point (that lies vertically below $p$), and hence $\calH^{d+\lambda-3}$ would have diameter $1$, implying that $t(\Gamma)$ is at most $d+\lambda-3$. However, the depth of $q$ is at least $d+\lambda-2$, a contradiction. There is a $\calH^{[a-\lambda,t(\Gamma)]}(\Gamma)$--geodesic from $p$ to $y$ that goes straight down from $p$ distance $\geq\lambda-3$, along a short horizontal segment and then straight up to $y$.  The path $[p,q]$ lies in the $5$--neighborhood of this geodesic, which can be prolonged to a geodesic of length $2\lambda$, still contained in $\calH^{[a-\lambda,t(\Gamma)]}(\Gamma)$, if needed.
\end{proof}

Finally, we prove that for sufficiently long fillings, the value of $t(\Gamma)$ (the partial truncation depth above) can also be made large.  This follows quickly from the following straightforward result.

\begin{lemma} \label{l:not very hyperbolic}
Suppose that $H$ is a group with finite generating set $S$.  For any $A > 0$ there exists $B$ so that for any nontrivial normal subgroup $J$ of $H$ so that (i) $H/J$ is a hyperbolic group; and (ii) $J$ contains no nontrivial elements of length less than $B$ (with respect to the word metric $d_S$), the Cayley graph of $H/J$ with respect to the image of $S$ is not $A$--hyperbolic.
\end{lemma}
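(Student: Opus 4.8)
The plan is to realize a short nontrivial element of the filling kernel as a long, isometrically embedded combinatorial circle in the Cayley graph of the quotient, and then to observe that a geodesic metric space containing such a circle cannot be hyperbolic with a small constant. So, given $A$, I would set $B$ to be slightly larger than $3A$ and at least some fixed absolute constant, and argue by contradiction: assume $J\triangleleft H$ is nontrivial, contains no nontrivial element of $d_S$--length less than $B$, and that the Cayley graph $\Gamma$ of $H/J$ with respect to $\overline S$ (the image of $S$) is $A$--hyperbolic. (Hypothesis (i), that $H/J$ is hyperbolic, will not actually be used.) Let $j\in J\setminus\{1\}$ have $\ell:=d_S(1,j)$ minimal, so $\ell\geq B$; fix a geodesic word $s_1\cdots s_\ell$ over $S^{\pm 1}$ representing $j$, put $g_i=s_1\cdots s_i\in H$, and let $\overline g_i$ be its image in $H/J$. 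The key claim is that $d_{\overline S}(\overline g_i,\overline g_k)=\min\{|i-k|,\ \ell-|i-k|\}$ for all $i,k\in\{0,\dots,\ell\}$; since $\overline g_0=\overline g_\ell$, this says precisely that $i\mapsto\overline g_i$ embeds the metric circle of circumference $\ell$ isometrically into $\Gamma$.

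To prove the claim, fix $i<k$ and let $x=g_i^{-1}g_k=s_{i+1}\cdots s_k$, a subword of a geodesic word, so $d_S(1,x)=k-i$. Reading this subword forwards gives a path in $\Gamma$ from $\overline g_i$ to $\overline g_k$ of length $k-i$, and reading the complementary portion of $s_1\cdots s_\ell$ (which, using $\overline g_\ell=\overline g_0$, also joins $\overline g_k$ to $\overline g_i$) gives one of length $\ell-(k-i)$; hence $d_{\overline S}(\overline g_i,\overline g_k)\leq\min\{k-i,\ell-(k-i)\}$. For the reverse inequality, $d_{\overline S}(\overline g_i,\overline g_k)=d_{\overline S}(\overline 1,\overline x)=\min_{n\in J}d_S(1,xn)$: the term $n=1$ equals $k-i$, and for $n\neq1$ we have $d_S(1,n)\geq\ell$ by minimality of $\ell$, so $d_S(1,xn)\geq d_S(1,n)-d_S(1,x)\geq\ell-(k-i)$; thus $d_{\overline S}(\overline g_i,\overline g_k)\geq\min\{k-i,\ell-(k-i)\}$, proving the claim.

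Finally I would exploit the embedded circle. Set $a=\lfloor\ell/3\rfloor$ and consider the vertices $x_0=\overline g_0$, $x_1=\overline g_a$, $x_2=\overline g_{2a}$. By the claim $d(x_0,x_1)=d(x_1,x_2)=a$ and $d(x_0,x_2)=\ell-2a$ (using $\ell-2a\leq 2a$, which holds once $\ell\geq 8$), and the three subarcs of the circle joining these vertices realize these distances, so they are genuine geodesics of $\Gamma$ and their union is a geodesic triangle whose underlying set is the whole circle. In the comparison tripod the leg at $x_0$ has length $(x_1\mid x_2)_{x_0}=\tfrac12(\ell-2a)$, so the two points of the sides $[x_0,x_1]$ and $[x_0,x_2]$ at that distance from $x_0$ have the same image in the tripod and hence, by $A$--thinness, lie within $A$ of each other in $\Gamma$; but along the circle those two points are exactly $\ell-2a$ apart, so the isometric embedding forces $\ell-2a\leq A$, i.e. $\ell\leq 2a+A\leq\tfrac23\ell+A$, i.e. $\ell\leq 3A$, contradicting $\ell\geq B>3A$. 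The argument is elementary throughout; the only points needing a little care — none of them a real obstacle — are the bookkeeping in the claim (that $\min_{n\in J}d_S(1,xn)$ genuinely computes the quotient distance, and that it suffices to treat $n=1$ and $n\neq1$ separately), the elementary inequality $\ell-2\lfloor\ell/3\rfloor\leq 2\lfloor\ell/3\rfloor$ (valid for $\ell\geq 8$, so one takes $B\geq 8$), and the fact that the two comparison points may be midpoints of edges rather than vertices, which shifts the final estimate by a bounded constant absorbed into the choice of $B$.
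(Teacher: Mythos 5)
Your proof is correct and is essentially the paper's argument: both take a shortest nontrivial element of $J$, show that subwords of at most half its length project to geodesics in the quotient (your isometric circle claim), and then contradict $A$--thinness. The only cosmetic difference is that the paper closes by subdividing the projected loop into a geodesic bigon whose midpoint lies at distance $|h|_S/4$ from the other side, whereas you use an inscribed geodesic triangle at the thirds of the circle.
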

\begin{proof}
Let $h$ be the shortest nontrivial element of $J$. Consider a geodesic $\gamma$ in the Cayley graph of $H$ from $1$ to $h$, which gives a loop $p$ in the Cayley graph of $H/J$. It is easily seen that any subgeodesic of $\gamma$ of length at most half the word length $|h|_S$ gives a geodesic in the Cayley graph of $H/J$. In particular, the loop $p$ can be subdivided into a geodesic bigon where the midpoint of one side is at distance $|h|_S/4$ from the other side. The lemma now follows easily.
\end{proof}

Suppose that $(G,\mc{P})$ is relatively hyperbolic.  Fix a compatible finite generating set $S$.  Suppose that $G(N_1,\ldots,N_m)$ is a Dehn filling with each
$P_i / N_i$ being hyperbolic.  According to Corollary \ref{c:universal hyperbolic} there are constants $t(i)$ so that the partially truncated horoballs of the Cayley graphs of $P_i/N_i$ to depth $t(i)$ is $\theta_0$--hyperbolic, for a universal constant $\theta_0$.  Moreover, $t(i)$ depends only on the hyperbolicity constant of the Cayley graph of $P_i/N_i$ (with respect to the obvious generating set in the image of $S$).  Moreover, from the construction and Lemma \ref{l:not very hyperbolic}, it is clear that this hyperbolicity constant goes to infinity and so does $t(i)$.  Therefore, the following is an immediate consequence of Lemma \ref{l:not very hyperbolic}.
\begin{cor} \label{c:t_c big}
Let $C > 0$ be any constant.  For all sufficiently long fillings $G(N_1, \ldots , N_m)$ of $(G,\mc{P})$, the constants $t(i)$ are all larger than $C$.
\end{cor}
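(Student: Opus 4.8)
The plan is to derive the statement from Lemma~\ref{l:not very hyperbolic} together with the estimate $t(\Gamma)\approx\log_2(\theta)$ recorded just after Definition~\ref{defn:tgamma}: that estimate says $t(i)$ is, up to a universal additive error, the base-$2$ logarithm of the optimal hyperbolicity constant of the Cayley graph $\Gamma_i$ of $P_i/N_i$, so to arrange $t(i)>C$ it suffices to arrange that this hyperbolicity constant exceeds some threshold $A_0=A_0(C)$, and Lemma~\ref{l:not very hyperbolic} says exactly that a long enough filling forces this. The two things left to do are: (a) run the implication ``$\Gamma_i$ far from hyperbolic $\Rightarrow t(\Gamma_i)$ large'' quantitatively; and (b) repackage the length bounds produced by Lemma~\ref{l:not very hyperbolic} as a single finite set $\mathcal B$ defining a sufficiently long filling.

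For (a) I would argue as follows. Recall $\calH^k=D^{-1}(k)$ is the horosphere at depth $k$, a graph on the vertex set of $\Gamma$ with metric $d_{\calH^k}(v,w)=\lceil 2^{-k}d_\Gamma(v,w)\rceil$. If $\Gamma$ fails the $4$-point condition $Q(A)$, witnessed by four points $x,y,z,w$, then dividing the witnessing inequality by $2^k$ and using that the ceilings can only increase the left-hand side and increase the right-hand side by at most $2$, one sees that the same four points (viewed at depth $k$) witness a failure of $Q(5)$ in $\calH^k$ as soon as $2^{1-k}A\ge 12$, i.e. as soon as $A\ge 6\cdot 2^{k}$. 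Since the thin-triangles condition and the conditions $Q(\cdot)$ are equivalent up to a universal constant $c_0$ (\cite[III.H.1.22]{BH} and the surrounding discussion), a graph that is not $A'$-hyperbolic fails $Q(A'/c_0)$ once $A'$ is large; hence setting $A_0=A_0(C)=6c_0\cdot 2^{C}$, if $\Gamma_i$ is not $A_0$-hyperbolic then $\calH^k(\Gamma_i)$ fails $Q(5)$ for every integer $k\le C$, so that $t(i)=t(\Gamma_i)>C$.

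For (b): fix $C$ and $A_0=A_0(C)$ as above. For each $i$ I would apply Lemma~\ref{l:not very hyperbolic} to $H=P_i$ with generating set $S\cap P_i$ and with $A=A_0$, obtaining a constant $B_i$ such that whenever $N_i\triangleleft P_i$ is nontrivial, $P_i/N_i$ is hyperbolic, and $N_i$ contains no nontrivial element of $(S\cap P_i)$-length less than $B_i$, the Cayley graph of $P_i/N_i$ is not $A_0$-hyperbolic. Put $B=\max_i B_i$. Since $S\supseteq S\cap P_i$ we have $|g|_S\le|g|_{S\cap P_i}$ for $g\in P_i$, so the finite set $\mathcal B_1=\{g\in G:0<|g|_S<B\}$ (finite, as $G$ is finitely generated) has the property that $N_i\cap\mathcal B_1=\emptyset$ forces $N_i$ to contain no nontrivial element of $(S\cap P_i)$-length less than $B$. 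Taking $\mathcal B$ to be $\mathcal B_1$ together with a finite set from Theorem~\ref{thm:dehnfilling} guaranteeing the filling is honest, any sufficiently long filling $G(N_1,\dots,N_m)$ avoiding $\mathcal B$ (with the $N_i$ nontrivial and the $P_i/N_i$ hyperbolic, which is the standing hypothesis under which $t(i)$ is defined at all) has each $\Gamma_i$ not $A_0$-hyperbolic, hence each $t(i)>C$ by part (a).

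I expect part (a) to be the only substantive step; everything else is bookkeeping with the definition of a long filling. Two caveats worth flagging: one genuinely needs the $N_i$ to be nontrivial for the statement to hold (this is automatic in all the paper's applications and is built into the hypotheses of Lemma~\ref{l:not very hyperbolic}); and the exact constants in the ceiling estimate of part (a) deserve a careful check, though only the qualitative fact that $t(\Gamma_i)\to\infty$ as $\Gamma_i$ becomes less hyperbolic is needed, and this is already asserted in the paragraph preceding the corollary.
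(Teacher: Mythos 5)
Your proposal is correct and follows essentially the same route as the paper, which presents the corollary as an immediate consequence of Lemma \ref{l:not very hyperbolic} together with the observation that $t(i)$ is determined by, and grows with, the hyperbolicity constant of the Cayley graph of $P_i/N_i$. The paper leaves implicit the quantitative step you supply in part (a) (a failure of $Q(A)$ in $\Gamma$ persists as a failure of $Q(5)$ in $\calH^k$ whenever $6\cdot 2^k\le A$, hence non-$A_0$-hyperbolicity forces $t(\Gamma)>C$), and both that computation and your bookkeeping in part (b) converting Lemma \ref{l:not very hyperbolic} into a finite avoidance set $\mathcal B$ check out, including your caveat that the $N_i$ must be nontrivial.
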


\subsection{A Greendlinger Lemma} \label{s:greendlinger}

Roughly speaking, the next lemma says that, for $G\to G/K$ a sufficiently long filling, if we have some $x\in G$ and $g\in  K\setminus\{1\}$, then any geodesic $[x,gx]$ in the cusped space for $G$ goes deep into a horoball, and it can be shortened using an element of the conjugate of the filling kernel corresponding to the horoball. It is similar to \cite[Lemma 5.10]{DGO} and it can presumably be proven using the techniques we use in Section \ref{sec:spiderweb} to construct spiderwebs, but we give a simple proof that only relies on Proposition \ref{p:uniform delta} and Lemma \ref{Greendlinger:BallsEmbed}.

For a relatively hyperbolic pair $(G,\mc{P})$ with cusped space $X$, let $\mc{C}$ denote the collection of parabolic points for the $G$--action on $\partial X$.  Suppose that $G \to G(N_1,\ldots , N_m)$ is a Dehn filling, with $N_i \unlhd P_i$.  The points in $\mc{C}$ are limit points of horoballs of the form $\calH(gP_i)$ for $g \in G$.  If $c \in \mc{C}$ is of the form $c = \partial \calH (gP_i)$ then let $K_c = gN_ig^{-1}$.  We also write $\calH_c$ for $\calH(gP_i)$.

\begin{lemma}\label{greendlinger}
Let $(G,\mc{P})$ be relatively hyperbolic, with associated cusped space $X$. For any $D\geq 0$, for any sufficiently long filling $G\to G/K$ the following holds: For any $g\in K\setminus\{1\}$ and $x\in X$ there exists $c\in\calC$ so that
 \begin{enumerate}
  \item any geodesic $[x,gx]$ intersects $\calH_c^{D}$,
  \item for any geodesic $[x,gx]$ there exists $k \in K_c$ so that $d(x,kgx) < d(x,gx)$.
 \end{enumerate}
\end{lemma}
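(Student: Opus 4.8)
The plan is to use a pigeonhole/Greendlinger-type argument together with the uniform geometry we have available: the uniform hyperbolicity constant $\delta$ from Proposition \ref{p:uniform delta}, and the fact (Lemma \ref{Greendlinger:BallsEmbed}) that arbitrarily large balls centered in the Cayley graph embed in $X/K$. Fix $D\geq 0$. Choose $R$ large compared to $D$ and $\delta$ (we will pin down how large below), and pass to a filling long enough that balls of radius, say, $3R$ embed under $X\to X/K$, and also long enough that the filling kernels meet no element of length $\leq 2R$ in the cusped metric. Let $g\in K\setminus\{1\}$ and $x\in X$; we may assume $x$ lies in the Cayley graph (move it into $\Gamma$ at bounded cost, absorbing the cost into $R$). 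Let $\gamma=[x,gx]$ be a geodesic.

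First I would argue that $\gamma$ must enter some horoball deeply. Suppose not: suppose $\gamma$ stays in $\calH_c^{[0,D']}$-complements for all $c$, i.e. $\gamma$ remains within the ``$D'$-truncated'' part of $X$ for a suitable $D'=D'(R)$. Since $g\in K=\langle\negthinspace\langle \bigcup N_i\rangle\negthinspace\rangle$ is nontrivial, $gx\neq x$ in $X/K$-distance by Lemma \ref{Greendlinger:BallsEmbed} would be forced to agree with $X$-distance up to the embedding radius — but in $X/K$ the points $x$ and $gx$ project to the \emph{same} point, so the projection of $\gamma$ is a nontrivial loop in $X/K$ based at (the image of) $x$. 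If $\gamma$ stayed shallow, then the projection $\overline\gamma$ stays in the shallow (boundedly-truncated) part of $X/K$, which by the results of Subsection \ref{ss:geom trunc} (Corollaries \ref{c:universal hyperbolic}, \ref{c:t_c big}) and the uniform hyperbolicity is a uniformly hyperbolic space in which, for a sufficiently long filling, there are no short nontrivial loops through a Cayley-graph vertex (Lemma \ref{l:not very hyperbolic} applied to the peripheral quotients, plus the embedding Lemma \ref{Greendlinger:BallsEmbed}). Hence $\overline\gamma$ must have length $\geq 2R$, and a midpoint subsegment of length $R$ embeds, and one deduces a contradiction with $g$ (equivalently, an element of a conjugate of some $N_i$) having cusped length $\leq 2R$. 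This forces $\gamma$ to pass through $\calH_c^{[D'',\infty)}$ for some $c\in\mathcal C$, and by convexity of deep horoballs (Lemma \ref{horoballconvexity}) $\gamma$ in fact penetrates to depth at least $D$, giving (1). The main technical care here is matching the depth at which $\gamma$ enters a horoball to the length of the piece of $\gamma$ that lives ``above'' that horoball, so that a too-shallow excursion would contradict the no-short-loops fact; this bookkeeping, balancing $R$, $D$, the embedding radius, and the short-element threshold, is where I expect the real work to be.

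Next, for (2), having fixed such a $c$ with $\gamma$ meeting $\calH_c^{D}$: let $p$ be the first and $q$ the last point of $\gamma$ on the horosphere $\calH_c^{\lceil \delta\rceil}$ (or some bounded depth where convexity kicks in), so the subsegment $\gamma'=[p,q]$ is a geodesic inside the (convex) horoball $\calH_c^{[\lceil\delta\rceil,\infty)}$ going down to depth $\geq D$. Since $p,q$ lie on the same horosphere of $\calH_c=\calH(gP_i)$, we have $q=h\cdot p$ (thinking equivariantly: $p$ and $q$ are both in the $gP_ig^{-1}$-orbit structure on that horosphere, at least after perturbing by a bounded amount) for some $h$ in the relevant conjugate $gP_ig^{-1}$. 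The element $g\in K$ acts on $X$ and $\gamma$ runs from $x$ to $gx$; the point is that the ``horoball portion'' of $\gamma$ records an element of $gP_ig^{-1}$ that, modulo $K$, must lie in the image of $gN_ig^{-1}=K_c$ — indeed the whole loop $\overline\gamma$ is nullhomotopic/trivial in $X/K$, and cutting it at the horoball shows the horoball syllable is trivial in $\overline G$, hence lies in $K\cap gP_ig^{-1}=gN_ig^{-1}$ for a sufficiently long filling (this is exactly the structure of $K$ as a free product of conjugates of the $N_i$, or can be seen directly from Lemma \ref{Greendlinger:BallsEmbed} applied at a suitable scale). Then $k:=$ (the element $h^{-1}$, or its image in $K_c$) satisfies: replacing the deep sub-geodesic $\gamma'$ by the corresponding path for $kg$ removes the excursion to depth $\geq D$, which by the horoball distance formula ($d_{\calH_c}$ between same-horosphere points at depth $0$ versus the detour cost $\sim 2\cdot\mathrm{depth}$) strictly shortens the path, i.e. $d(x,kgx)<d(x,gx)$. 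Since $kg$ differs from $g$ by an element of $K_c=gN_ig^{-1}$, we have $kg\in K$ as required; note (2) is claimed for \emph{any} geodesic $[x,gx]$, which is fine because the conclusion $d(x,kgx)<d(x,gx)$ does not depend on the choice of geodesic, only on the (geodesic-independent) distance $d(x,gx)$ and the fact that \emph{some} geodesic — equivalently any, by thinness — makes a deep excursion into $\calH_c$.

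The step I expect to be the main obstacle is the first one: showing that $\gamma$ must go \emph{deep} (to depth $\geq D$) and not merely somewhat deep. The honest argument needs the interplay of (a) uniform hyperbolicity of the partially-truncated quotient spaces over all long fillings, from Subsection \ref{ss:geom trunc}; (b) the fact that short loops through Cayley vertices don't exist in long fillings, essentially Lemma \ref{l:not very hyperbolic} transported to the cusped setting via Lemma \ref{Greendlinger:BallsEmbed}; and (c) a clean way to convert ``$\gamma$ does not go deep'' into ``$\overline\gamma$ is a short loop in a uniformly hyperbolic truncated space'', which requires truncating at depth roughly $D$ and checking that the truncated projection of $\gamma$ is still close to a geodesic (using Lemma \ref{TruncatedHoroballsAreVisual} or Lemma \ref{l:geod in trunc}). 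Once the depth statement (1) is in hand, part (2) is a routine computation with the combinatorial horoball metric.
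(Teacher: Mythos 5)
There is a genuine gap in both halves of your argument. For part (1), your contradiction does not close up: you conclude that the projected loop $\overline\gamma$ ``must have length $\geq 2R$'' and that this contradicts $g$ having cusped length $\leq 2R$, but $g$ is an arbitrary nontrivial element of $K$ and $d(x,gx)$ is unbounded, so no length bound on $g$ or on the loop is available and nothing is contradicted. Moreover, the machinery you invoke to make the shallow part of $X/K$ uniformly hyperbolic (Corollaries \ref{c:universal hyperbolic} and \ref{c:t_c big}, i.e.\ the truncated-horoball results of Subsection \ref{ss:geom trunc}) requires the peripheral quotients $P_i/N_i$ to be hyperbolic, whereas Lemma \ref{greendlinger} is stated for arbitrary sufficiently long fillings; the paper's proof uses only Proposition \ref{p:uniform delta} (a uniform $\delta$ for the full quotient cusped space $X/K$, cusps included) together with Lemma \ref{Greendlinger:BallsEmbed}. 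The mechanism your setup almost contains, and which is what the paper actually runs, is this: if $\gamma$ stayed at depth $<D$, every point of $\gamma$ would lie within $D$ of the Cayley graph, so every subsegment of length $10\delta$ would sit inside a ball of radius about $D+10\delta$ centered at a Cayley vertex; for a long filling such balls embed, so $\overline\gamma$ would be a $10\delta$--local geodesic in the $\delta$--hyperbolic space $X/K$, hence a quasigeodesic, hence not a nonconstant loop. Equivalently, $\overline\gamma$ must fail to be a $10\delta$--local geodesic somewhere, and Lemma \ref{Greendlinger:BallsEmbed} forces that failure to occur at two points of $\gamma$ lying at depth $\geq D$ in a common horoball, which gives (1) directly.

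For part (2) the central claim is false as stated: you take the peripheral element $h\in gP_ig^{-1}$ recorded by the excursion of $\gamma$ into $\calH_c$ and assert $h\in K_c$ because ``cutting the nullhomotopic loop at the horoball shows the horoball syllable is trivial in $\overline G$.'' Cutting a trivial word does not make its pieces trivial, and the entry/exit points of $[x,gx]$ on the horosphere are governed by closest-point projection, not by the free-product syllable decomposition of $g$. For instance, with a single peripheral $P=\langle a,b\rangle\cong\bZ^2$, $N=\langle a\rangle$, and $g=a\,waw^{-1}$ for a long $w\notin P$, the excursion of $[1,g]$ into $\calH(P)$ records roughly $a$ times the coarse projection of $w$ to $P$, which in general has a nontrivial $b$--component and so does not lie in $N$; hence your $k:=h^{-1}$ need not lie in $K_c$ at all. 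What is true is weaker and comes from a different source: at the spot where the projected loop fails to be a $10\delta$--local geodesic one obtains points $p,q$ on $\gamma$ with $d(p,q)\leq 10\delta$ and some $k\in K$ with $d(p,kq)<d(p,q)$; the ball-embedding argument then shows $p,q$ lie at depth $\geq D$ in a common $\calH_c$ and that $k\in K_c$, and the estimate $d(x,kgx)\leq d(x,p)+d(p,kq)+d(kq,kgx)<d(x,gx)$ finishes. So the shortening element of the lemma is not the inverse of the excursion's peripheral element, and without this replacement your route to (2) does not go through. (Your closing remark that the conclusion is independent of the chosen geodesic is fine.)
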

\begin{proof}
As in Proposition \ref{p:uniform delta}, let $\delta\geq 1$ be so that
\begin{itemize}
 \item $X$ is $\delta$--hyperbolic, and
 \item for all sufficiently long fillings $X/K$ is $\delta$--hyperbolic.
\end{itemize}

Fix $D \ge 0$.  It follows from Lemma \ref{Greendlinger:BallsEmbed} that for any sufficiently long filling $G\to G/K$, and any $x,y\in X$ in the same $K$--orbit satisfying $d(x,y)\leq 10\delta$, there exists a horoball $\calH_c$ so that $x,y\in \calH^{[D,+\infty)}_c$. Hence $y=kx$ for some $k\in K_c$ since the intersection of $K$ and the stabilizer of $\calH_c$ is $K_c$.

 Let $K$ be the kernel of such a filling, and let $g \in K \setminus \{1 \}$ and $x\in X$. If $d(x,gx)\leq 10\delta$ then we are done by the argument above, so suppose $d(x,gx)>10\delta$.  Let $\gamma=[x,gx]$ be a geodesic in $X$ from $x$ to $gx$ and let $\tilde{\gamma}$ be the projected path in $X/K$. Since $\tilde\gamma$ is a loop of length at least $10\delta$, $\tilde{\gamma}$ is not a $10\delta$--local geodesic.  Therefore, there are two
points $p$ and $q$ appearing in this order along $\gamma$ with $d(p,q)\leq 10\delta$ and some $k\in K$ so that $d(p,kq)<d(p,q)$. By the argument above, we have $p,q\in \calH^{[D,+\infty)}_c$ for some horoball $\calH_c$, and hence $\gamma\cap \calH_c^{D}\neq \emptyset$. Also, we have $k\in K_c$ and
$$d(x,kgx)\leq d(x,p)+d(p,kq)+d(kq,kgx)<d(x,p)+d(p,q)+d(q,gx)=d(x,gx),$$
as required.
\end{proof}

\section{Weak Gromov--Hausdorff convergence}\label{sec:wgh}

Our strategy to describe boundaries of Dehn fillings involves describing them as limits, in a suitable sense, of metric spaces that we have more control over.  The correct notion of limit for our purposes is similar to that of Gromov--Hausdorff limit and is described as follows.

\begin{defn}\label{wghlimit}
 Let $(M_i,d_i)_{i\in\mathbb N}$ and $(M,d)$ be metric spaces. We say that $(M,d)$ is a \emph{weak Gromov--Hausdorff limit} of the sequence $(M_i,d_i)$ if there exists $\lambda\geq 1$ and a sequence of $(\lambda,\epsilon_i)$--quasi-isometries $M\to M_i$, with $\epsilon_i\to 0$ as $i\to \infty$.
\end{defn}

\begin{example}
 If the compact metric space $(M,d)$ is a weak Gromov--Hausdorff limit of the sequence of connected metric spaces $(M_i,d_i)$, then $(M,d)$ is connected. In fact, if $M$ is not connected then we can write $M=A\sqcup B$ with $A,B$ non-empty and $d(A,B)=\epsilon>0$. It is readily seen that for $n$ large enough $M_n$ inherits a similar decomposition and hence it is not connected.
\end{example}

This section has two goals. The first one is to show that when a sequence of hyperbolic spaces converges in a suitable sense to a hyperbolic space, then their boundaries weakly Gromov--Hausdorff converge to the boundary of the limit hyperbolic space. The second goal is to give a criterion which allows us to prove (using a result of Claytor \cite{Claytor34}) that the weak Gromov--Hausdorff limit of a sequence of metric spaces homeomorphic to $S^2$ is planar.  The criterion we prove in this section, Lemma \ref{ivanovlemma}, is an adaptation of a result of Ivanov \cite{Ivanov97}.

\subsection{From convergence of spaces to convergence of their boundaries}

The definition of convergence of hyperbolic spaces that we use is the following one.

\begin{defn} \label{def:strongconverge}
  Let $(X,p)$ be a pointed metric space. Say the sequence of pointed metric spaces $\{(X_i,p_i)\}_{i\in\bN}$ \emph{strongly converges} to $(X,p)$ if the following holds:
For every $R>0$, there are isometries $\phi_i\co B_R(p)\to B_R(p_i)$ with $\phi_i(p)=p_i$ for all but finitely many $i$.
\end{defn}

In order to relate the boundary of a hyperbolic space to spheres of large radius, we need the space to be $D$--visual in the following sense -- a different concept from that of a \emph{visual metric} given in Definition \ref{def:visualmetric}.

\begin{defn}\label{defn:visualspace}
  Let $D>0$.  A geodesic metric space $X$ is \emph{$D$--visual} if, for every $a,b \in X$, there is a geodesic ray based at $a$ passing within $D$ of $b$. 
\end{defn}

The following is the main result of this subsection and it is an immediate corollary of Lemma \ref{lem:projecttosphere} below.
\begin{prop} \label{prop:strongconverge}
  Let $\delta>0$.
  Suppose $\{(X_i,p_i)\}_{i\in\bN}$ strongly converges to $(X,p)$, and that 
  the spaces $X$ and $X_i$ are all $\delta$--hyperbolic and $\delta$--visual.  Then for all positive $\epsilon\leq \frac{1}{6 \delta}$ and $\kappa$ as in Proposition \ref{findvisualmetric}, and any visual metrics   $\rho_i$ on $\partial X_i$, $\rho$ on $\partial X$ with parameters $\epsilon,\kappa$, the boundary $(\partial X,\rho)$ is a weak Gromov--Hausdorff limit of $(\partial X_i,\rho_i)$.
\end{prop}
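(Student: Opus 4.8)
The plan is to deduce Proposition \ref{prop:strongconverge} from a more technical lemma (the ``Lemma \ref{lem:projecttosphere}'' referred to in the text), which compares the boundary $\partial X$ of a $\delta$-hyperbolic, $\delta$-visual space with the metric sphere $S_R(p)$ of large radius $R$, equipped with a rescaled metric. Concretely, for a point $\xi \in \partial X$ I would choose (using $\delta$-visuality) a geodesic ray from $p$ representing $\xi$ and let $\pi_R(\xi)$ be the point where this ray crosses $S_R(p)$; conversely, for $x \in S_R(p)$ pick a geodesic $[p,x]$ and extend it (using $\delta$-visuality applied more carefully, or just the fact that one can always prolong a geodesic segment a bounded amount and reach deeper) to a ray, giving a point of $\partial X$. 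The key estimate is that for two rays $\alpha, \beta$ from $p$, the Gromov product $(\alpha \mid \beta)_p$ and the quantity $\min\{R, d_{S_R(p)}(\alpha(R),\beta(R))\}$ (suitably interpreted, e.g. via $\log$ of the sphere distance in the combinatorial/graph metric, or via $R - \frac12 d_R(\alpha(R),\beta(R))$ in a path metric on the sphere) agree up to additive error depending only on $\delta$. This is the standard ``the boundary looks like a sphere of radius $R$ at scale $e^{-\epsilon R}$'' principle; the precise bookkeeping is where the work is.

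Granting such a lemma, the proof of the Proposition goes as follows. First I would fix $\epsilon \le \frac{1}{6\delta}$ and $\kappa = \kappa(\epsilon,\delta)$ from Proposition \ref{findvisualmetric}, so that all the boundaries $\partial X, \partial X_i$ carry visual metrics $\rho, \rho_i$ with these uniform parameters. Next, for each $i$ I want a $(\lambda, \epsilon_i)$-quasi-isometry $\partial X \to \partial X_i$ with $\lambda$ independent of $i$ and $\epsilon_i \to 0$. The idea is: pick $R_i \to \infty$ slowly enough that the isometries $\phi_i \colon B_{R_i}(p) \to B_{R_i}(p_i)$ from strong convergence exist for all large $i$ (this is possible since for each fixed $R$ the isometry exists for all but finitely many $i$; a diagonal argument lets $R_i \to \infty$). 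Then define $f_i \colon \partial X \to \partial X_i$ as the composition: project $\partial X$ to (a subset of) $S_{R_i}(p)$ via $\pi_{R_i}$, apply $\phi_i$ to land in $S_{R_i}(p_i) \subset B_{R_i}(p_i)$, then ``un-project'' via a ray from $p_i$ to get a point of $\partial X_i$. Because $\phi_i$ is an isometry of the balls it preserves Gromov products of points at distance $\le R_i$ from the basepoints exactly, and because of the comparison lemma the distortion between $\rho$-distances on $\partial X$ and $\rho_i$-distances on $\partial X_i$ under $f_i$ is controlled: multiplicatively by a constant $\lambda$ coming only from $\delta, \kappa$, and additively by $e^{-\epsilon R_i}$ (from the error in the comparison lemma, which is additive in Gromov products hence multiplicative-by-$e^{O(\delta)}$ but also subject to an $e^{-\epsilon R_i}$-size floor below which everything collapses). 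Taking $\epsilon_i = C e^{-\epsilon R_i}$ (times the diameters, which are bounded since the boundaries are compact) gives $\epsilon_i \to 0$. One should also check $f_i$ is coarsely surjective, again using $\delta$-visuality of $X_i$ so that every point of $S_{R_i}(p_i)$ is close to some ray from $p_i$, and $R_i$-balls of $X_i$ are in the image of $\phi_i$.

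The main obstacle, I expect, is making the comparison between Gromov products at infinity and (log-)distances on large metric spheres clean enough to be uniform across the sequence, and in particular handling the ``un-projection'' step: given a point $x$ on a large sphere there may be several geodesics from the basepoint through it, and extending to a ray requires a uniform bound (from $\delta$-visuality and a prolongation argument) on how the choice affects the resulting boundary point. A secondary nuisance is that $\pi_R$ is only defined on the part of $\partial X$ whose representative rays reach radius $R$ — all of $\partial X$ since rays are infinite, so this is fine — but the map need not be injective or have image all of $S_R(p)$, so ``quasi-isometry'' here must be understood in the quasi-surjective sense, matching Definition \ref{wghlimit}. Once the comparison lemma is stated with explicit additive $\delta$-dependent error, all of these become routine, and the Proposition follows by composing the projection, the isometry $\phi_i$, and the un-projection, and reading off $\lambda$ from $\kappa$ and $\delta$ and $\epsilon_i$ from $R_i \to \infty$.
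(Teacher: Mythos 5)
Your proposal is correct and follows essentially the same route as the paper: the Proposition is obtained by composing the sphere-approximation quasi-isometry of Lemma \ref{lem:projecttosphere} for $X$, the exact ball isometries from strong convergence (which preserve Gromov products based at the basepoints), and a quasi-inverse of the same lemma for $X_i$, then letting $R_i\to\infty$ diagonally so the additive errors tend to $0$ while the multiplicative constant stays uniform in $\delta,\epsilon,\kappa$. The only cosmetic difference is that the paper compares $\partial X$ with $S_R(p)$ via the ambient quantity $e^{-\epsilon(\cdot\mid\cdot)_p}$ restricted to the sphere, rather than any intrinsic or combinatorial sphere metric, which sidesteps the bookkeeping worry you flag about the ``un-projection'' and the choice of comparison on $S_R(p)$.
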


\begin{rem}
  With a bit more work it should be possible to weaken the assumption of strong convergence in \ref{prop:strongconverge} to the assumption of pointed Gromov--Hausdorff convergence.
\end{rem}

\begin{lemma}\label{lem:tripod}
 Let $X$ be $\delta$--hyperbolic and let $w\in X$. Let $\alpha,\beta$ be rays starting at $w$ with limit points $a,b\in \partial X$.  Let $T$ be the tripod obtained by gluing two rays together along an initial subsegment of length $(a|b)_w$.  Then there is a $(1,5\delta)$--quasi-isometry from $\alpha\cup \beta$ to $T$, isometric on each of $\alpha$ and $\beta$.
\end{lemma}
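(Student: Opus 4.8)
The plan is to write down an explicit map $f\colon \alpha\cup\beta \to T$ by mapping each ray isometrically onto the corresponding ray of $T$ (matching basepoints), and then to check that $f$ is a $(1,5\delta)$--quasi-isometry by controlling distances between a point on $\alpha$ and a point on $\beta$. Parametrize $\alpha,\beta$ by arc length from $w$, write $g:=(a\mid b)_w$, and let $T$ be two rays glued along their initial length-$g$ segments. By construction $f$ is an isometric embedding on each of $\alpha$ and $\beta$, and the leg lengths of $T$ are the correct Gromov-product lengths, so $f$ is surjective. Since any two points of $\alpha\cup\beta$ lie on a path that is a concatenation of (at most) a subsegment of $\alpha$ and a subsegment of $\beta$, the only thing to verify is the quasi-isometry inequality for a pair $\alpha(s)$, $\beta(t)$.

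The key estimate is that $d_X(\alpha(s),\beta(t))$ and $d_T(f(\alpha(s)),f(\beta(t)))$ agree up to additive error $5\delta$. First I would record that for geodesic rays emanating from a common point, $(\alpha(s)\mid \beta(t))_w$ is within a bounded multiple of $\delta$ of $(a\mid b)_w = g$ once $s,t$ are large, and more usefully that $(\alpha(s)\mid\beta(t))_w \le g$ always and $(\alpha(s)\mid\beta(t))_w \ge \min\{s,t,g\} - c\delta$ for a small constant $c$; this is the standard thin-triangles comparison between the rays $\alpha$, $\beta$ and the ray realizing the Gromov product (see \cite[III.H.3.17]{BH}), and it is where the $\delta$'s enter. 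In the tripod $T$ one computes exactly: $d_T(f(\alpha(s)),f(\beta(t))) = (s-\min\{s,g\}) + (t-\min\{t,g\}) + |\min\{s,g\}-\min\{t,g\}|$, i.e. $s+t-2\min\{s,t,g\}$. On the other hand $d_X(\alpha(s),\beta(t)) = s+t-2(\alpha(s)\mid\beta(t))_w$ by the definition of the Gromov product. Comparing the two and plugging in the inequality $|(\alpha(s)\mid\beta(t))_w - \min\{s,t,g\}| \le \tfrac{5}{2}\delta$ (tracking constants carefully through the thin-triangle argument) yields $|d_X(\alpha(s),\beta(t)) - d_T(f(\alpha(s)),f(\beta(t)))| \le 5\delta$, which is exactly the $(1,5\delta)$ bound.

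I expect the only real work — the ``main obstacle'' — to be pinning down the constant: one must bound $(\alpha(s)\mid\beta(t))_w$ above and below by $\min\{s,t,g\}$ with a sharp enough error to land at $5\delta$ rather than some larger multiple. The upper bound $(\alpha(s)\mid\beta(t))_w \le (a\mid b)_w + 2\delta = g + 2\delta$ (and in fact $\le g$ with a slightly different normalization of $(a\mid b)_w$) is routine from the definition of the extended Gromov product and the $2\delta$-approximation recalled before Lemma \ref{lem:avoidlimsup}; the lower bound requires comparing the geodesic triangle (or ideal triangle) on $w, \alpha(s), \beta(t)$ with its comparison tripod and using $\delta$-thinness to see that $\alpha$ and $\beta$ stay $\delta$-close out to depth roughly $g$. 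A clean way to organize this is: let $\gamma$ be a geodesic ray from $w$ to a point of $\partial X$ achieving $(a\mid b)_w$ up to $2\delta$ (using Lemma \ref{lem:avoidlimsup} / Lemma \ref{lem:tripod}'s hypotheses implicitly), apply $\delta$-thinness to the two ideal triangles $(w,a,\cdot)$ and $(w,b,\cdot)$, and then assemble the estimates; all constants are then explicit and small, giving $5\delta$. Everything else is bookkeeping about concatenated subsegments and the two-point form of the quasi-isometry condition.
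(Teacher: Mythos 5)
Your plan is the same argument as the paper's, just reorganized: the paper also maps each ray isometrically to its leg of $T$ and checks the two‑point estimate, and since $d_X(\alpha(s),\beta(t))=s+t-2(\alpha(s)\mid\beta(t))_w$ while the tripod distance is $s+t-2\min\{s,t,(a\mid b)_w\}$, your Gromov‑product formulation $\bigl|(\alpha(s)\mid\beta(t))_w-\min\{s,t,(a\mid b)_w\}\bigr|\le\tfrac52\delta$ is exactly the paper's comparison in disguise (the paper phrases it as two cases, $\min\{s,t\}\le(a\mid b)_w$ and $\min\{s,t\}>(a\mid b)_w$, and compares distances directly).

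The one substantive caveat is that the constant chase, which is the entire content of the lemma, is asserted rather than carried out, and the route you gesture at does not obviously land on $\tfrac52\delta$. A straightforward chain of four‑point inequalities through approximating points (e.g.\ $(\alpha(s)\mid\beta(t))_w\ge\min\{(\alpha(s)\mid a_i)_w,(a_i\mid b_i)_w,(b_i\mid\beta(t))_w\}-2\delta$ together with $(\alpha(s)\mid a_i)_w\ge s-\delta$) only gives a lower error of $3\delta$, hence a $(1,6\delta)$‑quasi‑isometry. To get $\tfrac52\delta$ (in fact $2\delta$) you need the paper's device: use Lemma \ref{lem:avoidlimsup} to pick $a_n\to a$, $b_n\to b$ with $(a_n\mid b_n)_w$ close to $(a\mid b)_w$, compare $\alpha(s),\beta(t)$ with the points at the \emph{same parameters} $s,t$ on the finite geodesics $[w,a_n]$, $[w,b_n]$ (these are $\delta$‑close by thinness of the triangles with vertices $w,\alpha(N),a_n$ and $w,\beta(N),b_n$ for $N,n$ large), observe that because the comparison points are equidistant from $w$ this substitution changes the Gromov product by at most $\delta$ rather than $2\delta$, and only then apply thinness in the finite triangle $(w,a_n,b_n)$. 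Relatedly, ``apply $\delta$‑thinness to the two ideal triangles'' is not licensed by the definition in use (thinness is stated for geodesic triangles with vertices in $X$), so the finite approximations are not optional; and your parenthetical claim that $(\alpha(s)\mid\beta(t))_w\le(a\mid b)_w$ holds on the nose should be dropped — with the paper's $\sup\liminf$ definition only $(\alpha(s)\mid\beta(t))_w\le(a\mid b)_w+2\delta$ is available, which is all you need.
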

\begin{proof}
  Let $s,t\in [0,\infty)$.  We must show that $d(\alpha(s),\beta(t))$ is within $5\delta$ of the distance of their images in $T$:
\[ \tau(s,t) =
\begin{cases}
|s-t| &  \min\{s,t\}\leq (a|b)_w\\
s+t-2(a|b)_w & \mbox{ otherwise}  \\
\end{cases}
\]
  By Lemma \ref{lem:avoidlimsup}, there are sequences $\{a_i\}\to a$ and $\{b_i\}\to b$ with $\lim\limits_{i\to\infty}(a_i|b_i)_w = (a|b)_w$.  Choose $n, N$ so that $(a_n|\alpha(N))_w$ and $(b_n|\beta(N))_w$ both exceed $\max\{s,t\}+10\delta$, and so that $(a_n|b_n)_w$ is within $\eta \le \frac{\delta}{2}$ of $(a|b)_w$.  Let $\alpha'$ be a geodesic from $w$ to $a_n$, and let $\beta'$ be a geodesic from $w$ to $b_n$.  We have $d(\alpha(s),\alpha'(s))$ and $d(\beta(s),\beta'(s))$ both bounded above by $\delta$.  

  Suppose first that one of $s,t$ is at most $(a|b)_w$.  Without loss of generality it is $s$.  Then $d(\alpha'(s),\beta'(s)) \leq \delta+2\eta \le 2\delta$.  It follows that $d(\alpha(s),\beta(s)) \le 4\delta$, and so $d(\alpha(s),\beta(t))$ is within $4\delta$ of $\tau(s,t)=|s-t|$.

  Finally suppose that both $s$ and $t$ are larger than $(a|b)_w$.  Consider a geodesic triangle $\Delta$ two of whose sides are $\alpha'$ and $\beta'$.  Let $\overline{a}$ and $\overline{b}$ be the images in the comparison tripod for $\Delta$ of $\alpha'(s)$ and $\beta'(t)$, respectively.  Then the distance $d(\overline{a},\overline{b})=s+t-2(a_n|b_n)_w$ is within $2\eta$ of $\tau(s,t)=s+t-2(a|b)_w$.  Thus $d(\alpha'(s),\beta'(t))$ differs from $\tau(s,t)$ by at most $2\delta+2\eta$, and $d(\alpha(s),\beta(t))$ differs from $\tau(s,t)$ by at most $4\delta+2\eta \le 5\delta$.
\end{proof}

For the next lemma, recall (as we observed in Section \ref{ss:visual metrics}) that even though $e^{-\epsilon(\cdot\mid\cdot)}$ may not be a metric, the concept of quasi-isometry still makes sense. Also, recall that given a point $p$ of a metric space $(M,d)$, we denote the sphere of radius $R$ around $p$, that is to say the set $\{x\in M\mid d(x,p)= R\}$, by $S_R(p)$.
\begin{lemma}\label{lem:projecttosphere}
 For every $\delta,\epsilon$ there exists $\lambda$ so that the following holds. Let $X$ be $\delta$--hyperbolic and $\delta$--visual.  Then for any $w\in X$ and any $R>0$, there is a $(\lambda,c)$--quasi-isometry
\[ \phi \co (\partial X,e^{-\epsilon(\cdot|\cdot)_w})\to (S_R(w), e^{-\epsilon(\cdot|\cdot)_w}), \]
where $c=c(\delta,\epsilon,R)$ tends to $0$ as $R$ tends to $+\infty$.
\end{lemma}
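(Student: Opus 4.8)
The plan is to construct the quasi-isometry $\phi\colon \partial X \to S_R(w)$ by a "radial projection" using the $\delta$--visual hypothesis, and then control distances via Lemma \ref{lem:tripod}. First I would, for each $\xi\in\partial X$, fix a geodesic ray $\alpha_\xi$ from $w$ to $\xi$ (these exist since $X$ is proper and $\delta$--hyperbolic) and define $\phi(\xi) = \alpha_\xi(R)\in S_R(w)$. Surjectivity up to bounded error comes from the $\delta$--visual hypothesis: given any $x\in S_R(w)$, extend $[w,x]$ (or rather, there is a ray from $w$ passing within $\delta$ of $x$) to a ray $\alpha$ with some limit point $\xi$, and then $\phi(\xi)$ is within $\delta + (\text{bounded error from non-uniqueness of rays})$ of $x$; translated into the $e^{-\epsilon(\cdot\mid\cdot)_w}$ "distance" this is a bounded additive error, which in turn is absorbed into the additive constant $c$ after noting $e^{-\epsilon(\cdot\mid\cdot)_w}$ distances are bounded below by a constant depending on $R$ once the points are distinct at scale $R$ — this is exactly where the dependence $c = c(\delta,\epsilon,R)\to 0$ enters, since points of $S_R(w)$ that are genuinely different have Gromov product at most $R$, so their $e^{-\epsilon(\cdot\mid\cdot)_w}$--distance is at least $e^{-\epsilon R}$.

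The core estimate is the comparison $e^{-\epsilon(\xi\mid\eta)_w} \asymp e^{-\epsilon(\phi(\xi)\mid\phi(\eta))_w}$ up to multiplicative constant $\lambda$ and additive error going to $0$ with $R$. For this I would apply Lemma \ref{lem:tripod} to the pair of rays $\alpha_\xi, \alpha_\eta$: their union is $(1,5\delta)$--quasi-isometric to a tripod glued along length $(\xi\mid\eta)_w$. Writing $g = (\xi\mid\eta)_w$, I compute $(\phi(\xi)\mid\phi(\eta))_w = (\alpha_\xi(R)\mid\alpha_\eta(R))_w = \tfrac12(R + R - d(\alpha_\xi(R),\alpha_\eta(R)))$. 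If $g \ge R$ the two points are within $5\delta$ of each other so the Gromov product is $\ge R - \tfrac{5\delta}{2}$; if $g < R$ then $d(\alpha_\xi(R),\alpha_\eta(R))$ is within $5\delta$ of $2(R-g)$, so $(\phi(\xi)\mid\phi(\eta))_w$ is within $\tfrac{5\delta}{2}$ of $g = (\xi\mid\eta)_w$. Thus in all cases $(\phi(\xi)\mid\phi(\eta))_w$ and $\min\{(\xi\mid\eta)_w, R\}$ agree up to an additive constant depending only on $\delta$. Exponentiating with $-\epsilon$: $e^{-\epsilon(\phi(\xi)\mid\phi(\eta))_w}$ equals $\max\{e^{-\epsilon(\xi\mid\eta)_w}, e^{-\epsilon R}\}$ up to multiplicative constant $e^{\pm\epsilon\cdot(\text{const}\cdot\delta)}$, which gives the desired $(\lambda, c)$--quasi-isometry relation with $\lambda = \lambda(\delta,\epsilon)$ and $c = c(\delta,\epsilon,R) = O(e^{-\epsilon R})\to 0$, since $e^{-\epsilon R}$ is an additive perturbation.

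One technical point I would handle carefully: the choice of $\alpha_\xi$ is not canonical, but two rays from $w$ to the same $\xi$ stay within $2\delta$ of each other (standard for $\delta$--hyperbolic spaces), so $\phi$ is well-defined up to an error of $2\delta$ in the metric $d$, hence up to a bounded multiplicative error in $e^{-\epsilon(\cdot\mid\cdot)_w}$ — again harmless. Similarly I should note $\phi$ need not be injective on the nose, but quasi-isometry in the sense of Definition \ref{wghlimit} does not require injectivity, only coarse surjectivity and the two-sided distance bound, both of which are established above.

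I expect the main obstacle to be purely bookkeeping: carefully tracking which errors are multiplicative (and thus contribute to $\lambda$) versus additive (and thus contribute to $c$, needing to vanish as $R\to\infty$), and verifying that the additive errors are all of the form $e^{-\epsilon R}$ times a constant rather than, say, a fixed constant independent of $R$. The key conceptual input — that Gromov products on $S_R(w)$ saturate at $R$ and otherwise track boundary Gromov products — is already essentially contained in Lemma \ref{lem:tripod}, so there is no deep new idea required beyond organizing these estimates.
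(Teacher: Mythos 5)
Your proposal is correct and follows essentially the same route as the paper: the same map $\xi\mapsto\gamma_\xi(R)$, coarse surjectivity from $\delta$--visibility plus the fact that asymptotic rays fellow-travel, and the same two-case estimate via Lemma \ref{lem:tripod} (Gromov product $\geq R$ giving an additive error of order $e^{-\epsilon R}$, Gromov product $\leq R$ giving a purely multiplicative error $e^{O(\epsilon\delta)}$). The paper's proof is just the tidied-up version of exactly this bookkeeping, with $\lambda=e^{\frac{5}{2}\epsilon\delta}$ and $c=e^{\epsilon(\frac{5}{2}\delta-R)}$.
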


\begin{proof}
All rays in this proof are rays starting at $w$.  Denote $e^{-\epsilon(\cdot|\cdot)}$ by $\rho(\cdot,\cdot)$.  We'll prove the lemma for  $\lambda = e^{\frac{5}{2}\epsilon\delta}$ and $c = e^{\epsilon(\frac{5}{2}\delta-R)}$.  Note that $c$ tends to $0$ as $R$ tends to $\infty$.

Let us define a map $\phi\co \partial X\to S_R(p)$. For $a\in\partial X$, choose a ray $\gamma_a$ (parametrized by arc length) representing it. Then, set $\phi(a)=\gamma_a(R)$.
 
The fact that $X$ is $\delta$--visual combined with the fact that asymptotic rays stay within distance $\delta$ of each other implies that for any $x\in S_R(w)$ there exists $a\in\partial X$ with, say, $d(x,\phi(a))\leq 4\delta$, and hence $\rho(x,\phi(a))\leq e^{-\epsilon(R-2\delta)}<c$. Hence, the image of $\phi$ is $c$--dense in $S_R(w)$.

Now let $a,b\in\partial X$. We distinguish two cases.

If $(a|b)_w\geq R$ then $\rho(a,b)\leq e^{-\epsilon R}$.  In this case $\phi(a)$ and $\phi(b)$ lie within $5\delta$ of each other by Lemma \ref{lem:tripod}.  In particular $\rho(\phi(a),\phi(b))\leq e^{\frac{5}{2}\epsilon \delta-\epsilon R}$, so the difference $|\rho(\phi(a),\phi(b)) - \rho(a,b)|$ is at most $e^{\frac{5}{2}\epsilon\delta-\epsilon R}=c.$

Suppose on the other hand $(a|b)_w\leq R$, so that $\rho(a,b)\geq e^{-\epsilon R}$. By Lemma \ref{lem:tripod}, $\left|d(\phi(a),\phi(b))-2(R-(a|b)_w)\right|\leq 5\delta$, and hence $\left|(\phi(a)|\phi(b))_w-(a|b)_w\right|\leq \frac{5}{2}\delta$.  We deduce $$\lambda^{-1}\rho(a,b) \leq \rho(\phi(a),\phi(b))\leq \lambda\rho(a,b),$$
with no additive error in this case.
\end{proof}

\subsection{Linear connectedness and a lemma of Ivanov}

In order to show that the boundary of our filled group is planar in the proof of Theorem \ref{thm:boundary sphere} in Section \ref{s:proof of boundary sphere}, we use the following adaptation of a lemma of Ivanov \cite[Lemma 2.2]{Ivanov97}.
\begin{lemma}\label{ivanovlemma}
 Let $(M_i,d_i)$ be metric spaces, and assume that each $M_i$ is (homeomorphic to) a closed smooth manifold of dimension $\geq 2$. Suppose that there exists $L$ so that each $M_i$ is $L$-linearly connected and that $(M,d)$ is a weak Gromov--Hausdorff limit of $(M_i,d_i)$. If the finite graph $\Gamma$ can be topologically embedded in $M$ then for all large enough $i$ it can also be embedded in $M_i$.
\end{lemma}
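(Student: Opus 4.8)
The plan is to transport a fixed topological embedding $g\co\Gamma\to M$ across the quasi-isometries $f_i\co M\to M_i$ provided by weak Gromov--Hausdorff convergence, using $L$--linear connectedness of the $M_i$ to turn the (discontinuous) composite $f_i\circ g$ into an honest continuous map, and then the manifold structure together with the hypothesis $\dim M_i\ge 2$ to remove the remaining self-intersections. After an initial subdivision we may assume $\Gamma$ is a simple graph. Since $g$ embeds a finite complex, the ``scales'' $\delta_0:=\min\{d(g(e),g(e'))\co e,e'\text{ non-adjacent edges}\}$, $\delta_1:=\min\{d(g(v),g(w))\co v\ne w\text{ vertices}\}$ and $\delta_2:=\min\{d(g(v),g(e))\co v\text{ a vertex},\ v\notin e\}$ are all strictly positive, and these are the features of the configuration that survive the passage to $M_i$.

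First I would build a preliminary map $\psi_i\co\Gamma\to M_i$. On vertices put $\psi_i(v)=f_i(g(v))$; these are distinct for $i$ large, as $d_i(\psi_i(v),\psi_i(w))\ge\lambda^{-1}\delta_1-\epsilon_i$. For an edge $e=\{v,w\}$, partition the arc $g(e)$ as $g(v)=z_0,z_1,\dots,z_k=g(w)$ into pieces of diameter $<\eta$, join $f_i(z_{j-1})$ to $f_i(z_j)$ by an arc of diameter $\le L(\lambda\eta+\epsilon_i)$ via linear connectedness, and concatenate; this gives a path $\sigma^i_e$ from $\psi_i(v)$ to $\psi_i(w)$ lying within $c_i:=L(\lambda\eta+\epsilon_i)$ of $f_i(g(e))$. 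Being a connected finite union of arcs, $\sigma^i_e$ is a Peano continuum, hence contains an actual arc $\alpha^i_e$ from $\psi_i(v)$ to $\psi_i(w)$; I replace $\sigma^i_e$ by $\alpha^i_e$, which in particular removes all self-intersections. Using the quasi-isometry inequalities, the scales $\delta_0,\delta_2$, and uniform continuity of $d$ on the compact sets $g(e)\times g(e')$, one checks that for $\eta$ small enough and $i$ large enough: (a) $\alpha^i_e\cap\alpha^i_{e'}=\emptyset$ for non-adjacent $e,e'$; (b) $\psi_i(v)\notin\alpha^i_e$ when $v\notin e$; (c) if $e,e'$ share the vertex $v$ then $\alpha^i_e\cap\alpha^i_{e'}\subseteq B(\psi_i(v),r_i)$ with $r_i\to 0$, and moreover no $\alpha^i_e$ with $v\notin e$ meets $B(\psi_i(v),\rho)$ for a fixed $\rho>0$. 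Thus $\psi_i$ fails to be an embedding of $\Gamma$ only through crossings of edges confined to a finite, pairwise disjoint (for $i$ large) family of balls $B(\psi_i(v),r_i)$, one per vertex.

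It then remains to repair $\psi_i$ near each vertex $v$. Choosing $\rho_i$ with $r_i\ll\rho_i\ll\min\{\delta_1,\delta_2\}/\lambda$, let $W_v$ be the component of $\{x\co d_i(x,\psi_i(v))<\rho_i\}$ containing $\psi_i(v)$: an open, connected subset of the closed manifold $M_i$, hence itself an open connected manifold without boundary of dimension $n\ge 2$; by the choice of $\rho_i$ the only edge-arcs meeting $W_v$ are those at $v$. Inside $W_v$ I would discard the initial portions of the arcs $\alpha^i_{e_1},\dots,\alpha^i_{e_d}$ at $v$, with outer endpoints $y_1,\dots,y_d$ near $\partial W_v$, and redraw a genuinely embedded ``spider'': $d$ arcs in $W_v$ from $\psi_i(v)$ to the $y_j$ meeting pairwise only at $\psi_i(v)$, then reglue the outer tails. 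The key local fact is that such an embedded spider exists in any open connected manifold of dimension $\ge 2$: for $n\ge 3$ this is immediate from general position (a generic arc in an $n$--manifold misses a fixed $1$--complex once $1+1<n$), and for $n=2$ one proves it by induction on $d$, using that a regular neighbourhood of a finite tree in a surface is a disk and that the complement of an open disk in a connected surface is connected, so each new prong can be routed around the part of the spider already present. Performing this surgery at every vertex turns $\psi_i$ into a topological embedding $\Gamma\hookrightarrow M_i$, for all sufficiently large $i$.

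The hard part is exactly this last surgery in the surface case $n=2$: in dimension $\ge 3$ the entire cleanup reduces to routine general position, but in dimension $2$ a finite graph need not be approximable by an embedding, so it is essential that one has first herded all the bad intersections into disjoint little disks inside which nothing worse than an embedded star has to be drawn --- which is where the manifold hypothesis, the bound $\dim\ge 2$, and the \emph{uniform} linear connectedness (needed to keep the following-paths short enough for the fixed scales $\delta_0,\delta_1,\delta_2$ to dominate) are all used together. Some additional care is required to ensure the redrawn spiders stay within $W_v$, so that they do not disturb the far parts of the other edges, which have already been arranged to be disjoint.
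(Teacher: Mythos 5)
Your outline follows the paper's strategy quite closely (push vertices across the quasi-isometry, build edge paths by applying uniform linear connectedness to a fine subdivision, use the fixed separation scales of the embedded graph to control crossings, then repair near the vertices using that $M_i$ is a manifold of dimension $\geq 2$), but the vertex-repair step as you describe it has a genuine gap. Your surgery at a vertex $v$ presumes that each incident arc $\alpha^i_{e_j}$ splits cleanly into an ``initial portion inside $W_v$'' followed by an ``outer tail'', and that the redrawn spider only has to avoid the far parts of the \emph{other} edges. Neither is guaranteed: the arcs were obtained by concatenating linear-connectedness arcs and then extracting a sub-arc of a Peano continuum, so an arc incident to $v$ may leave and re-enter $W_v$ (and even the small ball $B(\psi_i(v),r_i)$ where the crossings are confined) many times. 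Consequently the decomposition ``discard the initial portion, reglue the outer tail'' is not well defined, two retained tails at $v$ can still cross each other, and the spider you draw inside $W_v$ can collide with returning strands of the very arcs you are repairing; your closing remark about keeping the spiders inside $W_v$ does not address this, since those strands lie inside $W_v$.

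The obvious repair, trimming each incident arc at its \emph{last} visit to a small closed ball around $\psi_i(v)$ so that the tails never return, runs into a scale mismatch: to join the trim points back to $\psi_i(v)$ you can only produce connected sets of diameter $L$ times the trimming radius, while the tails are only known to avoid the trimming-radius ball, not the $L$-fattened region (and a last-visit point on the frontier of a component of a metric ball need not even be accessible from inside it). This is exactly the difficulty the paper's proof is engineered around, and it dictates the order of construction: first build, for each vertex, a path-connected neighborhood $U_{\tilde v}$ with $B_{\epsilon}(\tilde v)\subseteq U_{\tilde v}\subseteq B_{4L\epsilon}(\tilde v)$ which by construction swallows all the short linear-connectedness arcs emanating from $B_{\epsilon}(\tilde v)$ (this is where the smoothness hypothesis is used, to make $U_{\tilde v}$ a compact manifold with boundary); then build the edge sets $A_e$ so that $A_e\cap U_{\tilde u}=\emptyset$ for $u\notin e$ and $A_e\cap A_{e'}\subseteq \mathring U_{\tilde v}$ when $e,e'$ share $v$; and only then extract the embedded edge path by trimming an arc in $A_e$ at its last visit to $U_{\tilde v}$. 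With that order of quantifiers the tails can never return to the region where the spider is drawn, the trim points automatically lie in the connected set $U_{\tilde v}$, and the glued-up union of spiders and edge arcs is homeomorphic to $\Gamma$. Your argument needs an analogue of the $U_{\tilde v}$'s (a connected vertex region, built \emph{before} the edge arcs are finalized, that the trimmed tails are forced to avoid) before the surgery can actually be carried out.
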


We emphasize that we do {\em not} assume that the limit $M$ is a manifold.

\begin{proof}
By assumption there is some $K\geq 1$, and a sequence of $(K,\epsilon(i))$--quasi-isometries $\pi_i\co M\to M_i$ with $\epsilon(i)\to 0$ as $i\to +\infty$.

 Let $f\co \Gamma\to M$ be a topological embedding.  We fix some constants $C,\epsilon, \epsilon'$ satisfying:
 \begin{enumerate}
 \item $C>5K^2L$;
 \item for any disjoint subgraphs $\Gamma_1,\Gamma_2$ of $\Gamma$ we have $d(f(\Gamma_1),f(\Gamma_2))\geq C\epsilon$;
 \item\label{sharedendpoint} if the edges $e_1,e_2$ share the endpoint $v$ and $p_i\in e_i$ is so that $d(f(p_i),f(v))\geq \epsilon/C$ then $d(f(p_1),f(p_2))\geq C\epsilon'$; and
 \item $\epsilon'<\frac{\epsilon}{6KL}$.
 \end{enumerate}
 Fix $i$ so that $\epsilon(i)\leq \epsilon'$ until the end of the proof.
 For $v$ a vertex of $\Gamma$, let $\tilde{v}=\pi_i(f(v))$.
\setcounter{claim}{0}
\begin{claim}\label{vertexclaim}
We can choose, for each vertex $v$ of $\Gamma$, a path-connected neighborhood $U_{\tilde v}$ of $\tilde v$ so that $B_{\epsilon}(\tilde v)\subseteq U_{\tilde v} \subseteq B_{4L\epsilon}(\tilde v)$. Moreover, we can require $U_{\tilde v}$ to be a compact manifold (with boundary).
\end{claim}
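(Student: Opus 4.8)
The plan is to find a path-connected neighborhood of $\tilde v$ trapped between two metric balls, using the $L$-linear connectedness of $M_i$ to get the lower containment and a manifold structure to get compactness. First I would note that since $M_i$ is $L$-linearly connected, every point $x \in B_\epsilon(\tilde v)$ can be joined to $\tilde v$ by a connected set (an arc, after increasing $L$ slightly, as in the remark following the definition of linear connectedness) of diameter at most $L \cdot d(x,\tilde v) \le L\epsilon$; taking the union over all such $x$ gives a path-connected set $V$ with $B_\epsilon(\tilde v) \subseteq V \subseteq B_{2L\epsilon}(\tilde v)$ (each point of each arc is within $L\epsilon$ of $\tilde v$, plus the $\epsilon$ to reach $x$ — at worst $B_\epsilon \subseteq V \subseteq \overline{B_{2L\epsilon}}$). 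This $V$ need not be open or a manifold, so the next step is to thicken it slightly while staying inside $B_{4L\epsilon}(\tilde v)$.

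Next I would promote $V$ to an open set. Since $M_i$ is a smooth manifold, fix a smooth Riemannian metric on $M_i$; its length metric is locally bi-Lipschitz to $d_i$, but more simply one can argue as follows. Take a small open neighborhood $U'$ of $V$ with $\overline{U'} \subseteq B_{3L\epsilon}(\tilde v)$; then replace $U'$ by the path-component $U_{\tilde v}^\circ$ of $U'$ containing $\tilde v$. Since $M_i$ is a manifold, it is locally path-connected, so this path-component is open, and it contains the connected set $V$ (because $V$ is connected and meets $U_{\tilde v}^\circ$ at $\tilde v$, hence lies entirely in one path-component of $U'$ — using again local path-connectedness to identify connected components with path-components of open sets). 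Thus $B_\epsilon(\tilde v) \subseteq V \subseteq U_{\tilde v}^\circ \subseteq B_{3L\epsilon}(\tilde v)$, and $U_{\tilde v}^\circ$ is open and path-connected.

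Finally I would arrange the manifold-with-boundary condition. Inside the open set $U_{\tilde v}^\circ$, choose a smooth compact codimension-$0$ submanifold-with-boundary $U_{\tilde v}$ (for instance a regular sublevel set of a smooth proper function on $U_{\tilde v}^\circ$, or the closure of a smoothly bounded open set) that still contains the closed ball $\overline{B_\epsilon(\tilde v)}$ and is contained in $U_{\tilde v}^\circ \subseteq B_{3L\epsilon}(\tilde v) \subseteq B_{4L\epsilon}(\tilde v)$. Such a submanifold exists because $M_i$ is a smooth manifold: one can, e.g., take a finite smooth cover of the compact set $\overline{B_\epsilon(\tilde v)}$ by coordinate balls with closures in $U_{\tilde v}^\circ$, take a union of slightly shrunken closed coordinate balls, and smooth the corners, or simply invoke that any open subset of a smooth manifold containing a given compact set contains a compact smooth codimension-$0$ submanifold-with-boundary containing that compact set. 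Replacing $U_{\tilde v}$ by its path-component containing $\tilde v$ (which is still a compact manifold-with-boundary, being a component of one), we get the required neighborhood; shrinking $L$-dependence into the stated constant $4L\epsilon$ absorbs the factor $3$ versus $4$.

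The main obstacle is purely bookkeeping: going from the a priori-only-connected set $V$ produced by linear connectedness to an honest open path-connected set, and then to a compact smooth manifold-with-boundary, while not inflating the diameter past $4L\epsilon$. The linear connectedness gives diameter $\le 2L\epsilon$ for $V$; the two thickening steps must together cost a factor less than $2$, which is why the statement allows the generous outer radius $4L\epsilon$. None of this uses hyperbolicity or the quasi-isometries $\pi_i$ beyond defining $\tilde v$; it is entirely a local statement about the manifolds $M_i$ together with their uniform linear connectedness.
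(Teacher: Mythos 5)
Your proof follows essentially the same route as the paper: use linear connectedness to build a path-connected set squeezed between $B_\epsilon(\tilde v)$ and a ball of radius a bounded multiple of $\epsilon$ (the paper takes the union $A$ of short paths between all pairs of points of $B_\epsilon(\tilde v)$; you take arcs from each point to $\tilde v$, which works just as well), and then use smoothness of $M_i$ to replace it by a compact codimension-$0$ submanifold-with-boundary -- the paper does this by taking a regular sublevel set $g^{-1}([0,t])$ of a function with $g^{-1}(0)=\overline{A}$ and then the component containing $A$, which is one of the devices you name. The only slip is in your last step: you ask the compact submanifold to contain merely $\overline{B_\epsilon(\tilde v)}$ and then pass to its path-component containing $\tilde v$; since $B_\epsilon(\tilde v)$ need not be connected, that component could a priori lose part of the ball, and the inclusion $B_\epsilon(\tilde v)\subseteq U_{\tilde v}$ would fail. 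The fix is already in your construction: engulf the path-connected set $V$ (or its closure, which still lies in your open set $U_{\tilde v}^\circ$, since that set is clopen in the neighborhood $U'$ and contains $V$) rather than just the ball; then the component of the compact submanifold containing $\tilde v$ contains $V\supseteq B_\epsilon(\tilde v)$, exactly as in the paper, where the sublevel set automatically contains the connected set $A$ and one takes the component containing $A$.
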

 
\begin{proof}[Proof of Claim \ref{vertexclaim}]
 For $x,y\in M_i$, let $A_{x,y}$ be the union of all paths of length $\leq Ld_i(x,y)$ joining $x$ to $y$.   Let $A = \bigcup \{A_{x,y}\mid x,y\in B_{\epsilon}(\tilde{v})\}$.
  Notice that $A\subseteq B_{(2L+1)\epsilon}(\tilde v)$, and that $A$ is path-connected.  Fix a homeomorphism $h$ from a smooth manifold to $M_i$, and let $g\co M_i\to [0,\infty)$ be chosen so that $g\circ h$ is smooth and $g^{-1}(0)$ is the closure of $A$.

For any $R>(2L+1)\epsilon$, and any sufficiently small regular value $t$ of $g\circ h$,
we have $g^{-1}([0,t])\subseteq B_{R}(\tilde v)$.  In particular, we can fix $t$ so that $U_t=g^{-1}([0,t])\subseteq B_{4L\epsilon}(\tilde v)$.  We may take $U_{\tilde v}$ to be the connected component of $U_t$ containing $A$.
\end{proof}

\begin{claim} \label{edgeclaim}
  We can choose, for each edge $e$ of $\Gamma$, an embedded path $\gamma_e$ in $M_i$ in such a way that the following properties are satisfied. If $v$ is not an endpoint of $e$ then $\gamma_e$ does not intersect $U_{\tilde v}$, and it intersects $U_{\tilde v}$ exactly in one endpoint if $v$ is an endpoint of $e$. Moreover, the paths $\gamma_e$ are disjoint.
\end{claim}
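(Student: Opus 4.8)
The plan is to build each $\gamma_e$ by following the image under $\pi_i\circ f$ of a sufficiently fine subdivision of $e$, using $L$--linear connectedness to interpolate, then trimming the ends that lie in the vertex neighborhoods $U_{\tilde v}$ and replacing the resulting path by an embedded arc inside its own image. Fix an edge $e$ with endpoints $v,w$. Since $f|_e$ is uniformly continuous on a compact interval, I would subdivide $e$ as $v=x_0,x_1,\dots,x_n=w$ with $d(f(x_{j-1}),f(x_j))<\delta_0$ for a parameter $\delta_0>0$ to be taken very small at the end. Writing $y_j=\pi_i(f(x_j))$, we have $y_0=\tilde v$, $y_n=\tilde w$, and $d_i(y_{j-1},y_j)\le K\delta_0+\epsilon(i)$ since $\pi_i$ is a $(K,\epsilon(i))$--quasi-isometry, so $L$--linear connectedness (and the remark letting us take arcs) supplies an embedded arc $\sigma_j$ from $y_{j-1}$ to $y_j$ of diameter at most $\rho_0:=L(K\delta_0+\epsilon(i))$. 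Let $\gamma_e'=\sigma_1*\cdots*\sigma_n$; every point of $\gamma_e'$ lies within $\rho_0$ of some $y_j\in\pi_i(f(e))$, so $\gamma_e'$ is contained in the $\rho_0$--neighborhood $N_{\rho_0}(\pi_i(f(e)))$. Choosing $\delta_0$ small (and recalling $\epsilon(i)\le\epsilon'$) makes $\rho_0$ as small as needed compared with $\epsilon$.

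\textbf{Trimming and embedding.} The image $|\gamma_e'|$ is a continuous image of $[0,1]$, hence a Peano continuum, hence arcwise connected, so it contains an embedded arc $A_e\co[0,1]\to M_i$ from $\tilde v$ to $\tilde w$ with $A_e\subseteq N_{\rho_0}(\pi_i(f(e)))$. I would then set $a=\sup\{t: A_e(t)\in U_{\tilde v}\}$ and $b=\inf\{t\ge a: A_e(t)\in U_{\tilde w}\}$ and take $\gamma_e=A_e|_{[a,b]}$. Conditions (1), (2), (4) give $d_i(\tilde v,\tilde w)\ge\tfrac1K d(f(v),f(w))-\epsilon(i)\ge\tfrac{C\epsilon}{K}-\epsilon'>4L\epsilon$, so $\tilde w\notin B_{4L\epsilon}(\tilde v)\supseteq U_{\tilde v}$ and symmetrically $\tilde v\notin U_{\tilde w}$; thus $a,b$ are well defined with $A_e(a)\in\partial U_{\tilde v}$, $A_e(b)\in\partial U_{\tilde w}$, and since $A_e|_{[a,1]}$ meets $U_{\tilde v}$ only at $A_e(a)$ while $A_e(b)\notin U_{\tilde v}$, the arc $\gamma_e$ meets each of $U_{\tilde v}$ and $U_{\tilde w}$ in exactly one of its endpoints. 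For a vertex $u\notin\{v,w\}$ the subgraphs $\{u\}$ and $e$ are disjoint, so $d(f(u),f(e))\ge C\epsilon$ by (2); combined with $U_{\tilde u}\subseteq B_{4L\epsilon}(\tilde u)$, the containment $\gamma_e\subseteq N_{\rho_0}(\pi_i(f(e)))$, and conditions (1), (4) and the smallness of $\rho_0$, this forces $\gamma_e\cap U_{\tilde u}=\emptyset$.

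\textbf{Disjointness of the family.} If $e,e'$ are disjoint subgraphs then $d(f(e),f(e'))\ge C\epsilon$ by (2), so $\pi_i(f(e))$ and $\pi_i(f(e'))$ are at distance at least $\tfrac{C\epsilon}{K}-\epsilon'$, which by (1), (4) exceeds $2\rho_0$; hence $\gamma_e$ and $\gamma_{e'}$ are disjoint. If $e,e'$ share exactly the vertex $v$, then each of $\gamma_e,\gamma_{e'}$ starts on $\partial U_{\tilde v}$ and otherwise lies outside $U_{\tilde v}\supseteq B_\epsilon(\tilde v)$; a point of $\gamma_e$ (resp. $\gamma_{e'}$) outside $U_{\tilde v}$ is within $\rho_0$ of $\pi_i(f(p))$ for some $p\in e$ (resp. $p'\in e'$), and $d_i(\pi_i(f(p)),\tilde v)>\epsilon-\rho_0$ together with (1), (4) gives $d(f(p),f(v))\ge\epsilon/C$, so condition (3) yields $d(f(p),f(p'))\ge C\epsilon'$; since $\epsilon(i)\le\epsilon'$ this keeps $\gamma_e$ and $\gamma_{e'}$ disjoint outside $U_{\tilde v}$ and in particular forces their initial endpoints on $\partial U_{\tilde v}$ to be distinct. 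Carrying this out for every edge produces the required family.

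\textbf{Main obstacle.} The delicate part is the behavior near a single vertex $\tilde v$: the arcs $\gamma_e$ for the (possibly many) edges at $v$ all crowd near $\tilde v$, and one must simultaneously cut each of them off on $\partial U_{\tilde v}$ at a \emph{distinct} point, keep the trimmed arcs pairwise disjoint, and keep them out of the irrelevant neighborhoods $U_{\tilde u}$. This is exactly what conditions (3) and (4) are calibrated for, and it is why one passes to an embedded arc inside $|\gamma_e'|$ before trimming, rather than trimming the concatenation $\gamma_e'$ directly. A secondary technical point, because the $U_{\tilde v}$ are genuine manifold neighborhoods that may overlap for adjacent $v,w$, is to verify that the trimming interval $[a,b]$ is nondegenerate and that $A_e(a)\notin U_{\tilde w}$; this should follow from the same distance estimates together with a careful choice of the $U_{\tilde v}$ within the latitude allowed by the proof of Claim \ref{vertexclaim}, but it requires a bit of extra care.
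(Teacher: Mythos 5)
Your construction---subdividing $f(e)$, projecting, joining consecutive projected points by arcs of diameter at most $L$ times their distance, extracting an embedded arc, and trimming it at its last exit from $U_{\tilde v}$ and first entry into $U_{\tilde w}$, with conditions (2) and (3) supplying exactly the separation estimates for non-incident vertices and edges and for edges sharing a vertex---is correct and is essentially the paper's own proof, which builds the same union of connecting paths $A_e$, proves the same three estimates, and leaves the trimming step implicit. The nondegeneracy issue you flag for $[a,b]$ disappears once one notes that $U_{\tilde v}\cap U_{\tilde w}=\emptyset$: this follows from $d_i(\tilde v,\tilde w)\geq \frac{C}{K}\epsilon-\epsilon' > 8L\epsilon$ after replacing $K$ by $\max\{K,2\}$ (harmless, since a $(K,\epsilon_i)$--quasi-isometry is also a $(K',\epsilon_i)$--quasi-isometry for $K'\geq K$) or taking $C$ slightly larger.
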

\begin{proof}[Proof of Claim \ref{edgeclaim}]

  Let $e$ be an edge of $\Gamma$, and let $\{p_j\}_{j=1,...,n}$ be a sequence of points along $f(e)$ that subdivide $f(e)$ into subpaths of diameter $\leq \epsilon'$.  For each $j$ set $q_j=\pi_i(p_j)$. Consider paths $\gamma_{j}$ connecting $q_j$ to $q_{j+1}$ of diameter $\leq L d_i(q_j,q_{j+1})$. Let $A_e$ be the union of all such paths, and notice that $A_e\subset N_{L(K+1)\epsilon'}(\pi_i(f(e)))$.  
  
  Suppose $v$ is not an endpoint of $e$.  We claim $A_e\cap U_{\tilde{v}}$ is empty.  Indeed, $d(f(v),f(e))\geq C\epsilon$, so $$d_i(\tilde{v},A_e)\geq \frac{C}{K}\epsilon - \epsilon_i - L(K+1)\epsilon'\geq \left(5KL - \frac{K}{2}\right)\epsilon >4L\epsilon.$$  

  Suppose $e$ and $e'$ are edges of $\Gamma$ not sharing an endpoint.  We claim $A_e\cap A_{e'}$ is empty.  Indeed, 
$$d_i(A_e,A_{e'})\geq d_i(\pi_i(e),\pi_i(e'))-2L(K+1)\epsilon'
\geq \frac{C}{K}\epsilon -\epsilon_i - 2L(K+1)\epsilon' >0.$$

  Finally suppose that $e$ and $e'$ are edges which do share an endpoint $v$.  We claim $A_e\cap A_{e'}\subseteq \mathring U_{\tilde v}$.  Indeed, if $x\in A_e\cap A_{e'}$, there are $q_j\in \pi_i(f(e))$ and $q_k'\in \pi_i(f(e'))$ within $L(K+1)\epsilon'$ of $x$.  The corresponding points $p_j\in f(e)$ and $p_k'\in f(e')$ must satisfy
  $d(p_j,p_k')\leq K(L(K+1)\epsilon'+\epsilon')<C\epsilon'$.  Using the condition \eqref{sharedendpoint}, it follows that $d(p_j,f(v))$ is bounded above by $\frac{\epsilon}{C}$, and so $d(q_j,\tilde{v})\leq \frac{K}{C}\epsilon+\epsilon_i$.  Finally $d(x, \tilde{v})\leq \frac{K}{C}\epsilon + \epsilon_i + L(K+1)\epsilon' < \epsilon$.

 It is now easy to see that the path-connected set $A_e$ contains an embedded path $\gamma_e$ as required.
\end{proof}

 In order to conclude the construction we just need to observe that, since $U_{\tilde v}$ is a manifold of dimension at least 2, each $U_{\tilde v}$ contains a union of paths $P_{\tilde v}$ that pairwise only intersect at $\tilde v$, each connecting $\tilde v$ to an endpoint of some $\gamma_e$. 
 
 The union $\bigcup P_{\tilde v}\cup \bigcup \gamma_e$ is a subset of $M_i$ homeomorphic to $\Gamma$.
\end{proof}

\section{Spiderwebs}\label{sec:spiderweb}

In this section we make a construction similar to that of {\em windmills} from \cite[$\S5$]{DGO}.  We call our construction {\em spiderwebs}.  The main difference between the constructions is that we want the stabilizers of spiderwebs to be a free product of \emph{finitely many} factors, which is not the case for the windmills from \cite{DGO}.

We work in this section with a $\theta$--hyperbolic space, reserving the symbol $\delta$ for a constant that is chosen in later sections (and depends on $\theta$).  We will will fix a particular $\theta$ in Assumption \ref{rem:choice of theta} and  then fix $\delta = 1500\theta$ in Assumption \ref{rem:choice of delta}.  

\subsection{Notation}\label{sec:notation}
We fix the following notation from now until the end of the section.
Let $(G,\mc{P})$ be a relatively hyperbolic pair, and let $X$ be a cusped space for the pair as in Definition \ref{def:cc}.  Fix an arbitrary integer $\theta\geq 1$ so that $X$ is $\theta$--hyperbolic.  As in Section \ref{s:greendlinger}, let $\mc{C}$ be the collection of parabolic fixed points in $\partial X$.  We are going to choose a $G$--equivariant, $10^3\theta$--separated family of horoballs as follows:  Let $c\in \mc{C}$.  Then $c$ is the unique limit point of some $\calH_c = \calH(gP)$, for some coset $gP$ of some $P\in \mc{P}$.  Let $\sepH_c = \calH_c^{[500\theta,\infty)}$; this is convex in $X$ by Lemma \ref{horoballconvexity}.  Note that the closure of the complement of $\bigcup \sepH_c$ is $G$--cocompact.  

Suppose that $\{ N_i\lhd P_i \}$ is a collection of (long) filling kernels, with $N_i\neq\{1\}$.  As in Section \ref{s:greendlinger}, if $c = \partial \calH(gP_i)$, then let $K_c = gN_ig^{-1}$ be the conjugate of a filling kernel fixing $c$.  We suppose that the groups $K_c$ satisfy the following:
\begin{vtc}
 For each $c\in \calC$, $g\in K_c\setminus \{1\}$ and $x\in X\setminus \sepH_c$ we have $d_X(x,gx)\geq 10^4\theta$.
\end{vtc}
The following is an easy consequence of Theorem \ref{thm:dehnfilling}.
\begin{lemma} \label{lem:long is v translating}
For sufficiently long fillings the family $\{K_c\}$ satisfies the very translating condition.
\end{lemma}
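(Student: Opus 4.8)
The plan is to choose the exceptional finite set $\mathcal B$ so that Lemma \ref{Greendlinger:BallsEmbed} becomes available, and then to argue by contradiction, using that $G$ acts freely on the Cayley graph together with the coarse geometry of horoballs in the cusped space.

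First I would reduce to the finitely many $G$--orbit representatives among the parabolic points. The data $\calC$, $\{\calH_c\}_c$, $\{\sepH_c\}_c$ and $\{K_c\}_c$ are all $G$--equivariant, $\calC/G$ is finite with representatives $c_i=\partial\calH(P_i)$ (so that $\calH_{c_i}=\calH(P_i)$, $\sepH_{c_i}=\calH(P_i)^{[500\theta,\infty)}$ and $K_{c_i}=N_i$), and since $d_X$ is $G$--invariant it suffices to verify the condition when $c=c_i$. Then I would set $R:=3\cdot 10^4\theta$ and let $\mathcal B\subseteq G\setminus\{1\}$ be the finite set provided by Lemma \ref{Greendlinger:BallsEmbed} for this value of $R$, applied to the subgroup $N_i<K$. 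Thus for any filling with all $N_i\cap\mathcal B=\emptyset$ the quotient map $X\to X/N_i$ restricts to an isometry on every ball of radius $R$ about a Cayley graph vertex. The consequence I really want is: if $g\in N_i\setminus\{1\}$ and $v$ is any Cayley graph vertex, then $d_X(v,gv)>R$. Indeed, if not, then $v$ and $gv$ lie in a common ball of radius $R$ about a Cayley vertex and have the same image in $X/N_i$, so $v=gv$; since $G$ acts freely on the Cayley graph this gives $g=1$, a contradiction.

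The heart of the argument is then a brief case analysis. Fix $g\in N_i\setminus\{1\}$ and $x\in X\setminus\sepH_{c_i}$ and suppose for contradiction that $d_X(x,gx)<10^4\theta$. Every point of $X$ lies either within $5000\theta$ of the Cayley graph or at depth greater than $5000\theta$ in a unique horoball $\calH_{c'}$. In the first case, choosing a Cayley vertex $v$ with $d_X(x,v)\le 5001\theta$ yields $d_X(v,gv)\le 2\cdot 5001\theta+d_X(x,gx)<R$, contradicting the previous paragraph. In the second case $c'\neq c_i$, since otherwise $x\in\calH(P_i)^{(5000\theta,\infty)}\subseteq\sepH_{c_i}$, which is excluded. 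If $g$ does not stabilise $\calH_{c'}$, then $g\calH_{c'}$ is a horoball distinct from and disjoint from $\calH_{c'}$ (their base cosets are disjoint), $gx$ lies at depth greater than $5000\theta$ in $g\calH_{c'}$, and every path from $x$ to $gx$ contains two disjoint subpaths, one from $x$ to the depth--$0$ horosphere of $\calH_{c'}$ and one from the depth--$0$ horosphere of $g\calH_{c'}$ to $gx$, each of length greater than $5000\theta$; hence $d_X(x,gx)>10^4\theta$, a contradiction. If $g$ does stabilise $\calH_{c'}$, then $g$ fixes the two distinct points $c_i$ and $c'$ of $\partial X$; since an infinite--order element of a peripheral subgroup is parabolic and fixes only its parabolic point, $g$ has finite order. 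Letting $\gamma$ be a bi--infinite geodesic in $X$ from $c_i$ to $c'$, the fact that $g$ fixes both endpoints puts $g\gamma$ within Hausdorff distance $O(\theta)$ of $\gamma$, and finiteness of the order forces the translation of $g$ along $\gamma$ to be bounded, so $g$ moves every point of $\gamma$ a distance at most some $R_0=O(\theta)<R$. But $\gamma$ runs from deep in $\calH_{c_i}$ to deep in the distinct horoball $\calH_{c'}$, so it passes through a point at depth $0$ in $\calH_{c_i}$, which is within $1$ of a Cayley vertex $v$; then $d_X(v,gv)\le R_0+2<R$, contradicting the previous paragraph. This establishes the very translating condition.

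The only step requiring genuine (if modest) thought is the last sub-case: one must notice that an element of the filling kernel which stabilises a second horoball is forced to be torsion — this uses that peripheral subgroups contain no loxodromic elements — and then run the ``bridge'' argument that feeds the situation back into Lemma \ref{Greendlinger:BallsEmbed}. The remaining cases are routine bookkeeping with the hyperbolicity constant and the depth function, which is why the statement is indeed an easy consequence of the Dehn filling theorem.
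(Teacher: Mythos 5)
Your proof is correct, and it is worth noting that the paper does not actually give an argument for Lemma \ref{lem:long is v translating} (it is asserted as an easy consequence of Theorem \ref{thm:dehnfilling}), so you are supplying a proof rather than reproducing one. Your route --- reduce to orbit representatives, use Lemma \ref{Greendlinger:BallsEmbed} to see that a nontrivial element of $N_i$ displaces every Cayley vertex by more than $3\cdot 10^4\theta$, then split according to whether $x$ is within $5000\theta$ of the Cayley graph, deep in a horoball that $g$ moves off itself, or deep in a $g$--invariant horoball --- is sound. The last sub-case is the only delicate one, and you handle it correctly: there $g$ fixes two distinct parabolic points, hence has finite order (your convergence-group argument works; alternatively you can quote the almost malnormality of $\mc{P}$, which the paper cites from Osin, to see that $P_i\cap hP_jh^{-1}$ is finite), and a finite-order isometry fixing both ends of a bi-infinite geodesic translates along it by $O(\theta)$, so it displaces the point where that geodesic crosses the depth-zero horosphere of $\calH_{c_i}$ by $O(\theta)$, contradicting the Cayley-vertex displacement bound. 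For comparison, the argument the authors presumably have in mind is shorter and avoids the case analysis entirely: for a long filling every nontrivial element of $N_i$ (and each of its $P_i$--conjugates) has huge word length in $P_i$, so by the structure of geodesics in combinatorial horoballs $g$ displaces every point of the depth-$500\theta$ horosphere of $\calH_{c_i}$ by much more than $10^4\theta$; since $\sepH_{c_i}$ is convex (Lemma \ref{horoballconvexity}), $g$--invariant, and nearest-point projection to it is coarsely Lipschitz with the nearest point of any $x\notin\sepH_{c_i}$ lying at depth $500\theta$, one gets $d(x,gx)\geq d(\pi_{\sepH_{c_i}}(x),g\pi_{\sepH_{c_i}}(x))-O(\theta)\geq 10^4\theta$ directly, with no separate treatment of torsion. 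The only places where you gesture rather than compute are the $O(\theta)$ constants in the torsion sub-case (Hausdorff closeness of asymptotic geodesics and additivity of the coarse translation number); these are standard, and in any case they can be absorbed by choosing $R$ larger at the outset, which your setup already permits.
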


\subsection{Spiderwebs}

For $Y$ a subset of $X$ we denote $\calC(Y)=\{c\in \calC\mid Y\cap \sepH_c\ne \emptyset\}$.

\begin{defn}[Spiderweb] \label{def:spiderweb}
A {\em $\theta$--spiderweb} is a subset $W$ of $X$ containing $1$ and satisfying the following axioms.
\begin{enumerate}[label=(S\arabic*)]
\item $W$ is $4\theta$--quasiconvex. \label{item:qc}
\item $\calC(W) = \calC(N_{50\theta}(W))$.\label{item:otherhoroballsfar}
\item\label{item:relcocompact} The group $K_W$ generated by 
\[	\bigcup_{c \in \calC(W)} K_c	,	\]
preserves $W$.  Moreover, for any $R>0$, $(N_R(G)\cap W)/K_W$ is compact.\label{item:cocompact}
\item There exists a finite subset $C \subset \calC(W)$ so that $K_W$ is the free product $\mathop{\ast}\limits_{c \in C} K_c$.\label{item:freeproduct}
\end{enumerate}
\end{defn}

Here is the main theorem of this section:
\begin{theorem}\label{exhaustion}
  In the notation established in Subsection \ref{sec:notation}, and for $K=\llangle \  \bigcup\limits_i N_i\ \rrangle$, there exists a family of $\theta$--spiderwebs $W_0\subset W_1\cdots$ so that $\bigcup W_i = X$ and (consequently) $K=\bigcup K_{W_i}$.
\end{theorem}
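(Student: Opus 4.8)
The plan is to construct the $W_i$ by an inductive ``growing'' procedure analogous to the windmill construction of \cite[$\S5$]{DGO}, but arranged so that at each finite stage only finitely many of the groups $K_c$ have been incorporated. We start with $W_0$: take a large metric ball $B_{R_0}(1)$ in $X$ (with $R_0$ large enough, say $R_0 > 100\theta$), saturate it by adding the horoballs $\sepH_c$ it meets, and then close up under the action of the group generated by the corresponding $K_c$'s. Since $B_{R_0}(1)$ meets only finitely many horoballs $\sepH_c$ up to the stabilizer action, and $X$ is $G$-cocompact away from horoballs, only finitely many $K_c$ appear, so $K_{W_0}$ is generated by finitely many $K_c$. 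The very translating condition is exactly what is needed (via a ping-pong / Greendlinger-type argument, cf. Lemma \ref{greendlinger} and the windmill arguments of \cite{DGO}) to guarantee that this group is in fact the \emph{free} product $\ast_{c\in C_0} K_c$ over a suitable finite set $C_0 \subseteq \calC(W_0)$, giving axiom \ref{item:freeproduct}, and to control the geometry enough to establish quasiconvexity \ref{item:qc} and the ``no extra nearby horoballs'' condition \ref{item:otherhoroballsfar}.

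For the inductive step, given a $\theta$-spiderweb $W_i$, I would enlarge it: let $W_{i+1}'$ be $W_i$ together with the ball $B_{R_{i+1}}(1)$ for an increasing sequence $R_i \to \infty$, again saturate with the horoballs $\sepH_c$ it meets, and then take the orbit under the group generated by $K_{W_i}$ and the finitely many new $K_c$'s. The key point is that $W_{i+1}'$ is contained in a bounded neighborhood of $W_i \cup B_{R_{i+1}}(1)$, so it meets only finitely many new horoball-orbits, and hence only finitely many new $K_c$ are added; thus the generating set of $K_{W_{i+1}}$ is still finite modulo the action. One then runs the same ping-pong analysis — now with $K_{W_i}$ playing the role of one ``old'' factor acting on the subset $W_i$ and each new $K_c$ acting on its horoball $\sepH_c$ (which is far from $W_i$ and from the other new horoballs by the separation of the family and the very translating condition) — to show $K_{W_{i+1}} = K_{W_i} \ast (\ast_{c \in C_{i+1}\setminus C_i} K_c)$, and to re-verify \ref{item:qc}, \ref{item:otherhoroballsfar}, \ref{item:cocompact}. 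Since at each stage $B_{R_i}(1) \subseteq W_i$ and $R_i \to \infty$, we get $\bigcup_i W_i = X$; and since every $N_i$ is a union of conjugates into the various $K_c$'s and every $K_c$ eventually lies in some $K_{W_i}$ (as $\sepH_c$ eventually meets some $B_{R_j}(1)$), we get $K = \llangle \bigcup_i N_i\rrangle = \bigcup_i K_{W_i}$, where the last equality uses that $\bigcup_i K_{W_i}$ is a subgroup (being an increasing union of subgroups) containing all the $K_c$ and contained in $K$.

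The main obstacle — and where the real work of Section \ref{sec:spiderweb} must go — is making the ping-pong argument precise and quantitatively controlled, in particular: (a) showing that the ``saturate, then take orbit'' operation actually stabilizes after finitely many sub-steps (or that it can be iterated to a limit which is still $4\theta$-quasiconvex with the right constants), since adding horoballs to $W$ can create new points near \emph{other} horoballs, potentially forcing one to add more $K_c$'s — one must show this cascade terminates, which is where the $10^3\theta$-separation of the horoball family and the $10^4\theta$ lower bound in the very translating condition are used; and (b) verifying that the free product decomposition persists, i.e. that no relations are introduced between $K_{W_i}$ and the newly added factors, which requires a Bass–Serre / developing-map style argument showing that a reduced word in the factors moves the basepoint a definite amount (so acts nontrivially) — this is the analogue of the key lemma underlying the windmill construction in \cite{DGO}. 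Given those, axioms \ref{item:qc}--\ref{item:freeproduct} and the exhaustion property follow as sketched, and the ``consequently $K = \bigcup K_{W_i}$'' clause is then immediate.
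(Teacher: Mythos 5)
Your outline follows the same broad strategy as the paper (inductive enlargement, finitely many new parabolic factors per step, a free–product verification, exhaustion), but as written it is not a proof: the two points you defer to ``where the real work must go'' are precisely the entire content of the theorem, and your specific construction has a structural problem at exactly those points. Concretely, in your enlargement step you saturate with whole horoballs and then ``close up under the action of the group generated''; but the spiderweb axioms force $K_{W_{i+1}}$ to be generated by $K_c$ for \emph{every} $c\in\calC(W_{i+1})$, so if the orbit of your saturated set comes within reach of a horoball whose $K_c$ you did not include, you must enlarge the group again, re-take the orbit, and so on — you give no argument that this cascade stabilizes, and the $10^3\theta$--separation and the very translating condition do not by themselves stop it. You also never address how the quasiconvexity constant stays at the fixed value $4\theta$ (axiom \ref{item:qc}) after repeatedly unioning in balls and horoballs and taking orbits; a uniform constant over all stages is needed for everything that follows.

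The paper's Proposition \ref{p:exists bigger spiderweb} is engineered to avoid both issues, and this is the genuinely missing idea in your proposal. One only adjoins $K_c$ for a finite set $E$ of representatives of the $K_W$--orbits of horoballs already within $100\theta$ of the \emph{old} $W$ (finitely many orbits by axiom \ref{item:cocompact}), sets $K_W^+=\langle K_W\cup\bigcup_{c\in E}K_c\rangle$, and defines $W'$ as the union of geodesics between points of $K_W^+\cdot N_{10\theta}(W)$ — by Lemma \ref{qconvhull} this hull is $2\theta$--quasiconvex, uniformly in the stage. The heart of the argument is then a single geometric Claim, proved by assembling broken geodesics into $100\theta$--local $12\theta$--tight paths via the projection lemmas (Lemmas \ref{smallprojections}, \ref{concat23}, \ref{concat1}, \ref{concatatproj}) and quasigeodesic stability (Lemma \ref{localtight}): every geodesic between orbit points stays in controlled neighborhoods of $K_W^+W$ and of the finitely many horoball orbits $K_W^+\sepH_c$, $c\in E$. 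This one estimate simultaneously gives injectivity of $K_W\ast(\mathop{\ast}_{c\in E}K_c)\to K_W^+$ (axiom \ref{item:freeproduct}, your point (b)), relative cocompactness, and — crucially — that no unforeseen horoball comes within $50\theta$ of $W'$, so $\calC(W')=K_W^+(\calC(W)\cup E)$ and $K_{W'}=K_W^+$ (axioms \ref{item:otherhoroballsfar}, \ref{item:cocompact}); horoballs that newly approach $W'$ are simply absorbed at the \emph{next} induction step rather than inside the current one, which is what kills your cascade. Finally, your auxiliary balls $B_{R_i}(1)$ are unnecessary: starting from $W_0=\{1\}$ and using $W_{i+1}\supseteq N_{10\theta}(W_i)$ already forces $\bigcup W_i=X$, and the identification $K=\bigcup K_{W_i}$ then follows as you say. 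So the skeleton is right, but the Claim (tight-path control of geodesics between orbit points) is the proof, and it is absent from your proposal.
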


To extend a given $\theta$--spiderweb $W$ to a larger one, we need a few lemmas about how $W$ interacts with its translates under elements of $K_c$ for $\sepH_c$ near to $W$, but not intersecting $W$.  Define $\mc{C}'(W)$ to be $\mc{C}(N_{100\theta}(W))\setminus \mc{C}(W)$.
\begin{lemma}\label{smallprojections}
  Let $c\in \mc{C}'(W)$.  Then $\diam_X(\pi_{\sepH_c}(W))\leq 8\theta$ and $\diam_X(\pi_W(\sepH_c))\leq 16\theta$.
\end{lemma}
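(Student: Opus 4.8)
The plan is to play the $4\theta$--quasiconvexity of $W$ (axiom (S1)) off against the convexity of $\sepH_c$ (established in Section~\ref{sec:notation} using Lemma~\ref{horoballconvexity}). The key consequence of $c\in\calC'(W)$ that I will use is only that $c\notin\calC(W)$: since $\calC(W)=\calC(N_{50\theta}(W))$ by axiom (S2), this means $N_{50\theta}(W)\cap\sepH_c=\emptyset$, so every point of $N_{4\theta}(W)$ — in particular every point of a geodesic between two points of $W$ — lies at distance at least $46\theta$ from $\sepH_c$. All nearest-point projections below exist because $X$ is proper. The one elementary fact used repeatedly is: for \emph{any} subset $Y\subseteq X$, if $q\in Y$ realizes $d(x,Y)$ and $z$ lies on a geodesic $[x,q]$, then $d(z,Y)=d(z,q)$; indeed for $w\in Y$ one has $d(z,w)\ge d(x,w)-d(x,z)\ge d(x,q)-d(x,z)=d(z,q)$, and $q\in Y$ gives the reverse inequality.

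To bound $\diam_X(\pi_{\sepH_c}(W))$, pick $w_1,w_2\in W$, let $p_i\in\sepH_c$ be a nearest point to $w_i$, and consider the geodesic quadrilateral with vertices $w_1,w_2,p_2,p_1$ and sides $[w_1,w_2],[w_2,p_2],[p_2,p_1],[p_1,w_1]$. By convexity $[p_1,p_2]\subseteq\sepH_c$, while $[w_1,w_2]\subseteq N_{4\theta}(W)$ by quasiconvexity and hence stays $46\theta$ away from $\sepH_c$. A geodesic quadrilateral in a $\theta$--hyperbolic space is $2\theta$--slim (cut it along a diagonal into two $\theta$--thin, hence $\theta$--slim, triangles), so $[p_1,p_2]$ lies in the $2\theta$--neighborhood of the union of the other three sides; the $[w_1,w_2]$ term contributes nothing, so the connected set $[p_1,p_2]$ is covered by the two closed sets $N_{2\theta}([w_1,p_1])$ and $N_{2\theta}([w_2,p_2])$, the first containing $p_1$ and the second $p_2$. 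Hence there is $m\in[p_1,p_2]$ in both, say within $2\theta$ of a point $z_i\in[w_i,p_i]$. Since $m\in\sepH_c$, the elementary fact gives $d(z_1,p_1)=d(z_1,\sepH_c)\le d(z_1,m)\le 2\theta$, so $d(m,p_1)\le 4\theta$, and symmetrically $d(m,p_2)\le 4\theta$; therefore $d(p_1,p_2)\le 8\theta$.

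The bound on $\diam_X(\pi_W(\sepH_c))$ is obtained the same way with the roles of the two sets swapped. For $h_1,h_2\in\sepH_c$ with nearest points $q_i\in W$, the quadrilateral $h_1,h_2,q_2,q_1$ has $[h_1,h_2]\subseteq\sepH_c$ and $[q_1,q_2]\subseteq N_{4\theta}(W)$, so again no point of $[q_1,q_2]$ is within $2\theta$ of $[h_1,h_2]$ and $2\theta$--slimness produces $m\in[q_1,q_2]$ within $2\theta$ of a point $z_i\in[h_i,q_i]$. This time $m$ is only $4\theta$--close to $W$ rather than lying in it, so $d(z_1,q_1)=d(z_1,W)\le d(z_1,m)+d(m,W)\le 2\theta+4\theta=6\theta$, whence $d(m,q_1)\le 8\theta$ and likewise $d(m,q_2)\le 8\theta$, giving $d(q_1,q_2)\le 16\theta$. (The asymmetry between the two constants $8\theta$ and $16\theta$ is exactly the asymmetry between $\sepH_c$ being genuinely convex and $W$ being merely $4\theta$--quasiconvex.)

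I do not expect a serious obstacle: this is a routine thin-quadrilateral argument. The only points needing care are the exact form of the slimness estimate for quadrilaterals in a $\theta$--thin space (each side in the $2\theta$--neighborhood of the union of the other three), the topological extraction of the crossing point $m$ from a connected set covered by two closed neighborhoods each meeting one endpoint, and the bookkeeping that keeps the additive error down to precisely $8\theta$, respectively $16\theta$, rather than a larger multiple of $\theta$.
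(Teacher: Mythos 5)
Your proof is correct and follows essentially the same route as the paper: the paper simply notes $d(\sepH_c,W)\geq 50\theta$ (from axiom (S2)), convexity of $\sepH_c$ and $4\theta$--quasiconvexity of $W$, and then cites Lemma~\ref{quasiconvproj}, whose proof is exactly the slim-quadrilateral/nearest-point-projection argument you carry out by hand (with $Q=0$ giving $8\theta$ and $Q=4\theta$ giving $16\theta$). So you have effectively inlined the appendix lemma rather than invoking it, with matching constants.
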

\begin{proof}
  Note that $\sepH_c$ is convex, and $W$ is $4\theta$--quasiconvex.  Moreover, $d(\sepH_c,W)\geq 50\theta$ by property \eqref{item:otherhoroballsfar} of $\theta$--spiderwebs.  The lemma then follows by applying Lemma \ref{quasiconvproj}.
\end{proof}
\begin{lemma}\label{concat23}
  Suppose $c\in\mc{C}'(W)$, and $g\in K_c\setminus\{1\}$.  Let $\gamma$ be a geodesic joining $W$ to $gW$.
  \begin{enumerate}
  \item\label{old3}  The geodesic $\gamma$ intersects $\sepH_c$ in a subsegment of length at least $100\theta$.
  \item\label{old2}  The geodesic $\gamma$ is contained in $N_{6\theta}(W)\cup N_{102\theta}(\sepH_c)\cup N_{6\theta}(gW)$.
  \end{enumerate}
\end{lemma}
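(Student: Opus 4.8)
The plan is to analyze a geodesic $\gamma = [w, gw']$ running from a point $w \in W$ to $gw' \in gW$, where $g \in K_c \setminus \{1\}$ and $c \in \mc{C}'(W)$. The key structural input is that $\sepH_c$ is convex (Lemma \ref{horoballconvexity}), $W$ is $4\theta$--quasiconvex, $gW$ is also $4\theta$--quasiconvex (being a translate), and all three of $W$, $gW$, $\sepH_c$ contain points close to $\sepH_c$ while the projections $\pi_{\sepH_c}(W)$ and $\pi_{\sepH_c}(gW)$ have diameter at most $8\theta$ by Lemma \ref{smallprojections} (applied to $W$, and to $gW$ using that $g$ fixes $c$ so $\pi_{\sepH_c}(gW) = g\,\pi_{\sepH_c}(W)$). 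The first point to establish is that $W$ and $gW$ are actually far apart: since $g \in K_c \setminus \{1\}$ and the very translating condition applies, $g$ moves points of $X \setminus \sepH_c$ by at least $10^4\theta$; combined with $\calC(W) = \calC(N_{50\theta}(W))$, a point of $W$ within $50\theta$ of $gW$ would force a contradiction, so $d(W, gW) \geq 100\theta$ (roughly). Then a standard thin-quadrilateral / projection argument shows $\gamma$ must pass through (a large neighborhood of) $\sepH_c$: the ``bridge'' between $W$ and $gW$ has to cross $\sepH_c$, because otherwise one could project and find $W$ and $gW$ close, contradicting the translation estimate.

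For part \eqref{old3}, the quantitative claim is that the portion of $\gamma$ inside $\sepH_c$ has length at least $100\theta$. I would argue: let $\gamma$ enter $\sepH_c$ near $\pi_{\sepH_c}(w)$ and leave near $\pi_{\sepH_c}(gw')$ (up to $O(\theta)$ error, using convexity of $\sepH_c$ and the standard fact that a geodesic between points whose nearest-point projections to a convex set are at bounded distance stays within $O(\theta)$ of that set on the relevant subsegment). Inside $\sepH_c = \calH_c^{[500\theta,\infty)}$, the element $g \in K_c$ moves the entry point to the exit point, and since $g$ acts on the horoball $\calH_c$ with the parabolic (horospherical) geometry, translating by a nontrivial $g$ forces the geodesic to dip to depth at least (depth of $\sepH_c$) plus a controlled amount — this is exactly the mechanism that in combinatorial horoballs a geodesic connecting two points at the same depth whose horospherical distance is positive goes deeper by roughly $\log_2$ of that distance. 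The very translating condition gives that the horospherical displacement is at least $10^4\theta$ at the boundary depth $500\theta$, forcing the geodesic to go down by at least $\sim \log_2(10^4\theta)$ — wait, that is not obviously $100\theta$. The cleaner route: the geodesic enters and exits $\sepH_c$, and between entry and exit the two endpoints of the bridge are $\geq 10^4\theta$ apart in $X \setminus \sepH_c$ terms but their projections to $\sepH_c$ are within $O(\theta)$; so actually the geodesic segment $\gamma \cap N_{O(\theta)}(\sepH_c)$ connects two points that are $\geq 10^4\theta - O(\theta)$ apart, hence has length $\geq 100\theta$, and one then upgrades ``within $N_{O(\theta)}(\sepH_c)$'' to ``within $\sepH_c$'' at the cost of shortening by $O(\theta)$, still leaving length $\geq 100\theta$.

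For part \eqref{old2}, I would split $\gamma$ into three pieces: an initial piece from $w$ until $\gamma$ first gets within $O(\theta)$ of $\sepH_c$, a middle piece, and a terminal piece from when $\gamma$ last leaves $N_{O(\theta)}(\sepH_c)$ to $gw'$. For the middle piece, thinness of the quadrilateral with vertices $w$, (entry point), (exit point), $gw'$ and convexity of $\sepH_c$ put it in $N_{102\theta}(\sepH_c)$. For the initial piece, I would compare $\gamma$ with the concatenation of a geodesic from $w$ to $\pi_W(\text{entry point})$ (which lies $4\theta$--close to $W$ by quasiconvexity) followed by a short hop: since $\diam \pi_W(\sepH_c) \leq 16\theta$ (Lemma \ref{smallprojections}), the entry point projects into a bounded subset of $W$, and a thin-triangle argument shows the initial piece of $\gamma$ stays in $N_{6\theta}(W)$; symmetrically the terminal piece is in $N_{6\theta}(gW)$. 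The main obstacle I anticipate is bookkeeping the constants so that the ``$O(\theta)$'' in every projection/thinness step really does fit inside the stated $6\theta$, $102\theta$, and $100\theta$ — in particular making the switch between ``near $\sepH_c$'' and ``in $\sepH_c$'' precise, which requires the standard lemma that a geodesic whose endpoints project to within $\eta$ on a convex set runs within $2\theta + \eta/2$ (or similar) of that set between the projection points; assembling these without slippage is the delicate part, but each individual step is a routine application of $\theta$--thinness, convexity of $\sepH_c$, and quasiconvexity of $W$ and $gW$, all already available.
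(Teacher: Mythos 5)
Your overall strategy for part \eqref{old3} matches the paper's (far-apart projections to $\sepH_c$ via the very translating condition and Lemma \ref{smallprojections}, then Lemma \ref{quasiconvproj} to force $\gamma$ to pass close to both projection points), but the crucial quantitative step is left unjustified: the ``upgrade'' from \emph{a long subsegment of $\gamma$ lying in $N_{O(\theta)}(\sepH_c)$} to \emph{a long subsegment lying in $\sepH_c$ itself} is not a general fact about convex sets in $\theta$--hyperbolic spaces. A geodesic whose endpoints lie within $O(\theta)$ of a convex set and are far apart need not meet that set at all (think of two points on the same side of a geodesic line in $\mathbb H^2$: the half-plane is convex and the connecting geodesic never touches the line), so ``shortening by $O(\theta)$'' cannot be asserted abstractly. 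What makes it true here is horoball-specific geometry, which is exactly the ingredient you flirted with and then discarded: once $\gamma$ has two points $p,p'$ at depth at least $(500-6)\theta$ with $d(p,p')\geq (10^4-28)\theta$, convexity of $\calH_c^{[R,\infty)}$ for $R\geq\theta$ (Lemma \ref{horoballconvexity}) keeps the subsegment $[p,p']$ inside the horoball, and the description of horoball geodesics as at most two vertical segments plus one short horizontal segment (Lemma \ref{l:geod in trunc}, \cite[Lemma 3.10]{rhds}) shows that only $O(\theta)$ of its length can sit at depths in $[494\theta,500\theta)$; hence almost all of it is at depth $\geq 500\theta$, i.e.\ in $\sepH_c$. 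Your worry about the depth gain being only logarithmic was a red herring — the claim is about the \emph{length} of $\gamma\cap\sepH_c$, which is essentially $d(p,p')$, not about how deep $\gamma$ goes — but abandoning the horoball structure altogether left the decisive step unproved.

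For part \eqref{old2}, your three-piece decomposition is workable in spirit, but as sketched the constants do not assemble to $6\theta$ and $102\theta$: in your thin-triangle comparison for the initial piece, the leg from the entry point $e$ to $\pi_W(e)$ has length that is only bounded by roughly $100\theta+O(\theta)$ (not $O(\theta)$), so points of $\gamma$ near that leg are only guaranteed to be within $\sim 115\theta$ of $W$ or $\sim 120\theta$ of $\sepH_c$, overshooting both stated neighborhoods. The paper avoids this slippage by comparing $\gamma$ not with a projection leg but with the \emph{shortest} geodesics $\sigma_1$ from $W$ to $\sepH_c$ and $\sigma_2$ from $\sepH_c$ to $gW$ (each of length at most $100\theta$ since $c\in\mc{C}'(W)$), applying Lemma \ref{geodbetweenqconv} to place the two halves of $\gamma$ in $N_{6\theta}(W\cup\sepH_c)\cup N_{2\theta}(\sigma_1)$ and $N_{6\theta}(\sepH_c\cup gW)\cup N_{2\theta}(\sigma_2)$, whence $N_{2\theta}(\sigma_i)\subseteq N_{102\theta}(\sepH_c)$ gives exactly the stated bound. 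Note also that this argument presupposes part \eqref{old3} (so that ``the part of $\gamma$ between $w$ and $\sepH_c$'' makes sense), which is another reason the gap in \eqref{old3} must be closed first.
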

\begin{proof}
  Let $\gamma$ be a geodesic joining $w\in W$ to $gw'\in gW$.

  Note that $g\pi_{\sepH_c}(W)=\pi_{\sepH_c}(gW)$.  Lemma \ref{smallprojections} says $\pi_{\sepH_c}(W)$  has diameter at most $8\theta$.  By the very translating condition $d_X(\pi_{\sepH_c}(W),\pi_{\sepH_c}(gW))$ is at least $(10^4-16)\theta$.  In particular $d_X(\pi_{\sepH_c}(w),\pi_{\sepH_c}(gw'))> 10^3\theta$.  Using the second part of Lemma \ref{quasiconvproj}, the geodesic $\gamma$ passes within $6\theta$ of both $\pi_{\sepH_c}(W)$ and $\pi_{\sepH_c}(gW)$.  In particular there are points $p,p'$ on $\gamma$ at depth at least $(500-6)\theta$ in the horoball containing $\sepH_c$, and satisfying $d_X(p,p')\geq (10^4 - 16 - 12)\theta$.  Since geodesics in combinatorial horoballs are vertical except for up to three horizontal edges (Lemma \ref{l:geod in trunc}), $\gamma$ must intersect $\sepH_c$ in a subsegment of length at least $(10^4-16-12-12)\theta-3> 100\theta$, establishing the first claim of the Lemma.

  Turning to the second claim, let $\sigma_1$ be a shortest geodesic joining $W$ to $\sepH_c$, and let $\sigma_2$ be a shortest geodesic from $\sepH_c$ to $gW$.  Note that each of $\sigma_1,\sigma_2$ has length at most $100\theta$.
   Lemma \ref{geodbetweenqconv} implies that the part of $\gamma$ between $w$ and $\sepH_c$ is contained in $N_{6\theta}(W\cup \sepH_c)\cup N_{2\theta}(\sigma_1)$.  Similarly the part of $\gamma$ between $\sepH_c$ and $w'$ is contained in $N_{6\theta}(\sepH_c\cup gW)\cup N_{2\theta}(\sigma_2)$.  Thus 
\[ \gamma \subseteq N_{6\theta}(W\cup gW)\cup N_{102\theta}(\sepH_c) ,\]
as required.
\end{proof}
\begin{lemma}\label{concat1}
  Let $c,c'\in \mc{C}'(W)$ be distinct, and let $g\in K_c\setminus\{1\}$, $g'\in K_{c'}\setminus\{1\}$.  Then $d_X(\pi_W(gW),\pi_W(g'W))\geq 500\theta$.
\end{lemma}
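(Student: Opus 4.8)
The plan is to reduce the distance bound between the projections $\pi_W(gW)$ and $\pi_W(g'W)$ to estimates on projections of horoballs, using the fact that geodesics from $W$ to $gW$ pass deep into $\widehat{\mathcal H}_c$ and geodesics from $W$ to $g'W$ pass deep into $\widehat{\mathcal H}_{c'}$ (Lemma \ref{concat23}\eqref{old3}), while $\widehat{\mathcal H}_c$ and $\widehat{\mathcal H}_{c'}$ are distinct horoballs in the $10^3\theta$--separated family, hence far apart.

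First I would set up notation: let $\gamma$ be a geodesic from some $w \in W$ to $gw' \in gW$ and $\gamma'$ a geodesic from some $u \in W$ to $g'u' \in g'W$. By Lemma \ref{concat23}\eqref{old3}, $\gamma$ enters $\widehat{\mathcal H}_c$ in a long subsegment (length $\geq 100\theta$) and $\gamma'$ enters $\widehat{\mathcal H}_{c'}$ in a long subsegment. Since $c \neq c'$, the horoballs $\mathcal H_c$ and $\mathcal H_{c'}$ are distinct members of the chosen $10^3\theta$--separated family, so $d_X(\widehat{\mathcal H}_c, \widehat{\mathcal H}_{c'}) \geq 10^3\theta$ (using the $500\theta$--truncation in the definition of $\widehat{\mathcal H}$ together with the $10^3\theta$--separation of the $\mathcal H_c$; I would double-check the exact constant but it is comfortably large). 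The key geometric point is then: a geodesic $\alpha = [\pi_W(gW), \pi_W(g'W)]$ lies close to $W$ (by $4\theta$--quasiconvexity of $W$), and I want to say that if this geodesic were short, then $W$ itself would come close to both $\widehat{\mathcal H}_c$ and $\widehat{\mathcal H}_{c'}$, or else the two nearest-point projections to $W$ of the two horoballs would be close, forcing the horoballs to be close — a contradiction.

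The cleanest route is probably via the nearest-point projection to $W$ and the standard quasiconvex projection lemmas (Lemma \ref{quasiconvproj} and Lemma \ref{geodbetweenqconv}, both invoked earlier). Specifically: $gW$ is a translate of $W$, hence also $4\theta$--quasiconvex, and the geodesic $\gamma$ from $W$ to $gW$ passes within bounded distance ($6\theta$, say) of $\pi_W(gW)$ near its start; the same $\gamma$ enters $\widehat{\mathcal H}_c$ deeply, so $\pi_W(gW)$ is within a bounded distance — roughly $d(W, \widehat{\mathcal H}_c) + O(\theta) \leq 100\theta + O(\theta)$ — of $\widehat{\mathcal H}_c$. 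Likewise $\pi_W(g'W)$ is within roughly $100\theta + O(\theta)$ of $\widehat{\mathcal H}_{c'}$. Therefore $d_X(\pi_W(gW), \pi_W(g'W)) \geq d_X(\widehat{\mathcal H}_c, \widehat{\mathcal H}_{c'}) - 2(100\theta + O(\theta)) \geq 10^3\theta - 200\theta - O(\theta) \geq 500\theta$, provided the $O(\theta)$ error terms are tracked and the separation constant is large enough, which it is by design. I would carry out the error-term bookkeeping using the explicit projection constants ($6\theta$, $8\theta$, $16\theta$) already established in Lemmas \ref{smallprojections} and \ref{concat23}.

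The main obstacle I anticipate is the bookkeeping of constants: I need to be careful that "passes close to $\pi_W(gW)$" and "enters $\widehat{\mathcal H}_c$ deeply" happen along $\gamma$ in a way that genuinely forces $\pi_W(gW)$ to be near $\widehat{\mathcal H}_c$ — i.e., I need the deep-entry subsegment of $\gamma$ and the $W$--projection-approximating part of $\gamma$ to be controllably located, rather than at opposite ends of a potentially long geodesic. This should follow because $\gamma$ runs $W \to \widehat{\mathcal H}_c \to gW$ with the portion before $\widehat{\mathcal H}_c$ contained in $N_{6\theta}(W) \cup N_{2\theta}(\sigma_1)$ where $\sigma_1$ is a shortest $W$-to-$\widehat{\mathcal H}_c$ geodesic of length $\leq 100\theta$ (Lemma \ref{concat23}\eqref{old2} and its proof), so the nearest-point projection $\pi_W(gW)$ — which is the image under $\pi_W$ of the endpoint $gw'$, equivalently approximated by where $\gamma$ leaves $W$'s neighborhood — is within $100\theta + O(\theta)$ of the entry point into $\widehat{\mathcal H}_c$. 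Once that localization is pinned down, the triangle inequality with the horoball separation closes the argument, and the slack in $10^3\theta$ versus $500\theta$ leaves ample room for all the additive $O(\theta)$ errors.
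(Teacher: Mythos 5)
Your overall strategy coincides with the paper's: locate $\pi_W(gW)$ within about $100\theta+O(\theta)$ of $\sepH_c$, locate $\pi_W(g'W)$ similarly near $\sepH_{c'}$, and play this off against the $10^3\theta$--separation of the family $\{\sepH_c\}$ (you phrase it as a direct triangle inequality, the paper as a contradiction; that difference is cosmetic, and the numerical slack is indeed ample). The gap is in the localization step itself. You justify it by asserting that $\pi_W(gW)$ is ``equivalently approximated by where $\gamma$ leaves $W$'s neighborhood,'' and that this exit point is near the entry into $\sepH_c$ because the pre-horoball portion of $\gamma$ lies in $N_{6\theta}(W)\cup N_{2\theta}(\sigma_1)$. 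Neither half is proved, and the second fact alone cannot deliver the conclusion: knowing that the initial portion of $\gamma$ lies in $N_{6\theta}(W)\cup N_{6\theta}(\sepH_c)\cup N_{2\theta}(\sigma_1)$ says nothing about \emph{where along $W$} the nearest-point projection of $gw'$ lands; a priori the point of $\gamma$ closest to $\pi_W(gw')$ could sit deep in the $N_{6\theta}(W)$ portion, far (along $W$) from $\sepH_c$. To close your argument you would need (i) the coarse identification of $\pi_W(gw')$ with the last point of $\gamma$ lying within $6\theta$ of $W$ --- a genuine, if standard, argument, e.g.\ apply Lemma \ref{quasiconvproj} to the terminal subsegment of $\gamma$ beyond its last visit to $N_{6\theta}(W)$, which stays more than $6\theta$ from $W$ and hence projects to $W$ with diameter at most $16\theta$, giving $d(\pi_W(gw'),z^*)\leq 22\theta$ for that last-exit point $z^*$; and (ii) the observation that $z^*$ cannot lie in the $N_{6\theta}(gW)$ part of the decomposition in Lemma \ref{concat23}.\eqref{old2}, since $d(W,gW)\geq 100\theta$ by Lemma \ref{concat23}.\eqref{old3}, so $z^*$ lies within roughly $102\theta$ of $\sepH_c$. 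Note also that $\pi_W(gW)$ is the union of $\pi_W(gw')$ over all $gw'\in gW$, so this must be run for an arbitrary endpoint, not one fixed $\gamma$.

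The paper sidesteps all of this with a shorter observation: if $x\in\pi_W(y)$ for some $y\in gW$, then $x$ is also a nearest point of $W$ to any point $z$ on a geodesic $[x,y]$, in particular to a point of $[x,y]\cap\sepH_c$, which is nonempty by Lemma \ref{concat23}.\eqref{old3}; hence $\pi_W(gW)\subseteq\pi_W(\sepH_c)$. Combining this with Lemma \ref{smallprojections} ($\diam\,\pi_W(\sepH_c)\leq 16\theta$) and $d(\sepH_c,W)\leq 100\theta$ localizes the entire projection at once, after which the separation of the horoballs finishes the proof exactly as in your last step. I would recommend adopting that containment trick rather than repairing (i) and (ii) by hand.
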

\begin{proof}
  By way of contradiction, suppose that $g\in K_c\setminus\{1\}$ and $g'\in K_{c'}\setminus\{1\}$ satisfy $d_X(\pi_W(gW),\pi_W(g'W))< 500\theta$.

  We claim that $\pi_W(gW)\subseteq \pi_W(\sepH_c)$ and $\pi_W(g'W)\subseteq \pi_W(\sepH_{c'})$.  Indeed, suppose $x\in \pi_W(gW)$.  Then there is some $y\in gW$ with $d(y,W)=d(y,x)$.  By Lemma \ref{concat23}.\eqref{old3}, any geodesic from $x$ to $y$ intersects $\sepH_c$.  Let $z\in \sepH_c$ be on one such geodesic.  Then $d(z,W)=d(z,x)$, so $x\in \pi_W(z)\subseteq \pi_W(\sepH_c)$.  This establishes that $\pi_W(gW)\subseteq \pi_W(\sepH_c)$; the argument that $\pi_W(g'W)\subseteq \pi_W(\sepH_{c'})$ is identical.

  Thus we also have $d_X(\pi_W(\sepH_c),\pi_W(\sepH_{c'}))<500\theta$.
    By Lemma \ref{smallprojections}, these projections have diameter at most $16\theta$.
  Since $\sepH_c$ and $\sepH_{c'}$ are each distance at most $100\theta$ from $W$, we deduce $d_X(\sepH_c,\sepH_{c'})< 500\theta+ 2(16\theta)+2(100\theta)=732\theta <10^3\theta$.  Since the horoballs are $10^3\theta$--separated, this contradicts $c\neq c'$.
\end{proof}

Since $\{1\}$ is a $\theta$--spiderweb, Theorem \ref{exhaustion} follows immediately from the following proposition.

\begin{prop} \label{p:exists bigger spiderweb}
Suppose that $W$ is a $\theta$--spiderweb.  Then there is a $\theta$--spiderweb $W'$ so that
\begin{enumerate}
 \item $W'$ contains $N_{10\theta}(W)$,
 \item $K_{W'} = K_W \ast \left( \mathop{\ast}\limits_{c \in E} K_c \right)$ for some finite subset
$E \subseteq \calC(W') \setminus \calC(W)$.
\end{enumerate} 
\end{prop}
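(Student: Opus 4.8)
## Proof proposal

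The plan is to build $W'$ by starting with $N_{10\theta}(W)$, adjoining the convex horoballs $\sepH_c$ that are forced in by axiom \ref{item:otherhoroballsfar}, and then closing up under the action of the larger group $K_{W'}$. Concretely, I would let $E \subseteq \mc{C}'(W)$ be a set of orbit representatives for the action of $K_W$ on those $c \in \mc{C}(N_{100\theta}(W)) \setminus \mc{C}(W)$ for which $\sepH_c$ is genuinely ``attached'' to $W$ (one has to be slightly careful here: the honest definition should be $E = \{c \in \mc{C}'(W) : d_X(\sepH_c, W) \le 100\theta\}$, modulo the $K_W$-action, and finiteness of $E$ follows from relative cocompactness \ref{item:cocompact} of $W$ together with the $10^3\theta$-separation of the horoball family). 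Then set $K_{W'} = \langle K_W \cup \bigcup_{c \in E} K_c\rangle$ and define
\[
W' = \bigcup_{k \in K_{W'}} \left( k \cdot \big( N_{10\theta}(W) \cup \textstyle\bigcup_{c \in E}\sepH_c \big)\right),
\]
or rather its $4\theta$-quasiconvex hull, to be safe about axiom \ref{item:qc}. That $W'$ contains $N_{10\theta}(W)$ and is $K_{W'}$-invariant is then immediate from the construction.

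The heart of the argument is the free-product statement (2), and this is exactly where Lemmas \ref{smallprojections}, \ref{concat23}, \ref{concat1} are designed to be used, via a ping-pong / Bass–Serre argument identical in spirit to the windmill argument of \cite{DGO}. I would verify that $K_{W'} = K_W \ast \left(\mathop{\ast}_{c \in E} K_c\right)$ by showing that any nonempty reduced word $w = a_0 g_1 a_1 \cdots g_n a_n$ with $g_j \in K_{c_j}\setminus\{1\}$ (the $c_j \in E$ in distinct $K_W$-translates as one moves along the word) and $a_j \in K_W$ moves the basepoint $1$ (or moves $W$ off itself). The key geometric input: for $c \in E$ and $g \in K_c \setminus \{1\}$, a geodesic from $W$ to $gW$ must pass through $\sepH_c$ in a long subsegment (Lemma \ref{concat23}.\eqref{old3}), so the nearest-point projections $\pi_W(gW)$ and $\pi_W(g'W)$ for $c \ne c'$ are far apart (Lemma \ref{concat1}); concatenating the pieces $W \to gW \to gg'W \to \cdots$ along a reduced word and using $\delta$-hyperbolicity (local-geodesic criterion, as in the proof of Lemma \ref{TruncatedHoroballsAreVisual}), the concatenation stays quasi-geodesic and in particular has endpoints at distance growing with $n$. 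This forces $w \ne 1$ and also shows no collapse in the Bass–Serre tree, giving the free product decomposition.

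The remaining axioms are comparatively routine. Axiom \ref{item:qc} ($4\theta$-quasiconvexity): $W'$ is a union of $K_{W'}$-translates of quasiconvex pieces ($W$ itself, and the convex horoballs $\sepH_c$, $c \in E$) glued along the long geodesic segments produced by Lemma \ref{concat23}; controlling the quasiconvexity constant uses the same concatenation estimate plus the fact that geodesics between two quasiconvex sets fellow-travel the two sets and a connecting geodesic (Lemma \ref{geodbetweenqconv}), and one checks the constant does not degrade past $4\theta$ — this is where the numerology ($4\theta$ vs $6\theta$ vs $50\theta$) is tuned. Axiom \ref{item:otherhoroballsfar}, $\mc{C}(W') = \mc{C}(N_{50\theta}(W'))$: a horoball $\sepH_{c''}$ meeting $N_{50\theta}(W')$ but not $W'$ would meet the $50\theta$-neighborhood of some translate $k\sepH_c$ or of $k N_{10\theta}(W)$; in the first case this contradicts $10^3\theta$-separation (since $500\theta > 50\theta$ eats into a different horoball only if they are too close), and in the second case it would put $c''$ in $\mc{C}(N_{60\theta}(kW)) = k\,\mc{C}(N_{60\theta}(W))$, and by \ref{item:otherhoroballsfar} for $W$ together with the construction, $c''$ was already included in $E$, hence in $\mc{C}(W')$. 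Axiom \ref{item:cocompact}: $(N_R(G) \cap W')/K_{W'}$ is built from finitely many $K_{W'}$-orbits of the compact sets $(N_R(G) \cap W)/K_W$ (which is compact by \ref{item:cocompact} for $W$) and the truncated-horoball pieces $(N_R(G) \cap \sepH_c)$, which meet the Cayley graph in a bounded set since $\sepH_c$ starts at depth $500\theta$ — so cocompactness is inherited. I expect the main obstacle to be the bookkeeping in the ping-pong argument, specifically verifying that the concatenation along a reduced word is a (uniform) quasigeodesic when the word alternates between $K_W$-syllables and $K_c$-syllables lying in different $K_W$-orbits; this is where one must combine Lemma \ref{concat1} (projections of distinct horoball-translates are $500\theta$-separated in $W$) with the relative cocompactness to control how $K_W$ permutes the horoballs, and it is the step most sensitive to the precise constants fixed in Subsection \ref{sec:notation}.
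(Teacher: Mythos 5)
Your proposal follows essentially the same route as the paper: the same choice of $E$ as $K_W$--orbit representatives of $\mc{C}'(W)$, the same group generated by $K_W$ and the $K_c$ for $c\in E$, $W'$ taken as a (geodesic) hull of the orbit of $N_{10\theta}(W)$, and the free product together with the remaining axioms obtained by concatenating geodesics through the widely separated projections (Lemmas \ref{smallprojections}, \ref{concat23}, \ref{concat1}) and a local-to-global argument, which the paper packages as a single Claim about $100\theta$--local $12\theta$--tight paths via Lemmas \ref{concatatproj} and \ref{localtight}. The bookkeeping you flag as the main obstacle --- grouping the $K_W$--syllables so that consecutive translates of $W$ are distinct, and then using the resulting control on geodesics both for axiom (S2) and for cocompactness via depth control --- is exactly what that Claim carries out, so your sketch is correct in approach.
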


\begin{proof}
  If $\calC(N_{60\theta}(W))=\calC(W)$ then $W'=N_{10\theta}(W)$ is a $\theta$--spiderweb, and the other condition trivially holds.
 
 Therefore, suppose that $\mc{C}'(W) = \calC(N_{100\theta}(W)) \setminus \calC(W)$ is nonempty.  Note that $\mc{C}'(W)$ has finitely many $K_W$--orbits, because of item \ref{item:cocompact} in the definition of $\theta$--spiderweb.
 
 Let $E$ be a set of representatives for the $K_W$--orbits of $\mc{C}'(W)$, let $K_W^+=\langle K_W\cup(\bigcup_{c\in E}K_c)\rangle$ and let $W'$ be the union of all geodesics connecting pairs of points in the orbit $K_W^+ N_{10\theta}W$.  By Lemma \ref{qconvhull}, $W'$ is $2\theta$--quasiconvex.  We remark that Lemma \ref{concat23}.\eqref{old3} (together with non-triviality of the $N_i$) implies that $\mc{C}'(W)\subseteq \mc{C}(W')$.

Our goal is now to prove that $W'$ is a $\theta$--spiderweb and $K_W^+=K_{W'}$.

Let 
\[ \phi\co K_W \ast  \left(\mathop{\ast}_{c\in E}K_c\right)\to K_W^+ \]
be the natural map.   Clearly $\phi$ is surjective.  We establish \eqref{item:freeproduct} in the definition of $\theta$--spiderweb by showing that $\phi$ is injective.  At the same time, we obtain information about geodesics between $K_W^+$--translates of $W$ sufficient to establish \eqref{item:otherhoroballsfar} in the definition of a $\theta$--spiderweb.

\begin{claim*}
  Let $w,w'\in W$, and let $g\in K_W\ast  \left(\mathop{\ast}\limits_{c\in E}K_c\right)$.  Let $\gamma$ be a geodesic joining $w$ to $\phi(g)w'$.
  Let $H_1 = \bigcup_{c\in E}\sepH_c$.  
  \begin{enumerate}
    \item\label{cl:gammacontrol} The geodesic $\gamma$ lies in
       $N_{34\theta}(K_W^+ W)\cup N_{130\theta}(K_W^+ H_1)$.
    \item\label{cl:gammadepthcontrol} Let $x\in \gamma\setminus N_{34\theta}(K_W^+W)$.  Then the distance from $x$ to $K_W^+W$ is at most $\depth(x)-300\theta$.
    \item\label{cl:injective} If $g\notin K_W$, then $\phi(g)w'\notin W$.
  \end{enumerate}
\end{claim*}
We complete the proof of the Proposition, assuming the claim.
Axiom \eqref{item:qc}, quasiconvexity, follows from Lemma \ref{qconvhull}, as already noted.  

We next show that Axiom \eqref{item:otherhoroballsfar} holds. In fact, we show $K_W^+(\calC(W)\cup E)=\calC(W')=\calC(N_{50\delta}(W'))$. The containments ``$\subseteq$'' are clear, so we are left to show if some horoball $\sepH_c$ satisfies $d_X(\sepH_c,W')\leq 50\theta$ then $c\in K_W^+(\calC(W)\cup E)$.  Let $x\in W'$ minimize the distance to $\sepH_c$.  The point $x$ is on some geodesic joining points in $K_W^+ . N_{10\theta}(W)$.  It therefore lies within $12\theta$ of a geodesic joining points in $K_W^+ . W$.  Translating everything by an element of $K_W^+$, we may assume that this geodesic has one endpoint in $W$, as in the claim.
Part \eqref{cl:gammacontrol} of the claim implies that $x$ lies either in a $46\theta$--neighborhood of some $K_W^+$--translate of $W$, or in a $142\theta$--neighborhood of some $K_W^+$--translate $k . \sepH_{c'}$ of $\sepH_{c'}$ for some $c'\in E$.  In the first case, we conclude that $\sepH_c$ has a $K_W^+$--translate meeting a $100\theta$--neighborhood of $W$, implying that $c\in K_W^+(\calC(W)\cup E)$.
In the second case, we have $d(\sepH_c,k. \sepH_{c'})\leq 200\theta$, implying $c=kc'$ by $10^3\theta$--separation of horoballs, and hence $c\in K_W^+( E)$.

The invariance of $W'$ under $K_{W}^+$ is immediate from the construction. Also, $K^+_{W}=K_{W'}$ because $\calC(W')=K_W^+(\calC(W)\cup E)$, so we get the first part of Axiom \eqref{item:cocompact}. The second part of Axiom \eqref{item:cocompact} follows from part \eqref{cl:gammadepthcontrol} of the Claim.

Since $\phi$ is automatically injective on $K_W$, part \eqref{cl:injective} of the claim shows $\phi$ is injective, establishing Axiom \eqref{item:freeproduct}, and showing $W'$ is a $\theta$--spiderweb.

\begin{proof}[Proof of Claim]
  We'll prove the claims by building a nice $100\theta$--local $12\theta$--tight quasigeodesic joining $w$ to $gw'$, and applying quasigeodesic stability.  
  Since $g$ lies in a free product, it can be written $g = g_1\cdots g_n$ where each $g_i$ is a nontrivial element of some free factor.  Without any loss of generality, we may assume that $g\neq 1$, and (rechoosing $w'$ if necessary) that $g_n\notin K_W$.
  We define certain prefixes $k_i = g_1\cdots g_{j_i}$ of $g$ inductively as follows:
  \begin{equation*}
    j_1= \begin{cases} 1 & g_1\notin K_W \\ 2 & g_1\in K_W\end{cases}\mbox{, and }
    j_{i+1} = \begin{cases} j_i+1 & g_{j_i+1}\notin K_W \\ j_i+2 & g_{j_i+1}\in K_W\end{cases}
  \end{equation*}
 Thus, for example, $k_1 = g_1$ if $g_1\notin K_W$ and  $k_1=g_1g_2$ otherwise.  We obtain elements $k_1,\ldots,k_s$, where $k_s = g$.  
 These choices ensure that, if we define $W_0 = W$ and $W_i= k_iW$ for $i\in \{1,\ldots,s\}$, we always have $W_i \neq W_{i-1}$.  For each $i\in \{1,\ldots,s\}$ choose a shortest geodesic $[q_{i-1},p_i]$ from $W_{i-1}$ to $W_i$.  This is a translate of a segment joining $W$ to $\kappa W$ for some $\kappa\in K_c\setminus\{1\}$, $c\in \mc{C}'(W)$.  Lemma \ref{concat23}.\eqref{old3} implies that $[q_{i-1},p_i]$ has length at least $100\theta$.

  Note that $q_{i-1}\in \pi_{W_{i-1}}(p_i)$ and $p_i\in \pi_{W_i}(q_{i-1})$.

  Set $p_0 = w$, and $q_s = \phi(g)w'$.  For each $i\in \{0,\ldots s\}$ choose a geodesic $[p_i,q_i]$; this geodesic lies in a $4\theta$--neighborhood of $W_i$ by quasiconvexity.  When $i\notin\{0,s\}$, we have $p_i\in \pi_{W_i}(W_{i-1})$ and $q_i\in \pi_{W_i}(W_{i+1})$, so by Lemma \ref{concat1}, it has length at least $500\theta$.

  Let $\alpha$ be the broken geodesic $[p_0,q_0]\cdot[q_0,p_1]\cdots[p_s,q_s]$.  We claim that $\alpha$ is a $100\theta$--local $12\theta$--tight path.  Except possibly for the first and last segments, all the geodesic subsegments of $\alpha$ have length at least $100\theta$, so tightness need only be verified on concatenations of two of the geodesic subsegments.  One of these segments always connects a point to a closest point projection in some  $W_i$ which contains both endpoints of the second segment.  Since $W_i$ is $4\theta$--quasiconvex, we can apply Lemma \ref{concatatproj} to conclude that this concatenation of two subsegments is $12\theta$--tight.

We can now apply Lemma \ref{localtight}, with $C = 12\theta$, to conclude that any geodesic $\gamma$ with the same endpoints as $\alpha$ is Hausdorff distance at most $28\theta$ from $\alpha$.  In particular, such a geodesic $\alpha$ does not lie in a $4\theta$--neighborhood of $W$, so $\phi(g)w'\notin W$ and part \eqref{cl:injective} of the claim is established.

To establish part \eqref{cl:gammacontrol}, we note that $\alpha$ lies in $N_{6\theta}(K_W^+W)\cup N_{102\theta}(K_W^+H_1)$ by applying Lemma \ref{concat23}.\eqref{old2} to the subsegments passing between the $W_i$.  It follows that any geodesic from $w$ to $\phi(g)w'$ lies in $N_{34\theta}(K_W^+W)\cup N_{130\theta}(K_W^+H_1)$.

To establish part \eqref{cl:gammadepthcontrol}, let $x$ lie on $\gamma$.  Then $x$ lies within $28\theta$ of some point $x'$ on $\alpha$.  If $x'\in [p_i,q_i]$ for some $i$, then $d_X(x,K_W^+W)\leq 30\theta$.  Otherwise, $x'\in [q_i,p_i]$, which is entirely contained in the $100\theta$--neighborhood of some $\sepH_c$.  In particular, the depth of $x'$ is at least $400\theta + d(x,K_W^+W)$
\end{proof}
This completes the proof of Proposition \ref{p:exists bigger spiderweb}.
\end{proof} 

We have already noted that Theorem \ref{exhaustion} follows immediately from Proposition \ref{p:exists bigger spiderweb}, so we have proved Theorem \ref{exhaustion} and completed the construction of $\theta$--spiderwebs.

The following result follows immediately from the construction of spiderwebs and may be useful in future applications.

  \begin{theorem}
  Suppose that $(G,\mc{P})$ is a relatively hyperbolic pair, let $X$ be the cusped space for $(G,\mc{P})$ and let $\mc{C}$ be the set of parabolic fixed points in $\partial X$.
  
    For all sufficiently long fillings, the following holds: Let $K$ be the kernel of the filling.  There is a set $T\subset \mc{C}$ meeting each $K$--orbit exactly once, so that 
\[ K = \mathop{\ast}\limits_{t\in T} \left( K\cap \mathrm{Stab}(t) \right), \]
    and each subgroup $K \cap \mathrm{Stab}(t)$ is conjugate in $G$ to a unique filling kernel $N_i\lhd P_i$.
  \end{theorem}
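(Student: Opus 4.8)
The plan is to read the statement off the spiderweb exhaustion constructed above. First I would invoke Theorem \ref{exhaustion} to fix a family $W_0=\{1\}\subset W_1\subset\cdots$ of $\theta$--spiderwebs with $\bigcup_i W_i=X$ and $K=\bigcup_i K_{W_i}$, and --- crucially --- I would keep track of the fact that this family is produced by iterating Proposition \ref{p:exists bigger spiderweb}, so that at each step $K_{W_{i+1}}=K_{W_i}\ast\bigl(\mathop{\ast}_{c\in E_i}K_c\bigr)$ for finite sets $E_i\subseteq\calC(W_{i+1})\setminus\calC(W_i)$. Writing $C_i\subseteq\calC(W_i)$ for the finite index set of free factors of $K_{W_i}$ provided by Axiom \eqref{item:freeproduct}, this means $C_{i+1}=C_i\sqcup E_i$ and $K_{W_i}=\mathop{\ast}_{c\in C_i}K_c$. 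With $C:=\bigcup_i C_i$, the group $K=\bigcup_i K_{W_i}$ is the directed union of these finite free products along the natural inclusions, so that
\[ K\;=\;\mathop{\ast}_{c\in C}K_c \]
as an internal free product (possibly of countably infinitely many factors), each $K_c=gN_ig^{-1}$ nontrivial because the filling kernels are. Justifying this passage from the finite levels to the infinite free product is a standard colimit argument; the real bookkeeping comes in the next step.

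Next I would show that $C$ can serve as the set $T$. The key input is that, for a sufficiently long filling and any parabolic point $c=\partial\calH(gP_i)$, one has $K\cap\mathrm{Stab}(c)=K\cap gP_ig^{-1}=g(K\cap P_i)g^{-1}=gN_ig^{-1}=K_c$, using normality of $K$ in $G$ together with the identity $K\cap P_i=N_i$ (which is exactly the content of the embedding $P_i/N_i\hookrightarrow G/K$ in Theorem \ref{thm:dehnfilling}\eqref{item:P_imodN_i_embed}); this is uniform over $c$ since there are finitely many $G$--orbits of parabolic points, so one finite exceptional set works. It follows that $K\cap\mathrm{Stab}(kc)=kK_ck^{-1}$ for every $k\in K$. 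Now if $c,c'\in C$ were distinct but $K$--equivalent, say $c'=kc$, then $K_{c'}=kK_ck^{-1}$, i.e.\ the distinct free factors $K_c$ and $K_{c'}$ of the decomposition above would be conjugate in $K$; this is impossible, since distinct free factors of a free product are never conjugate (look at the $K$--action on the Bass--Serre tree: conjugates of distinct factors stabilize vertices lying in distinct $K$--orbits). Hence $c\mapsto Kc$ is injective on $C$. In the other direction, $C$ meets every $K$--orbit of parabolic points: given such a point $c$, the nonempty set $\sepH_c$ meets some $W_i$, so $c\in\calC(W_i)$, and the proof of Proposition \ref{p:exists bigger spiderweb} yields $\calC(W_i)=K_{W_i}\cdot C_i$ by induction, so $c\in K\cdot C_i\subseteq K\cdot C$. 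Therefore $T:=C$ meets each $K$--orbit exactly once, and $K=\mathop{\ast}_{t\in T}(K\cap\mathrm{Stab}(t))$.

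Finally, each $K_t=gN_ig^{-1}$ is by construction conjugate in $G$ to the filling kernel $N_i\lhd P_i$, and for the uniqueness of the index $i$ I would argue as follows (in the main case that $N_i$ is infinite, which covers the situations considered in this paper; the finite case needs a little extra care): if $gN_ig^{-1}=hN_jh^{-1}$, set $u=h^{-1}g$, so $uN_iu^{-1}=N_j$; then $uP_iu^{-1}\cap P_j$ contains the infinite subgroup $N_j$, and almost malnormality of $\mc{P}$ forces $i=j$ (and $u\in P_i$, whence even $N_i=N_j$). I expect the main obstacle --- to the extent there is one --- to be purely organizational: arranging the free-product factorizations at the finite levels to be compatible enough to pass to the limit, and checking that the orbit representatives are accounted for exactly once. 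All of the geometric work has already been done in Section \ref{sec:spiderweb}, and the uniqueness is the standard almost-malnormality argument.
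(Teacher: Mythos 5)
Your proposal is correct and follows essentially the same route as the paper's proof: extract the finite free--product decompositions $K_{W_i}=\mathop{\ast}_{c\in \overline{E}_i}K_c$ from the iterated application of Proposition \ref{p:exists bigger spiderweb}, pass to the increasing union to get $K=\mathop{\ast}_{c\in T}K_c$, use $\bigcup_i W_i=X$ to see every $K$--orbit of parabolic points is represented, and rule out two representatives in one orbit by noting that distinct free factors cannot be conjugate. The extra details you supply --- the identification $K\cap\mathrm{Stab}(c)=K_c$ via Theorem \ref{thm:dehnfilling}\eqref{item:P_imodN_i_embed} and the almost--malnormality argument for uniqueness of the filling kernel --- are points the paper leaves implicit, and they are handled correctly.
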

\begin{proof}
Fix a long enough filling
\[	G \to G(N_1,\ldots , N_m)	\]
so that the very translating condition above holds (this condition holds for sufficiently long fillings by Lemma \ref{lem:long is v translating}).

We then choose the construction of spiderwebs $\{ W_i \}_{i \in \mathbb N}$ as in Theorem \ref{exhaustion}, and specifically the family constructed via Proposition \ref{p:exists bigger spiderweb}.  By Theorem \ref{exhaustion} we have, for each $i$,

\[K = \bigcup_i K_{W_i}	,\]
and by Proposition \ref{p:exists bigger spiderweb} we know that 
\[	K_{W_{i+1}} = K_{W_i} \ast \left( \mathop{\ast}\limits_{c \in E_i} K_c \right)	,	\]
for some finite $E_i \subset \mathcal{C}$, where $K_c$ is a conjugate of some filling kernel $N_j$.  It follows that 
\begin{equation}\label{finitefreeproduct}	
  K_{W_i} = \mathop{\ast}\limits_{c\in \overline{E}_i} K_c\mbox{,\quad where \quad }\overline{E}_i = \bigsqcup_{j=1}^i E_j.
\end{equation}  
Since $K$ is an increasing union of the subgroups $K_{W_i}$, we have, for $T = \cup_i \overline{E}_i = \sqcup_i E_i$
\[ K = \mathop{\ast}\limits_{c\in T} K_c.\]

It remains to show that $T$ meets each $K$--orbit in $\calC$ exactly once.  Since $\bigcup\limits_i W_i = X$, it is clear that each $K$--orbit of element of $\mathcal{C}$ is eventually included in one of the $E_i$.
Suppose by contradiction that there is $k\in K$ and $c_1,c_2\in T$ so that $kc_1 = c_2$.  Let $j$ be chosen large enough so that $c_1, c_2\in \overline{E}_j$, and so that $k\in K_{W_j}$.  Then the subgroups $K_{c_1}$ and $K_{c_2}$ are conjugate inside $K_{W_j}$, contradicting the free product structure \eqref{finitefreeproduct}.

\end{proof}

In \cite[Theorem 7.9]{DGO} it is proved that the kernel is a free product of conjugates of the filling kernels $N_i$.  The only new part of the above result is to identify the indexing set for the free product as being in bijection with the $K$--orbits of $\mc{C}$.  We believe that this description of the indexing set also follows from the construction of windmills in \cite{DGO}, and also that this description is surely known by the authors of \cite{DGO}.

\section{Approximating the boundary of a Dehn filling} \label{s:technical}
  The statements in this section form the core of our new method for understanding the boundary of a Dehn filling.  In this section we give statements in the absolute and relative setting, but only use (or indeed prove) the absolute statements in the sequel.  The careful reader will see that the relative statements are strictly easier to establish.  

The absolute (hyperbolic) statements require some further constructions, which we give in the next subsection.

\subsection{Truncated quotients}
  Let $(G,\mc{P})$ be relatively hyperbolic.  In subsequent sections we will focus on (long) filling kernels $\{K_i\lhd P_i\}$ with  $P_i/K_i$ hyperbolic for each $i$.  We call such fillings \emph{hyperbolic fillings}. Since we do not require anything about the hyperbolicity constant of (a Cayley graph of) $P_i/K_i$, we do not get a uniform hyperbolicity constant for quotients of (a given Cayley graph of) $G$ by the filling kernel. We overcome this by taking truncated quotients as defined below. Having a uniform hyperbolicity constant regardless of the long hyperbolic filling will be crucial for us. Recall that in the case that $P_i/K_i$ is (virtually) $\mathbb Z$, the corresponding truncated horoball can be thought of as (the lift to the universal cover of) a Margulis tube, as discussed in Subsection \ref{ss:geom trunc}.
  
 Let $K$ be the normal closure in $G$ of $\bigcup_i K_i$.  For a sufficiently long filling, it is the case that the intersection $K_c$ of $K$ with a horoball stabilizer is conjugate to some $K_i$ (see Theorem \ref{thm:dehnfilling}.\eqref{item:P_imodN_i_embed}).  If we are assuming that the $P_i/K_i$ are hyperbolic, this means that $K_c$ acts on each ``horosphere'' $\calH_c^D\subset\calH_c$ with quotient a Gromov hyperbolic graph.  We saw in Subsection \ref{ss:geom trunc} that for sufficiently deep horospheres, the quotient is a hyperbolic graph with uniform constant.

Fix a cusped space $X$ for $(G,\mc{P})$.  In Section \ref{sec:spiderweb}, we constructed, for sufficiently long fillings $G\to \overline{G} = G(K_1,\ldots,K_n)$ a sequence of spiderwebs (Definition \ref{def:spiderweb}) $W_k\subseteq W_{k+1}\subseteq \cdots\subset X$, each of which is stabilized by a subgroup $K_{W_j}$ of the kernel of $G\to \overline{G}$.

Recall that in this section we are assuming all the quotients $P_i/K_i$ are hyperbolic groups.  The universal constant $\theta_0$ comes from Corollary \ref{c:universal hyperbolic}.

\begin{defn}\label{defn:trunc_quotient}
  Let $W$ be a $\theta$--spiderweb in $X$, associated to the filling kernels $\{K_i\lhd P_i\}$.  The \emph{truncated quotient} $T_W$ associated to $W$ is obtained in the following way.  As in Subsection \ref{ss:geom trunc}, for each horoball center $c$, we
we let $t_c$ be the minimal integer so that the quotient by $K_c$ of the horosphere at depth $t_c$ in $\calH_c$ satisfies Gromov's $4$--point condition $Q(5)$, and note that Corollary \ref{c:universal hyperbolic} then implies that the quotient $\calH_c^{[500\theta,t_c]}/K_c$ is $\theta_0$--hyperbolic (where $\calH_c^{[500\theta,t_c]}$ is that part of the horoball $\calH_c$ between depth $500\theta$ and $t_c$).
Let $\Sigma_c$ denote the horosphere at depth $t_c$, centered at $c$.
The group $K_W$ acts properly on $X\setminus \bigcup\{\calH_c^{(t_c,\infty)}\mid c\in \calC(W)\}$, and we let $T_W$ be the quotient by this action.  We similarly define $T_{\overline G}$ to be the the quotient of $X\setminus \bigcup\{\calH_c^{(t_c,\infty)}\mid c\in \calC\}$ by $K$. 
\end{defn}

The points coming from $W$ in $T_W$ form a compact subset.  The quotient $T_{\overline G}$ is quasi-isometric to $\overline G$.

\subsection{Statements for hyperbolic fillings}
The next results are some of the main ingredients of the proof of Theorem \ref{thm:boundary sphere}.

The following theorem says that, for $W$ a $\theta$--spiderweb associated to a long filling, $T_W$ is hyperbolic and visual (as in Definition \ref{defn:visualspace}) with uniform constants, and it describes the topology of its boundary.

\begin{restatable}{theorem}{thmtrunc}\label{thm:trunc}
  Let $(G,\mc{P})$ be relatively hyperbolic with cusped space $X$.  Then there exist $\theta,\delta$ with the following properties.  For all sufficiently long hyperbolic fillings $G \to \overline{G} = G(N_1,\ldots , N_n)$ with $N_i$ infinite for all $i$ and any $\theta$--spiderweb $W$ (see Definition \ref{def:spiderweb}) associated to the filling we have
  \begin{enumerate}
   \item\label{trunc_are_hyp} The truncated quotient $T_W$ is $\delta$--hyperbolic and $\delta$--visual, and so is $T_{\overline G}$.
   
   \item\label{trunc_cover} If $\mathcal F$ is the union of subsets of $\partial T_W$ of the form $\Lambda(\Sigma_c /K_c)$ for $c\in \calC(W)$, then there exists a regular covering map $(\partial X \setminus \Lambda(K_W))\to \partial T_W \setminus \mathcal F$ with deck group $K_W$.
\item\label{trunc_F codense} $\partial T_W \setminus \mathcal F$ is open and dense in $\partial T_W$.
\end{enumerate}  
  \end{restatable}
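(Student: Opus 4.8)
The plan is to prove the three claims in order, building everything on the truncated‑horoball technology of Subsection~\ref{ss:geom trunc} and the spiderweb construction of Section~\ref{sec:spiderweb}.

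\textbf{Step 1: uniform hyperbolicity and visibility (part \eqref{trunc_are_hyp}).}
First I would fix $\theta$ so that the cusped space $X$ (and, by Proposition~\ref{p:uniform delta}, all cusped spaces of sufficiently long fillings) is $\theta$--hyperbolic, and so that for sufficiently long fillings the very translating condition of Subsection~\ref{sec:notation} holds (Lemma~\ref{lem:long is v translating}); then set $\delta=1500\theta$ as in the section's convention, possibly enlarging it at the end. The space $T_W$ is assembled from two kinds of pieces: a $K_W$--cocompact ``thick part'' (the closure of the complement of the separated horoballs, which is $G$--cocompact in $X$ hence descends to something cocompact), and the truncated horoballs $\calH_c^{[500\theta,t_c]}/K_c$, each of which is $\theta_0$--hyperbolic by Corollary~\ref{c:universal hyperbolic} with a \emph{universal} constant, and each of which is glued to the thick part along a horosphere. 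The key point is that these pieces are glued along uniformly quasiconvex subsets (the truncated horoballs are convex in $X$ by Lemma~\ref{horoballconvexity}, and this survives the quotient by the properly discontinuous $K_W$--action on balls, by Lemma~\ref{Greendlinger:BallsEmbed}), so a combination theorem for trees of hyperbolic spaces yields a uniform hyperbolicity constant $\delta$ independent of the filling and of $W$. For $\delta$--visibility I would argue directly: a geodesic ray in $T_W$ is obtained by lifting to $X$, using that $X$ itself is visual (horoballs are visual, and the combinatorial cusped space is geometrically finite hence visual), and pushing down; the truncated horoballs are ``locally visual'' by Lemma~\ref{TruncatedHoroballsAreVisual}, which is exactly the technical input that lets a ray escape out the side of a truncated horoball rather than being forced up to a (now nonexistent) deep part. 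The uniform bound $c=c(\delta,\epsilon,R)\to 0$ in Lemma~\ref{lem:projecttosphere} then applies to all the $T_W$ at once. The same argument, replacing $K_W$ by $K$ and $\calC(W)$ by $\calC$, handles $T_{\overline G}$.

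\textbf{Step 2: the covering map (part \eqref{trunc_cover}).}
Let $Y=X\setminus\bigcup\{\calH_c^{(t_c,\infty)}\mid c\in\calC(W)\}$, so $T_W=Y/K_W$. Since $K_W$ acts properly and cocompactly (on bounded neighborhoods of the thick part) and freely (it is a free product of the torsion‑free — being infinite and free abelian, or at worst with the relevant conjugates of the $N_i$ — filling kernels $K_c$, and acts without fixed points on the relevant part of $X$), $Y\to T_W$ is a covering map of hyperbolic spaces. I would then use the standard fact that for a proper cocompact free action of $K_W$ on a hyperbolic space $Y$, the quotient map extends to the boundaries away from the limit set: $\partial Y\setminus\Lambda(K_W)\to\partial(Y/K_W)\setminus(\text{limit points of the quotient})$ is a regular covering with deck group $K_W$. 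Here $\partial Y$ is identified with $\partial X$ since removing the deep parts $\calH_c^{(t_c,\infty)}$ of horoballs changes the space only by a quasi‑isometry on each horoball that fixes the boundary point $c$ and is the identity elsewhere — the truncation replaces the single point $c$ of $\partial X$ with the boundary $\Lambda(\Sigma_c/K_c)$ of the top horosphere. Tracking this identification, the ``missing'' part of $\partial T_W$ below the quotient map is precisely $\mathcal F=\bigcup_{c\in\calC(W)}\Lambda(\Sigma_c/K_c)$, giving the stated covering $(\partial X\setminus\Lambda(K_W))\to\partial T_W\setminus\mathcal F$.

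\textbf{Step 3: $\mathcal F$ is closed with dense complement (part \eqref{trunc_F codense}).}
Openness of $\partial T_W\setminus\mathcal F$ is equivalent to $\mathcal F$ being closed. The set $\mathcal F$ is a countable union of the sets $\Lambda(\Sigma_c/K_c)$, each a ``boundary of a truncated horoball'' sitting inside $\partial T_W$; I would show it is closed by showing these pieces leave every compact subset of $T_W$, i.e.\ that only finitely many of them come within any fixed distance of a basepoint, which follows from proper discontinuity of $K_W$ on $T_W$ together with the $10^3\theta$--separation of the horoballs $\sepH_c$. So a sequence in $\mathcal F$ converging in $\partial T_W$ is eventually trapped in finitely many $\Lambda(\Sigma_c/K_c)$, each of which is compact, hence the limit lies in $\mathcal F$. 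For density of the complement: the thick part of $T_W$ is nonempty and $K_W$--cocompact, and $T_W$ is visual (Step 1), so every point of $\partial T_W$ — in particular every point of $\mathcal F$ — is a limit of geodesic rays; since the truncated horoballs are finite‑diameter in the vertical direction, any geodesic ray eventually leaves each of them and travels in the thick part, so its limit point can be approximated by limit points of rays that avoid a neighborhood of any given $\Sigma_c/K_c$, i.e.\ by points of $\partial T_W\setminus\mathcal F$. (Concretely: boundary points coming from $\partial X\setminus\Lambda(K_W)$ via Step 2 are dense, because their preimages, namely $\partial X$ minus a countable set of parabolic points, are dense in $\partial X$.)

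\textbf{Main obstacle.}
The routine parts are the boundary‑extension formalism in Step 2 and the point‑set topology in Step 3; the genuine difficulty is the \emph{uniformity} of the constant $\delta$ in Step 1. The truncation depth $t_c$ depends on the hyperbolicity constant of $P_i/N_i$, which is \emph{not} uniform over fillings, so one must be careful that (i) the truncated horoballs themselves are uniformly hyperbolic (this is Corollary~\ref{c:universal hyperbolic}, with universal $\theta_0$) and (ii) the gluing of finitely many uniformly hyperbolic pieces — the thick part plus truncated horoballs, meeting along uniformly quasiconvex horospheres with uniformly bounded overlap geometry — produces a space whose hyperbolicity constant depends only on $\theta$ and $\theta_0$, not on the number of pieces or on $W$. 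The cleanest route is a Bestvina--Feighn / Mj--Reeves style combination theorem, or a direct thin‑triangles argument exploiting that any geodesic in $T_W$ alternates between the thick part and truncated horoballs in a controlled way (Lemma~\ref{l:geod in trunc} giving the shape of geodesics inside each truncated horoball). Making this uniformity airtight, and checking that the visibility constant is likewise uniform via Lemma~\ref{TruncatedHoroballsAreVisual}, is where the real work of the proof lies.
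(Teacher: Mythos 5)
There are genuine gaps, and the most serious one is exactly at the point you flag as ``where the real work lies.'' Your Step 1 proposes to get uniform $\delta$ by a Bestvina--Feighn/Mj--Reeves style combination theorem, gluing a ``$K_W$--cocompact thick part'' to the truncated horoball quotients along horospheres. This decomposition does not satisfy the hypotheses of any such theorem: the complement of the horoballs is $G$--cocompact, but its quotient by the much smaller group $K_W$ is \emph{not} cocompact (only $N_R(G)\cap W$ is $K_W$--cocompact, by axiom (S3)), and more importantly this thick piece is not Gromov hyperbolic at all in general (it is essentially the Cayley graph of $G$, which is only \emph{relatively} hyperbolic), and certainly not with constants uniform over fillings -- uniformity is the whole content of part \eqref{trunc_are_hyp}. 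Moreover the gluing loci are horospheres, which are exponentially distorted in the truncated horoballs (geodesics between points of $\calH_c^{500\theta}/K_c$ dive down toward depth $t_c$, and $t_c\to\infty$ over long fillings), so they are not uniformly quasiconvex or quasi-isometrically embedded edge spaces; Lemma \ref{horoballconvexity} gives convexity of the horoballs as subsets, not of the horospheres. The paper's route is different: a local-to-global argument via the Coarse Cartan--Hadamard Theorem \ref{CartanHadamard}, using (i) that $T_W$ is $100\theta$--simply-connected, and (ii) that every ball of radius $10^7\theta$ in $T_W$ is isometric to a ball in $X$ itself or in some $\calH_c^{[500\theta,t_c]}/K_c$ -- and (ii) is where the ``sufficiently long'' hypothesis enters, through Lemma \ref{Greendlinger:BallsEmbed} and Corollary \ref{c:t_c big}. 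Your local visibility input (Lemma \ref{TruncatedHoroballsAreVisual} plus a local-to-global principle) matches the paper, but the hyperbolicity mechanism you propose would not go through as stated.

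Step 2 also rests on claims that fail. The action of $K_W$ on $Y=X\setminus\bigcup\{\calH_c^{(t_c,\infty)}\mid c\in\calC(W)\}$ is proper but \emph{not} cocompact, so there is no standard ``boundary covering away from the limit set'' fact to cite; and the assertion that $\partial Y$ is identified with $\partial X$ is incorrect (removing the deep part of a horoball is not a quasi-isometry of that horoball: the parabolic point $c$ is replaced by the boundary of the depth-$t_c$ horosphere, so the statement is internally inconsistent as written). Note also that the covering in the theorem starts from $\partial X\setminus\Lambda(K_W)$, not from $\partial Y$, so one must explain why a ray to a point outside $\Lambda(K_W)$ eventually avoids all the truncated horoballs with $c\in\calC(W)$; the paper does this via the saturated spiderweb $S_W$, the identity $\Lambda(K_W)=\Lambda(S_W)$ (Lemma \ref{lem:same_limit_set}), the $100\delta$ translation bound off $S_W$ (Lemma \ref{lem:frontier_translation}, which again uses the Greendlinger Lemma for long fillings), and the resulting local isometry of the quotient on $50\delta$--balls away from $S_W$ (Corollary \ref{cor:fiftydeltaisometry}); the covering map $\Theta$ is then built and verified by hand. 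Finally, in Step 3 your density argument is not quite right: a ray converging to a point of $\Lambda(\Sigma_c/K_c)$ stays in that truncated horoball forever (travelling horizontally at depth $t_c$), and density of $\partial X\setminus\Lambda(K_W)$ upstairs does not transfer across a map that is not even defined near $\mathcal F$ (also $\Lambda(K_W)$ is a Cantor set, not a countable set of parabolic points). The paper instead modifies such a ray after a long time to run vertically out of the horoball, using $\delta$--visibility, to produce nearby boundary points outside $\mathcal F$.
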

  
  The following theorem describes the boundary of the quotient group as a limit of boundaries of $T_{W_i}$, where the $\theta$--spiderwebs $W_i$ form an exhaustion of the cusped space.  Recall the notion of weak Gromov--Hausdorff convergence from Section \ref{sec:wgh}.

\begin{restatable}{theorem}{existsvisual} \label{thm:existsvisual}  
Let $(G,\mc{P})$ be relatively hyperbolic. Then there exist $\epsilon, \kappa$ so that for all sufficiently long hyperbolic fillings $G\to \overline{G} = G(N_1,\ldots,N_n)$ with $N_i$ infinite for all $i$ the following hold: 
\begin{enumerate}
\item\label{exists_visual} For any $\theta$--spiderweb $W$ as in Theorem \ref{thm:trunc} associated to the filling there is a visual metric $\rho_W$ on $\partial T_W$ based at the image of $1$ with parameters $\epsilon, \kappa$.
\item\label{exists_GH} If $W_1\subseteq W_2\subseteq \ldots$ is a sequence of $\theta$--spiderwebs with $\bigcup W_j=X$, then the sequence $\left\{ (\partial T_{W_j},\rho_{W_j}) \right\}$ weakly Gromov--Hausdorff converges to a visual metric on $\partial T_{\overline G}$.   
\end{enumerate}
\end{restatable}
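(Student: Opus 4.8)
We take $\delta$ to be a constant as in Theorem~\ref{thm:trunc}, so that for all sufficiently long hyperbolic fillings the truncated quotients $T_W$ (for $W$ any $\theta$--spiderweb associated to the filling) and $T_{\overline G}$ are $\delta$--hyperbolic and $\delta$--visual; we then set $\epsilon=\frac1{6\delta}$ and $\kappa=\kappa(\epsilon,\delta)$ as in Proposition~\ref{findvisualmetric}. These depend only on $(G,\mc{P})$. Part~\eqref{exists_visual} is then immediate: applying Proposition~\ref{findvisualmetric} to the $\delta$--hyperbolic space $T_W$, based at the image of $1$, produces a visual metric $\rho_W$ on $\partial T_W$ with parameters $\epsilon,\kappa$.

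For part~\eqref{exists_GH} the plan is to show that the pointed spaces $(T_{W_j},\overline{1}_j)$ \emph{strongly converge} (Definition~\ref{def:strongconverge}) to $(T_{\overline G},\overline{1})$, where $\overline{1}_j$ and $\overline{1}$ denote the images of $1$ in $T_{W_j}$ and $T_{\overline G}$. Granting this, Proposition~\ref{prop:strongconverge}---applicable because all the spaces involved are $\delta$--hyperbolic and $\delta$--visual by Theorem~\ref{thm:trunc}.\eqref{trunc_are_hyp}---shows that $(\partial T_{W_j},\rho_{W_j})$ weakly Gromov--Hausdorff converges to $(\partial T_{\overline G},\rho)$ for any visual metric $\rho$ on $\partial T_{\overline G}$ with parameters $\epsilon,\kappa$; such a $\rho$ exists by Proposition~\ref{findvisualmetric} applied to $T_{\overline G}$, and this is precisely the assertion of part~\eqref{exists_GH}.

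To establish the strong convergence, fix $R>0$, write $X'=X\setminus\bigcup\{\calH_c^{(t_c,\infty)}\mid c\in\calC\}$ and $X'_j=X\setminus\bigcup\{\calH_c^{(t_c,\infty)}\mid c\in\calC(W_j)\}$, so that $T_{\overline G}=X'/K$ and $T_{W_j}=X'_j/K_{W_j}$, with quotient maps $q$ and $q_j$. The cusped space $X$ is locally finite, so the closed ball $B_{3R}(1)$ (taken in $X$) is finite; since $\bigcup_jW_j=X$ with the $W_j$ nested, for all large $j$ we have both $B_{3R}(1)\subseteq W_j$ and $c\in\calC(W_j)$ for every parabolic point $c$ with $\calH_c\cap B_{3R}(1)\ne\emptyset$ (there are finitely many such $c$ and $\calC(W_j)\nearrow\calC$). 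Since the depth function is $1$--Lipschitz, a path of length $\le R$ issuing from $1$ stays at depth $\le R$; using this one checks, for such $j$, that $B_R(1)$ computed in $X'_j$ coincides with $B_R(1)$ computed in $X'$, that this common ball lies in $X'$, and that a geodesic of length $\le 2R$ between two of its points meets no horoball $\calH_c^{(t_c,\infty)}$ with $c\in\calC\setminus\calC(W_j)$. Consequently $B_R^{T_{W_j}}(\overline{1}_j)=q_j(B_R^{X'}(1))$, the rule $q_j(x)\mapsto q(x)$ ($x\in X'$) defines a surjective, basepoint--preserving map $\psi_j\co B_R^{T_{W_j}}(\overline{1}_j)\to B_R^{T_{\overline G}}(\overline{1})$, and the displayed comparisons of path metrics show $\psi_j$ is an isometry, \emph{provided} that whenever $x$ and $kx$ both lie in $W_j$ with $k\in K$ we have $k\in K_{W_j}$. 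This \emph{separation property} holds for the exhaustion of Theorem~\ref{exhaustion}: part~\eqref{cl:injective} of the Claim in the proof of Proposition~\ref{p:exists bigger spiderweb} gives $gW_i\cap W_i=\emptyset$ for $g\in K_{W_{i+1}}\setminus K_{W_i}$, and since $W_i\subseteq W_{i+1}$ and $K=\bigcup_iK_{W_i}$ this propagates to give $kW_j\cap W_j=\emptyset$ for every $k\in K\setminus K_{W_j}$. Thus for all large $j$ the $R$--balls about the basepoints of $T_{W_j}$ and $T_{\overline G}$ are isometric by a basepoint--preserving isometry; taking inverses yields the isometries required by Definition~\ref{def:strongconverge}.

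The heart of the argument is this separation property together with the bookkeeping of the two ways $T_{W_j}$ differs from $T_{\overline G}$: the horoballs indexed by $\calC\setminus\calC(W_j)$ are left un--truncated in $T_{W_j}$, and the deck group $K_{W_j}$ is a proper subgroup of $K$. The content is that both discrepancies are invisible inside any fixed metric ball about the basepoint once $j$ is large, and the cleanest route is to work with the exhaustion constructed in Section~\ref{sec:spiderweb}. For a genuinely arbitrary nested exhausting family of $\theta$--spiderwebs one would either deduce the separation property directly from the spiderweb axioms by re--running the local--geodesic stability argument of Proposition~\ref{p:exists bigger spiderweb}, or reduce to the canonical family by an interleaving/cofinality argument; since only the canonical exhaustion is needed in the sequel, either suffices.
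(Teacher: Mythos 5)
Your overall route is the paper's: choose $\epsilon,\kappa$ once and for all from Proposition \ref{findvisualmetric} applied with the uniform constant $\delta$ of Theorem \ref{thm:trunc}.\eqref{trunc_are_hyp} (the paper takes $\epsilon=\frac{1}{10\delta}$, you take $\frac{1}{6\delta}$; immaterial), and reduce part \eqref{exists_GH} to strong convergence of the truncated quotients plus Proposition \ref{prop:strongconverge}, verifying strong convergence by comparing metric balls through the two covering maps $\Phi_j,\Phi$ from subsets of $X$. Your bookkeeping with the $3R$-- and $2R$--balls (untruncated horoballs indexed by $\calC\setminus\calC(W_j)$ are invisible there, and paths of length $\le 2R$ issuing from $B_R(1)$ stay where the two truncations agree) matches the paper's construction of a length-preserving bijection on $2R$--balls that restricts to an isometry on $R$--balls.

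The one genuine gap is the injectivity step. The theorem quantifies over an \emph{arbitrary} nested exhausting sequence of $\theta$--spiderwebs, but your ``separation property'' ($kW_j\cap W_j=\emptyset$ for all $k\in K\setminus K_{W_j}$) is justified only by propagating part \eqref{cl:injective} of the Claim in Proposition \ref{p:exists bigger spiderweb}, which is available only for the particular exhaustion built there (Theorem \ref{exhaustion}); for a general family you offer two strategies but carry out neither, so as written you have proved a weaker statement than the one asserted. The paper avoids the issue entirely with a softer observation: since the $K$--action on $X$ is proper, only finitely many $k\in K$ satisfy $kB\cap B\neq\emptyset$ for the fixed ball $B=B_{2R}(1)$, and since $K=\bigcup_j K_{W_j}$ is an increasing union (true for \emph{any} nested exhausting family of spiderwebs, as every $K_c$ eventually lies in some $K_{W_j}$), all of these finitely many elements lie in $K_{W_j}$ once $j$ is large; this is exactly the orbit-identification needed on the ball, with no global statement about $W_j$ required. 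If you do want your stronger separation property for arbitrary spiderwebs, it is in fact available in the paper --- the Claim inside Proposition \ref{lem:neigh_embed}, proved via Lemma \ref{greendlinger}, gives precisely that $x,kx\in N_{R_0}(S_W)$ with $k\in K$ forces $k\in K_W$ for all sufficiently long fillings and every spiderweb --- but invoking that (or the paper's finiteness trick) is what your argument is missing; the deferral ``either suffices'' is not a proof of the statement as stated.
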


The following theorem guarantees that the boundaries of the $T_W$ are linearly connected with uniform constant.  This is important in order to be able to apply Lemma \ref{ivanovlemma}.

\begin{restatable}{theorem}{linearconnectedness}\label{thm:linconn}
 Let $(G,\mc{P})$ be relatively hyperbolic and suppose that the Bowditch boundary $\partial X$ (when endowed with any visual metric) is linearly connected.  Then for all sufficiently long hyperbolic fillings $G\to \overline{G} = G(N_1,\ldots,N_n)$ with $\overline {G}$ one-ended and $N_i$ infinite for each $i$, the following holds: There exists $L$ so that, for any $\theta$--spiderweb $W$ with parameter $\theta$ as in Theorem \ref{thm:trunc}, $(\partial T_W,\rho_W)$ is $L$--linearly connected, where $\rho_W$ is the visual metric from Theorem \ref{thm:existsvisual}.
\end{restatable}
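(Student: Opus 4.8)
The plan is to prove $L$--linear connectedness of $(\partial T_W, \rho_W)$ by transporting linear connectedness of $\partial X$ across the covering map of Theorem \ref{thm:trunc}.\eqref{trunc_cover} and then across the ``filling in'' of the horosphere quotients $\Sigma_c/K_c$ that make up $\mathcal F$. Concretely, let $\pi\co (\partial X\setminus\Lambda(K_W))\to \partial T_W\setminus\mathcal F$ be the regular $K_W$--cover. I would first prove a \emph{quantitative} local statement: for any two points $\bar\xi,\bar\eta\in\partial T_W\setminus\mathcal F$ that are $\rho_W$--close, there are lifts $\xi,\eta\in\partial X$ that are comparably close in a fixed visual metric $\rho$ on $\partial X$. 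This requires the Gromov-product comparison $(\xi\mid\eta)_1$ (in $X$) versus $(\bar\xi\mid\bar\eta)_{\bar 1}$ (in $T_W$): since $X\to T_W$ restricted to a large ball around $1$ is an isometry (Lemma \ref{Greendlinger:BallsEmbed}) and $W$ is quasiconvex, a geodesic in $T_W$ realizing a large Gromov product between $\bar\xi,\bar\eta$ lifts, up to bounded error coming from $\delta$ and the spiderweb constant $\theta$, to a path tracking a geodesic in $X$ between suitable lifts. One has to be a little careful that this geodesic stays away from the deleted deep horoballs; but points of $\partial T_W\setminus \mathcal F$ are by construction limits of rays that exit through the $K_W$--translates of $W$, so geodesics between them can be chosen to avoid $\calH_c^{(t_c,\infty)}$ for all relevant $c$ by the estimates in Lemma \ref{concat23} and the convexity Lemma \ref{horoballconvexity}.

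Next I would assemble the connected set. Given $\bar\xi,\bar\eta\in\partial T_W\setminus\mathcal F$, pick lifts $\xi,\eta$ with $\rho(\xi,\eta)\lesssim \rho_W(\bar\xi,\bar\eta)$; linear connectedness of $(\partial X,\rho)$ gives a connected set $J\subseteq \partial X$ with $\xi,\eta\in J$ and $\diam_\rho(J)\le L_0\,\rho(\xi,\eta)$. Then $\pi(J\setminus\Lambda(K_W))$ is connected in $\partial T_W\setminus\mathcal F$; but $\pi$ need not be Lipschitz globally, only locally, and $J$ may have diameter comparable to the whole space when $\bar\xi,\bar\eta$ are far apart. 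The standard fix (used e.g. in Bonk--Kleiner / Mackay type arguments) is a scale-dependent version: cover $J$ by boundedly many pieces each of $\rho$--diameter small enough that $\pi$ is bilipschitz on each, so that $\pi(J)$ has controlled $\rho_W$--diameter; when $\bar\xi,\bar\eta$ are at macroscopic distance one instead uses that $\partial T_W$ is connected and compact with diameter bounded below (coming from one-endedness of $\overline G$, which is where that hypothesis enters) to get a trivial bound $\diam(\partial T_W)\le L'\rho_W(\bar\xi,\bar\eta)$. Finally, to connect points of $\mathcal F$ or points separated from the rest by an $\mathcal F$--component: each $\Lambda(\Sigma_c/K_c)$ is the boundary of a truncated horoball quotient, which is quasi-isometric to $P_i/N_i$, hence (being one-ended, virtually-$\mathbb Z$ being excluded by one-endedness of $\overline G$ forcing the $P_i/N_i$ pieces to attach along connected sets) its limit set is a point or a connected set of controlled diameter; in any case $\mathcal F$ is built from finitely many such $K_W$--orbits of connected ``slits'' each of small diameter, and since $\partial T_W\setminus\mathcal F$ is dense (Theorem \ref{thm:trunc}.\eqref{trunc_F codense}), any point of $\mathcal F$ is a limit of points in $\partial T_W\setminus\mathcal F$ and can be joined to them cheaply. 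The uniformity of $L$ over all long fillings and all spiderwebs $W$ follows because every constant in sight ($\delta$, $\theta$, $\theta_0$, $\kappa$, $\epsilon$, the radius in Lemma \ref{Greendlinger:BallsEmbed}, the diameter lower bound from one-endedness) is uniform by Proposition \ref{p:uniform delta}, Theorem \ref{thm:trunc}, and Theorem \ref{thm:existsvisual}.

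The main obstacle I expect is the one hinted at above: $\pi$ is only a \emph{local} similarity/bilipschitz map, so pushing a connected set forward does not a priori control diameters, and one must run a multi-scale ``chaining'' argument to convert the local control into the global linear-connectedness inequality — and do so with constants independent of $W$. A secondary, more bookkeeping-heavy obstacle is verifying that the added pieces $\Lambda(\Sigma_c/K_c)$ do not create ``bottlenecks'': i.e. that a short path in $\partial T_W$ joining two points on opposite sides of such a set can be taken with controlled diameter. Here one really uses that $\overline G$ is one-ended, so that no $T_W$ (for $W$ in the exhaustion) has a cut point at such a slit; combined with the no-local-cut-point results of \cite{GM-splittings} for $\partial\overline G$ and a persistence argument like Lemma \ref{ivanovlemma}, this rules out the bad configuration for all large $W$. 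I would organize the write-up as: (i) a lemma comparing $\rho_W$-distance downstairs with $\rho$-distance upstairs at small scales via ball-embedding; (ii) a lemma that $\mathcal F$ consists of uniformly-small connected pieces; (iii) the multi-scale chaining to conclude.
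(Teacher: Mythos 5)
There is a genuine gap, and it sits exactly where the filling changes the geometry: at small scales near $\mathcal F$. Your plan is to transport linear connectedness of $\partial X$ through the boundary covering $\pi\co\partial X\setminus\Lambda(K_W)\to\partial T_W\setminus\mathcal F$, but the quantitative local statement you need --- that $\rho_W$--close points downstairs admit comparably $\rho$--close lifts, with uniform constants --- fails as the points approach $\mathcal F$: the covering is a local homeomorphism but not uniformly locally bilipschitz, its distortion blows up near $\mathcal F$ (upstairs this is the region accumulating on $\Lambda(K_W)$, where $K_W$ moves points by arbitrarily little in the visual metric), and a connected set $J\subset\partial X$ joining two lifts may pass through $\Lambda(K_W)$, so $\pi(J\setminus\Lambda(K_W))$ need not even be connected. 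Your multi-scale chaining can only recycle the same defective local input, so it cannot close this. Moreover your description of $\mathcal F$ is incorrect: each piece $\Lambda(\Sigma_c/K_c)$ is a copy of $\partial(P_i/N_i)$, which in the motivating case ($P_i/N_i$ virtually cyclic) is a \emph{two-point} set, not a connected ``slit'', and there is no a priori bound on how far apart those two points are in $\rho_W$; one-endedness of $\overline G$ says nothing about one-endedness of $P_i/N_i$. Joining points whose short connections are obstructed near such a piece is precisely the crux, and ``$\mathcal F$ is small and $\partial T_W\setminus\mathcal F$ is dense'' does not do it. Finally, your only quantitative use of one-endedness (a macroscopic diameter bound, plus a vague persistence/no-bottleneck appeal) is not where that hypothesis is actually needed.

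The missing idea, which is how the paper proceeds, is to get the small-scale control near the filled region from $\partial\overline G$ rather than from $\partial X$. One-endedness of $\overline G$ gives linear connectedness of $\partial\overline G\cong\partial T_{\overline G}$ by Bonk--Kleiner \cite[Proposition 4]{BonkKleiner05}, and Proposition \ref{lem:neigh_embed} isometrically embeds a large neighborhood $N_R(\overline S_W)$ of the (truncated, quotiented) saturated spiderweb into $T_{\overline G}$, so that interpolating families of rays produced from linear connectedness of $\partial\overline G$ (via Lemma \ref{lem:arc_to_rays}) can be transferred back into $T_W$. The paper then reformulates linear connectedness as a chaining condition on geodesic rays (Lemma \ref{chain2}) and argues in two cases according to whether the two rays in $T_W$ diverge far from $\overline S_W$ --- where the configuration lifts to $X$ by Lemma \ref{lem:liftproject} and linear connectedness of $\partial X$ applies --- or close to $\overline S_W$, where the embedded neighborhood and $\partial\overline G$ take over. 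Without an ingredient playing the role of Proposition \ref{lem:neigh_embed} plus Bonk--Kleiner, your argument has no source of uniform linear connectedness in the second regime, so the proof as proposed does not go through (and the resulting $L$ could not be uniform over spiderwebs in any case without it).
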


 Note that $L$ depends on the filling, but then is uniform over $\theta$--spiderwebs associated to that filling.  The constants $\delta$, $\epsilon$, $\kappa$ do not depend on the (long) filling.

\subsection{Statements for general fillings}
For a general (not-necessarily-hyperbolic) long Dehn filling, we can make similar statements as above for the Bowditch boundary of $(\overline{G},\overline{\mc{P}})$.  We use the following terminology.  For $W$ a spiderweb associated to a Dehn filling, let $X_W = {X}/{K_W}$.  If $K$ is the kernel of the filling map $G\to \overline{G}$, let $X_{\overline{G}} = {X}/{K}$, and note that, for long fillings, $X_{\overline{G}}$ is a cusped space for the pair $(\overline{G},\overline{\mc{P}})$.  In particular, $\partial (\overline{G},\overline{\mc{P}}) = \partial X_{\overline{G}}$.
\begin{theorem}\label{relversion:trunc}
  Let $(G,\mc{P})$ be relatively hyperbolic with cusped space $X$.  Then there exist $\theta,\delta$ with the following properties.  For all sufficiently long fillings $G \to \overline{G} = G(N_1,\ldots , N_n)$ with $N_i$ infinite for all $i$ and any $\theta$--spiderweb $W$ associated to the filling we have
  \begin{enumerate}
   \item The quotient $X_W$ is $\delta$--hyperbolic and $\delta$--visual, and so is $X_{\overline G}$.
   \item If $\mathcal F\subset \partial X_W$ consists of the points $\Lambda(H_c /K_c)$ for $c\in \calC(W)$, then there exists a regular covering map $(\partial X \setminus \Lambda(K_W))\to \partial X_W \setminus \mathcal F$ with deck group $K_W$.
    \item Let $\mc{F}^{\mathrm{iso}}\subseteq \mc{F}$ consist of those points which are isolated in $\partial X_W$ (so they come from finite index $N_i\lhd P_i$).  Then $\partial X_W  \setminus \mathcal F$ is open and dense in $\partial X_W\setminus \mathcal{F}^{\mathrm{iso}}$.
\end{enumerate}  
  \end{theorem}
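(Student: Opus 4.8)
The plan is to run the arguments behind Theorem~\ref{thm:trunc} in the setting where \emph{no} truncation is performed, which is strictly simpler: the quotient $X_W=X/K_W$ of the cusped space by the spiderweb stabiliser is (a mild perturbation of) a cusped space, so it is Gromov hyperbolic with a constant not depending on the filling, and its boundary is governed by the domain‑of‑discontinuity picture for the geometrically finite action of $K_W$ on $X$. Fix $\theta$ as in Section~\ref{sec:spiderweb}, and let $\delta$ be a constant (at least the one of Proposition~\ref{p:uniform delta}) to be enlarged finitely often below; $\delta$ will not depend on the filling.

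\textbf{Part (1): hyperbolicity and visuality.} For $X_{\overline G}=X/K$ there is essentially nothing new: for sufficiently long fillings $X/K$ is a cusped space for $(\overline G,\overline{\mc P})$, hence $\delta$--hyperbolic by Proposition~\ref{p:uniform delta}. For $X_W$ I would invoke the Coarse Cartan--Hadamard Theorem~\ref{CartanHadamard}: by the last sentence of Lemma~\ref{Greendlinger:BallsEmbed} the projection $X\to X_W$ is an isometry on balls of any prescribed radius $R$ about Cayley‑graph vertices (for long enough fillings), so $X_W$ is locally $\theta$--hyperbolic at scale $R$, and the proof of Lemma~\ref{greendlinger} applies verbatim with $K_W$ in place of $K$ (using the very translating condition for $\{K_c\}$, valid for long fillings) to supply the coarse simple connectivity and linear filling hypothesis. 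This yields $\delta$--hyperbolicity of $X_W$ with $\delta=\delta(\theta)$, uniformly over long fillings and over spiderwebs. For visuality, $X$ itself is $\theta$--visual (every vertex lies on a geodesic ray, e.g.\ one descending a combinatorial horoball), and this passes to $X_W$: given $\bar a\in X_W$, lift it to $a\in X$ with $d(1,a)=d_{X_W}(\bar 1,\bar a)$ and take a geodesic ray $\rho$ from $1$ within $\theta$ of $a$; if $\rho$ limits to a point of $\partial X\setminus\Lambda(K_W)$ then (by Part (2)) its image in $X_W$ is a uniform quasigeodesic ray passing within $\theta$ of $\bar a$, and if $\rho$ limits into $\Lambda(K_W)$ (which forces $a$ deep in some $\sepH_c$ with $c\in\calC(W)$, or else lets one reroute $\rho$ to a generic point of $\partial X\setminus\Lambda(K_W)$) we replace the tail of $\rho$ beyond $a$ by a vertical descent into the horoball through $a$; appending a sufficiently long vertical segment to a geodesic yields a local geodesic, hence a uniform quasigeodesic ray whose image descends a quotient horoball and passes within $\theta$ of $\bar a$ (the mechanism of Lemma~\ref{TruncatedHoroballsAreVisual}, using the description of horoball geodesics in Lemma~\ref{l:geod in trunc}). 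All constants are uniform. The same two arguments handle $X_{\overline G}$.

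\textbf{Part (2): the covering map.} Since $W$ is $4\theta$--quasiconvex and, by Definition~\ref{def:spiderweb}, $K_W$ acts on it cocompactly away from the separated family $\{\sepH_c\mid c\in\calC(W)\}$ and is the free product $\ast_{c\in C}K_c$ with $C$ finite, the $K_W$--action on $\partial X$ is a geometrically finite convergence action with limit set $\Lambda(K_W)$, parabolic points the $K_W$--orbit of $C$, and properly discontinuous on the domain of discontinuity $\Omega:=\partial X\setminus\Lambda(K_W)$. Define $p\co\Omega\to\partial X_W$ by sending $\xi$, represented by a geodesic ray $\rho$ from $1$ in $X$, to $\bar\rho(\infty)$, where $\bar\rho$ is the image of $\rho$: the image is a local geodesic at scale $R$ in the $\delta$--hyperbolic $X_W$, hence a uniform quasigeodesic ray, so $\bar\rho(\infty)$ exists; and $\xi\notin\Lambda(K_W)$ is exactly what forces $\rho$ to eventually leave, and stay out of, every $R$--neighbourhood of every $K_W$--orbit (otherwise some $k_nx_0\to\xi$, putting $\xi$ in $\Lambda(K_W)$), which in turn forces $p(\xi)\notin\mathcal F$ (a ray whose image descends a quotient horoball $\calH_c/K_c$ forever is asymptotic to a $K_W$--translate of $c\in\Lambda(K_W)$). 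Conversely, a geodesic ray of $X_W$ representing a point of $\partial X_W\setminus\mathcal F$ has bounded depth along a subsequence and does not stay a bounded distance from any orbit point, so its lift — again a local geodesic hence uniform quasigeodesic ray of $X$ — limits to a point of $\Omega$; this gives surjectivity onto $\partial X_W\setminus\mathcal F$. The map $p$ is visibly $K_W$--equivariant, $K_W$ acts properly discontinuously on $\Omega$ by the convergence property, and freely there — a nontrivial element with a fixed point in $\Omega$ would be loxodromic with both fixed points in $\Lambda(K_W)$, while nontrivial torsion of the free product $K_W$ is conjugate into a factor $K_c$ (see the treatment of torsion in the proof of Theorem~\ref{thm:trunc}). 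Hence $p$ descends to a homeomorphism $\Omega/K_W\cong\partial X_W\setminus\mathcal F$ and is a regular covering with deck group $K_W$, local triviality coming from proper discontinuity together with the local isometry $X\to X_W$. I expect this part — verifying the escaping dichotomy in both directions using only the spiderweb data, and pinning down local triviality — to be the main obstacle; it is the relative (and easier) analogue of the corresponding step of Theorem~\ref{thm:trunc}, where the truncation horospheres complicate both $\mathcal F$ and the quasigeodesic estimates.

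\textbf{Part (3): openness and density.} Because $c'=kc$ with $k\in K_W$ gives $\calH_{c'}=k\calH_c$ with the same image in $X_W$, and $C$ is finite, the set $\mathcal F=\{\Lambda(\calH_c/K_c)\mid c\in\calC(W)\}$ is finite (indexed by $C$). Each $\Lambda(\calH_c/K_c)$ is a single point of $\partial X_W$; when the relevant quotient $P_i/N_i$ is finite the image of the coset of $P_i$ in $X_W$ is bounded, so $\calH_c/K_c$ is attached along a bounded set and its limit point is isolated in $\partial X_W$ (these are the points of $\mathcal F^{\mathrm{iso}}$), and when $P_i/N_i$ is infinite the horospherical geometry of Lemma~\ref{l:geod in trunc} produces boundary points of $X_W$ arbitrarily close to, but distinct from, $\Lambda(\calH_c/K_c)$, so that point is non‑isolated. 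In particular $\mathcal F$ is closed, so $\partial X_W\setminus\mathcal F$ is open in $\partial X_W$, a fortiori in $\partial X_W\setminus\mathcal F^{\mathrm{iso}}$. Finally, writing $\partial X_W\setminus\mathcal F^{\mathrm{iso}}=(\partial X_W\setminus\mathcal F)\sqcup(\mathcal F\setminus\mathcal F^{\mathrm{iso}})$: a point $\bar c\in\mathcal F\setminus\mathcal F^{\mathrm{iso}}$ is by definition non‑isolated, hence a limit of points $\bar\xi_n\ne\bar c$, and since $\mathcal F$ is finite no constant subsequence drawn from $\mathcal F\setminus\{\bar c\}$ converges to $\bar c$, so $\bar\xi_n\in\partial X_W\setminus\mathcal F$ for all large $n$; thus $\mathcal F\setminus\mathcal F^{\mathrm{iso}}\subseteq\overline{\partial X_W\setminus\mathcal F}$, giving the asserted density.
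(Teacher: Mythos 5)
A preliminary remark on the comparison: the paper never writes out a proof of Theorem \ref{relversion:trunc} --- it states that the relative versions (Theorems \ref{relversion:trunc}--\ref{rel:linconn}) are strictly easier than Theorems \ref{thm:trunc}--\ref{thm:linconn} and omits them --- so the benchmark is the intended argument, namely the untruncated analogue of Section \ref{sec:proofs}. Your proposal follows exactly that route: Coarse Cartan--Hadamard plus a local-to-global visibility argument for part (1), the ray-projection covering map for part (2), and openness/density for part (3). Your part (3) is complete, and, as you observe, it is genuinely simpler than Lemma \ref{lem:opendense} because in the relative setting $\mc{F}$ is a finite set of parabolic points (finitely many $K_W$--orbits in $\calC(W)$), so density at a non-isolated point of $\mc{F}$ is immediate.

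Two places in parts (1) and (2) would not survive as written, though both are repaired by exactly the ingredients the paper uses in the truncated case. First, you derive local hyperbolicity of $X_W$ at scale $R$ from Lemma \ref{Greendlinger:BallsEmbed} alone; that lemma only controls balls centered at Cayley-graph vertices, and a ball lying deep inside a quotient horoball $\calH_c/K_c$ is not covered by it (and for a long filling such a ball is in general not isometric to any ball in $X$, since at depth comparable to $\log_2$ of the shortest element of $K_c$ the quotient contains short loops coming from $K_c$). You need the dichotomy of Claim \ref{claim:one ball or the other}, with Corollary \ref{c:universal hyperbolic} replaced by the fact that full combinatorial horoballs over arbitrary graphs are hyperbolic with a universal constant; similarly, the coarse simple connectivity is the analogue of Claim \ref{claim:T_W D-sc} (peripheral loops are pushed deep into the quotient horoballs, which are simply connected after attaching the $2$--cells of \cite[\S3]{rhds}), not a consequence of the Greendlinger Lemma \ref{greendlinger}, which plays no role there; and for $\delta$--visuality note that Definition \ref{defn:visualspace} is a two-point condition, so one should apply Proposition \ref{lem:local_visibility} at arbitrary points (prolonging downward inside quotient horoballs) rather than only produce rays from the basepoint. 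Second, in part (2) the quantity that makes the projected ray a $50\delta$--local geodesic is its distance from the saturated spiderweb $S_W$, not from a $K_W$--orbit: the relevant facts are $\Lambda(S_W)=\Lambda(K_W)$ (Lemma \ref{lem:same_limit_set}), Lemma \ref{lem:frontier_translation} and Corollary \ref{cor:fiftydeltaisometry}. These displacement estimates also give freeness and proper discontinuity of the $K_W$--action on $\partial X\setminus\Lambda(K_W)$, covering possible torsion in the factors $K_c$; your remark that torsion is conjugate into a factor does not by itself exclude a torsion element fixing a point of the domain of discontinuity. With these substitutions your outline coincides with the intended (omitted) proof.
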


\begin{theorem} \label{relativevisual}
Let $(G,\mc{P})$ be relatively hyperbolic. Then there exist $\epsilon, \kappa$ so that for all sufficiently long fillings $G\to \overline{G} = G(N_1,\ldots,N_n)$ with $N_i$ infinite for all $i$ the following hold: 
\begin{enumerate}
\item For any $\theta$--spiderweb $W$ as in Theorem \ref{relversion:trunc} associated to the filling there is a visual metric $\rho_W$ on $\partial X_W$ based at the image of $1$ with parameters $\epsilon, \kappa$.
\item If $W_1\subseteq W_2\subseteq \ldots$ is a sequence of $\theta$--spiderwebs with $\bigcup W_j=X$, then the sequence $\left\{ (\partial X_{W_j},\rho_{W_j}) \right\}$ weakly Gromov--Hausdorff converges to a visual metric on $\partial X_{\overline{G}}$.
\end{enumerate}
\end{theorem}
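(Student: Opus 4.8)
The plan is to deduce Theorem~\ref{relativevisual} from Theorem~\ref{relversion:trunc} together with the convergence machinery of Section~\ref{sec:wgh}, running the same outline that combines Theorems~\ref{thm:trunc} and~\ref{thm:existsvisual} in the hyperbolic case, but with the truncated quotients $T_W$ replaced throughout by the quotient cusped spaces $X_W=X/K_W$ and $X_{\overline G}=X/K$. Since these are honest quotients of $X$, no truncated horoballs need to be kept under uniform control, which is exactly why the relative statement is easier.

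First I would invoke Theorem~\ref{relversion:trunc}.(1) to fix a constant $\delta$, depending only on $(G,\mc{P})$, so that for all sufficiently long fillings every $X_W$ and $X_{\overline G}$ is $\delta$--hyperbolic and $\delta$--visual (and proper, since $X$ is proper and the relevant subgroup of $G$ acts properly on $X$). Then choose $\epsilon\le \frac{1}{6\delta}$ and $\kappa=\kappa(\epsilon,\delta)\ge 1$ as in Proposition~\ref{findvisualmetric}; these depend only on $(G,\mc{P})$, so they are the $\epsilon,\kappa$ claimed in the theorem. Part~(1) is now immediate: Proposition~\ref{findvisualmetric}, applied with basepoint the image $\bar 1$ of $1\in G$, produces a visual metric $\rho_W$ on $\partial X_W$ based at $\bar 1$ with parameters $\epsilon,\kappa$ (and likewise $\partial X_{\overline G}$ carries such a metric).

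For Part~(2) the task reduces to checking the hypotheses of Proposition~\ref{prop:strongconverge}: given an exhaustion $W_1\subseteq W_2\subseteq\cdots$ of $X$ by $\theta$--spiderwebs, I claim that $(X_{W_j},\bar 1)$ strongly converges to $(X_{\overline G},\bar 1)$ in the sense of Definition~\ref{def:strongconverge}. Granting this, Proposition~\ref{prop:strongconverge}, applied with the uniform $\delta$ above, the metrics $\rho_{W_j}$ from Part~(1), and a visual metric $\rho$ on $\partial X_{\overline G}$ based at $\bar 1$ with parameters $\epsilon,\kappa$, gives that $(\partial X_{\overline G},\rho)$ is a weak Gromov--Hausdorff limit of $(\partial X_{W_j},\rho_{W_j})$, which is the conclusion. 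To prove the strong convergence, recall $K_{W_j}\le K$ for all $j$ and $\bigcup_j K_{W_j}=K$ by Theorem~\ref{exhaustion} (available for long fillings by Lemma~\ref{lem:long is v translating}), so there are natural $1$--Lipschitz quotient maps $q_j\co X_{W_j}=X/K_{W_j}\to X/K=X_{\overline G}$ carrying $\bar 1$ to $\bar 1$. Fix $R>0$. Lifting geodesics issuing from $\bar 1$ (realizing the quotient distance) shows each $q_j$ maps $B_R(\bar 1)$ onto $B_R(\bar 1)$; I would show it is in fact an isometry on $B_R(\bar 1)$ for all large $j$. If it fails to be, then, since $q_j$ is $1$--Lipschitz and onto on these balls, there are lifts $x,y\in X$ with $d_X(x,1),d_X(y,1)\le R$ and some $k\in K\setminus K_{W_j}$ with $d_X(x,ky)<d_X(x,y)\le 2R$; hence $ky\in B_{3R}(1)$, so $k B_{3R}(1)\cap B_{3R}(1)\neq\emptyset$. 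Because $X$ is proper and $K$ (a subgroup of $G$) acts properly discontinuously on $X$, the set $\{k\in K: kB_{3R}(1)\cap B_{3R}(1)\ne\emptyset\}$ is finite, and each of its elements lies in some $K_{W_{j_0}}$; taking $j$ larger than all the finitely many indices $j_0$ that occur rules out such a $k$, a contradiction. Thus for all large $j$ the map $q_j$ restricts to an isometry $B_R(\bar 1)\to B_R(\bar 1)$ fixing $\bar 1$, whose inverse is the isometry demanded by Definition~\ref{def:strongconverge}; as $R$ was arbitrary, strong convergence follows.

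The one substantive point above is the finiteness used in the last step: for each radius $R$, the elements of $K$ responsible for any discrepancy between $X_{W_j}$ and $X_{\overline G}$ inside $B_R(\bar 1)$ form a finite set, which is absorbed into $K_{W_j}$ once $W_j$ is large enough — a close cousin of Lemma~\ref{Greendlinger:BallsEmbed}. Everything else is a direct appeal to Theorem~\ref{relversion:trunc}, Proposition~\ref{findvisualmetric}, Proposition~\ref{prop:strongconverge}, and Theorem~\ref{exhaustion}; in particular no analogue of the delicate uniform hyperbolicity estimates for truncated horoballs (the heart of Theorems~\ref{thm:trunc}--\ref{thm:linconn}) is needed here, since Theorem~\ref{relversion:trunc}.(1) supplies uniform hyperbolicity and visuality of the untruncated quotients $X_W$ for free.
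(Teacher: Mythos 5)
Your proof is correct and follows essentially the route the paper intends: it omits the relative proofs, but your argument mirrors its proof of Theorem \ref{thm:existsvisual} — uniform hyperbolicity and visuality from Theorem \ref{relversion:trunc} plus Proposition \ref{findvisualmetric} for part (1), and strong convergence fed into Proposition \ref{prop:strongconverge} for part (2), using the same key finiteness observation (only finitely many $k\in K$ move a fixed ball of $X$ to meet itself, and these are eventually absorbed into $K_{W_j}$) that appears in Section \ref{sec:converge}. Your direct comparison of orbit metrics on balls is a clean substitute for the paper's path-length-preserving bijection on $2R$--balls, and it is simpler here precisely because no truncation is involved.
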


\begin{theorem} \label{rel:linconn}
 Let $(G,\mc{P})$ be relatively hyperbolic and suppose that the Bowditch boundary $\partial X$ (when endowed with any visual metric) is linearly connected.  Then for all sufficiently long fillings $G\to \overline{G} = G(N_1,\ldots,N_n)$ with $\partial (\overline {G},\overline{\mc{P}})$ linearly connected and $N_i$ infinite for each $i$, the following holds: There exists $L$ so that, for any $\theta$--spiderweb $W$ with parameter $\theta$ as in Theorem \ref{relversion:trunc}, $(\partial X_W,\rho_W)$ is $L$--linearly connected, where $\rho_W$ is the visual metric from Theorem \ref{relativevisual}.
\end{theorem}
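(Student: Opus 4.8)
The plan is to adapt the proof of Theorem~\ref{thm:linconn}; the relative statement should indeed be \emph{strictly easier}, since one works directly with the hyperbolic quotients $X_W=X/K_W$ and $X_{\overline G}=X/K$ and never has to truncate horoballs -- the full combinatorial horoballs here play the role that truncated horoballs play in the hyperbolic setting, so Corollaries~\ref{c:universal hyperbolic}--\ref{TruncatedHoroballsAreHyperbolic} and Lemma~\ref{TruncatedHoroballsAreVisual} are simply not needed. First I would fix $\delta$ via Proposition~\ref{p:uniform delta} and Theorem~\ref{relversion:trunc}, so that $X$, $X_{\overline G}$ and every $X_W$ are $\delta$--hyperbolic and $\delta$--visual, and fix visual parameters $\epsilon,\kappa$ as in Theorem~\ref{relativevisual}. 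Let $L_0$ be a linear connectedness constant for $\partial X$ and $L_1$ one for $\partial X_{\overline G}=\partial(\overline G,\overline{\mc{P}})$; both exist by hypothesis. The goal is to produce $L=L(L_0,L_1,\delta,\epsilon,\kappa,\theta)$, uniform over all $\theta$--spiderwebs $W$ of the fixed (long) filling, so that every pair $\xi\ne\eta\in\partial X_W$ lies in a connected subset of $\rho_W$--diameter at most $L\,\rho_W(\xi,\eta)$. For pairs with $\rho_W(\xi,\eta)$ above a fixed threshold $\rho_0$ one takes the whole of $\partial X_W$ (which is connected, by the description in Theorem~\ref{relversion:trunc} together with connectedness of $\partial X$, and of $\rho_W$--diameter at most $\kappa$); so one may assume $\tau:=(\xi\mid\eta)_{\bar1}$ is large -- $\bar1$ denoting the image of $1\in X$ -- with $\rho_W(\xi,\eta)$ comparable to $e^{-\epsilon\tau}$ up to $\kappa$, and consider geodesic rays $\alpha=[\bar1,\xi]$, $\beta=[\bar1,\eta]$, which $2\delta$--fellow travel until about time $\tau$.

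There are two mechanisms for building the connecting set, according to where $\alpha$ and $\beta$ split. If near time $\tau$ they lie within a fixed bounded distance of the image of the Cayley graph -- equivalently, they do not split inside the deep part of any horoball $\calH_c/K_c$ with $c\in\calC(W)$ -- then I would lift through the regular covering $p\colon\partial X\setminus\Lambda(K_W)\to\partial X_W\setminus\mathcal F$ of Theorem~\ref{relversion:trunc}. Using Lemma~\ref{Greendlinger:BallsEmbed} (so that $X\to X/K_W$ is a local isometry on large balls about Cayley--graph points, for long enough fillings), one checks that $p$ is a local isometry for the visual metrics with a constant uniform in $W$, hence there are lifts $\tilde\xi,\tilde\eta\in\partial X$ with $\rho(\tilde\xi,\tilde\eta)\le\kappa'\rho_W(\xi,\eta)$; linear connectedness of $\partial X$ gives a connected $\tilde J\ni\tilde\xi,\tilde\eta$ of diameter $\le L_0\rho(\tilde\xi,\tilde\eta)$, which for $\rho_0$ small is disjoint from $\Lambda(K_W)$, and $J:=p(\tilde J)$ works. (If $\xi\in\mathcal F$ then $\alpha$ eventually stays deep in a horoball over $\calC(W)$, so such a pair falls into the second case.)

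In the second case $\alpha$ and $\beta$ split at depth $\ge D=D(\delta,\theta)$ inside some $\calH_c/K_c$ with $c\in\calC(W)$ (so, by convexity of deep horoballs, Lemma~\ref{horoballconvexity}, both rays descend that far). Here I would transfer linear connectedness \emph{down} from $\partial X_{\overline G}$: the quotient map $X_W\to X_{\overline G}$ is an isometry on $\calH_c/K_c$ and on a uniform neighborhood of it (the further identifications made in passing to $X_{\overline G}$ all occur in the horoballs over $\calC\setminus\calC(W)$, which are disjoint from $\calH_c/K_c$), and consequently it induces a homeomorphism from the set of endpoints of geodesics from $\bar1$ meeting $\calH_c^{(D,\infty)}/K_c$ -- a set containing both $\xi$ and $\eta$ -- onto the corresponding subset of $\partial X_{\overline G}$, \emph{bilipschitz with a constant uniform in $W$} for the respective visual metrics. (To get the uniform bilipschitz bound one reduces both Gromov products to a common basepoint lying inside the isometrically identified deep horoball, using that changing the basepoint of a visual metric is a quasi-symmetry with gauge depending only on $\delta,\epsilon$.) Since $\rho_W(\xi,\eta)$ is small, the image pair lies in that set, linear connectedness of $\partial X_{\overline G}$ provides a connected set of controlled diameter joining them, and pulling it back through the bilipschitz homeomorphism gives the desired $J\ni\xi,\eta$. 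Taking $L$ to be the larger of the two constants finishes the argument.

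The hard part -- and the reason the authors defer the relative proofs -- is the uniformity of these two transfers: one must verify that the covering $p$ and the local identifications coming from $X_W\to X_{\overline G}$ are bilipschitz for the chosen visual metrics with constants depending neither on the spiderweb $W$ nor on the long filling, and this forces one to track constants carefully through basepoint changes, in particular changes that move the basepoint deep into a horoball. The hypothesis that $\partial(\overline G,\overline{\mc{P}})$ is linearly connected is used precisely in the second mechanism, and the hypothesis that $\partial X$ is linearly connected in the first.
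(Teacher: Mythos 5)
The paper never writes out a proof of Theorem \ref{rel:linconn} (the relative statements are explicitly left as easier analogues of Theorems \ref{thm:trunc}--\ref{thm:linconn}), so the benchmark is the proof of Theorem \ref{thm:linconn}. Your high-level plan agrees with that proof in its two ingredients -- use linear connectedness of $\partial X$ in one regime and of $\partial(\overline G,\overline{\mc P})$ in the other, with no truncation needed -- but the way you execute both regimes has genuine gaps, and you draw the case division in the wrong place. Your first case ("the rays split at bounded depth") includes pairs whose rays split right next to the image of the (saturated) spiderweb. For such a pair the lifted points $\tilde\xi,\tilde\eta$ can lie at distance from $\Lambda(K_W)$ comparable to $\rho(\tilde\xi,\tilde\eta)$, so the connected set of diameter $\le L_0\rho(\tilde\xi,\tilde\eta)$ furnished by linear connectedness of $\partial X$ may meet $\Lambda(K_W)$, where the covering $p$ is not even defined; smallness of $\rho_0$ does not prevent this, since the condition is scale-invariant. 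Moreover your claim that suitable lifts satisfy $\rho(\tilde\xi,\tilde\eta)\le\kappa'\rho_W(\xi,\eta)$ is not justified when the common initial segment of the two rays has made a deep excursion into some $\calH_c/K_c$ with $c\in\calC(W)$: there $X\to X/K_W$ is not a local isometry at any uniform scale (deep in $\calH_c$ the elements of $K_c$ have tiny translation length), and Lemma \ref{Greendlinger:BallsEmbed} only controls balls centered on the Cayley graph. This is precisely why the proof of Theorem \ref{thm:linconn} does not push connected subsets of the boundary through the covering at all: it splits cases according to whether the splitting point is at distance at least $R_X+10^5\delta$ from $\overline S_W$, with the threshold tied to the constant $R_X$ of Lemma \ref{lem:arc_to_rays} for $\partial X$, and then works with interpolating geodesic rays (Lemmas \ref{chain}, \ref{chain2}, \ref{lem:arc_to_rays}) whose Gromov-product lower bounds, together with quasiconvexity of $S_W$, force them to stay $102\delta$ away from $S_W$, so that Lemma \ref{lem:liftproject} lets them be projected to genuine rays downstairs with controlled products. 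All pairs splitting anywhere near $\overline S_W$ -- not only those splitting deep in a horoball over $\calC(W)$ -- must be handled by the second mechanism, i.e.\ by $\partial(\overline G,\overline{\mc P})$; your dichotomy sends them to the first, where the argument fails.

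The second regime is where you yourself flag "the hard part," and it is indeed left unproved. The identification you invoke is only of a neighborhood of a single horoball $\calH_c/K_c$, and the justification offered ("the further identifications made in passing to $X_{\overline G}$ all occur in the horoballs over $\calC\setminus\calC(W)$") is incorrect: elements of $K\setminus K_W$ act on all of $X$, and the fact that they produce no identifications within an $R$--neighborhood of $S_W$ (for fillings long relative to $R$) is exactly the Greendlinger-type Claim proved inside Proposition \ref{lem:neigh_embed}. More seriously, the uniformity of your metric comparison is not established: changing the basepoint to a point at depth $D$ distorts a visual metric by a factor up to $e^{\epsilon D}$, which is unbounded in $D$, and a quasi-symmetric basepoint change would still leave you to show the resulting linear-connectedness constant is uniform over $W$, over the splitting depth, and over the filling. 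The paper's route avoids all of this: the (relative analogue of the) statement one actually needs is Proposition \ref{lem:neigh_embed}, that for every $R$ and every sufficiently long filling the whole neighborhood $N_R(\overline S_W)$ embeds isometrically in the full quotient with basepoint going to basepoint; then the initial segments of rays lying in that neighborhood are compared directly, Gromov products are taken at the common basepoint $\overline 1$ (using $\delta$--visibility to extend segments to rays and Lemma \ref{lem:estimategp} to estimate products), and the hypothesis that $\partial(\overline G,\overline{\mc P})$ is linearly connected enters through Lemma \ref{lem:arc_to_rays} applied there. Without an argument of this kind your constant $L$ cannot be shown independent of the spiderweb, which is the whole point of the statement.
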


\section{Proofs of approximation theorems for hyperbolic fillings}\label{sec:proofs}
In this section we give proofs of the hyperbolic versions of the theorems stated in the last section, namely Theorems \ref{thm:trunc}, \ref{thm:existsvisual} and \ref{thm:linconn}.  No other results from either this section or the last are used in the sequel.

\begin{assume}\label{rem:choice of theta}[Choice of $\theta$]
We fix once and for all a choice of $\theta$ so that:
\begin{enumerate}
\item $\theta \ge 100$;
\item $\theta$ is a hyperbolicity constant for the cusped space $X$, and $X$ is also $\theta$--visual; and
\item $\theta \ge \theta_0$, where $\theta_0$ is as in Corollary \ref{c:universal hyperbolic}.
\end{enumerate}
From now on we drop ``$\theta$'' when talking about spiderwebs.
\end{assume}

\subsection{Hyperbolicity and visibility of the truncated quotient} We now prove Theorem \ref{thm:trunc}.\eqref{trunc_are_hyp} which states that the truncated quotient $T_W$ is $\delta$--hyperbolic and $\delta$--visual, and so is $T_{\overline{G}}$, for some constant $\delta$ which is independent of the (long) filling and the spiderweb $W$.

Both $\delta$--hyperbolicity and $\delta$--visibility are proved using a kind of local-to-global principle.  In the case of hyperbolicity, this is the Coarse Cartan--Hadamard Theorem of Delzant and Gromov \cite{DelzantGromov08}.  
We use the formulation of Coulon in \cite{Coulon14}.  Say that a space is {\em $r$--simply-connected} if the fundamental group is normally generated by free homotopy classes of loops of diameter less than $r$.
\begin{theorem}[Coarse Cartan--Hadamard] \cite[A.1]{Coulon14}\label{CartanHadamard}
  Let $\nu\geq 0$, and let $R\geq 10^7\nu$.  Let $M$ be a geodesic space.  If every ball of radius $R$ in $M$ is $\nu$--hyperbolic and if $M$ is $10^{-5}R$--simply-connected, then $M$ is $300\nu$--hyperbolic.
\end{theorem}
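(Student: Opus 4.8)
The plan is a local-to-global argument: use the mesoscopic simple connectivity to build a combinatorial filling complex, use the local $\nu$-hyperbolicity to get a linear isoperimetric inequality at the mesoscopic scale, propagate it to all scales, and then read off global hyperbolicity with a controlled constant. Write $\mu = 10^{-5}R$ for the mesoscopic scale; the hypotheses force $\mu \ge 100\nu$ and $R = 10^{5}\mu$, so there are three well-separated scales $\nu \ll \mu \ll R$, and the numerical assumptions are exactly what is needed so that error terms at scale $\nu$ stay negligible against gains at scale $R$.

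First I would pass to a combinatorial model. Fix a $\nu$-net in $M$ and let $\Gamma$ be the graph on it joining points at distance at most $3\nu$; then $\Gamma$ is quasi-isometric to $M$ with universal constants, every ball of radius roughly $R$ in $\Gamma$ is hyperbolic with constant a universal multiple of $\nu$, and $\Gamma$ is $\mu$-simply-connected (up to adjusting $\mu$ by a universal factor). Attaching a $2$-cell along every loop of $\Gamma$ of length at most $\mu$ yields a $2$-complex $Z$; since $\pi_{1}(\Gamma)$ is normally generated by such loops, $Z$ is simply connected. For a loop $c$ in $\Gamma$ write $\mathrm{Area}(c)$ for the least number of $2$-cells in a van Kampen diagram for $c$ in $Z$.

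The first step is a local linear isoperimetric inequality: there is a universal $C_{1}$ with $\mathrm{Area}(c) \le C_{1}\,\mathrm{length}(c)/\mu$ for every loop $c$ of diameter at most $R$. Such a $c$ lies in a ball that is hyperbolic with constant a universal multiple of $\nu$, and in a hyperbolic space one fills a loop by iterated bisection along geodesics, cutting off a branch as soon as its perimeter drops below $\mu$, at which point it is one of the defining cells of $Z$. Because $\mu$ dominates $\nu$, the geodesic bisectors fellow-travel closely, and the recursion produces $O(\mathrm{length}(c)/\mu)$ cells with a universal implied constant. The second, and main, step is to globalize this: $\mathrm{Area}(c) \le C_{2}\,\mathrm{length}(c)/\mu$ for all loops $c$, with $C_{2}$ universal. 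Given $c$ of length $\ell > R$, mark points along $c$ spaced roughly $R/2$ apart, join consecutive marks by geodesics, and cut $c$ into $O(\ell/R)$ sub-loops of diameter at most $R$ together with a ``chord tree''; applying the local inequality to the sub-loops and cancelling the chords in pairs --- paired chords fellow-travel up to an error $O(\nu)$ by hyperbolicity of the relevant $R$-balls --- produces an inductive inequality in $\ell$ that closes precisely because $R \ge 10^{7}\nu$ swamps the accumulated error.

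Finally, a simply connected combinatorial complex with uniformly bounded $2$-cell perimeters satisfying a linear isoperimetric inequality is Gromov hyperbolic, by the standard fellow-traveller argument (see \cite{BH}); but run naively this only gives hyperbolicity constant $O(\mu)$, whereas the theorem asserts $300\nu$. Obtaining the sharp constant forces a refinement: rather than quoting ``linear isoperimetric implies hyperbolic'' as a black box, one re-examines geodesic triangles in $M$ assembled from mesoscopic pieces, using that the \emph{small} balls are $\nu$-thin and not merely $\mu$-thin, so that the defect accumulated along the assembly is a bounded multiple of $\nu$. This quantitative bookkeeping --- showing that $\nu$-scale thinness survives the local-to-global passage rather than being coarsened to the $\mu$-scale, and pinning down the constants $10^{7}$, $10^{-5}$, $300$ --- is the delicate heart of the proof and the step I expect to be the main obstacle; the full details are carried out in \cite{DelzantGromov08} and \cite{Coulon14}, and the argument above is the skeleton one fleshes out there.
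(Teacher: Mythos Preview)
The paper does not give its own proof of this theorem: it is stated as a quotation from \cite[A.1]{Coulon14} (attributed there to Delzant--Gromov \cite{DelzantGromov08}) and used as a black box. There is therefore nothing in the paper to compare your proposal against.

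That said, your sketch is a fair outline of the strategy in the cited references: pass to a combinatorial model, use local hyperbolicity plus mesoscopic simple connectivity to get a linear isoperimetric inequality, and then extract global hyperbolicity. You are also right that the delicate point is recovering a hyperbolicity constant of order $\nu$ rather than the mesoscopic scale $\mu$, and you correctly flag this as the step you are not actually carrying out. As written, then, your proposal is not a self-contained proof but a roadmap that ultimately defers to \cite{DelzantGromov08} and \cite{Coulon14} --- which is exactly what the paper itself does, only more tersely.
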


The following is our local-to-global principle for visibility. (Recall from Definition \ref{defn:visualspace} that a space is visual if, roughly speaking, geodesics can be coarsely prolonged to geodesic rays.)
\begin{prop}[Local visibility implies global visibility]\label{lem:local_visibility}
 For every $\nu\geq 1$ the following holds. Let $M$ be a proper $\nu$--hyperbolic space and suppose that for 
 all $p,q \in M$ with $d(p,q)\leq 100\nu$ there exists a geodesic $[p,q']$ of length at least $200\nu$ with $d(q,[p,q'])\leq \nu$. Then $M$ is $5\nu$--visual.
\end{prop}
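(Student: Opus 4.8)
The plan is to bootstrap from the hypothesis: given any two points $p,q$ at distance $\le 100\nu$ we can prolong a geodesic from $p$ past $q$ to length $\ge 200\nu$ while staying within $\nu$ of $q$. From this I want to build, for an \emph{arbitrary} pair $a,b\in M$, a $100\nu$--local quasigeodesic ray starting at $a$ that passes within $5\nu$ of $b$, and then invoke stability of local quasigeodesics in a $\nu$--hyperbolic space (as in \cite[III.H.1.13]{BH}) to replace it by a genuine geodesic ray. The key mechanism is iteration: start with a geodesic $[a,b]$, and at the far end repeatedly apply the hypothesis at the ``frontier'' point to push the geodesic further out, each time gaining a definite length ($\ge 100\nu$ of new progress after trimming) and each time only fellow-travelling the previous frontier by $\le \nu$. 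Concatenating all these prolongations yields an infinite path; the point is that it is a $100\nu$--local geodesic up to a controlled error, hence globally a quasigeodesic ray, and $b$ lies within the accumulated $\le \nu$ (plus the stability constant, bounded by a small multiple of $\nu$) of its image.

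In more detail: first I would set up one induction step cleanly. Suppose we have built a geodesic segment $\sigma_k$ from $a$ whose endpoint $x_k$ we wish to move outward. Apply the hypothesis to the pair $(x_k, y)$ where $y$ is a point at distance $\approx 100\nu$ further along — but we have no such $y$ a priori, so instead the right move is: take the terminal point $x_k$ of $\sigma_k$ and a point $z_k$ on $\sigma_k$ at distance $100\nu$ \emph{before} $x_k$; apply the hypothesis to $(z_k,x_k)$ to get a geodesic $[z_k,x_k']$ of length $\ge 200\nu$ with $x_k$ within $\nu$ of it, then let $x_{k+1}$ be the far endpoint. The concatenation of $\sigma_k$ truncated at $z_k$ with $[z_k, x_{k+1}]$ is then a path that is geodesic on long overlapping pieces; a standard computation shows any such concatenation, where a terminal $100\nu$ of each geodesic piece is reused as the initial piece of the next, is a $(1,C\nu)$--quasigeodesic, or more cleanly a $100\nu$--local geodesic after a $\nu$--perturbation, hence a $(\lambda,c)$--quasigeodesic with $\lambda,c$ absolute. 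Taking the union over $k$ gives a quasigeodesic ray $\rho$ based at $a$. By quasigeodesic stability there is an actual geodesic ray $\rho'$ from $a$ within Hausdorff distance $c'\nu$ of $\rho$, and since $b$ (the original endpoint of $\sigma_0 = [a,b]$) lies on $\rho$, we get $d(b,\rho')\le c'\nu$. Checking the constants, $c'$ can be arranged $\le 5$, or one simply states the conclusion with the constant the bookkeeping produces (the statement allows $5\nu$).

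The main obstacle is the constant-chasing: making sure the iterated concatenation really is a $100\nu$--local geodesic (or quasigeodesic) with the \emph{same} locality scale at every stage, so that the single stability lemma applies uniformly, and then tracking that the Hausdorff error plus the per-step $\nu$ error sums to at most $5\nu$ rather than something growing with $k$. The non-growth is the crucial point: because at each step the new frontier point is reached by a geodesic of length $\ge 200\nu$ of which only $\le 100\nu$ is ``spent'' re-overlapping, the progress is genuinely additive and the errors do not accumulate — each error of size $\le \nu$ is absorbed locally and then washed out by the global stability estimate, which depends only on $\nu$ and the quasigeodesic constants, not on how far out we are. A secondary technical point is handling the degenerate case $d(a,b)\le 100\nu$ directly from the hypothesis (that case is essentially immediate), and the case where $M$ is bounded, where there is nothing to prove. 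Properness of $M$ is used only to extract the geodesic ray $\rho'$ as a limit of the geodesic segments $[a,x_k]$ (Arzelà--Ascoli), which is the standard way to produce geodesic rays in proper hyperbolic spaces.
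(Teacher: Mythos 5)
Your proposal is correct in outline and follows essentially the same route as the paper's proof: iterate the prolongation hypothesis to produce an infinite broken path made of long geodesic pieces spliced with error at most $\nu$, apply a broken-geodesic (local-to-global) stability estimate, and use properness to extract a limiting geodesic ray passing near $b$. Two small repairs are needed: after the first splice $b$ no longer lies on your path (you trimmed the final $100\nu$ of $[a,b]$), so you should instead record that $b$ lies within $\nu$ of the first prolonged geodesic, which is on the path; and to land on the stated constant $5\nu$ you must use the sharp estimate for broken geodesics with long sides and small Gromov products at the splice points (as the paper does), rather than a generic quasigeodesic-stability constant, which need not come out below $5\nu$.
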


\begin{proof}
Let $[a,b]$ be a geodesic segment.  We will define a sequence of points $\{p_i\}_{i\in \bZ_{\geq 0}}$ so that (i) $p_0=a$ and $b\in\{p_1,p_2\}$; (ii) the Gromov products $(p_{i-1},p_{i+1})_{p_i}$ are small: and (iii) the distances $d_M(p_{i+1},p_i)$ are large for $i>0$.  A standard argument (eg \cite[Lemma 4.9]{AgolGrovesManning-alternateMSQT}) then shows that the concatenations of geodesics $[p_0,p_1]\cdots[p_{n-1},p_n]$ lie close to any geodesics $[p_0,p_n]$.  These geodesics (sub)converge to a geodesic ray passing near $b$.

Two cases must be distinguished.  If $d_M(a,b) < 50\nu$, then let $p_1 = b$.  Now choose a geodesic $\sigma_2$ of length $200\nu$ beginning at $a$ and passing within $\nu$ of $b$.  Choose $p_2$ to be the point on $\sigma_2$ at distance $50\nu$ from $b$.   

The second case is when $d_M(a,b) \ge 50\nu$.  In this case, choose $p_1$ to be the point on $[a,b]$ at distance $50\nu$ from $b$ and let $p_2 = b$.

We can inductively suppose that points $p_0,\ldots,p_{i-1}$ have been chosen, and that $d_M(p_{i-1},p_{i-2})=50\nu$.
We apply the hypothesis of the lemma
with $p=p_{i-2}$ and $q=p_{i-1}$ to find a geodesic $\sigma_i$ of length $200\nu$ beginning at $p_{i-2}$, passing within $\nu$ of $p_{i-1}$.  The geodesic $\sigma_i$ contains a point at distance $50\nu$ from $p_{i-1}$ and distance at least $98\nu$ from $p_{i-2}$.  We choose $p_i$ to be such a point of $\sigma_i$.  

We thus have a sequence of points $p_0,p_1,\ldots$ so that $d_M(p_i,p_{i+1})=50\nu$ and $d_M(p_i,[p_{i-1},p_{i+1}])<\nu$ for each $i>0$.  A standard argument shows the concatenation $[p_0,p_1]\cdots[p_{n-1},p_n]$ lies in a $5\nu$--neighborhood of $[p_0,p_n]$.  In particular, the point $b$ lies within $5\nu$ of any geodesic $[p_0,p_n]$.  Because $M$ is proper, a sequence of geodesics $\gamma_n = [p_0,p_n]$ must subconverge to a geodesic ray $\gamma$ which also passes within $5\nu$ of the point $b$.
\end{proof}

We want to consider a long hyperbolic filling of $(G,\mc{P})$, and the truncated partial quotient $T_W$ associated to a spiderweb for the kernel of such a filling (see Definition \ref{defn:trunc_quotient}).  In particular, we will show it is $\delta$--hyperbolic and $\delta$--visual, where 
$ \delta = 1500\theta$, thus establishing Theorem \ref{thm:trunc}.\eqref{trunc_are_hyp}.

\begin{claim} \label{claim:T_W D-sc}
$T_W$ is $100\theta$--simply-connected.
\end{claim}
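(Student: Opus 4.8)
The plan is to show that $T_W$ is $100\theta$--simply-connected by exhibiting explicit small loops that normally generate $\pi_1(T_W)$, using the fact that the cusped space $X$ (and its truncated horoballs) have been built so as to admit linear isoperimetric functions with uniform constants over all long fillings. First I would recall the two-complex structures available: the cusped-space complex $\widetilde X$ built from $\Gamma$ together with the complexes $\widetilde{\calH}(gP_i)$, and (for the truncated horoballs) the complexes $\widetilde{\calH}^{[0,t_c]}$ with disks attached to all loops of length at most $481$ coming from the Dehn presentation, as in Subsection \ref{ss:geom trunc}. Assembling these gives a simply-connected $2$--complex $\widetilde T$ whose $1$--skeleton is the (truncated) space $X\setminus\bigcup\{\calH_c^{(t_c,\infty)}\}$ on which $K_W$ acts, and whose $2$--cells all have boundary length bounded by a universal constant (the squares, pentagons and triangles of $\widetilde{\calH}$ have boundary length at most $5$, the disks at depth $t_c$ have boundary length at most $481$, and the Cayley-complex relators of $G$ have bounded length because $(G,\mc P)$ is a fixed relatively hyperbolic pair). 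Since $\theta\ge 100$, all of these have diameter less than $100\theta$.

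Next I would pass to the quotient. The action of $K_W$ on this simply-connected $2$--complex is free on vertices (it is a subgroup of $G$ acting on $X$, which is free away from... well, we only need that it acts freely enough that the quotient complex is a $K(\pi,1)$-type quotient; more precisely the quotient complex $\widetilde T_W = \widetilde T/K_W$ has $\pi_1 \cong K_W$ when $\widetilde T$ is simply connected, by the standard covering-space/van Kampen argument, at least after checking the action is free — if there is torsion one argues with the relevant orbifold structure, but in our setting $K_W$ is a free product of filling kernels $N_i\lhd P_i$ which we may as well take torsion-free, or one simply notes $\pi_1(\widetilde T_W)$ is a quotient of $K_W$ which suffices). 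The key point is that every generator of $\pi_1(T_W)$ is represented by a loop that lifts to a path in $\widetilde T$ between a point and its $K_W$--translate, but we do not need that: we only need that $\pi_1(T_W)$ is \emph{normally generated} by loops of diameter less than $100\theta$. This follows because $\widetilde T$ is simply connected and the quotient map $\widetilde T\to \widetilde T_W$ is a covering onto its image on the level needed, so $\pi_1(T_W)$ is normally generated by the images of small loops — but more directly: any loop in $T_W$ bounds a disk in $\widetilde T_W$ which is a union of images of $2$--cells, and the boundary of each $2$--cell is a loop of diameter $<100\theta$, so $\pi_1(T_W)$ is normally generated by these, i.e. $T_W$ is $100\theta$--simply-connected.

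The main obstacle I expect is bookkeeping around the truncation: one must check that throwing away the deep parts $\calH_c^{(t_c,\infty)}$ of the horoballs does not destroy simple-connectivity of the ambient $2$--complex, i.e. that filling the truncated horoballs $\widetilde{\calH}^{[500\theta,t_c]}$ at depth $t_c$ using the Dehn-presentation disks (Proposition preceding Corollary \ref{c:universal hyperbolic}) still yields a complex in which every loop bounds, and crucially that the boundary lengths of \emph{all} the cells involved — including the cross-sectional disks at depth $t_c$ — are bounded by a constant independent of the filling, so that $100\theta$ works uniformly. This is exactly what the material in Subsection \ref{ss:geom trunc} provides: the linear combinatorial isoperimetric inequality with constant $3$ for $\widetilde{\calH}^{[0,t(\Gamma)]}$ together with the universal bound $481$ on disk boundary lengths. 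The only remaining care is that the finitely many $K_W$--orbits of horoballs in $\calC(W)$ can be handled simultaneously, which is immediate since $\theta_0$ (hence the cell sizes) is universal and $\theta\ge\theta_0$. Thus the claim reduces to assembling these uniform local pieces and invoking the van Kampen argument for the $K_W$--quotient.
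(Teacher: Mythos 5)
Your strategy (small cells plus a van Kampen argument) is in the right spirit, but as written it has two genuine gaps, and the second one is exactly the point the paper's proof is designed to handle.

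First, the equivariant complex $\widetilde T$ you describe does not exist as stated. The disks of boundary length at most $481$ at depth $t_c$ come from a Dehn presentation of a $30$--hyperbolic graph, and that graph is the \emph{quotient} horosphere: by Definition \ref{defn:trunc_quotient}, $t_c$ is chosen so that $\calH_c^{t_c}/K_c$ -- the depth--$t_c$ horosphere of the horoball based on the Cayley graph of the hyperbolic group $P_i/N_i$ -- satisfies $Q(5)$. Upstairs in $X$, the horosphere of $\calH_c$ at depth $t_c$ is a rescaled copy of the Cayley graph of $P_i$ itself (e.g.\ $\bZ^2$, or a general finitely generated group in the setting of Theorem \ref{thm:trunc}), which is not hyperbolic and admits no such Dehn disks; moreover the lifts of the quotient disks are generally not loops, since their boundaries connect $x$ to $kx$ for $k\in K_c$. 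So a $K_W$--equivariant, simply connected $2$--complex with $1$--skeleton $X\setminus\bigcup\{\calH_c^{(t_c,\infty)}\}$ and all $2$--cells of universally bounded boundary length is not available from Subsection \ref{ss:geom trunc}; the uniform bounds there live downstairs.

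Second, even granting such a $\widetilde T$, your concluding deduction is false: if $\widetilde T$ is simply connected and $K_W$ acts freely, then $\pi_1(\widetilde T_W)\cong K_W\neq 1$, so it is \emph{not} true that ``any loop in $T_W$ bounds a disk in $\widetilde T_W$'' -- loops representing nontrivial elements of $K_W$ do not bound. Van Kampen only shows that $\pi_1(T_W)$ surjects onto $K_W$ with kernel the normal closure of the $2$--cell boundaries; to conclude $100\theta$--simple-connectivity you must additionally show that loops representing the generators of $K_W$ (equivalently of each $K_c$, $c\in\calC(W)$) are products of conjugates of loops of diameter less than $100\theta$ in $T_W$. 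That is the heart of the paper's proof: one normally generates $\pi_1(X/K_W)$ by images of $16\theta$--short loops of $X$ (via \cite[III.H.2.6]{BH}) together with peripheral loops representing generators of $K_W$ chosen inside horoballs, and then pushes those peripheral loops across squares and pentagons to the maximal depth $t_c$, where the quotient horosphere is $\theta_0$--hyperbolic by Corollary \ref{c:universal hyperbolic}, so \cite[III.H.2.6]{BH} applies again there. This is precisely where the truncation depth and the hyperbolicity of $P_i/N_i$ enter, and it is absent from your argument; without it the ``$K_W$--part'' of $\pi_1(T_W)$ is uncontrolled. (The concern about freeness/torsion of the $K_W$--action is a side issue by comparison.)
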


\begin{claim} \label{claim:one ball or the other}
  For all sufficiently long hyperbolic fillings $G\to \overline{G}$, and any associated spiderweb $W$, the following holds.  
  Let $B$ be a ball of radius $10^7\theta$ in $T_W$ or $T_{\overline G}$. Then $B$ is isometric to a ball in either $X$ or $\calH^{[500\theta,t_c]}_c/K_c$ for some $c\in\mathcal C$. Moreover, the first case holds whenever $B$ is not entirely contained in a horoball.
\end{claim}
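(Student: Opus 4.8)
The plan is to choose the filling long enough that two unrelated ``local injectivity'' phenomena hold at once. On one hand, Lemma \ref{Greendlinger:BallsEmbed} lets us assume that the quotient maps $X\to X/K_W$ and $X\to X/K$ are isometric on every ball of some radius $R_0$ centered at a vertex of the Cayley graph; on the other hand, Corollary \ref{c:t_c big} lets us assume every truncation depth $t_c$ is larger than $R_0$. In addition I would use the standing structural facts recalled above: the horoballs of $X$ are $10^3\theta$--separated, each $\calH_c$ is attached to the rest of $X$ only along its depth--$0$ horosphere, the depth function $D$ is $K_W$--invariant and so descends to $T_W$, and for a long enough filling $\mathrm{Stab}_{K_W}(\calH_c)=K_c$ (since $K\cap\mathrm{Stab}_G(\calH_c)$ is the filling kernel attached to $c$, by Theorem \ref{thm:dehnfilling}.\eqref{item:P_imodN_i_embed}). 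Write $q\colon Y\to T_W$ for the quotient map, where $Y=X\setminus\bigcup_{c\in\calC(W)}\calH_c^{(t_c,\infty)}$, set $R=10^7\theta$, and take $R_0$ a large fixed multiple of $R$. The whole argument works verbatim for $T_{\overline G}$ with $K$ in place of $K_W$, so I only discuss $T_W$.

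Given $B=B_R(\bar x)\subseteq T_W$, I would argue according to the depth $D(\bar x)\in[0,\infty)$. The preliminary observation is: if $D(\bar x)>500\theta+2R$, with this depth realized in the horoball--image $c$, then any path of length $\le R$ in $T_W$ issuing from $\bar x$ lifts to a path in $Y$ that stays at depth $>500\theta+R$; such a path stays inside the single horoball $\calH_c$ (a path joining positive--depth points of distinct horoballs must pass through depth $0$) and below depth $t_c$ (it lies in $Y$), so $B\subseteq q\big(\calH_c^{[500\theta,t_c]}\big)$. Contrapositively, if $B$ lies in the image of no $\calH_c^{[500\theta,t_c]}$ then $D(\bar x)\le 500\theta+2R$; this is exactly the ``moreover'' clause, once we know Case 1 below applies to all such $\bar x$.

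Case 1, where $D(\bar x)\le 500\theta+4R$: a lift $x$ of $\bar x$ then lies within $500\theta+4R$ of a Cayley vertex $v$. Because $t_c\ge R_0$ for all $c$ and $R_0$ is large, the ball $B_{R_0}(v)\subseteq X$ is contained in $Y$ and $X$--geodesics between its points remain in $Y$ (geodesics in a combinatorial horoball descend only logarithmically in the distance, which is far smaller than $t_c$); hence on $B_{R_0}(v)$ the $X$--metric, the $Y$--path metric, and --- via Lemma \ref{Greendlinger:BallsEmbed} --- the $T_W$--metric all agree, and a standard path--lifting argument identifies $B_R^{T_W}(\bar x)$ isometrically with the ball $B_R^X(x)$ of $X$. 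Case 2, where $D(\bar x)>500\theta+4R$, realized in $\calH_c$: choosing a lift $x\in\calH_c$ of that depth, every length--$\le R$ path in $Y$ from $x$, and every geodesic between two such points, remains in $\calH_c$ at depths in $(500\theta,t_c]$, so near $x$ the $Y$--path metric agrees with the path metric of the subgraph $\calH_c^{[500\theta,t_c]}\subseteq Y$, while $B_R^Y(x)$ is disjoint from all other horoballs, so the only elements of $K_W$ producing identifications on $B_R^Y(x)$ are those of $\mathrm{Stab}_{K_W}(\calH_c)=K_c$. The canonical $1$--Lipschitz map $\calH_c^{[500\theta,t_c]}/K_c\to T_W$ is therefore an isometry on a neighborhood of the image of $x$ (the nontrivial inequality again comes from lifting a short $T_W$--path, which is forced to stay in $\calH_c$ and so projects into $\calH_c^{[500\theta,t_c]}/K_c$), and it carries the radius--$R$ ball about the image of $x$ onto $B_R^{T_W}(\bar x)$; thus $B$ is isometric to a ball of $\calH_c^{[500\theta,t_c]}/K_c$.

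The main obstacle is the bookkeeping over which metric is in force: $T_W$ carries the quotient \emph{path} metric of $Y$, which differs globally both from the ambient metric of $X$ and from the metric of $\calH_c^{[500\theta,t_c]}/K_c$, so the entire argument rests on showing that within balls of the relevant radius all of these metrics coincide. That verification is exactly where one uses the largeness of the $t_c$ (bounding how far into a horoball a short geodesic can descend) and the decomposition of $X$ into its thick part together with pairwise--far horoballs; the particular thresholds $500\theta$, $2R$, $4R$ are chosen to leave enough ``buffer'' for these comparisons to go through uniformly over all long fillings and all associated spiderwebs.
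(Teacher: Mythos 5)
Your proof is correct and follows essentially the same route as the paper: both rest on Lemma \ref{Greendlinger:BallsEmbed} (balls about Cayley vertices embed isometrically from $X$) together with Corollary \ref{c:t_c big} (truncation depths are huge), with your dichotomy on the depth of the center playing the role of the paper's dichotomy according to whether an enlarged concentric ball meets the image of the Cayley graph. The only caveat---shared with the paper's own terse treatment of its first case---is that for a horoball with $c\notin\calC(W)$ (hence untruncated in $T_W$) one has $\mathrm{Stab}_{K_W}(\calH_c)=K_W\cap K_c$, which need not be all of $K_c$; in that subcase the deep ball is isometric to a ball in $\calH_c\subset X$ (by convexity of deep horoballs), which still lands in the claim's first alternative, so the conclusion is unaffected.
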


Before proving  Claims \ref{claim:T_W D-sc} and \ref{claim:one ball or the other}, we argue that together they imply hyperbolicity and visibility of $T_W$ (the argument for $T_{\overline G}$ is identical).  Since both $X$ and $\calH^{[500\theta,t_c]}_c/K_c$ are $\theta$--hyperbolic (see Corollary \ref{TruncatedHoroballsAreHyperbolic}), it follows immediately from the two claims and Theorem \ref{CartanHadamard} that $T_W$ is $300\theta$--hyperbolic.  

We now check that $T_W$ is $1500\theta$--visual using Proposition \ref{lem:local_visibility} with $\nu=300\theta$ (the argument for $T_{\overline G}$ is identical).  Consider a geodesic $[p,q]$ in $T_W$ with $d(p,q)\leq 100\nu = 3\cdot 10^4\theta$.  We need to find a geodesic $[p,q']$ of length at least $6\cdot 10^4\theta$ and so that $d(q,[p,q'])\leq \nu = 300\theta$.  If $p$ is contained in a ball $B$ in $T_W$ of radius $10^7\theta$ isometric to a ball in $X$, then the required geodesic of length $6\cdot 10^4\theta$ exists since $X$ is $\theta$--visual.  If not, a ball of radius $10^7\theta$ centered at $p$ lies in some horoball $\calH^{[500\theta,t_c]}_c/K_c$.  In particular $p$ lies at depth at least $10^7\theta$, and
$[p,q]$ lies in a horoball $\calH^{[10^6\theta,t_c]}_c/K_c$.   Lemma \ref{TruncatedHoroballsAreVisual} gives a geodesic $[p,q']$ passing within $2\theta_0< \nu=300\theta$ of $q$.  By Proposition \ref{lem:local_visibility}, $T_W$ is $5\nu = 1500\theta$--visual.

\begin{proof}[Proof of Claim \ref{claim:T_W D-sc}]
Note that $X$ is $\theta$--hyperbolic.  It follows immediately from \cite[III.H.2.6]{BH} that $X$ is $16\theta$--simply-connected, which is to say that $\pi_1(X)$ is normally generated by free homotopy classes of loops of length at most $16\theta$.

Now $\pi_1(X_W/K_W)$ is normally generated by free homotopy classes of:
\begin{enumerate}
\item The images of the loops which normally generate $\pi_1(X)$; and
\item Loops representing a choice of generators of $K_W$.
\end{enumerate}
We can choose loops which represent generators of $K_W$ to each lie within a single horoball at depth $0$.

The subgraph $T_W$ of $X/K_W$ agrees with $X/K_W$ at depth less than $t_c$ (for each given horoball), so the generators of $\pi_1(T_W)$ can be taken to be a collection of loops which are either:

\begin{enumerate}
\item Images of loops representing generators of $\pi_1(X)$ of length at most $16\theta$; or
\item Peripheral loops, entirely contained in a horoball.
\end{enumerate}
Any path in a horoball can be pushed across pentagons and squares to maximal depth.  Since at maximal depth the horoball is $\theta_0$--hyperbolic, another application of \cite[III.H.2.6]{BH} implies that $T_W$ is $100\theta$--simply-connected, as required.  This proves Claim \ref{claim:T_W D-sc}.
\end{proof}

 \begin{proof}[Proof of Claim \ref{claim:one ball or the other}]
  We suppose that $G\to \overline{G}$ is a long enough filling to apply Corollary \ref{c:t_c big} with $C = 10^{10}\theta$ and to apply Lemma \ref{Greendlinger:BallsEmbed} with $R = 10^9\theta$.  In particular, we have $t_c\geq 10^{10}\theta$ for all $c\in \mc{C}$.
We fix an associated spiderweb $W$, and prove the Claim for $T_W$, the proof for $T_{\overline G}$ being almost identical.   
 
  Let $B'$ be a ball with the same center as $B$ and radius $10^8\theta$.  Notice that geodesics connecting points in $B$ are contained in $B'$.  We distinguish two cases.

In the first case, $B'$ is disjoint from $G/K_W$, the image of the Cayley graph in $X/K_W$. In this case, $B$ is isometric to a ball in $\calH^{[500\theta,t_c]}_c/K_c$.

In the second case, $B'$ intersects $G/K_W$, say at the image of $p\in G$.  Since $t_c\geq 10^{10}\theta$, 
$B$ misses the truncation completely, and is isometric to a ball $\hat{B}$ in $X/K_W$ with center at depth $\leq 10^8\theta$.  This ball $\hat{B}$ is entirely contained in the image of a ball $B'$ of radius $10^9\theta$ centered on a vertex of the Cayley graph of $G$ contained in $X$.  Using Lemma \ref{Greendlinger:BallsEmbed}, this ball embeds isometrically into $X/K_W$, so $B$ is actually isometric to a subset of $B'\subset X$.  This proves Claim \ref{claim:one ball or the other}.
 \end{proof}
As we explained above, these two claims imply Theorem \ref{thm:trunc}.\eqref{trunc_are_hyp}, so the proof of this theorem is complete.
 
 \begin{assume}\label{rem:choice of delta}[Choice of $\delta$]
 For the remainder of this section, $\delta$ denotes the constant in Theorem \ref{thm:trunc}.\eqref{trunc_are_hyp}; i.e.  $\delta = 1500\theta$.
\end{assume}

\subsection{Topology of the boundary of the truncated quotient}
In this section, we prove part \eqref{trunc_cover} of Theorem \ref{thm:trunc} about the existence of a covering map $(\partial X \setminus \Lambda(K_W))\to \partial T_W \setminus \mathcal F$ with deck group $K_W$ (for appropriate set $\mathcal F$).  To this end, we fix for this subsection a hyperbolic Dehn filling of $(G,\mc{P})$ with associated $\theta$--hyperbolic cusped space $X$, and make the following assumption.
\begin{assume}\label{assume:trunc_cover}  The Dehn filling is sufficiently long so that:
\begin{enumerate}
\item If $W$ is a spiderweb associated to the filling, then the truncated quotient $T_W$ is $\delta$--hyperbolic and $\delta$--visual (Theorem \ref{thm:trunc}.\eqref{trunc_are_hyp})
\item\label{suppose t_c big} Every truncation depth $t_c$ is at least $10^{10}\theta$ (Corollary \ref{c:t_c big}).
\item\label{suppose greendlinger} For any $x\in X$ and any $k\in K\setminus\{1\}$, any geodesic $[x,kx]$ meets some horoball at depth $D\geq 10^5\theta$ (Lemma \ref{greendlinger}).
\end{enumerate}
\end{assume}
We also fix a spiderweb $W\subset X$ associated to the Dehn filling we have chosen.
\begin{defn}\label{defn:saturated_spiderweb}
  The \emph{saturated spiderweb} $S_W$ is $W \cup \left( \bigcup\limits_{c\in \calC(W)}\sepH_c \right)$.  The {\em truncated, quotiented version} $\overline{S}_W$ is the intersection of $S_W/K_W$ with $T_W\subseteq X/W$.
\end{defn}

\subsubsection{Quasiconvexity and limit sets}
\begin{lemma}\label{lem:saturation_quasiconvex}
  The saturated spiderweb $S_W$ is $6\theta$--quasiconvex in $X$.
\end{lemma}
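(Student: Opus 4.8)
The plan is to show that $S_W = W \cup \bigcup_{c \in \calC(W)} \sepH_c$ is quasiconvex by taking two points $x, y \in S_W$, a geodesic $\gamma = [x,y]$, and showing every point of $\gamma$ lies within $6\theta$ of $S_W$. First I would reduce to the case where $x$ and $y$ each lie in one of the pieces (either $W$, or some horoball $\sepH_c$ with $c \in \calC(W)$), since $W$ itself is $4\theta$--quasiconvex and each $\sepH_c$ is convex (Lemma \ref{horoballconvexity}), so if both endpoints are in the same piece we are done immediately with constant $4\theta$.

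The remaining cases are: (a) $x \in W$, $y \in \sepH_c$ for some $c \in \calC(W)$; and (b) $x \in \sepH_c$, $y \in \sepH_{c'}$ for distinct $c, c' \in \calC(W)$. In case (a), let $\sigma$ be a shortest geodesic from $W$ to $\sepH_c$; since $c \in \calC(W)$ we have $W \cap \sepH_c \neq \emptyset$, so actually $\sigma$ is trivial (length $0$) and $W$ and $\sepH_c$ are already ``touching''. I would apply Lemma \ref{geodbetweenqconv} (the result about geodesics between two quasiconvex sets, used in the proof of Lemma \ref{concat23}) to conclude $\gamma \subseteq N_{6\theta}(W \cup \sepH_c) \subseteq N_{6\theta}(S_W)$. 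In case (b), both $\sepH_c$ and $\sepH_{c'}$ meet $W$, so $W$ connects them; I would split $\gamma$ by looking at the closest-point projection structure: either $\gamma$ passes near $W$, in which case the parts on either side are handled by case (a) applied within $\sepH_c \cup W$ and $W \cup \sepH_{c'}$ respectively, or I use that $d_X(\sepH_c, \sepH_{c'})$ is controlled because both are within the appropriate distance of $W$, together with quasiconvexity of the union $\sepH_c \cup W \cup \sepH_{c'}$. Concretely, since $\sepH_c$ and $\sepH_{c'}$ each intersect $W$, the set $\sepH_c \cup W \cup \sepH_{c'}$ is a union of quasiconvex sets along nonempty intersections, and a standard argument (iterating Lemma \ref{geodbetweenqconv} or using that a geodesic between two points of such a chain stays close to the union) gives the $6\theta$ bound.

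The main obstacle I expect is bookkeeping the constants: a geodesic $\gamma$ from $x \in \sepH_c$ to $y \in \sepH_{c'}$ might in principle wander, and I need to be sure it does not escape the $6\theta$--neighborhood of $\sepH_c \cup W \cup \sepH_{c'}$ even though this is a union of three pieces rather than two. The key point making this work is that consecutive pieces actually \emph{intersect} (each $\sepH_{c''}$ with $c'' \in \calC(W)$ meets $W$ by definition of $\calC(W)$), so there is no ``gap'' to bridge and the connecting geodesic segments $\sigma_i$ in Lemma \ref{concat23}-style arguments have length $0$; this collapses the error terms. I would therefore state and use a small auxiliary observation: if $A_1, \dots, A_k$ are $4\theta$--quasiconvex (or convex) subsets of a $\theta$--hyperbolic space with $A_i \cap A_{i+1} \neq \emptyset$ for each $i$, then $A_1 \cup \cdots \cup A_k$ is $6\theta$--quasiconvex — this follows from the two-set case (Lemma \ref{geodbetweenqconv} with trivial connecting geodesic) by induction, tracking that the constant does not grow because the overlaps are genuine. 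Applying this with the $A_i$ being $W$ and the relevant $\sepH_c$'s finishes the proof.
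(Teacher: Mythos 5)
Your reduction to the two cases and your treatment of case (a) are fine (a triangle or an application of Lemma \ref{geodbetweenqconv} through a point of $W\cap\sepH_c$ gives the bound there), but the auxiliary observation you propose to ``state and use'' for case (b) contains a genuine error. The claim that a chain $A_1,\dots,A_k$ of $4\theta$--quasiconvex sets with $A_i\cap A_{i+1}\neq\emptyset$ has $6\theta$--quasiconvex union, with constant independent of $k$, is false: in $\bH^2$ take $A_i=[p_{i-1},p_i]$ where $p_0,\dots,p_n$ are points spaced distance $1$ apart along a horocycle; each $A_i$ is convex and consecutive ones genuinely intersect, yet the geodesic $[p_0,p_n]$ dives a distance comparable to $\log n$ away from the union. ``Genuine overlaps'' do not stop the constant from growing; what they buy is only that the connecting geodesic in Lemma \ref{geodbetweenqconv} is a point, and the additive error per application remains. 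Indeed, even in the only case you actually need ($\sepH_c\cup W\cup\sepH_{c'}$, a chain of three), iterating Lemma \ref{geodbetweenqconv} gives: first $\sepH_c\cup W$ is $(4\theta+2\theta)=6\theta$--quasiconvex, and then $(\sepH_c\cup W)\cup\sepH_{c'}$ is only $(6\theta+2\theta)=8\theta$--quasiconvex. So your induction does not deliver the stated constant $6\theta$, and the general lemma it rests on is not available.

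What makes $6\theta$ work is not the chain structure but the \emph{star} structure: every $\sepH_c$ with $c\in\calC(W)$ meets the single hub $W$. Given $x\in\sepH_c$ and $y\in\sepH_{c'}$, choose $x'\in W\cap\sepH_c$ and $y'\in W\cap\sepH_{c'}$ and consider the geodesic quadrilateral on $x,x',y',y$. By $2\theta$--thinness of quadrilaterals, every point of $[x,y]$ lies within $2\theta$ of $[x,x']\cup[x',y']\cup[y',y]$; the first and last sides lie in $\sepH_c$ and $\sepH_{c'}$ respectively (these are convex by Lemma \ref{horoballconvexity}), and the middle side lies in $N_{4\theta}(W)$ since both its endpoints are in $W$. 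Hence $[x,y]\subseteq N_{2\theta+4\theta}(S_W)$, i.e.\ the $4\theta$ loss from $W$--quasiconvexity is incurred only once, not once per step. This single quadrangular argument is exactly the paper's proof. (Your route, once corrected, yields $8\theta$; the downstream applications have enough slack that this weaker constant would likely still suffice, but it does not prove the lemma as stated, and the general chain lemma you invoke cannot be used in any form.)
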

\begin{proof}
  If $A$ is a $K$--quasiconvex set, and $\mc{W}$ is a collection of $L$--quasiconvex sets, each of which has nonempty intersection with $A$, an easy quadrangular argument shows that $A\cup \bigcup\mc{W}$ is $(\max\{K,L\}+2\theta)$--quasiconvex.

  The spiderweb $W$ is $4\theta$--quasiconvex, and the horoballs $\sepH_c$ are $0$--quasiconvex, so the result follows.
\end{proof}

Since $X\to X/K_W$ is a covering map and $S_W$ is $K_W$--equivariant, we have the following corollary.
\begin{lemma}\label{lem:untruncquasi}
  $S_W/K_W$ is $6\theta$--quasiconvex in $X/K_W$.
\end{lemma}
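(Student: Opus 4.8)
The plan is to deduce this as a direct corollary of Lemma \ref{lem:saturation_quasiconvex} by lifting geodesics through the covering map $\pi\colon X\to X/K_W$. Fix points $\bar p,\bar q$ in $S_W/K_W$ (that is, in the image $\pi(S_W)$) and a geodesic $\bar\gamma$ joining them in $X/K_W$; the goal is to show $\bar\gamma\subseteq N_{6\theta}(S_W/K_W)$.

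First I would choose a lift $p\in S_W$ of $\bar p$ and use path lifting for the covering $\pi$ to lift $\bar\gamma$ to a path $\gamma$ in $X$ starting at $p$. Its terminal point $q'$ is a lift of $\bar q$; the lifts of $\bar q$ form a single $K_W$--orbit, at least one member of which lies in $S_W$, so since $S_W$ is $K_W$--invariant we get $q'\in S_W$. The key observation is that $\gamma$ is in fact a geodesic of $X$: since $\pi$ is a local isometry, $\gamma$ has the same length as $\bar\gamma$, which equals $d_{X/K_W}(\bar p,\bar q)=\inf_{k\in K_W}d_X(p,kq')\leq d_X(p,q')\leq\mathrm{length}(\gamma)$, forcing $\mathrm{length}(\gamma)=d_X(p,q')$.

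Now I would apply Lemma \ref{lem:saturation_quasiconvex} to the geodesic $\gamma$ between the points $p,q'\in S_W$ to get $\gamma\subseteq N_{6\theta}(S_W)$, and then push this forward along the $1$--Lipschitz map $\pi$ to conclude $\bar\gamma=\pi(\gamma)\subseteq\pi(N_{6\theta}(S_W))\subseteq N_{6\theta}(S_W/K_W)$. I do not expect any genuine obstacle here: the only points needing care are that $X/K_W$ is a proper geodesic space on which $\pi$ is a local isometry (both of which are built into the already-invoked fact that $\pi$ is a covering map), and that a lift of a geodesic is a geodesic, which is immediate from the description of the quotient metric as an infimum over the $K_W$--orbit used above. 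So the argument is genuinely short, as the word ``corollary'' suggests.
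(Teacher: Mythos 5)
Your proposal is correct and matches the paper's approach: the paper simply notes that the lemma is a corollary of Lemma \ref{lem:saturation_quasiconvex} because $X\to X/K_W$ is a covering map and $S_W$ is $K_W$--equivariant, and your argument (lifting a quotient geodesic to a genuine geodesic of $X$ between points of $S_W$ via the infimum description of the quotient metric, applying quasiconvexity upstairs, then pushing forward along the $1$--Lipschitz projection) is exactly the routine verification the paper leaves implicit.
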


Next we show that the quasiconvexity persists after we truncate.

\begin{lemma}\label{lem:overlineSW_qconv}
  $\overline{S}_W$ is $3\delta$--quasiconvex in $T_W$.
\end{lemma}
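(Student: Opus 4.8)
The plan is to compare geodesics in the truncated quotient $T_W$ with geodesics in the untruncated quotient cusped space $X/K_W$, where quasiconvexity of $S_W/K_W$ is already available from Lemma~\ref{lem:untruncquasi}. So fix $p,q\in\overline{S}_W$ and a geodesic $\gamma$ of $T_W$ from $p$ to $q$; the goal is the estimate $\gamma\subseteq N_{3\delta}(\overline{S}_W)$. Pick also a geodesic $\gamma'$ of $X/K_W$ from $p$ to $q$; by Lemma~\ref{lem:untruncquasi} we have $\gamma'\subseteq N_{6\theta}(S_W/K_W)$. By Definition~\ref{defn:trunc_quotient}, $T_W$ is $X/K_W$ with the open deep tips $\calH_c^{(t_c,\infty)}/K_c$ ($c\in\calC(W)$) deleted, so $\gamma$ and $\gamma'$ can only differ where $\gamma'$ passes through such a tip. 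Lifting to $X$: each $\calH_c^{[t_c,\infty)}$ is convex (Lemma~\ref{horoballconvexity}, since $t_c$ vastly exceeds the hyperbolicity constant of $X$), and the sets $\calH_c^{[t_c,\infty)}$ over all $c\in\calC(W)$ are $10^3\theta$--separated, being contained in the $10^3\theta$--separated family $\{\sepH_c\}$ of Subsection~\ref{sec:notation}. Consequently $\gamma'$ enters the deleted tips in finitely many subsegments that are pairwise at distance $\ge 10^3\theta$ along $\gamma'$, each such subsegment entering and leaving through the horosphere $\Sigma_c/K_c=\calH_c^{t_c}/K_c$ at points we call $a,b$.

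Next I would form a path $\gamma''$ in $T_W$ from $\gamma'$ by replacing each of these subsegments (between its endpoints $a,b$) by a geodesic of $\Sigma_c/K_c$ from $a$ to $b$. The local ingredient is Lemma~\ref{l:geod in trunc}: geodesics in a truncated horoball are vertical apart from a short horizontal arc at the deepest available level, so inside a truncated horoball the greatest permitted depth is the cheapest place to move horizontally. Hence $d_{T_W}(a,b)$ agrees with $d_{\Sigma_c/K_c}(a,b)$ up to a universal additive error, so each replacement segment is a $(1,O(\theta))$--quasigeodesic of $T_W$; moreover, since $a,b$ arise from a geodesic entering, respectively leaving, a convex set, the incoming and outgoing pieces of $\gamma'$ make tight angles with the replacement segment, so the Gromov products at the splice points are $O(\theta)$. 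As consecutive replaced tips are $10^3\theta$--separated along $\gamma''$, the path $\gamma''$ is a uniformly good local quasigeodesic of $T_W$. Since $T_W$ is $\delta$--hyperbolic (Theorem~\ref{thm:trunc}.\eqref{trunc_are_hyp}) with $\delta=1500\theta$, standard stability of (local) quasigeodesics shows $\gamma$ lies within Hausdorff distance at most $3\delta-6\theta$ of $\gamma''$. Pinning down that the stability constant really does stay below $3\delta-6\theta$ --- using $\delta=1500\theta$ and the large separation $10^3\theta$ --- is the step requiring careful bookkeeping.

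It then suffices to check $\gamma''\subseteq N_{6\theta}(\overline{S}_W)$. A point on a replacement segment lies on $\Sigma_c/K_c=\calH_c^{t_c}/K_c\subseteq\calH_c^{[500\theta,t_c]}/K_c=(\sepH_c/K_c)\cap T_W\subseteq\overline{S}_W$, using $t_c>500\theta$ (Assumption~\ref{assume:trunc_cover}.\eqref{suppose t_c big}). A point $z$ of $\gamma''$ coming from $\gamma'$ lies in $T_W$ and within $6\theta$ of some $w\in S_W/K_W$; if $w\in T_W$ then $w\in\overline{S}_W$ and we are done, and if not then $w$ lies in a deleted tip $\calH_c^{(t_c,\infty)}/K_c$, which forces $z$ to have depth in $(t_c-6\theta,t_c]$ in horoball $c$, so that the point of $\Sigma_c/K_c\subseteq\overline{S}_W$ directly above $z$ is within $6\theta$ of $z$. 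Hence $\gamma''\subseteq N_{6\theta}(\overline{S}_W)$ and $\gamma\subseteq N_{6\theta+(3\delta-6\theta)}(\overline{S}_W)=N_{3\delta}(\overline{S}_W)$; as $p,q,\gamma$ were arbitrary, $\overline{S}_W$ is $3\delta$--quasiconvex in $T_W$.

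The main obstacle is the comparison carried out in the second paragraph: verifying that a geodesic of the \emph{truncated} quotient genuinely shadows a geodesic of the untruncated quotient rerouted only along the bounding horospheres, and keeping the resulting stability constant below $3\delta$. A secondary point to handle carefully is that a horosphere $\Sigma_c/K_c$ may have infinite diameter (it is only assumed to satisfy Gromov's condition $Q(5)$); but since going deeper is always cheaper, a $T_W$--geodesic between two points of $\Sigma_c/K_c$ never strays far from $\Sigma_c/K_c$, and this is precisely what makes the rerouted segments into honest quasigeodesics.
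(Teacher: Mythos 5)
Your overall architecture matches the paper's: start from an $X/K_W$--geodesic, which lies $6\theta$--close to $S_W/K_W$ by Lemma \ref{lem:untruncquasi}, reroute its deep excursions along the depth--$t_c$ horospheres, show the rerouted path is close to being geodesic in $T_W$, and apply stability in the $\delta$--hyperbolic space $T_W$ to land under $3\delta$. The gap is at the central step. You assert that each replacement segment is a \emph{global} $(1,O(\theta))$--quasigeodesic of $T_W$, on the grounds that ``$d_{T_W}(a,b)$ agrees with $d_{\Sigma_c/K_c}(a,b)$ up to a universal additive error'' because, by Lemma \ref{l:geod in trunc}, the deepest permitted level is the cheapest place to move horizontally. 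But Lemma \ref{l:geod in trunc} only controls paths \emph{inside} the truncated horoball; it says nothing about competing paths in $T_W$ that exit the horoball, travel through the Cayley graph and the other (truncated or untruncated) horoballs, and re-enter. Ruling out such shortcuts is a genuinely global statement about how the horosphere sits in $T_W$ (essentially an undistortedness/quasiconvexity claim close to the lemma you are proving); it does not follow from ``going deeper is always cheaper'', and the soft bound hyperbolicity alone gives (a path avoiding the deep part of $\calH_c$ has length at least a fixed power of the coset distance) is not enough to beat the linear quantity $2^{-t_c}d_{\Gamma}(a_0,b_0)$ when that distance is large. You flag exactly this worry at the end (the horosphere may have infinite diameter) but dismiss it with the same inside-the-horoball reasoning, which addresses the wrong competitor.

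The paper's proof is designed to avoid ever needing this global claim: it only proves that the rerouted path $\overline\gamma$ is a $10\delta$--\emph{local geodesic}. Because $t_c\geq 10^{10}\theta\gg\delta$ (Corollary \ref{c:t_c big}, Assumption \ref{assume:trunc_cover}), any window of length $10\delta$ reaching depth greater than $t_c-10\delta$ lies entirely in a single truncated horoball, where Lemma \ref{l:geod in trunc} gives complete control of the comparison geodesic (including the delicate subcase where it avoids depth $t_c$); the normalization of the $X/K_W$--geodesic via \cite[Lemma 3.10]{rhds}, which you omit, makes the splice points harmless. One then quotes \cite[III.H.1.13]{BH} for local geodesics with the clean constant $2\delta$, so $2\delta+6\theta<3\delta$ and no bookkeeping is left over. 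In your version, by contrast, both the local-quasigeodesic stability constant (which you explicitly defer) and the Gromov-product claim at the splice points remain unjustified --- the replacement segment is far from an ambient geodesic of $X/K_W$ or $T_W$, so the convexity argument behind Lemma \ref{concatatproj} does not apply to it directly. To repair the argument, either convert your verification into a purely local one as in the paper, or actually prove the comparison $d_{T_W}(a,b)\geq d_{\Sigma_c/K_c}(a,b)-O(\theta)$ using the relatively hyperbolic structure (quasiconvexity of peripheral cosets and almost malnormality), which is substantially more work than the citation you give.
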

\begin{proof}
Suppose that $p, q \in \overline{S}_W$.  Let $\gamma$ be a $X/K_W$--geodesic between $p$ and $q$.  According to \cite[Lemma 3.10]{rhds}, we may assume that $\gamma$ intersects any horoball in a path which consists of at most two vertical segments and a single horizontal segment.  We form a path $\overline{\gamma}$ in $T_W$ between $p$ and $q$ as follows.  Any part of $\gamma$ which is not contained in $T_W$ lies in a truncated part of a horoball $H$.  Such a segment of $\gamma$ consists of two vertical segments (of length at least $t_c$) and a single horizontal segment.  Replace any such subsegment below depth $t_c$ by a geodesic at depth $t_c$ in the truncated horoball.  

Applying Lemma \ref{lem:untruncquasi} to $\gamma$ and noting that $\overline{\gamma}\setminus \gamma$ lies entirely in $\overline{S}_W$, we see that $\overline{\gamma}$ lies in a $6\theta$--neighborhood of $\overline{S}_W$.

We claim that $\overline{\gamma}$ is a $10\delta$--local geodesic in $T_W$.  At depths less than $t_c - 10\delta$ this is clear, since at such depths the spaces $X/K_W$ and $T_W$, and the paths $\gamma$ and $\overline{\gamma}$, are locally identical.  Thus suppose that $\sigma$ is a subsegment of $\overline{\gamma}$ of length $10\delta$ that has at least one point at depth greater than $t_c - 10\delta$.  Since $t_c\gg 10\delta$ (Assumption \ref{assume:trunc_cover}.\eqref{suppose t_c big} above), this subsegment lies entirely inside a single truncated horoball.

Let $a$ and $b$ be the endpoints of $\sigma$.  If $\sigma$ were part of the original path $\gamma$ then since distances in $T_W$ are greater than those in $X/K_W$, $\sigma$ remains a geodesic in this case.

We are left with the possibility that $\sigma$ is not part of the original path $\gamma$.  Suppose that $\sigma_0$ is a $T_W$--geodesic between $a$ and $b$, chosen to satisfy the conclusion of Lemma \ref{l:geod in trunc}.  In particular, $\sigma_0$ consists of at most two vertical segments and a single horizontal segment, either at depth $t_c$ or having length at most $3$.  It is clear that the only way $\sigma_0$ could be shorter than $\sigma$ is if $\sigma_0$ does not intersect depth $t_c$, since in every other case the only possible difference between $\sigma$ and $\sigma_0$ is the choice of geodesic at depth $t_c$.  However, if $\sigma_0$ does not intersect depth $t_c$, 
it must be that neither $a$ nor $b$ is at depth $t_c$, and the $X/K_W$--geodesic between $a$ and $b$ goes beneath depth $t_c$ (in order that $p$ be truncated).  
Lemma \ref{l:geod in trunc} ensures that any horizontal segment in $\sigma_0$ has depth at most $3$.  But then it is clear that there would not have been truncation, since $\sigma_0$ is then an $X/K_W$--geodesic as well.  Thus $\sigma_0$ cannot be shorter than $\sigma$ and we have argued that $\overline{\gamma}$ is a $10\delta$--local geodesic, as required.
By \cite[III.H.1.13]{BH}, any geodesic joining $p$ to $q$ lies within $2\delta$ of such a path, and thus lies in the $(2\delta+6\theta)$--neighborhood of $\overline{S}_W$.  Since $6\theta<\delta$, the lemma is proved.
\end{proof}

\begin{lemma}\label{lem:same_limit_set}
  $\Lambda(S_W) = \Lambda(W) = \Lambda(K_W)$
\end{lemma}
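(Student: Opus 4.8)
\textbf{Plan for the proof of Lemma \ref{lem:same_limit_set}.}
The plan is to prove the three sets $\Lambda(S_W)$, $\Lambda(W)$, $\Lambda(K_W)$ coincide by a chain of two inclusions in each direction, working entirely with the Gromov boundary of the $\theta$--hyperbolic cusped space $X$. First I would record the trivial inclusions: since $W\subseteq S_W$ we have $\Lambda(W)\subseteq\Lambda(S_W)$, and since $K_W$ acts on $W$ with $1\in W$, the orbit $K_W\cdot 1$ lies in $W$, so $\Lambda(K_W)=\Lambda(K_W\cdot 1)\subseteq\Lambda(W)$. Thus it suffices to prove $\Lambda(S_W)\subseteq\Lambda(K_W)$, since then the cycle $\Lambda(S_W)\subseteq\Lambda(K_W)\subseteq\Lambda(W)\subseteq\Lambda(S_W)$ closes up and all three are equal.

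For the remaining inclusion, let $\xi\in\Lambda(S_W)$ and pick a sequence of points $y_n\in S_W=W\cup\bigcup_{c\in\calC(W)}\sepH_c$ with $y_n\to\xi$. I would first reduce to the case $y_n\in W$ for all $n$. If infinitely many $y_n$ lie in a single horoball $\sepH_c$ with $c\in\calC(W)$, then $\xi$ is the parabolic point $c$ itself (a horoball has a single boundary point), and since $K_c\le K_W$ is infinite, $c=\Lambda(K_c)\subseteq\Lambda(K_W)$ and we are done. Otherwise, after passing to a subsequence, each $y_n$ lies in a distinct horoball $\sepH_{c_n}$ with $c_n\in\calC(W)$; by definition of $\calC(W)$ each such horoball meets $W$, so I may pick $w_n\in W\cap\sepH_{c_n}$ with $d(y_n,w_n)$ bounded --- in fact $\sepH_c$ is at bounded projection-diameter from $W$ is not quite what I need, but since $w_n$ and $y_n$ both lie in the (convex) horoball $\sepH_{c_n}$ and $w_n$ lies on its depth-$500\theta$ horosphere, the Gromov product $(y_n\mid w_n)_1$ still tends to infinity because both escape to $c_n$ and $d(1,\sepH_{c_n})\to\infty$; hence replacing $y_n$ by $w_n\in W$ does not change the limit $\xi$. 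So we may assume $y_n\in W$ throughout.

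Now I would exploit Axiom \eqref{item:cocompact} of a spiderweb: for every $R>0$ the set $(N_R(G)\cap W)/K_W$ is compact, equivalently $N_R(G)\cap W$ is contained in finitely many $K_W$--orbits. Given $y_n\in W\to\xi$, either $\depth(y_n)\to\infty$ along a subsequence, in which case $y_n$ is escaping into a horoball $\sepH_{c}$ with $c\in\calC(W)$ (a geodesic $[1,y_n]$ must penetrate deep into such a horoball) and we are back in the parabolic-point case handled above; or $\depth(y_n)$ stays bounded, say $y_n$ lies within bounded distance $R$ of the Cayley graph $G$. Then by cocompactness each $y_n=k_n g_n$ with $k_n\in K_W$ and $g_n$ ranging over a finite set, so $d(y_n,k_n\cdot 1)$ is uniformly bounded; therefore $\{k_n\cdot 1\}$ is equivalent to $\{y_n\}$, giving $\xi\in\Lambda(K_W\cdot 1)=\Lambda(K_W)$. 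This completes the inclusion $\Lambda(S_W)\subseteq\Lambda(K_W)$ and hence the lemma.

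\textbf{Expected main obstacle.} The delicate point is the reduction in the middle paragraph: controlling what happens when the approximating sequence wanders out along horoballs $\sepH_c$ attached to $W$, and verifying that replacing a deep point $y_n\in\sepH_{c_n}$ by a point $w_n\in W\cap\sepH_{c_n}$ genuinely preserves convergence to the same boundary point. This needs the convexity of $\sepH_c$ (Lemma \ref{horoballconvexity}), the fact that $\calC(W)=\calC(N_{50\theta}(W))$ so no stray horoballs contribute, and a Gromov-product estimate showing $(y_n\mid w_n)_1\to\infty$; everything else is a routine application of the cocompactness axiom and the standard description of boundaries of quasiconvex subsets.
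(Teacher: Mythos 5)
Your overall architecture (trivial inclusions plus a case analysis of an approximating sequence, with the bounded-depth case handled by the cocompactness axiom and the single-horoball case handled by $K_c$ being infinite) matches the paper's, but there is a genuine gap in the one case that actually requires work: a sequence whose points lie in infinitely many \emph{pairwise distinct} horoballs $\sepH_{c_n}$, $c_n\in\calC(W)$, possibly at unbounded depth. Your reduction there replaces $y_n\in\sepH_{c_n}$ by some $w_n\in W\cap\sepH_{c_n}$; but such a $w_n$ can itself be arbitrarily deep in $\sepH_{c_n}$, so you have made no progress, and your step-3 dichotomy does not close the loop: when $\depth(y_n)\to\infty$ you assert that ``we are back in the parabolic-point case,'' which is only correct if a single horoball recurs infinitely often. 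If the horoballs vary, the limit $\xi$ need not be a parabolic point (think of points at growing but moderate depth in a sequence of distinct horoballs accumulating on a conical limit point), and sending the sequence back through your earlier dichotomy just returns you to the distinct-horoball branch with points already in $W$ --- the argument never terminates and $\xi\in\Lambda(K_W)$ is never established in this case.

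What is missing is the ingredient the paper uses at exactly this point: by axiom \eqref{item:cocompact} (together with the $10^3\theta$--separation of horoballs), the horoballs meeting $W$ fall into \emph{finitely many $K_W$--orbits}, so one may write $\sepH_{c_n}=g_n\sepH_c$ with $g_n\in K_W$ and $c$ among finitely many representatives. Then $d(1,\sepH_{c_n})\to\infty$ while $d(g_n\cdot 1,\sepH_{c_n})=d(1,\sepH_c)$ is uniformly bounded, and convexity of the horoballs forces $(y_n\mid g_n\cdot 1)_1\to\infty$; hence $g_n\to\xi$ and $\xi\in\Lambda(K_W)$. Alternatively, your reduction can be repaired without this by choosing the replacement point more carefully: using $4\theta$--quasiconvexity of $W$, a geodesic $[1,y_n]$ crosses the depth-$500\theta$ horosphere of $\calH_{c_n}$ within $4\theta$ of a point $w_n'\in W$ of \emph{bounded} depth, and since $w_n'$ lies close to $[1,y_n]$ at distance about $d(1,\sepH_{c_n})\to\infty$ from $1$, one gets $(y_n\mid w_n')_1\to\infty$, so $w_n'\to\xi$ and your bounded-depth/cocompactness step applies. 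Either fix must be supplied; as written the distinct-horoball case is circular.
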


\begin{proof}
The group $K_W$ stabilizes $W$, hence $\Lambda(K_W)\subseteq \Lambda(W)$, and $W\subseteq S_W$, so $\Lambda(W)\subseteq \Lambda(S_W)$. It remains to show $\Lambda(S_W)\subseteq\Lambda(K_W)$. Let $\{x_i\}$ be a sequence of points in $S_W$ converging to some $x\in\partial X$. We can assume that either (i) they are all contained in $G$; or (ii) each is contained in a horoball $\sepH_{g_ic}$ for some $g_i\in K_W$ (there are finitely many $K_W$--orbits of horoballs intersecting $W$).
 In the first case, $x\in\Lambda(K_W)$ because $K_W$ acts cocompactly on $W\cap G$. In the second case, up to passing to a subsequence one of the following holds:  Either all $g_ic$ coincide or all $g_ic$ are pairwise distinct.

First suppose that all $g_ic$ coincide. Recalling that $c$ is the point at infinity of $\sepH_c$, we have $x=g_ic$, and $g_ic\in \Lambda(K_W)$ because $K_{g_ic}<K_W$ is infinite.
  
Finally, suppose that all $g_ic$ are pairwise distinct. In this case it is easy to see that $x$ coincides with the limit of the $g_i$, hence $x\in \Lambda(K_W)$ as required.
\end{proof}

We next describe the limit set of $\overline{S}_W$.  Recall that the set $\mc{F}$ is the union of the limit sets $\Lambda(\Sigma_c/K_c)$ for $c \in \mc{C}(W)$.
\begin{lemma}
  $\Lambda(\overline{S}_W)$ is $\mc{F}$.
\end{lemma}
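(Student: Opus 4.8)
The plan is to compute $\Lambda(\overline{S}_W)$ directly from the description of $\overline{S}_W$ (Definition \ref{defn:saturated_spiderweb}) as the image in $T_W$ of $W$ together with the truncated horoballs $\calH_c^{[500\theta,t_c]}$, $c\in\calC(W)$.  The key geometric observation is that a truncated horoball is \emph{bounded in the depth direction}: every vertex of $\calH_c^{[500\theta,t_c]}$ is joined to the bottom horosphere $\Sigma_c=\calH_c^{t_c}$ by a vertical path of length at most $t_c-500\theta$ lying in $\calH_c^{[500\theta,t_c]}$, so $\calH_c^{[500\theta,t_c]}/K_c$ lies within a fixed Hausdorff distance of $\Sigma_c/K_c$ in $T_W$, and hence the two have the same limit set in $\partial T_W$.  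Since $t_c\ge 10^{10}\theta>500\theta$ by Assumption \ref{assume:trunc_cover}.\eqref{suppose t_c big}, the horosphere $\Sigma_c$ survives the truncation, so $\Sigma_c\subseteq\sepH_c\subseteq S_W$ and its image $\Sigma_c/K_c$ is genuinely contained in $\overline{S}_W$; this already yields the inclusion $\mc{F}=\bigcup_{c\in\calC(W)}\Lambda(\Sigma_c/K_c)\subseteq\Lambda(\overline{S}_W)$.

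For the reverse inclusion I would first exhibit $\overline{S}_W$, up to bounded Hausdorff distance in $T_W$, as the union of a bounded set and finitely many of the horospheres $\Sigma_c/K_c$.  Concretely: since the closure of the complement of $\bigcup_{c\in\calC}\sepH_c$ is $G$--cocompact, there is a constant $R$ depending only on $\theta$ (and with $R<10^{10}\theta\le t_c$) so that $W\setminus N_R(G)\subseteq\bigcup_{c\in\calC(W)}\sepH_c$; on the other hand $(N_R(G)\cap W)/K_W$ is compact by spiderweb axiom \eqref{item:cocompact}, and (as noted in the proof of Lemma \ref{lem:same_limit_set}) there are only finitely many $K_W$--orbits of horoballs $\sepH_c$ with $c\in\calC(W)$, say with representatives $c_1,\dots,c_m$.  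Removing the truncated part and quotienting by $K_W$ then gives
\[ \overline{S}_W\ \subseteq\ B\ \cup\ \bigcup_{j=1}^{m}\calH_{c_j}^{[500\theta,t_{c_j}]}/K_{c_j}\ \subseteq\ N_\Delta\Bigl(B\ \cup\ \bigcup_{j=1}^{m}\Sigma_{c_j}/K_{c_j}\Bigr), \]
where $B=(N_R(G)\cap W)/K_W$ is compact and $\Delta=\max_j(t_{c_j}-500\theta)$.  Now take any sequence $\{\bar x_i\}$ in $\overline{S}_W$ converging to some $\bar x\in\partial T_W$, choose $\bar x_i'\in B\cup\bigcup_j\Sigma_{c_j}/K_{c_j}$ with $d_{T_W}(\bar x_i,\bar x_i')\le\Delta$, and note that since Gromov products change by at most the size of the perturbation, $\{\bar x_i'\}$ also converges to infinity and $\{\bar x_i'\}\to\bar x$.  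As $B$ is bounded, all but finitely many $\bar x_i'$ lie in $\bigcup_j\Sigma_{c_j}/K_{c_j}$, and by pigeonhole a subsequence lies in a single $\Sigma_{c_j}/K_{c_j}$; that subsequence still converges to $\bar x$, whence $\bar x\in\Lambda(\Sigma_{c_j}/K_{c_j})\subseteq\mc{F}$.

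The point requiring the most care is the decomposition of $\overline{S}_W$: one must use that only finitely many horoballs meet $W$ (which is exactly what spiderweb axiom \eqref{item:cocompact} provides, via the same argument used in Lemma \ref{lem:same_limit_set}) and that the ``shallow'' part $N_R(G)\cap W$ maps to a \emph{bounded} subset of $T_W$ — here it is essential both that $K_W$ is collapsed in $T_W$, so $K_W$--cocompactness becomes compactness, and that $t_c\ge 10^{10}\theta$, so that $N_R(G)\cap W$ lies entirely above the truncation depth and is unaffected by it.  Note that the quasiconvexity of $\overline{S}_W$ in $T_W$ from Lemma \ref{lem:overlineSW_qconv} is not strictly needed for this computation; it will instead be used together with the present identification $\Lambda(\overline{S}_W)=\mc{F}$ to control how the covering map of Theorem \ref{thm:trunc}.\eqref{trunc_cover} behaves near $\mc{F}$.
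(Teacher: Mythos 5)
Your argument is correct and is essentially the paper's: the published proof simply observes that $\overline{S}_W$ lies within finite Hausdorff distance of the finite union of the $\Sigma_c/K_c$ (choosing $K_W$--orbit representatives of horoball centers) and concludes the limit sets agree, and your decomposition into the compact piece $B$ plus truncated horoball quotients lying within $\Delta$ of the horospheres is precisely the fleshed-out version of that observation. No gap here.
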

\begin{proof}
  Note that $\overline{S}_W$ is finite Hausdorff distance from the union $\bigcup\limits_{c\in \calC(W)} \Sigma_c/K_c$, which (choosing representatives of $K_W$--orbits of horoball centers $c$) is actually a finite union of quasiconvex sets of the form $\Sigma_c/K_c$.  The limit set $\Lambda(\overline{S})$ is thus equal to the union of the limit sets of the $\Sigma_c/K_c$ in $\partial T_W$, which is $\mc{F}$.
\end{proof}

\subsubsection{The action of $K_W$ on $S_W$ and $X$}

\begin{defn}
The {\em frontier} of $S_W$ is the set of vertices in $S_W$ which are joined by an edge to a point in $X \setminus S_W$.
\end{defn}
Observe that by construction every vertex in the frontier of $S_W$ has depth at most $500\theta$.

\begin{lemma}\label{lem:frontier_translation}
Suppose that $x \not \in S_W$ or $x$ belongs to the frontier of $S_W$.  Then for any $k \in K \setminus \{ 1 \}$ we have $d(x,kx) > 100\delta$.
\end{lemma}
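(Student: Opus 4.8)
The plan is to use the ``very translating condition'' (which holds here by Lemma \ref{lem:long is v translating}, since we are working with a sufficiently long filling) together with the fact that $x$ has small depth, to rule out the possibility that $d(x,kx)\le 100\delta$. Recall that $K$ is the free product $\mathop{\ast}_{c}K_c$ over a transversal of $K$--orbits of $\mathcal C$; in particular $K$ acts on $X$ in such a way that the only elements moving a point a bounded amount near the Cayley graph are (conjugates of) elements of a single $K_c$. First I would observe that if $x\notin S_W$ or $x$ lies on the frontier of $S_W$, then by the remark just before the lemma (every frontier vertex has depth at most $500\theta$) and by property \eqref{item:otherhoroballsfar} of spiderwebs, $x$ lies outside $\sepH_c$ for every $c\in\mathcal C(W)$, and in fact $x\in X\setminus\sepH_{c}$ for \emph{every} $c\in\mathcal C$ for which $K_c<K$ is a nontrivial filling kernel — any horoball $\sepH_c$ containing $x$ at depth $\ge 500\theta$ would force $x\in\sepH_c$, but $x$ has depth $\le 500\theta$ (if on the frontier) or is excluded by construction (if $x\notin S_W$; here one uses that $S_W\supseteq\bigcup_{c\in\mathcal C(W)}\sepH_c$, and $\mathcal C(W)=\mathcal C(N_{50\theta}(W))$, so a deep point of any horoball meeting near $W$ is already inside $S_W$).

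Next, fix $k\in K\setminus\{1\}$ and suppose toward a contradiction that $d(x,kx)\le 100\delta$. If $d(x,kx)\le 10\delta$, then applying the argument at the start of the proof of Lemma \ref{greendlinger} (a consequence of Lemma \ref{Greendlinger:BallsEmbed}) to the pair $x,kx$ in the same $K$--orbit, we get a horoball $\calH_c$ with $x,kx\in\calH_c^{[D,\infty)}$ for our chosen large $D$ (in particular $D\ge 500\theta$), and $k\in K_c$; this contradicts the fact established above that $x$ lies outside every relevant $\sepH_c$. If $10\delta<d(x,kx)\le 100\delta$, then I would run the same Greendlinger-type argument: the loop that is the image of a geodesic $[x,kx]$ in $X/K$ is a loop of length $\le 100\delta$, so it fails to be a sufficiently-long-local geodesic and hence can be shortened using some $k'\in K$; more directly, one can just invoke Lemma \ref{greendlinger} (with $D\ge 500\theta$) to conclude that a geodesic $[x,kx]$ intersects some $\calH_c^D$, and so has a point at depth $\ge 500\theta$ in $\calH_c$. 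But $[x,kx]$ has length $\le 100\delta$, so both endpoints $x$ and $kx$ lie within $100\delta$ of depth $\ge 500\theta$ in $\calH_c$; since $\calH_c$ is a combinatorial horoball and geodesics in it are essentially vertical (Lemma \ref{l:geod in trunc}), this forces $x$ itself to have depth at least $500\theta-100\delta$. Wait — this is not yet a contradiction with ``$x$ has depth $\le 500\theta$''. So the cleaner route is to take $D$ much larger than $100\delta$, say $D\ge 10^5\theta>100\delta$ as in Assumption \ref{assume:trunc_cover}.\eqref{suppose greendlinger}: then a geodesic of length $\le 100\delta$ from $x$ cannot reach depth $D$ in any horoball unless $x$ already has depth at least $D-100\delta>500\theta$, contradicting that $x$ is on the frontier (depth $\le 500\theta$) or outside $S_W$ entirely.

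The one remaining case is $x\notin S_W$ with $x$ of large depth, i.e. $x$ deep inside some horoball $\calH_{c'}$ with $c'\notin\mathcal C(W)$ (so $\sepH_{c'}$ does not meet $W$). Here I would argue that a geodesic $[x,kx]$ of length $\le 100\delta$ must stay inside $\calH_{c'}$ (it cannot exit a horoball it is deep within and return within length $100\delta$, again by the vertical-geodesics description, since exiting to depth $0$ and back costs more than $2\cdot(\text{depth}(x))\gg 100\delta$), and then $kx\in\calH_{c'}$ too, forcing $k$ to stabilize $\calH_{c'}$, hence $k\in K\cap\mathrm{Stab}(\calH_{c'})=K_{c'}$. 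Since $x\notin S_W$ means $x\notin\sepH_{c}$ for $c\in\mathcal C(W)$ but $x$ \emph{is} in $\sepH_{c'}$, we are exactly in the configuration controlled by the very translating condition: $c'\in\mathcal C$, $k\in K_{c'}\setminus\{1\}$, and either $x\in X\setminus\sepH_{c'}$ (impossible, $x$ is deep) — so in fact $x\in\sepH_{c'}$ and we need the strengthening that elements of $K_{c'}$ translate points of $\sepH_{c'}$ by a large amount \emph{along the horosphere}, which follows because translation length in $\calH_{c'}^{[500\theta,\infty)}$ is at least the translation length on the depth-$500\theta$ horosphere, and the very translating condition bounds the latter below (using $x'\in X\setminus\sepH_{c'}$ obtained by pushing $x$ up to depth just above $0$, whose $K_{c'}$--displacement differs from that of $x$ by a controlled amount). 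The main obstacle is precisely this last bookkeeping: translating the very translating condition (stated for points \emph{outside} $\sepH_{c'}$) into a lower bound on displacement of points \emph{inside} $\sepH_{c'}$; I expect this to be a short computation using the explicit horoball metric (horizontal distance at depth $n$ is $\lceil 2^{-n}d_\Gamma\rceil$, so moving up decreases horizontal distance, and the displacement of a deep point dominates that of the corresponding depth-$0$ point along a geodesic that descends, crosses horizontally, and ascends).
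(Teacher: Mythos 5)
Your reduction funnels all the real difficulty into your final case ($x\notin S_W$ deep inside a horoball $\calH_{c'}$ with $c'\notin\calC(W)$ and, after your reduction, $k\in K_{c'}\setminus\{1\}$), and there the inequality you hope to verify by ``a short computation'' is false — it goes the wrong way. In a combinatorial horoball the horizontal distance at depth $n$ is $\lceil 2^{-n}d_\Gamma\rceil$, so if $k$ moves a depth-$0$ vertex $v$ by $d_\Gamma(v,kv)=T$, then for every $n\geq\log_2 T$ the vertices $(v,n)$ and $(kv,n)$ are joined by a single horizontal edge, and $d(x,kx)=1$ for $x=(v,n)$. Deep points are displaced exponentially \emph{less}, not more, than shallow ones; the very translating condition is stated only for $x\notin\sepH_{c}$ precisely because of this. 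Consequently no bookkeeping can close your last case: for $c'\notin\calC(W)$ and $k\in K_{c'}\setminus\{1\}$, a sufficiently deep $x\in\sepH_{c'}$ does lie outside $S_W$ and is moved by a single edge. Such $k$ never lies in $K_W$ (from $K_W=\ast_{c\in C}K_c$ with $C\subseteq\calC(W)$ and $K_W$--invariance of $\calC(W)$ one gets $K_{c'}\cap K_W=\{1\}$), and every application of the lemma in the paper takes $k\in K_W$; but your argument, which tries to verify the displacement bound for arbitrary $k\in K$ including these elements, cannot succeed in this configuration.

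There is also a numerical slip in your middle case: with $\delta=1500\theta$ one has $100\delta=1.5\cdot 10^5\theta>10^5\theta$, so with the depth $D=10^5\theta$ of Assumption \ref{assume:trunc_cover} the bound ``$\depth(x)\geq D-100\delta>500\theta$'' is vacuous; your one-sided estimate discards a factor of $2$. The paper's proof is different and avoids both problems: for $x\notin S_W$ it takes $y\in\pi_{S_W}(x)$, which lies on the frontier and hence has depth at most $500\theta$; since $\depth(ky)=\depth(y)$, any geodesic $[y,ky]$ must descend to depth $10^5\theta$ (Assumption \ref{assume:trunc_cover}) \emph{and climb back up}, giving $d(y,ky)\geq 2(10^5-500)\theta>100\delta$ — it is this descend-and-return factor of $2$ that beats $100\delta$ at the given $D$. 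The bound is then transferred to $d(x,kx)$ by showing the broken path $[x,y]\cup[y,ky]\cup[ky,kx]$ fellow-travels a geodesic $[x,kx]$, a tightness argument at the two corners (compare Lemma \ref{concatatproj}) which uses that $y$ and $ky$ are closest-point projections of $x$ and $kx$ to $S_W$ — exactly the place where $k$ preserving $S_W$, i.e. $k\in K_W$, does real work. Your cases with $d(x,kx)$ small and $x$ shallow are fine (and could be run with a larger $D$ built into the standing assumptions), but the deep case is where the quantification over all of $K$ breaks down, and your route cannot repair it.
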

\begin{proof}
  Consider $x \not \in S_W$.  Then any $y \in \pi_{S_W}(x)$ is contained in the frontier of $S_W$.  By Assumption \ref{assume:trunc_cover}.\eqref{suppose greendlinger}, any geodesic $[y,ky]$ must go at least $10^5\theta$ into some horoball.  Since the depth of $y$ is at most $500\theta$, we have $d(y,ky)\geq 2\cdot(10^5-500)\theta$.
The broken geodesic $[x,y] \cup [y,ky] \cup [ky,kx]$ is $2\theta$ close to a geodesic $[x,kx]$, from which it quickly follows that $d(x,kx) > 1.5\cdot 10^5\theta = 100\delta$, as required.
\end{proof}

\begin{cor}\label{cor:fiftydeltaisometry}
  Let $x$ be a point in $X\setminus N_{100\delta}(S_W)$.  Then the map from $X$ to $X/K_W$ restricts to an isometry from the $50\delta$--ball in $X$ around $x$ to a $50\delta$--ball in $T_W$.
\end{cor}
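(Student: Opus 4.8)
The plan is to combine the translation estimate of Lemma \ref{lem:frontier_translation} with the elementary fact that if a group acts by isometries on a space and moves a point $x$ a distance strictly greater than $4r$, then the quotient map restricts to an isometry on the $r$--ball about $x$. The key point is that $x$ lies not merely outside $S_W$ but at distance more than $100\delta$ from it, and this extra room is exactly what makes the radius $50\delta$ admissible.

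First I would sharpen Lemma \ref{lem:frontier_translation}. Since $d(x,S_W)>100\delta$ we have $x\notin S_W$, so that lemma applies and gives $d(x,kx)>100\delta$ for all $k\in K\setminus\{1\}$; but its proof in fact yields a bound growing with $d(x,S_W)$. Taking $y\in\pi_{S_W}(x)$, a frontier vertex of depth at most $500\theta$, the proof produces a geodesic $[y,ky]$ dropping to depth at least $10^5\theta$ in some horoball (Assumption \ref{assume:trunc_cover}.\eqref{suppose greendlinger}), so $d(y,ky)\ge 2(10^5-500)\theta$, and shows that the broken geodesic $[x,y]\cup[y,ky]\cup[ky,kx]$ stays within a bounded multiple of $\theta$ of a geodesic $[x,kx]$. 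Hence $d(x,kx)\ge 2\,d(x,S_W)+d(y,ky)-O(\theta)>200\delta$ for every $k\in K\setminus\{1\}$, and in particular for every $k\in K_W\setminus\{1\}$.

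Next I would run the covering-space argument for $\pi\colon X\to X/K_W$, which is a covering map, hence a local isometry, as recorded just before Lemma \ref{lem:untruncquasi}. Writing $\bar x=\pi(x)$: injectivity of $\pi$ on $B_{50\delta}(x)$ is immediate, since $\pi(y)=\pi(z)$ with $y,z\in B_{50\delta}(x)$ and $z=ky$, $k\neq1$, would give $d(x,kx)\le d(x,z)+d(ky,kx)\le 100\delta$; the image equals the $50\delta$--ball about $\bar x$ in $X/K_W$ because lifts of geodesics are geodesics; and $\pi$ preserves distances on $B_{50\delta}(x)$ because a path in $X/K_W$ between two points of this ball that is shorter than their distance in $X$ would lift to a geodesic $[y,kz]$ of length $<100\delta$ with $k\in K_W\setminus\{1\}$, forcing $d(x,kx)\le d(x,y)+d(y,kz)+d(kz,kx)<200\delta$, a contradiction.

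Finally I would check that the truncation defining $T_W$ is invisible near $\bar x$. The set deleted from $X/K_W$ to form $T_W$ is the image of $\bigcup_{c\in\calC(W)}\calH_c^{(t_c,\infty)}$, which lies inside $\bigcup_{c\in\calC(W)}\sepH_c\subseteq S_W$; since $S_W$ is $K_W$--invariant, $d(kx,S_W)=d(x,S_W)>100\delta$ for all $k\in K_W$, so $\pi^{-1}(B_{100\delta}(\bar x))=\bigcup_{k\in K_W}B_{100\delta}(kx)$ misses the deleted set and $B_{100\delta}(\bar x)\subseteq T_W$. By $\delta$--hyperbolicity, geodesics of $X/K_W$ between points of the $50\delta$--ball about $\bar x$ stay inside $B_{100\delta}(\bar x)\subseteq T_W$, so the $X/K_W$-- and $T_W$--metrics agree on that ball, which is therefore exactly $\pi(B_{50\delta}(x))$. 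The main obstacle is the sharpened form of the first step: one must notice that the estimate of Lemma \ref{lem:frontier_translation} scales with $d(x,S_W)$, so that the $100\delta$ buffer in the hypothesis produces a translation length exceeding $4\cdot 50\delta$; once that is in hand the remaining covering-space bookkeeping is routine.
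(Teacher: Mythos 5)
Your proposal is correct, and it is essentially the derivation the paper intends: the corollary is meant to follow from Lemma \ref{lem:frontier_translation} together with routine covering-space bookkeeping, which is exactly what you do. Your one substantive addition is a good one: the bald statement $d(x,kx)>100\delta$ only yields an isometry on sets of diameter at most $50\delta$ (e.g.\ the $25\delta$--ball), since pairs in the $50\delta$--ball can be $100\delta$ apart; to handle the full ball one really does need the stronger estimate implicit in the lemma's proof, namely that the translation bound grows like $2\,d(x,S_W)$ plus the $2(10^5-500)\theta$ contribution from the deep excursion of $[y,ky]$, giving $d(x,kx)>200\delta$ for $k\in K_W\setminus\{1\}$ when $d(x,S_W)>100\delta$. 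Two small remarks. First, the step from ``the broken geodesic is $O(\theta)$--close to $[x,kx]$'' to the additive lower bound $d(x,kx)\ge 2d(x,S_W)+d(y,ky)-O(\theta)$ should be justified as in the lemma's own proof, via tightness of the concatenations at the closest-point projections to the $6\theta$--quasiconvex set $S_W$ (Lemmas \ref{concatatproj} and \ref{localtight}), or equivalently by noting that $[x,kx]$ passes $2\theta$--close to $y$ and then to $ky$ in that order; for the corner at $ky$ this uses $kS_W=S_W$, so you should restrict the sharpened claim to $k\in K_W$, which is all the corollary needs. Second, your appeal to $\delta$--hyperbolicity of $X/K_W$ is not established in the paper (only $T_W$, $T_{\overline G}$ and the cusped space of the filled group are shown to be $\delta$--hyperbolic; the relative statement for $X_W$ is left unproved), but it is also unnecessary: in any geodesic space a geodesic between two points of the $50\delta$--ball about $\bar x$ lies in the $100\delta$--ball about $\bar x$, which you have already shown misses the truncated horoball images, so the $X/K_W$-- and $T_W$--metrics agree there and the rest of your argument goes through unchanged.
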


\subsubsection{The covering map}\label{subsec:covering_map}
Let $\phi\co X\to X/K_W$ be the quotient map.
We define a map 
\[ \Theta \co (\partial X \setminus \Lambda(K_W))\to (\partial T_W) \setminus \mathcal F \]
as follows.  Represent $\xi\in \partial X\setminus \Lambda(K_W)$ by a $50\delta$--local geodesic ray $\gamma \co [0,\infty)\to X$ starting at $1$.  (Note that there is a geodesic ray Hausdorff distance at most $3\theta$ from $\gamma$; we use local geodesic rays because they occur naturally in the proof anyway.)
Let $R_\gamma$ be the smallest number so that $d(\gamma(t), S_W)\geq 100\delta$ for all $t\geq R_\gamma$.
Since $\xi\notin \Lambda(K_W) = \Lambda(S_W)$ (see Lemma \ref{lem:same_limit_set}), and $S_W$ is quasiconvex, there is such an $R_\gamma$.  If $\gamma$ is actually a geodesic, Lemma \ref{quasiconvproj} can be used to show that $\gamma$ makes linear progress away from $S_W$ after time $R_\gamma$:
\begin{equation}\label{eq:linear}
 100\delta - 22\theta + t < d( \gamma(R_\gamma+t),S_W) \leq 100\delta + t .
\end{equation}
Similar statements can be made for a $50\delta$--local geodesic, using the fact it is quasigeodesic and close to a geodesic, and/or replacing $100\delta$ with any quantity sufficiently large with respect to $\theta$.

Define $\overline\gamma(t) = \phi(\gamma(t+R_\gamma))$.  
The image $\overline\gamma$ of the ray in $X/K_W$ lies entirely in $T_W \setminus N_{100\delta}\left(\overline{S}_W\right)$.  It follows from Corollary \ref{cor:fiftydeltaisometry} that $\overline\gamma$ is a $50\delta$--local geodesic in the $\delta$--hyperbolic space $T_W$.
In particular,
$[\overline{\gamma}]$ represents a point of $(\partial T_W) \setminus \mathcal F $, and we define $\Theta(\xi)$ to be this point.

\begin{lemma}\label{lem:Theta_well_def}
  The map $\Theta$ is well-defined and continuous.
\end{lemma}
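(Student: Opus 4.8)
\emph{The plan} is to verify separately the two assertions hidden in the statement: that the value $\Theta(\xi)=[\overline\gamma]$ does not depend on the choice of $50\delta$--local geodesic ray $\gamma$ from $1$ representing $\xi$ (well-definedness), and that $\Theta$ is continuous. The two workhorses will be Corollary~\ref{cor:fiftydeltaisometry}, which says $\phi$ restricts to an isometry on every $50\delta$--ball whose centre lies outside $N_{100\delta}(S_W)$, together with the linear escape estimate \eqref{eq:linear} and its $50\delta$--local-geodesic analogue (licensed in the construction of $\Theta$): once a representing ray has permanently left $N_{100\delta}(S_W)$ at time $R_\gamma$, it recedes from $S_W$ at unit speed. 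I will also use two standard facts: a $50\delta$--local geodesic ray to $\xi$ lies within $O(\delta)$ of an honest geodesic ray to $\xi$ from the same basepoint (\cite[III.H.1.13]{BH}); and, for a quasiconvex set $S\ni 1$ and $\zeta\in\partial X\setminus\Lambda(S)$, the function $t\mapsto d_X(\eta(t),S)$ along a geodesic ray $\eta$ from $1$ to $\zeta$ is \emph{coarsely unimodal}, i.e.\ $\{t:d_X(\eta(t),S)<100\delta\}$ is contained in an interval $[0,R']$ up to an additive error $O(\delta)$ --- this follows from $\delta$--thinness and quasiconvexity by the same computation that gives \eqref{eq:linear}.

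\emph{Well-definedness.} (That $[\overline\gamma]\in\partial T_W\setminus\mathcal F$ is already recorded in the construction, using $\mathcal F=\Lambda(\overline S_W)$ and that $\overline\gamma$ recedes from $\overline S_W$; only independence of $\gamma$ remains.) Given two $50\delta$--local geodesic rays $\gamma_1,\gamma_2$ from $1$ to $\xi$, both lie within $O(\delta)$ of a fixed geodesic ray to $\xi$, hence (in arc-length parametrisation) $d_X(\gamma_1(t),\gamma_2(t))\le c_1$ for all $t$, with $c_1=O(\delta)<50\delta$, and $|R_{\gamma_1}-R_{\gamma_2}|=O(\delta)$. For $t$ large enough that \eqref{eq:linear} gives $d_X(\gamma_1(R_{\gamma_1}+t),S_W)>100\delta+c_1$, the point $\gamma_2(R_{\gamma_1}+t)$ lies outside $N_{100\delta}(S_W)$ and within $c_1<50\delta$ of $\gamma_1(R_{\gamma_1}+t)$, so Corollary~\ref{cor:fiftydeltaisometry} yields
\[ d_{T_W}\bigl(\overline{\gamma_1}(t),\,\overline{\gamma_2}(t+R_{\gamma_1}-R_{\gamma_2})\bigr)=d_X\bigl(\gamma_1(R_{\gamma_1}+t),\,\gamma_2(R_{\gamma_1}+t)\bigr)\le c_1 . \]
Thus $\overline{\gamma_1}$ and $\overline{\gamma_2}$ are asymptotic in $T_W$ and $[\overline{\gamma_1}]=[\overline{\gamma_2}]$.

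\emph{Continuity.} Since $\Theta$ is now well-defined, one may use honest geodesic rays. Let $\xi_n\to\xi$ in $\partial X\setminus\Lambda(K_W)$, pick geodesic rays $\gamma,\gamma_n$ from $1$ to $\xi,\xi_n$, and put $T_n=(\xi_n\mid\xi)_1$, so $T_n\to\infty$ and $d_X(\gamma_n(t),\gamma(t))\le 2\delta$ for $t\le T_n-O(\delta)$. Fix once and for all a constant $c_0=O(\delta)$ with $d_X(\gamma(t),S_W)>100\delta+3\delta$ for every $t\ge R_\gamma+c_0$ (possible by \eqref{eq:linear}). For all large $n$: on $[R_\gamma+c_0,T_n]$ we get $d_X(\gamma_n(t),S_W)\ge d_X(\gamma(t),S_W)-2\delta>100\delta$; since this interval is longer than the coarse-unimodality error and $\{t:d_X(\gamma_n(t),S_W)<100\delta\}$ contains $0$ and is coarsely an interval, it must lie in $[0,R_\gamma+c_0]$, whence $R_{\gamma_n}\le R_\gamma+c_0$. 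Now $\overline\gamma(c_0)=\phi(\gamma(R_\gamma+c_0))$ and $\overline{\gamma_n}(R_\gamma+c_0-R_{\gamma_n})=\phi(\gamma_n(R_\gamma+c_0))$ both lie within $R_\gamma+c_0$ of $\phi(1)$, and for $s\in[0,T_n-R_\gamma-c_0]$ Corollary~\ref{cor:fiftydeltaisometry} (applied at $\gamma(R_\gamma+c_0+s)$, which lies outside $N_{100\delta}(S_W)$) gives
\[ d_{T_W}\bigl(\overline\gamma(c_0+s),\,\overline{\gamma_n}(R_\gamma+c_0-R_{\gamma_n}+s)\bigr)=d_X\bigl(\gamma(R_\gamma+c_0+s),\,\gamma_n(R_\gamma+c_0+s)\bigr)\le 2\delta . \]
So the chosen representatives of $\Theta(\xi)$ and $\Theta(\xi_n)$ emanate from within bounded distance of $\phi(1)$ and $2\delta$--fellow-travel over an interval of length tending to $\infty$; hence $(\Theta(\xi_n)\mid\Theta(\xi))_{\phi(1)}\to\infty$, i.e.\ $\Theta(\xi_n)\to\Theta(\xi)$. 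As $\partial T_W$ and $\partial X\setminus\Lambda(K_W)$ are metrisable, this sequential continuity is continuity.

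\emph{The main obstacle.} The isometry on balls is pre-packaged in Corollary~\ref{cor:fiftydeltaisometry}; the genuinely delicate point is controlling the ``bounce time'' $R_{\gamma_n}$. One must rule out that $\gamma_n$ re-enters $N_{100\delta}(S_W)$ \emph{after} the window in which it fellow-travels $\gamma$ --- otherwise $\Theta(\xi_n)$ would be dictated by a far-away tail of $\gamma_n$ bearing no relation to $\Theta(\xi)$, and continuity would fail. This is exactly where the coarse unimodality of the distance-to-$S_W$ function along a geodesic ray aimed outside $\Lambda(S_W)$ is indispensable, and it forces a little care in the constant bookkeeping: since the escape threshold $100\delta$ and the fellow-travelling error $2\delta$ are of the same order of magnitude, one must escape a further $O(\delta)$ before invoking Corollary~\ref{cor:fiftydeltaisometry}.
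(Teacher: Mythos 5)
Your overall strategy is the same as the paper's: compare the ``take-off'' data of representing rays using the linear escape estimate \eqref{eq:linear}, transfer distances to $T_W$ via Corollary \ref{cor:fiftydeltaisometry}, and conclude asymptoticity/convergence of the projected rays. Your continuity half is essentially correct (and the coarse-unimodality control of $R_{\gamma_n}$, which you rightly identify as the delicate point, is a legitimate way to rule out late re-entry into $N_{100\delta}(S_W)$; note it uses $1\in S_W$, which holds since $1\in W$).

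There is, however, a genuine gap in the well-definedness half. You assert that two $50\delta$--local geodesic rays $\gamma_1,\gamma_2$ from $1$ to the same $\xi$ satisfy $d_X(\gamma_1(t),\gamma_2(t))\le c_1=O(\delta)<50\delta$ \emph{in arc-length parametrisation for all $t$} (and likewise $|R_{\gamma_1}-R_{\gamma_2}|=O(\delta)$), deducing this from the fact that both lie $O(\delta)$--Hausdorff-close to a common geodesic ray. That inference is invalid: by \cite[III.H.1.13]{BH} a $50\delta$--local geodesic is only a $(\lambda,\epsilon)$--quasigeodesic with $\lambda>1$, and in a graph such a ray can lose length at a small but definite linear rate (think of a path that periodically switches between parallel geodesics), so its arc-length parametrisation drifts relative to that of a genuine geodesic by an amount growing linearly in $t$. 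Hence for large $t$ the bound $d_X(\gamma_1(t),\gamma_2(t))<50\delta$ needed to invoke Corollary \ref{cor:fiftydeltaisometry} can fail, and your displayed estimate $d_{T_W}(\overline\gamma_1(t),\overline\gamma_2(t+R_{\gamma_1}-R_{\gamma_2}))\le c_1$ is not justified; the claim $|R_{\gamma_1}-R_{\gamma_2}|=O(\delta)$ fails for the same reason (the take-off \emph{points} are $O(\theta)$--close, but the take-off \emph{times} need not be). The repair is exactly the care the paper takes: avoid parametrised statements for local geodesics. Either compare nearest points — for each large $t$ there is $s$ with $d_X(\gamma_1(t),\gamma_2(s))\le O(\theta)<50\delta$ and $\gamma_1(t)\notin N_{100\delta}(S_W)$, so Corollary \ref{cor:fiftydeltaisometry} gives the same bound downstairs, and two quasigeodesic rays in $T_W$ at finite Hausdorff distance with nearby starting points are asymptotic — or argue as the paper does, using \eqref{eq:linear} (and its local-geodesic analogue) to show the take-off points satisfy $d(\gamma_1(R_{\gamma_1}),\gamma_2(R_{\gamma_2}))\le 104\theta<\delta$ and that for each large $t$ the ray $\overline\gamma_2$ \emph{passes within} a bounded distance of $\overline\gamma_1(t)$. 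With that modification your argument goes through.
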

\begin{proof}
  Suppose $\gamma$ and  $\{\gamma_i\}_{i\in \bN}$ are $50\delta$--local geodesics in $X$ starting at $1$, so that $[\gamma_i]\to [\gamma]$ in $\partial X$.  We show that $\Theta$ is well-defined and continuous by showing that $[\overline{\gamma}_i]\to [\overline{\gamma}]$ in $\partial T_W$.  (To deduce the map is well-defined, take a constant sequence.)

  If all the rays are geodesic, they stay within $\theta$ of each other on larger and larger initial subsegments.  Using the inequality \eqref{eq:linear} above, it can be shown that for all but finitely many $i$, we have $d(\gamma_i(R_{\gamma_i}),\gamma(R_\gamma))\leq 24\theta$.  
If they are only $50\delta$--local geodesics, we can use the fact that they are $3\theta$--close to geodesics to get the bound
\[ d(\gamma_i(R_{\gamma_i}),\gamma(R_\gamma))\leq 104\theta <\delta \]
  for all but finitely many $i$.
  It follows that the rays $\overline{\gamma}$ and $\{\overline{\gamma}_i\}$ all start within $\delta$ of one another.  Moreover, for any large $t$, all but finitely many $\overline{\gamma}_i$ pass within $7\theta$ of $\overline{\gamma}(t)$.  It follows that the equivalence classes $\{[\overline{\gamma}_i]\}$ converge to $[\overline{\gamma}]$, as required.
\end{proof}

\begin{prop}\label{prop:Theta_covers}
  The map $\Theta$ is a covering map. The preimage of any given point is a $K_W$--orbit in $\partial X$.
\end{prop}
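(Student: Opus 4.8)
The plan is to show that $\Theta$ is a regular covering map whose deck transformations are exactly the maps $\xi\mapsto k\xi$, $k\in K_W$; this immediately yields that the fibre over any point is a $K_W$--orbit.  The main tools are Corollary \ref{cor:fiftydeltaisometry} (the quotient map $\phi\co X\to X/K_W$ restricts to an isometry on every $50\delta$--ball centred outside $N_{100\delta}(S_W)$), Lemma \ref{lem:frontier_translation} (the $K_W$--orbit of any point outside $S_W$ is $100\delta$--separated), the quasiconvexity of $S_W$ and $\overline{S}_W$ (Lemmas \ref{lem:saturation_quasiconvex} and \ref{lem:overlineSW_qconv}) together with the linear escape estimate \eqref{eq:linear}, the identifications $\Lambda(K_W)=\Lambda(S_W)$ (Lemma \ref{lem:same_limit_set}) and $\Lambda(\overline{S}_W)=\mathcal F$, and the fact that $\Theta$ is well defined and continuous (Lemma \ref{lem:Theta_well_def}).

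First I would record $K_W$--invariance and surjectivity.  For $k\in K_W$ and a $50\delta$--local geodesic ray $\gamma$ from $1$ to $\xi$, the translate $k\gamma$ represents $k\xi$ and satisfies $\phi\circ(k\gamma)=\phi\circ\gamma$; since $k\xi\notin\Lambda(K_W)$ as well, and the value of $\Theta$ is independent of the representing ray and its basepoint (asymptotic rays have $\phi$--images at bounded Hausdorff distance, as $\phi$ is $1$--Lipschitz, by the reasoning of Lemma \ref{lem:Theta_well_def}), we get $\Theta(k\xi)=\Theta(\xi)$, so $\Theta$ factors through the $K_W$--action.  For surjectivity, given $\bar\eta\in\partial T_W\setminus\mathcal F=\partial T_W\setminus\Lambda(\overline{S}_W)$, represent it by a geodesic ray in $T_W$ from $\phi(1)$; since $\overline{S}_W$ is quasiconvex the tail of this ray eventually leaves $N_{100\delta}(\overline{S}_W)$, and there Corollary \ref{cor:fiftydeltaisometry} lets us lift it, step by step through isometric $50\delta$--balls, to a $50\delta$--local geodesic ray in $X$.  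Its endpoint $\xi\in\partial X$ is not in $\Lambda(S_W)=\Lambda(K_W)$ because the lift stays outside $N_{99\delta}(S_W)$, and $\Theta(\xi)=\bar\eta$ directly from the definition.

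Next, the fibre computation.  Suppose $\Theta(\xi)=\Theta(\xi')$, with representing rays $\gamma,\gamma'$ from $1$ and escape parameters $R_\gamma,R_{\gamma'}$.  The tails $\overline\gamma,\overline{\gamma'}$ of their $\phi$--images represent the same point of the $\delta$--hyperbolic space $T_W$, hence eventually fellow--travel at some distance $C=O(\delta)$; lifting through $\phi$ one finds, for each large $t$, an element $k_t\in K_W$ with $d_X(\gamma(t+R_\gamma),\,k_t\gamma'(s(t)+R_{\gamma'}))\le C$ for some $s(t)\approx t$.  Comparing consecutive values of $t$, and using that $\gamma'$ is coarsely unit speed, gives $d_X(\gamma'(s(t)+R_{\gamma'}),\,k_t^{-1}k_{t+1}\gamma'(s(t)+R_{\gamma'}))\le C_0$ for a constant $C_0=O(\delta)$; since that point lies outside $N_{100\delta}(S_W)$ and $C_0<100\delta$ (which holds with the fixed constant $\delta=1500\theta$), Lemma \ref{lem:frontier_translation} forces $k_t^{-1}k_{t+1}=1$.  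So $k_t$ stabilises at some $k\in K_W$, whence $\gamma$ and $k\gamma'$ are asymptotic in $X$ and $\xi=k\xi'$.  Combined with the invariance above, the fibres of $\Theta$ are exactly the $K_W$--orbits.

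It remains to produce an evenly covered neighbourhood of each $\bar\eta=\Theta(\xi)$.  Fixing a ray $\gamma$ from $1$ to $\xi$ and a large $M$, the shadow $V$ of the ball $B_1(\gamma(R_\gamma+M))$ is a neighbourhood of $\xi$ in $\partial X$; for $M$ large the rays representing points of $V$ all have (up to bounded error) the same escape parameter and pass $O(\delta)$--close to $\gamma(R_\gamma+M)$, so via the $50\delta$--isometry of Corollary \ref{cor:fiftydeltaisometry} the map $\Theta$ carries $V$ homeomorphically onto a shadow neighbourhood $U$ of $\bar\eta$ in $\partial T_W\setminus\mathcal F$ (injectivity on $V$ from the fibre computation, continuity and openness from Lemma \ref{lem:Theta_well_def} and the local isometry).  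Then $\Theta^{-1}(U)=\bigsqcup_{k\in K_W}kV$, and distinct translates $kV$ are disjoint because the points $k\gamma(R_\gamma+M)$ are $100\delta$--separated (Lemma \ref{lem:frontier_translation}), so their shadows are disjoint once $M$ is large.  Hence $\Theta$ is a regular covering map with deck group $K_W$.  I expect this last step---pinning down the shadow neighbourhoods and checking uniformly that $\Theta$ restricts to an open embedding on a shadow while distinct $K_W$--translates of the shadow stay disjoint, i.e.\ transporting the cone topology of $\partial X$ faithfully through the merely \emph{local} isometry $\phi$---to be the main obstacle; tracking the constants in the fibre computation (to guarantee $C_0<100\delta$) is the other slightly delicate point, made routine by the slack in $\delta=1500\theta$.
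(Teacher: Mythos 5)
Your overall strategy is the same as the paper's (use Corollary \ref{cor:fiftydeltaisometry} plus the $100\delta$--separation of orbits from Lemma \ref{lem:frontier_translation} to build evenly covered ``shadow'' neighborhoods), and your fibre computation via the stabilizing elements $k_t$ is correct and is a legitimate alternative to the way the paper extracts the fibre statement from its sheet decomposition. The problem is that the part of the argument that actually makes $\Theta$ a covering map --- the evenly covered neighborhoods --- is exactly the part you leave at the level of assertions, and one of those assertions is not justified by the reason you give. You claim that the translates $kV$ are pairwise disjoint ``because the points $k\gamma(R_\gamma+M)$ are $100\delta$--separated, so their shadows are disjoint once $M$ is large.'' Separation of shadow centers does not by itself give disjoint shadows: two centers that are radially aligned (one lying near a geodesic from $1$ through the other, at a different distance from the basepoint) can be arbitrarily far apart and still have overlapping shadows. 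What saves the statement here is additional structure you have not invoked: all the centers $k\gamma(R_\gamma+M)$ lie at the \emph{same} distance from the $K_W$--invariant $6\theta$--quasiconvex set $S_W$, and the basepoint $1$ lies in $S_W$, so the linear escape estimate \eqref{eq:linear} forces any single ray from $1$ to pass near two such centers only at comparable parameters; then the two centers would be $O(\delta)<100\delta$ apart, contradicting Lemma \ref{lem:frontier_translation}. The paper packages this more cleanly by projecting to $S_W$: rays in a sheet have $\pi_{S_W}$--image of diameter at most $20\delta$, while $\pi_{S_W}(b_g)$ and $\pi_{S_W}(b_h)=hg^{-1}\pi_{S_W}(b_g)$ are at least $100\delta$ apart. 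Either way, an argument of this kind is needed and is missing from your write-up.

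The second gap is the claim that $\Theta$ carries $V$ homeomorphically onto an \emph{open} neighborhood $U$ of $\bar\eta$. Injectivity you can get from your fibre computation plus the disjointness above, and continuity from Lemma \ref{lem:Theta_well_def}; but surjectivity onto an open set downstairs, and continuity of the inverse, require lifting boundary points of $\partial T_W$ near $\bar\eta$ back into $V$, i.e.\ showing that every $50\delta$--local geodesic ray downstairs landing near $\bar\eta$ lifts to a ray that passes near a \emph{specific} preimage of the reference point. This is precisely where the paper does its work: it chooses the neighborhood $\overline U$ downstairs so that every $50\delta$--local geodesic from $N_{100\delta}(\overline S_W)$ to $\overline U$ passes within $10\delta$ of a fixed point $\overline b$ on the reference ray, and then defines the sheets $U_g$ as limit sets of lifts based at the preimages $b_g$ of $\overline b$; with that ``funnel'' property, both $\Theta^{-1}(\overline U)=\bigcup_g U_g$ and the homeomorphism of each sheet onto $\overline U$ come out of the lifting construction. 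You explicitly defer this step as ``the main obstacle,'' but it is the crux of the proposition, so as it stands the proposal is an accurate plan with the decisive steps unproved rather than a proof.
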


\begin{proof}
Let $\overline\xi\in (\partial T_W) \setminus \mathcal F$, and let $\overline\gamma$ be a geodesic ray in $T_W$ from $\overline 1$ to $\overline\xi$.  For $t_0$ sufficiently large there exists an open neighborhood $\overline U\subseteq(\partial T_W) \setminus \mathcal F$ of $\overline\xi$ with the property that any $50\delta$--local geodesic from $N_{100\delta}(\overline S_W)$ to $\overline U$ passes within $10\delta$ of $\overline b=\overline\gamma(t_0)$.

Let $\{b_g\}_{g\in K_W}$ be the preimage of $\overline b$, with indexing so that $gb_h=b_{gh}$ for all $g,h\in K_W$. Let $R_g$ be the set of all lifts to $X$ starting at $b_g$ of $50\delta$--local geodesic rays in $T_W$ starting at $\overline b$ and limiting to $\overline U$ (notice that $T_W$ is a subset of $X/K_W$ which is covered by $X$).  Since $\overline{S}_W$ is quasiconvex, and $\overline{b}$ starts away from $N_{100\delta}(\overline{S}_W)$, we can apply Corollary \ref{cor:fiftydeltaisometry} to imply that all such lifts are $50\delta$--local geodesic rays.  Let $U_g$ be the set of all limit points in $\partial X$ of elements of $R_g$.

 Clearly, for any $g,h\in K_W$ we have $gU_h=U_{gh}$. We now have to prove that
 \begin{enumerate}
  \item for each distinct $g,h\in K_W$ we have $U_g\cap U_h=\emptyset$,\label{item:disjointU}
  \item for each $g\in K_W$, $U_g$ is open and $\Theta|_{U_g}$ is a homeomorphism onto $\overline U$,\label{item:homeoU}
  \item $\Theta^{-1}(\overline U)=\bigcup_{g\in K_W} U_g$.
 \end{enumerate}

  To show item \eqref{item:disjointU}, notice that any $\gamma \in R_g$ has the property that the diameter of $\pi_{S_W}(\gamma)$ is at most $20\delta$. For $g,h$ distinct elements of $K_W$, the distance between $\pi_{S_W}(b_g)$ and $\pi_{S_W}(b_h)=hg^{-1}\pi_{S_W}(b_g)$ is at least $100\delta$ by Lemma \ref{lem:frontier_translation}. In particular, elements of $R_g$ cannot be asymptotic to elements of $R_h$, so $U_g\cap U_h = \emptyset$.
  
  Let us prove that $U_g$ is open.  Let $[\alpha]\in U_g$.  We may suppose $\alpha\in R_g$, so $\alpha$ is the lift of $\overline{\alpha}$ starting at $b_g$ and $\overline{\alpha}$ is a $50\delta$--local geodesic starting at $\overline{b}$ and limiting to a point in $\overline{U}$.  Since $\overline{U}$ is open, some standard basic neighborhood of $[\overline{\alpha}]$ is contained in $\overline{U}$.  In particular, for $t$ chosen sufficiently large, if $\eta$ is a $50\delta$--local geodesic starting at $\overline{b}$ and passing within $10\delta$ of $\overline{\alpha}(t)$, then $[\eta]\in \overline{U}$.

  Now consider a standard neigborhood $V$ of $[\alpha]\in \partial X$, the set of points represented by $50\delta$--local geodesic rays starting at $b_g$ and passing within $10\delta$ of $\alpha(t)$.  By the previous paragraph, all such rays are elements of $R_g$, so $V\subseteq U_g$.  Since $[\alpha]$ was arbitrary, this proves $U_g$ is open.

  Let us prove that $\Theta|_{U_g}$ is injective. Take $\gamma_1,\gamma_2\in R_g$ with distinct limit points in $\partial X$. Then for some smallest $t_1$, $d(\gamma_1(t_1),\gamma_2(t_1))\geq 20\delta$, which implies that the same holds for their projections to $T_W$ (see Corollary \ref{cor:fiftydeltaisometry}), which in turn implies that such projections have distinct limit points.
  
  Surjectivity of $\Theta|_{U_g}$ onto $\overline U$ and continuity of the inverse are clear from the definition via lifts. We proved item \eqref{item:homeoU}.
  
  We are left to prove that for any ray $\alpha$ in $X$ starting at $1$ with $\Theta(\alpha)\in\overline U$ we have $\alpha\in U_g$ for some $g$. Let $\alpha'$ be the subray of $\alpha$ that intersects $N_{100\delta}(S_W)$ at its starting point only. By the defining property of $\overline b$, the projection $\overline\alpha'$ of $\alpha'$ to $T_W$ passes within $10\delta$ of $\overline b$, which implies that $\alpha'$ passes within $10\delta$ of $b_g$ for some $g\in K_W$. Any geodesic ray starting at $b_g$ and asymptotic to $\alpha$ belongs to $R_g$ because its projection to $T_W$ is asymptotic to $\overline\alpha'$ which limits to $\overline U$. Hence, the limit point of $\alpha$ is in $U_g$, as required.
This completes the proof of Proposition \ref{prop:Theta_covers}.
\end{proof}

\begin{proof}[Proof of Theorem \ref{thm:trunc}.\eqref{trunc_cover}] We defined a map $\Theta\co(\partial X)\setminus \Lambda(K_W)\to (\partial T_W)\setminus \mathcal F$ in Subsection \ref{subsec:covering_map} and proved that it is a covering map in Proposition \ref{prop:Theta_covers}. The fact that this covering is regular with deck group $K_W$ may be seen as follows: For any $k\in K_W$ and $\xi\in (\partial X\setminus \Lambda(K_W))$ we have $\Theta(k\xi)=\Theta(\xi)$ since we can represent $\xi$ and $k\xi$ by rays $\gamma$ and $k\gamma$, so that the definition of $\Theta$ and Lemma \ref{lem:Theta_well_def} clearly give $\Theta(\xi)=\Theta(\gamma)=\Theta(k\gamma)=\Theta(k\xi)$. Hence, $K_W$ acts by deck transformations, and by the description of preimages of points given by Proposition \ref{prop:Theta_covers}, it acts transitively on preimages of points. Hence, the covering is regular with deck group exactly $K_W$.
\end{proof}

\subsubsection{Connectedness of $\partial T_W$}
The following result is Theorem \ref{thm:trunc}.\eqref{trunc_F codense}.

\begin{lemma}\label{lem:opendense}
  For any spiderweb $W$ associated to a sufficiently long filling, $\partial T_W \setminus \mc{F}$ is open and dense in $\partial T_W$.
\end{lemma}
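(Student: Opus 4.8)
The plan is to establish two things: that $\partial T_W \setminus \mc F$ is open, and that it is dense. Openness is nearly immediate: by Theorem \ref{thm:trunc}.\eqref{trunc_cover} the set $\partial T_W \setminus \mc F$ is the base of a covering map, and more concretely $\mc F = \Lambda(\overline S_W)$ is the limit set of the quasiconvex set $\overline S_W$ (Lemma \ref{lem:overlineSW_qconv}), hence closed in the compact space $\partial T_W$; so its complement is open. The real content is density, i.e.\ showing that every point of $\mc F$ is a limit of points of $\partial T_W \setminus \mc F$.

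For density I would argue as follows. A point $\zeta \in \mc F$ lies in $\Lambda(\Sigma_c/K_c)$ for some $c \in \calC(W)$, i.e.\ it is a boundary point ``at the bottom'' of the truncated horoball $\calH_c^{[500\theta,t_c]}/K_c$. The key geometric fact to exploit is that the truncated horoball is not a ``dead end'': at its deepest horosphere $\Sigma_c$ (depth $t_c$), the quotient graph $\Sigma_c/K_c$ has diameter roughly $1$ but is genuinely a quotient of a nontrivial horosphere, and crucially $\Sigma_c/K_c$ still attaches to the rest of $T_W$ along the undeep part of the horoball and along $W$. I would pick a geodesic ray $\overline\gamma$ in $T_W$ from the basepoint to $\zeta$; by Lemma \ref{l:geod in trunc}-type control, $\overline\gamma$ eventually descends into $\calH_c^{[500\theta,t_c]}/K_c$ essentially vertically. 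Now I want to perturb $\overline\gamma$ near its deep portion so that instead of limiting to $\zeta$ it comes back up out of the horoball, across $\calH_c^{[500\theta,\infty)}$, and escapes to a boundary point \emph{not} in $\mc F$. Concretely: for each large $n$, follow $\overline\gamma$ down to depth $\approx n$, then turn and go straight back up to depth $500\theta$, exit $\sepH_c$, and continue along a geodesic ray in the $K_W$-cocompact part of $T_W$ (which exists since $T_W$ is $\delta$--visual by Theorem \ref{thm:trunc}.\eqref{trunc_are_hyp}); since $W \cap G$ is noncompact-but-$K_W$-cocompact and $\partial X \setminus \Lambda(K_W)$ is nonempty (the filling is long so $K_W$ is infinite-index, e.g.\ $\partial X$ is a sphere in the application and $\Lambda(K_W)$ is a proper subset), such an exiting ray limits to a point of $\partial T_W \setminus \mc F$. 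The Gromov product of this perturbed ray with $\overline\gamma$ is at least $\approx n$, so the resulting boundary points converge to $\zeta$.

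To make the last step clean, rather than constructing the exit ray by hand I would instead use the covering map $\Theta$ of part \eqref{trunc_cover} together with the fact that $\partial X \setminus \Lambda(K_W)$ is dense in $\partial X$ (since $\Lambda(K_W) = \Lambda(W)$ is the limit set of an infinite-index quasiconvex subgroup — here one uses that $\partial X$ has no isolated points and that $K_W$ does not act cocompactly on all of $X$). Lift $\zeta$ to a parabolic point $c \in \partial X$ fixed by $K_c \le K_W$; approximate $c$ by points $\xi_n \in \partial X \setminus \Lambda(K_W)$; push forward by $\Theta$. The only thing needing care is that $\Theta$, while continuous, is only defined on $\partial X \setminus \Lambda(K_W)$ and does not obviously extend continuously to $c$; so I cannot literally say $\Theta(\xi_n) \to \Theta(c)$. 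Instead I show directly that $\Theta(\xi_n) \to \zeta$ in $\partial T_W$, by tracking the geodesic rays: the ray to $\xi_n$ stays $\theta$-close to the ray to $c$ for time $\to\infty$, both descend into $\calH_c$, and the definition of $\Theta$ (which cuts off the part inside $N_{100\delta}(S_W)$ and projects) shows the projected rays all agree with $\overline\gamma$ down to depth $\to\infty$ before diverging. Hence their limit points in $\partial T_W$ converge to $\zeta$.

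The main obstacle is this last matching argument: controlling how the cutoff time $R_{\gamma_n}$ in the definition of $\Theta$ behaves as $\xi_n \to c$, since the rays $\gamma_n$ spend more and more time inside $S_W$ (deep in $\sepH_c$) before escaping, so $R_{\gamma_n} \to \infty$ and one must check that the \emph{post-cutoff} rays $\overline{\gamma_n}$ nonetheless have limit points converging to $\zeta$ rather than running off elsewhere. This is handled by the linear-progress estimate \eqref{eq:linear} and the fact that the projection $\pi_{S_W}$ of each $\gamma_n$ has bounded diameter, so all the escaping rays leave $S_W$ from nearly the same place (close to a $K_c$-translate of a fixed frontier point), forcing their $T_W$-images to share a long common initial segment with the image of $\overline\gamma$.
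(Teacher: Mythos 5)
There is a genuine gap in the density argument, and it stems from a misreading of what a point of $\mc{F}$ is. A point $\zeta\in\Lambda(\Sigma_c/K_c)$ is \emph{not} the image of the parabolic point $c\in\partial X$: in $T_W$ the horoball is truncated at the bounded depth $t_c$, so a geodesic ray to $\zeta$ descends only to depth $t_c$ and then travels \emph{horizontally} along $\Sigma_c/K_c$ forever; the single point $c$ corresponds to the whole set $\mc{F}_i\cong\partial(P_i/N_i)$, which typically has more than one point. Your first sketch already shows this confusion (``follow $\overline\gamma$ down to depth $\approx n$'' is impossible for $n>t_c$, so the Gromov product you extract is capped at $\approx t_c$), and the $\Theta$--based version you actually adopt inherits it. If you take an \emph{arbitrary} sequence $\xi_n\to c$ in $\partial X\setminus\Lambda(K_W)$, there is no reason that $\Theta(\xi_n)\to\zeta$: the post-cutoff rays $\overline{\gamma_n}$ used to define $\Theta(\xi_n)$ start outside $N_{100\delta}(\overline S_W)$ and never enter the truncated horoball quotient, whereas the ray to $\zeta$ runs along depth $t_c$ indefinitely, so the Gromov products $(\Theta(\xi_n)\mid\zeta)_{\overline 1}$ need not tend to infinity. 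Your claim that ``the projected rays all agree with $\overline\gamma$ down to depth $\to\infty$'' cannot hold (depth is bounded by $t_c$, and the cutoff at $R_{\gamma_n}$ deletes the in-horoball portion entirely), and the claim that all the escaping rays exit near a single frontier point up to $K_c$ is unjustified: the exit points of the rays to $\xi_n$ can lie arbitrarily far apart even in the quotient by $K_c$ (which is only quasi-isometric to $P_i/N_i$, an infinite group). Indeed, different choices of $\xi_n\to c$ can produce limits at different points of $\mc{F}_i$, or no limit at all, so no choice-free pushforward of an approximating sequence for $c$ can hit a \emph{prescribed} $\zeta$; to repair the argument one would have to choose $\xi_n$ so that the exit points track the direction $\zeta\in\partial(P_i/N_i)$, which is exactly the content your proposal omits.

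The paper's proof avoids all of this by working directly in $T_W$: represent $\zeta$ by a ray whose tail is entirely horizontal at depth $t_c$ inside the truncated horoball image; for each large $N$, follow it for time $N$ and then replace the rest by a vertical path going up and out of the horoball image, obtaining a $50\delta$--local geodesic; by $\delta$--visuality (Theorem \ref{thm:trunc}.\eqref{trunc_are_hyp}) this is close to a genuine ray $\gamma_N$ that fellow-travels the ray to $\zeta$ for time $N$ but limits to a point outside $\mc{F}_i$, and since the finitely many other $\mc{F}_j$ are at positive distance from $\mc{F}_i$, the endpoint lies outside $\mc{F}$ for all but finitely many $N$; hence $\zeta$ is a limit of points of $\partial T_W\setminus\mc{F}$. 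Your openness argument (that $\mc{F}$ is a finite union of closed limit sets) is fine.
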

\begin{proof}
  We have already remarked that $\mc{F}$ is a finite union of closed sets, so $\partial T_W\setminus \mc{F}$ is open.

  More specifically, the set $\mc{F}$ is a finite disjoint union of closed sets $F_1,\ldots,F_k$, where each $F_i$ is the limit set of some $\Sigma_c/K_c$.  Let $c_1,\ldots,c_k$ be representatives of the $K_W$--orbits of the points $c$ which occur, and write $\mc{F}_i = \Lambda(\Sigma_{c_i}/K_{c_i})$, so $\mc{F} = \sqcup \mc{F}_i$.

  Let $\xi\in \mc{F}_i$ be represented by a geodesic ray $\gamma$.  By quasiconvexity, we may assume that some tail of $\gamma$ is contained in the truncated image of $\sepH_{c_i}$.  Moreover, we may assume this tail is entirely horizontal.  For $N\in \bZ$ very large, we form a new, $50\delta$--local geodesic $\hat{\gamma}_N$ which agrees with $\gamma$ up to $t=N$, and then changes to a vertical path until it leaves the image of $\sepH_c$.  Using $\delta$--visibility, this path is close to a geodesic ray $\gamma_N$ which fellow travels $\gamma$ for time $N$, but tends to a point not in the limit set of $\mc{F}_i$.  Some of these $\gamma_N$ may end up in $\mc{F}_j$ for $j\neq i$, but each $\mc{F}_j$ is disjoint from some open neighborhood of $\mc{F}_i$, so this only happens for finitely many $N$.  It follows that $\xi$ is a limit of points in $\partial T_W\setminus \mc{F}$.
\end{proof}

We have now proved Theorem \ref{thm:trunc}.

\begin{cor}\label{cor:approxconnected}
  Suppose $W$ is associated to a sufficiently long filling and that $\partial X\setminus \Lambda(K_W)$ is connected.  Then $\partial T_W$ is connected.
\end{cor}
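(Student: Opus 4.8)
The plan is to obtain connectedness of $\partial T_W$ as a formal consequence of parts \eqref{trunc_cover} and \eqref{trunc_F codense} of Theorem \ref{thm:trunc}, using two elementary topological facts: the continuous image of a connected space is connected, and the closure of a connected subspace is connected.

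First I would invoke Theorem \ref{thm:trunc}.\eqref{trunc_cover}: for $W$ associated to a sufficiently long filling there is a regular covering map $\Theta\co (\partial X \setminus \Lambda(K_W))\to \partial T_W \setminus \mc{F}$ with deck group $K_W$. In particular $\Theta$ is continuous, and it is surjective, since by Proposition \ref{prop:Theta_covers} the preimage of any point of $\partial T_W \setminus \mc{F}$ is a $K_W$--orbit of a point of $\partial X \setminus \Lambda(K_W)$, which is nonempty. By hypothesis $\partial X \setminus \Lambda(K_W)$ is connected, so its continuous image $\partial T_W \setminus \mc{F} = \Theta\bigl(\partial X \setminus \Lambda(K_W)\bigr)$ is connected.

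Next I would apply Theorem \ref{thm:trunc}.\eqref{trunc_F codense}, which states that $\partial T_W \setminus \mc{F}$ is dense in $\partial T_W$. Since the closure in a topological space of a connected subspace is again connected, it follows that $\partial T_W = \overline{\partial T_W \setminus \mc{F}}$ is connected, which is what we wanted.

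I do not expect any real obstacle here beyond bookkeeping: one only needs the filling to be long enough that Theorem \ref{thm:trunc} applies (so that the covering map $\Theta$ exists and $\partial T_W \setminus \mc{F}$ is dense), and the surjectivity of $\Theta$, which is immediate from the description of its point preimages in Proposition \ref{prop:Theta_covers}.
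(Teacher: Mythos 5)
Your proof is correct and follows essentially the same route as the paper: the covering map from Theorem \ref{thm:trunc}.\eqref{trunc_cover} shows $\partial T_W\setminus\mc{F}$ is a continuous (surjective) image of the connected space $\partial X\setminus\Lambda(K_W)$, hence connected, and then density from Theorem \ref{thm:trunc}.\eqref{trunc_F codense} (Lemma \ref{lem:opendense}) gives connectedness of $\partial T_W$. No issues.
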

\begin{proof}
  Theorem \ref{thm:trunc}.\eqref{trunc_cover} implies that $\partial T_W\setminus \mc{F}$ is covered by $\partial X\setminus \Lambda(K_W)$.  
Thus connectedness of $\partial T_W\setminus \mc{F}$  follows from connectedness of  $\partial X\setminus \Lambda(K_W)$.  By Lemma \ref{lem:opendense}, $\partial T_W\setminus \mc{F}$ is open and dense in $\partial T_W$.  It follows that $\partial T_W$ is connected.
\end{proof}

\subsection{Choosing the visual metric}
We have already proved (Theorem \ref{thm:trunc}.\eqref{trunc_are_hyp}) that, for long fillings, the $T_W$ are all $\delta$--hyperbolic, for a fixed $\delta>0$.  Fix $\epsilon = \frac{1}{10\delta}$.  Fix $\kappa = \kappa(\epsilon,\delta)$ as in Proposition \ref{findvisualmetric}.

Fix a spiderweb $W$ with parameter $\theta$ associated to a filling sufficiently long that $T_W$ is $\delta$--hyperbolic and $\delta$--visual.
We denote the image in $T_W$ of $1\in X$ by $\overline{1}$.
Proposition \ref{findvisualmetric} implies that there exists a visual metric $\rho_W(\cdot,\cdot)$ on $\partial T_W$ based at $\overline{1}$ with parameters $\epsilon,\kappa$. This proves Theorem \ref{thm:existsvisual}.\eqref{exists_visual}.

\subsection{Convergence}\label{sec:converge}
 In this subsection, we prove Theorem \ref{thm:existsvisual}.\eqref{exists_GH}, which states that the visual metrics constructed in the last subsection weakly Gromov--Hausdorff converge to a visual metric on $\partial T_{\overline{G}}$.

\begin{proof}[Proof of Theorem \ref{thm:existsvisual}.\eqref{exists_GH}]
Using Proposition \ref{prop:strongconverge}, it is enough to show that the $T_{W_j}$ strongly converge to $T_{\overline{G}}$, a space on which $\overline{G}$ acts geometrically. Fix any $R\geq 0$ and let $j$ be large enough that $W_j$ contains the ball $B$ of radius $2R$ in $X$, and moreover whenever $x,y\in B$ are in the same $K$--orbit then they are in the same $K_{W_j}$--orbit. The latter property can be arranged since there are only finitely many $k\in K$ so that there exists $x\in B$ with $kx\in B$.

We will show that there exists a locally isometric bijection $b$ that preserves lengths of paths from the ball $B_j$ of radius $2R$ around $\overline{1}$ in $T_{W_j}$ to the ball $\overline{B}$ of radius $2R$ around $\overline{1}$ in $T_{\overline G}$. Such bijection restricts to an isometry on the corresponding balls of radius $R$, proving strong convergence since $R$ was arbitrary.

Before defining $b$, notice that there are covering maps $\Phi_j\co X\setminus \bigcup\{\mathcal H^{(t_c,\infty)}_c\mid c\in \calC(W)\}\to T_{W_j}$ and $\Phi\co X\setminus \bigcup\{\mathcal H^{(t_c,\infty)}_c\mid c\in \calC\}\to T_{\overline G}$ (the domains of the two covering maps are obtained from $X$ by removing different sets of horoballs).

The map $b$ is defined by $b(x)=\Phi(\Phi_j^{-1}(x))$. In order to show that it is well defined we have to show that $\Phi_j^{-1}(x)$ is contained in the domain of $\Phi$ and that any point in $\Phi_j^{-1}(x)$ has the same image under $\Phi$. In order to show the former property notice that, since we can lift geodesics from $T_{W_j}$ to $X\setminus \bigcup\{\mathcal H^{(t_c,\infty)}_c\mid c\in \calC(W)\}$ to paths of the same length, $B_j$ is contained in $\Phi_j(B)$, so that $\Phi_j^{-1}(x)\subseteq K_{W_j}B\subseteq W$. Hence, if by contradiction we had some $y\in\Phi_j^{-1}(x)\cap \mathcal H^{(t_c,\infty)}$ for some $c\in \calC$ then we would actually have $c\in\calC(W)$, but clearly $\Phi_j^{-1}(x)\cap \mathcal H^{(t_c,\infty)}$ in that case. The latter property just follows from the fact that if two points of $X$ are in the same $K_W$--orbit then they are in the same $K$--orbit.

From the fact that $b$ is well-defined and the fact that $\Phi$ and $\Phi_j$ are covering maps it follows that $b$ is a local isometry. Injectivity of $b$ follows from the fact that if two points $p,q$ of $K_{W_j}B$ are in the same $K$--orbit (i.e. $\Phi(x)=\Phi(y)$) then they are in the same $K_{W_j}$--orbit (i.e. $\Phi_j(x)=\Phi_j(y)$). Surjectivity of $b$ follows from the following argument. If $y$ lies in $\overline{B}$, then we can lift a geodesic from $\overline{1}$ to $y$ to a path $\tilde\gamma$ in $X\setminus \bigcup\{\mathcal H^{(t_c,\infty)}_c\mid c\in \calC\}$ of length at most $2R$ starting at $1$. If $x$ is the endpoint of $\Phi_j\circ\tilde\gamma$, then it is readily checked that $b(x)=y$. The proof that $b$ is a locally isometric bijection is complete.
\end{proof}

We have now completed the proof of Theorem \ref{thm:existsvisual}.

\subsection{Linear connectedness}
In this subsection we prove Theorem \ref{thm:linconn} about uniform linear connectedness.
\linearconnectedness*

We must show that our approximating spaces $T_W$ have Gromov boundaries which are uniformly linearly connected.  We first reformulate the linear connectedness condition in terms of joining points by ``discrete paths'' of points which are at least a bit closer. The following is similar to the last part of the proof of \cite[Proposition 4]{BonkKleiner05}.
\begin{lemma}\label{chain}
 Let $M$ be a compact metric space. Suppose that there exists $L\geq 1$ so that each $p,q\in M$ can be joined by a chain of points $p=p_1,\dots,p_n=q$ so that $\diam(\{p_1\ldots,p_n\})\leq L d(p,q)$ and $d(p_i,p_{i+1})\leq d(p,q)/2$. Then $M$ is $5L$--linearly connected. 
\end{lemma}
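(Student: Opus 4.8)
The plan is to fix $p,q \in M$ with $d(p,q) = r$, take the chain $p = p_1, \dots, p_n = q$ guaranteed by hypothesis, and recursively refine it until we obtain a genuine connected set (indeed, by the remark following the definition of linear connectedness, an arc) joining $p$ to $q$ of controlled diameter. The key point is that each consecutive pair $p_i, p_{i+1}$ satisfies $d(p_i, p_{i+1}) \le r/2$, so applying the hypothesis again to each such pair produces a finer chain whose new points are at distance at most $r/4$ from their neighbors, and so on. Iterating, at stage $k$ we have a chain all of whose consecutive gaps are at most $r/2^k$, and whose points added at stage $k$ lie within $L \cdot (r/2^{k-1})$ of the two endpoints of the stage-$(k-1)$ subchain they refine.

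First I would set up the bookkeeping for the diameter bound. Let $J_0 = \{p_1, \dots, p_n\}$, so $\diam(J_0) \le Lr$. When we refine a segment between consecutive points $a, b$ of the current chain (with $d(a,b) \le r/2^{k}$), the new points all lie within distance $L \cdot d(a,b) \le L r / 2^{k}$ of $a$ (and of $b$). Hence every point appearing at stage $k+1$ is within $L r/2^k$ of a point of the stage-$k$ chain. Summing the geometric series, every point ever added lies within $Lr(1 + 1/2 + 1/4 + \cdots) = 2Lr$ of $J_0$, wait — more carefully, within $Lr \sum_{k \ge 0} 2^{-k} = 2Lr$ of the original point $p_1$. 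Combined with $\diam(J_0) \le Lr$, the closure $J$ of the union of all chains has diameter at most $Lr + 2 \cdot 2Lr$, which is bounded by $5Lr$ once one is careful about which base point the estimates are anchored to; I would organize the estimate so that the final constant comes out at most $5L$.

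The remaining task is to see that the resulting set $J$ (the closure of the union of all chains over all stages) is connected, and that $p, q \in J$. Connectedness follows because $J$ is the closure of an increasing union of finite sets $J_k$, where $J_{k+1} \supseteq J_k$, the gaps within $J_k$ shrink to zero uniformly (bounded by $r/2^k$), and $M$ is compact hence complete: a standard argument shows that a compact set which contains, for every $\epsilon > 0$, an $\epsilon$-chain between any two of its points — equivalently is "well-chained" at every scale — is connected. Concretely, if $J = A \sqcup B$ were a separation into nonempty relatively closed sets with $d(A,B) = \epsilon_0 > 0$, then for $k$ with $r/2^k < \epsilon_0$ the chain $J_k$ could not have consecutive points straddling $A$ and $B$, yet it starts in $A$ (at $p$) and ends in $B$ (at $q$, after relabeling), a contradiction; this forces connectedness. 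Finally, invoking the remark after the definition of linear connectedness, up to an arbitrarily small increase of the constant we may replace $J$ by an arc, so $M$ is $5L$-linearly connected.

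\textbf{Main obstacle.} The genuinely delicate point is the diameter accounting: one must anchor all the ``new points are close to old points'' estimates to a \emph{fixed} reference (e.g. $p$) rather than chaining them naively, since a naive triangle-inequality telescoping would blow the constant up. Getting the geometric series to sum to exactly the claimed bound $5L$ (rather than, say, $4L+1$ or something larger) requires being slightly clever about whether to bound new points against both endpoints of the segment they refine and taking the smaller estimate. The connectedness argument, by contrast, is routine once compactness is in hand.
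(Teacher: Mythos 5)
Your proof is correct and follows essentially the same route as the paper: iteratively refine the chain, bound the diameter by summing the geometric series of refinement errors against a fixed reference set, and deduce connectedness of the closure from compactness via the well-chained/separation argument. The only cosmetic differences are that the paper starts the induction from $\mathcal{Q}_0=\{p,q\}$ rather than from the first chain, and in the separation argument one should note (as the paper does, via density of $\bigcup J_k$) that $A$ and $B$ each meet some $J_k$ rather than assuming $p\in A$ and $q\in B$; also, the worry in your ``main obstacle'' is unfounded, since the naive triangle-inequality telescoping is exactly what the paper does and the geometric decay keeps the constant bounded.
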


\begin{proof}
Let $p,q\in M$. We can construct a chain of points interpolating between $p,q$, and then a ``finer'' one by interpolating between consecutive points of the first chain, and so on. Formally, we can construct by induction on $i$ sequences of points $\mathcal Q_i=\{q^i_j\}_{j=0,\dots,n(i)}$ with
\begin{itemize}
 \item $\mathcal Q_0=\{p,q\}$,
 \item $\mathcal Q_i\subseteq \mathcal Q_{i+1}$,
 \item $q^i_0=p,q^i_{n(i)}=q$
 \item $d(q^{i+1}_j,\mathcal Q_{i})\leq L d(p,q)/2^i$,
 \item $d(q^i_j,q^i_{j+1})\leq d(p,q)/2^i$.
\end{itemize}

Define $\mathcal Q$ to be the closure of $\bigcup \mathcal Q_i$, and notice $p,q\in \mathcal Q$. Also, it is easily seen that for each $q^i_j$ we have $d(\{p,q\},q^i_j)\leq \sum_{m=0}^{i-1} (L d(p,q)/2^m)\leq 2Ld(p,q)$, so that $diam(\mathcal Q)\leq 5L d(p,q)$. Finally, $\mathcal Q$ is connected because if not one could write $\mathcal Q$ as a union of disjoint non-empty clopen sets $A,B$. By compactness we have $d(A,B)=\epsilon >0$. Also, since $\bigcup \mathcal Q_i$ is dense in $\mathcal Q$, both $A$ and $B$ intersect $\mathcal Q_i$ for each sufficiently large $i$. However, for sufficiently large $i$ and for any $q^i_{j_1},q^i_{j_2}$ there exists a chain of points in $\mathcal Q_i\subseteq \mathcal Q$ connecting $q^i_{j_1},q^i_{j_2}$ where consecutive points are within distance $\epsilon/2$ of each other, in contradiction with the decomposition $\mathcal Q=A\sqcup B$.
\end{proof}

In the specific case that $M = \partial Z$ for a Gromov hyperbolic space $Z$, and $\partial Z$ is equipped with a visual metric, we can translate this criterion into one about geodesic rays. Since there are many constants involved, we briefly explain their roles. First of all, $\delta,\kappa,\epsilon$ are just the usual constants associated to a hyperbolic space. Secondly, $\lambda$ needs to be large enough to ensure that, in a sequence of rays interpolating between two given ones $\gamma_1,\gamma_2$, the distance between the limit points of consecutive rays is at most one half of the distance between the limit points of $\gamma_1,\gamma_2$. Finally, the constant $S$ will be the one determining the eventual linear connectedness constant, which is $L$.

\begin{figure}[htbp]
  \centering
  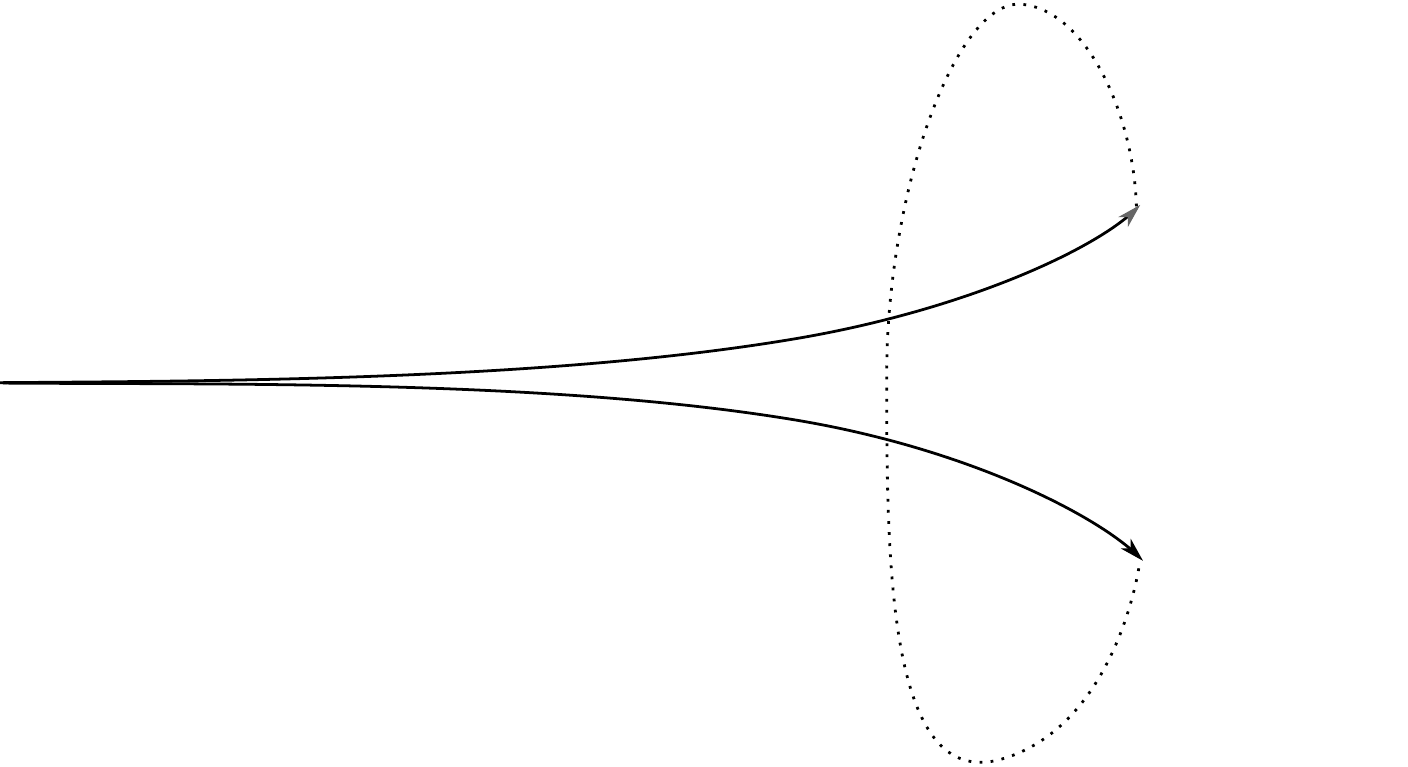
  \caption{The criterion of Lemma \ref{chain} translates into a statement (Lemma \ref{chain2}) about rays with certain Gromov products.  Large Gromov product corresponds to small distance in the boundary.}
  \label{fig:interpolate}
\end{figure}

\begin{lemma}\label{chain2}
Let $\delta,\kappa,\epsilon>0$ and let $\lambda>\ln(2\kappa^2)/\epsilon+10\delta$. For every $S$ there exists $L$ with the following property. Let $Z$ be  $\delta$--hyperbolic, with a basepoint $w$ and a visual metric $\rho$ on $\partial Z$ based at $w$ with parameters $\epsilon,\kappa$.
 Also, suppose that for each pair of rays $\gamma_1,\gamma_2$ starting at $w$ there exists a chain $\gamma_1=\alpha_1,\dots,\alpha_n=\gamma_2$ of rays starting at $w$ with $(\alpha_i|\alpha_{i+1})_w\geq (\gamma_1|\gamma_2)_w+\lambda$ and $(\alpha_i|\gamma_1)_w\geq (\gamma_1|\gamma_2)_w-S$. Then $(\partial Z,\rho)$ is $L$--linearly connected.
\end{lemma}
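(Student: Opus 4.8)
The plan is to translate the hypothesis about rays with controlled Gromov products into the chain condition of Lemma \ref{chain}, and then invoke that lemma. Throughout, the key dictionary is that for points $\xi,\eta\in\partial Z$ represented by rays $\gamma_\xi,\gamma_\eta$ from $w$, the visual metric satisfies $\kappa^{-1}e^{-\epsilon(\xi\mid\eta)_w}\leq \rho(\xi,\eta)\leq \kappa e^{-\epsilon(\xi\mid\eta)_w}$, and that (up to a bounded additive error depending on $\delta$) the Gromov product $(\gamma_\xi\mid\gamma_\eta)_w$ of the rays computes $(\xi\mid\eta)_w$. So ``large Gromov product'' means ``small distance''.

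First I would fix a pair of points $p,q\in\partial Z$ and choose representative rays $\gamma_1,\gamma_2$ from $w$. Apply the hypothesis to get a chain $\gamma_1=\alpha_1,\dots,\alpha_n=\gamma_2$ of rays from $w$ with $(\alpha_i\mid\alpha_{i+1})_w\geq (\gamma_1\mid\gamma_2)_w+\lambda$ and $(\alpha_i\mid\gamma_1)_w\geq (\gamma_1\mid\gamma_2)_w-S$. Let $p_i=[\alpha_i]\in\partial Z$; this is the desired chain of points in $M=\partial Z$. I then need two estimates. For consecutive points: $d(p_i,p_{i+1})=\rho(p_i,p_{i+1})\leq \kappa e^{-\epsilon(\alpha_i\mid\alpha_{i+1})_w}\leq \kappa e^{-\epsilon((\gamma_1\mid\gamma_2)_w+\lambda)}$, while $d(p,q)=\rho(p,q)\geq \kappa^{-1}e^{-\epsilon(\gamma_1\mid\gamma_2)_w}$ (modulo the $2\delta$-error in passing between the Gromov product of the rays and that of the boundary points, which is where the $10\delta$ in the hypothesis on $\lambda$ is needed). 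Taking the ratio, $d(p_i,p_{i+1})/d(p,q)\leq \kappa^2 e^{-\epsilon\lambda}\cdot e^{O(\delta)}$, and the hypothesis $\lambda>\ln(2\kappa^2)/\epsilon+10\delta$ is exactly designed to make this $\leq 1/2$ (the $10\delta$ absorbing the additive error in the Gromov-product comparison). For the diameter bound: for each $i$, $d(p,p_i)=\rho(p,p_i)\leq \kappa e^{-\epsilon(\alpha_i\mid\gamma_1)_w}\leq \kappa e^{-\epsilon((\gamma_1\mid\gamma_2)_w-S)}=\kappa e^{\epsilon S}e^{-\epsilon(\gamma_1\mid\gamma_2)_w}$, while $d(p,q)\geq \kappa^{-1}e^{-\epsilon(\gamma_1\mid\gamma_2)_w}$ up to the same bounded error, so $d(p,p_i)\leq \kappa^2 e^{\epsilon S}e^{O(\delta)}\, d(p,q)$; hence $\diam(\{p_1,\dots,p_n\})\leq 2\kappa^2 e^{\epsilon S}e^{O(\delta)}\, d(p,q)$. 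Set $L'$ to be this constant, so the chain condition of Lemma \ref{chain} holds with constant $L'$, and conclude that $(\partial Z,\rho)$ is $5L'$--linearly connected; this is the $L=L(S)$ (also depending on the fixed $\delta,\kappa,\epsilon,\lambda$) claimed in the statement.

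One bookkeeping point deserves care: the hypotheses are phrased for \emph{rays}, not arbitrary boundary points, so I should first note that every point of $\partial Z$ is represented by a geodesic ray from $w$ (this uses properness, which holds for all spaces under consideration, or one can work with the quasi-isometric model; in any case in our applications $Z=T_W$ is proper), and that the Gromov product of two such rays agrees with the Gromov product of their endpoints up to an additive error of at most, say, $4\delta$ — this follows from the definitions of $(\alpha\mid\beta)_w$ for rays and $(\xi\mid\upsilon)_w$ for boundary points together with the standard fact (cited in the excerpt) that the latter can be computed from any representative sequences up to $2\delta$. I expect the main (and really only) obstacle to be keeping all the additive $\delta$-errors straight when converting between Gromov products of rays, Gromov products of boundary points, and distances in the visual metric, and verifying that the stated lower bound on $\lambda$ is genuinely enough to force the contraction factor below $1/2$; everything else is a direct substitution into Lemma \ref{chain}. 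There is no deep content beyond this translation, so the proof should be short.
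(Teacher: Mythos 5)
Your proposal is correct and follows essentially the same route as the paper's proof: represent the two boundary points by rays, apply the hypothesis to get the chain of rays, convert Gromov products of rays to Gromov products of boundary points (absorbing the additive $\delta$--errors in the $10\delta$ slack on $\lambda$), use the $\kappa$--bilipschitz visual-metric comparison to get $\rho(p_i,p_{i+1})\le \tfrac12\rho(p,q)$ and a diameter bound of the form $\kappa^2 e^{\epsilon S+O(\epsilon\delta)}\rho(p,q)$, and then invoke Lemma \ref{chain}. The paper does exactly this, with the explicit choice $L=10\kappa^2 e^{\epsilon S+10\epsilon\delta}$ in place of your $e^{O(\delta)}$ bookkeeping.
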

\begin{proof}
  Notice that $\lambda$ satisfies
\begin{equation}\label{eq:lambda1}
   \kappa^2 e^{-\epsilon\lambda + 10\epsilon\delta} < \frac{1}{2} .
\end{equation}
  Now fix $S$, and let $L = 10 \kappa^2 e^{\epsilon S+10\epsilon\delta}$.

  We check the criterion in Lemma \ref{chain}.  Let $Z$ be $\delta$--hyperbolic, let $w$ be a basepoint, and let $\rho(\cdot,\cdot)$ be a visual metric as in the statement of the lemma.
  Fix $p$, $q$ in $\partial Z$, which we represent by rays $\gamma_p$, $\gamma_q$ respectively.  Let $\{\alpha_i\}_{i=1\ldots n}$ be a chain of rays with $\alpha_1=\gamma_p$, $\alpha_n = \gamma_q$, and satisfying $(\alpha_i|\alpha_{i+1})_w \geq (\gamma_1|\gamma_2)_w+\lambda$ and $(\alpha_i|\gamma_1)_w\geq (\gamma_1|\gamma_2)_w-S$.  Let $p_i\in \partial Z$ be the equivalence class of $\alpha_i$.  

  We observed in Section \ref{s:Prelim} that Gromov products at infinity can be computed, up to a small error, using representative rays.  In particular, since $(\alpha_i|\alpha_{i+1})_w \geq (\gamma_p|\gamma_q)_w+\lambda$, we have
  $(p_i|p_{i+1})_w \geq (p|q)_w+\lambda-10\delta$, so
  \begin{equation}
    \rho(p_i,p_{i+1})\leq \kappa e^{-\epsilon(p|q)_w-\epsilon\lambda + 10\epsilon\delta}\leq \kappa^2e^{-\epsilon\lambda+10\epsilon\delta}\rho(p,q)<\frac{1}{2}\rho(p,q).
  \end{equation}
  Similarly, for any $p_i$, we have 
  \begin{equation}
    \rho(p_i,p)\leq \kappa^2e^{\epsilon S+ 10\epsilon\delta}\rho(p,q)= \frac{L}{10}\rho(p,q).
  \end{equation}
  Thus the diameter of the set $\{p_1,\ldots p_n\}$ is at most $\frac{L}{5}\rho(p,q)$.  Since $p$, $q$ were arbitrary, Lemma \ref{chain} implies that $(\partial Z,\rho)$ is $L$--linearly connected.
\end{proof}

The following lemma provides a converse to Lemma \ref{chain2} by allowing us to construct a sequence of rays starting from an arc in the boundary.

\begin{lemma}\label{lem:arc_to_rays}
Let $Z$ be hyperbolic and suppose that $\partial Z$, when endowed with a visual metric based at $w\in Z$, is linearly connected. Then there exists $R>0$ so that for every $C>0$ and every pair of rays $\gamma_1,\gamma_2$ in $Z$ starting at $w$ there exists a sequence of rays $\gamma_1=\alpha_1,\dots,\alpha_n=\gamma_2$ starting at $w$ with $(\alpha_i|\alpha_{i+1})_w\geq (\gamma_1|\gamma_2)_w+C$ and $(\alpha_i|\gamma_1)_w\geq (\gamma_1|\gamma_2)_w-R$.
\end{lemma}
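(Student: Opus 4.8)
The plan is to build the chain of rays by interpolating along an \emph{arc} of $\partial Z$ joining the endpoints of $\gamma_1$ and $\gamma_2$, and then to convert the resulting metric estimates into estimates on Gromov products. Throughout, $\rho$ denotes the given visual metric based at $w$, with parameters $\epsilon,\kappa$, and $\delta$ is a hyperbolicity constant for $Z$; since $Z$ is proper, $\partial Z$ is compact and every boundary point is the limit of a geodesic ray from $w$. I would use freely the two standard translations: (i) $\rho$ is $\kappa$--bilipschitz to $e^{-\epsilon(\cdot\mid\cdot)_w}$, so $\tfrac1\epsilon\ln\tfrac1{\kappa\rho(\eta,\upsilon)}\le(\eta\mid\upsilon)_w\le\tfrac1\epsilon\ln\tfrac{\kappa}{\rho(\eta,\upsilon)}$ for $\eta,\upsilon\in\partial Z$; and (ii) if $\alpha,\beta$ are geodesic rays from $w$ with limit points $\eta,\upsilon$, then $(\alpha\mid\beta)_w$ and $(\eta\mid\upsilon)_w$ differ by at most $3\delta$ (by the fact recalled in Section~\ref{s:Prelim} together with $1$--Lipschitz continuity of the Gromov product along geodesics). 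Using the remark following the definition of linear connectedness and compactness of $\partial Z$, fix $L\ge 1$ such that any two points of $\partial Z$ are joined by an arc of diameter at most $L$ times their distance. The constant I would take is $R=\tfrac1\epsilon\ln(\kappa^2 L)+10\delta$; the whole point of the lemma is that this can be chosen independently of $C$ and of the pair of rays.

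Given $C>0$ and rays $\gamma_1,\gamma_2$ from $w$ with limit points $\xi_1,\xi_2$, I would first dispose of the degenerate case $\xi_1=\xi_2$: there $(\gamma_1\mid\gamma_2)_w=\infty$ and the two--term chain $\alpha_1=\gamma_1$, $\alpha_2=\gamma_2$ works trivially. Otherwise $\rho(\xi_1,\xi_2)>0$, and I would choose an arc $J$ from $\xi_1$ to $\xi_2$ with $\diam J\le L\rho(\xi_1,\xi_2)$, fix a continuous parametrization of $J$ on $[0,1]$, and use its uniform continuity to pick points $\xi_1=p_1,\dots,p_n=\xi_2$ in order along $J$ with $\rho(p_i,p_{i+1})\le\kappa^{-2}e^{-\epsilon(C+10\delta)}\rho(\xi_1,\xi_2)$ for each $i$. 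Finally, by properness, lift each $p_i$ to a geodesic ray $\alpha_i$ from $w$, taking $\alpha_1=\gamma_1$ and $\alpha_n=\gamma_2$.

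The verification is then a short calculation. For the consecutive bound, translation (i) and the choice of $\rho(p_i,p_{i+1})$ give $(p_i\mid p_{i+1})_w\ge(\xi_1\mid\xi_2)_w+C+10\delta$; applying translation (ii) on both the $\alpha$--side and the $\gamma$--side (total error at most $6\delta$) yields $(\alpha_i\mid\alpha_{i+1})_w\ge(\gamma_1\mid\gamma_2)_w+C$. For the closeness bound, $\rho(p_i,\xi_1)\le\diam J\le L\rho(\xi_1,\xi_2)$ together with translation (i) gives $(p_i\mid\xi_1)_w\ge(\xi_1\mid\xi_2)_w-\tfrac1\epsilon\ln(\kappa^2 L)$, and translation (ii) converts this to $(\alpha_i\mid\gamma_1)_w\ge(\gamma_1\mid\gamma_2)_w-R$, as desired.

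This argument is mostly bookkeeping with visual metrics, so I do not expect a serious obstacle. The one conceptual point — and the reason the lemma is worth isolating — is that the \emph{fineness} of the chain and its overall \emph{diameter} are controlled by genuinely different quantities: the fineness must shrink like $e^{-\epsilon C}$ as $C$ grows, whereas the diameter is always at most $L\rho(\xi_1,\xi_2)$, independently of $C$; this is exactly what lets $R$ be taken independent of $C$. The only technical hypotheses one must be careful not to overlook are the (standing) properness of $Z$, used to realize the interpolating boundary points by honest geodesic rays from $w$, and the degenerate case $\xi_1=\xi_2$, both handled as above.
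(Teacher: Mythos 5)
Your proof is correct and follows essentially the same route as the paper's: take an arc of diameter at most $L\rho(\xi_1,\xi_2)$ provided by linear connectedness, subdivide it finely enough (your uniform-continuity argument plays the role of the paper's compactness argument), lift the points to rays from $w$, and convert the metric estimates into Gromov-product estimates with bounded error, so that $R$ depends only on $\epsilon,\kappa,L,\delta$ and not on $C$. The only differences are cosmetic (slightly different additive constants and the explicit treatment of the degenerate case $\xi_1=\xi_2$).
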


\begin{proof}
 Denote by $\delta$ a hyperbolicity constant for $Z$ and fix $w\in Z$. Then there exist $\epsilon,\kappa,L$ and a visual metric $\rho$ based at $w$ with parameters $\epsilon,\kappa$ so that $(\partial Z,\rho)$ is $L/2$--linearly connected. Set $R=\log(\kappa^2L)/\epsilon+20\delta$. Fix now any $C,\gamma_1,\gamma_2$ as in the statement. Denoting $p_1,p_2\in\partial Z$ the limit points of $\gamma_1,\gamma_2$, there exists an arc $I$ connecting $p_1$ to $p_2$ and with diameter $\leq L\rho(p_1,p_2)$.  Let $\alpha$ be a ray from $w$ to a point $p\in I$.  Approximating Gromov products of points at infinity by Gromov products of rays, we have
 $$e^{-\epsilon(\gamma_1|\alpha)_w}\leq \kappa e^{10\epsilon\delta} \rho(p_1,p)\leq \kappa e^{10\epsilon\delta}L\rho(p_1,p_2)\leq \kappa^2 e^{20\epsilon\delta}Le^{-\epsilon(\gamma_1|\gamma_2)_w},$$
 from which we deduce $(\gamma_1|\alpha)_w\geq (\gamma_1|\gamma_2)_w-\log(\kappa^2L)/\epsilon-20\delta=(\gamma_1|\gamma_2)_w-R$. A similar computation shows that whenever $p,q\in\partial Z$ are close enough, any rays $\gamma_p,\gamma_q$ from $w$ to $p,q$ satisfy $(\gamma_p|\gamma_q)_w\geq (\gamma_1|\gamma_2)_w+C$. Hence, by a simple compactness argument, we can find a sequence of points $p_1=a_1,\dots,a_n=p_2$ contained in $I$ so that, for any choice of rays $\alpha_i$ from $w$ to $a_i$, $\{\alpha_i\}$ provides the required sequence of rays.
\end{proof}

In the current work, we only need the following proposition for a particular value of $R$.  However we believe the more general form given will be useful in future work.
Recall that $\overline S_W$ denotes the truncated quotient of the saturated spiderweb by $K_W$ (see Definition \ref{defn:saturated_spiderweb}), while $T_{\overline G}$ denotes the quotient of the cusped space minus certain horoballs by $K$ (see Definition \ref{defn:trunc_quotient}). Roughly speaking, we show that a large neighborhood of $\overline S_W$ in $T_W$ isometrically embeds in $T_{\overline G}$.  This is a stronger version of the strong convergence property we used in \ref{thm:existsvisual}.\eqref{exists_GH}.

\begin{prop}\label{lem:neigh_embed}
Let $(G,\mc{P})$ be relatively hyperbolic, and let $X$ be the corresponding $\theta$--hyperbolic cusped space. Then for every $R$ the following holds. For all sufficiently long hyperbolic fillings $G\to \overline{G}$ and every $\theta$--spiderweb $W$ associated to the filling, $N_R(\overline S_W)$ isometrically embeds into $T_{\overline G}$. 
More precisely:  Let $\Phi_W\co X\setminus \bigcup\{\mathcal H^{(t_c,\infty)}_c\mid c\in \calC(W)\}\to T_W$ and $\Phi\co X\setminus \bigcup\{\mathcal H^{(t_c,\infty)}_c\mid c\in \calC\}\to T_{\overline{G}}$ be the natural covering maps. Then there exists an isometric embedding $\iota\co N_R(\overline S_W)\to T_{\overline G}$ so that $\iota\circ\Phi_W=\Phi$ where both sides are defined (in particular, $\iota(1)=1$). Moreover, the image of $\iota$ is $N_R(\Phi(S_W\setminus \bigcup \{\mathcal H^{(t_c,\infty)}_c\mid c\in \calC\}))$.
\end{prop}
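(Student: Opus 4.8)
The plan is to define $\iota$ by lifting-then-reprojecting, exactly as in the proof of Theorem \ref{thm:existsvisual}.\eqref{exists_GH}, and then to upgrade the ``local isometry'' statement there to an honest isometric embedding on the neighborhood $N_R(\overline S_W)$. Concretely, for $x\in N_R(\overline S_W)\subseteq T_W$, I would pick any preimage $\tilde x\in \Phi_W^{-1}(x)\subseteq X\setminus\bigcup\{\mathcal H^{(t_c,\infty)}_c\mid c\in\calC(W)\}$, check that $\tilde x$ in fact lies outside all the horoballs $\mathcal H^{(t_c,\infty)}_c$ for $c\in\calC$ (not just those in $\calC(W)$), and set $\iota(x)=\Phi(\tilde x)$. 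Well-definedness is the first thing to verify: two preimages of $x$ differ by an element of $K_W\leq K$, so they have the same image under $\Phi$; and the fact that $\tilde x$ misses the extra horoballs follows because, after translating, $\tilde x$ may be taken in $S_W$ itself (closest-point projections into the quasiconvex $\overline S_W$, lifted), and a point of $S_W$ at depth $\leq t_c$ in some $\mathcal H_c$ with $c\notin\calC(W)$ would force $c\in\calC(W)$ by definition of $S_W=W\cup\bigcup_{c\in\calC(W)}\sepH_c$ — here I use that the $t_c$ are large (Corollary \ref{c:t_c big}) while frontier depths are $\leq 500\theta$. The relation $\iota\circ\Phi_W=\Phi$ on the common domain is then immediate from the construction, and in particular $\iota(1)=1$.

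Next I would identify the image. Since $\iota(\Phi_W(\tilde x))=\Phi(\tilde x)$ and $N_R(\overline S_W)=\Phi_W\big(N_R(S_W)\cap(X\setminus\bigcup\{\mathcal H^{(t_c,\infty)}_c\mid c\in\calC(W)\})\big)$ (lifting geodesics of length $\leq R$ from a point of $\overline S_W$, using that these stay outside the deep horoballs for a long enough filling), the image of $\iota$ is $\Phi\big(N_R(S_W)\setminus\bigcup\{\mathcal H^{(t_c,\infty)}_c\mid c\in\calC\}\big)$, which after noting $\Phi$ is distance-nonincreasing and its domain is convex-enough equals $N_R\big(\Phi(S_W\setminus\bigcup\{\mathcal H^{(t_c,\infty)}_c\mid c\in\calC\})\big)$, as claimed. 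The content here is the same ``lift geodesics, they avoid the truncation'' bookkeeping used in Section \ref{sec:converge}.

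The real work is injectivity and the isometry property. For distance-preservation: given $x,y\in N_R(\overline S_W)$, a $T_W$-geodesic between them need not stay in $N_R(\overline S_W)$, so the argument of \ref{thm:existsvisual}.\eqref{exists_GH} (which only handled a ball $B_j$) does not directly apply. Instead I would argue that a $T_W$-geodesic $[x,y]$ stays in a bounded neighborhood of $\overline S_W$ — using $3\delta$-quasiconvexity of $\overline S_W$ in $T_W$ (Lemma \ref{lem:overlineSW_qconv}) to get $[x,y]\subseteq N_{R+3\delta}(\overline S_W)$ — lift it through $\Phi_W$ to a path of the same length in $X$ avoiding the deep horoballs, and project it through $\Phi$; this shows $d_{T_{\overline G}}(\iota x,\iota y)\leq d_{T_W}(x,y)$. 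For the reverse inequality I would do the symmetric thing with a $T_{\overline G}$-geodesic between $\iota x$ and $\iota y$: it lies in $N_{R+3\delta'}(\Phi(S_W\setminus\cdots))$ by quasiconvexity of $\Phi(S_W\setminus\cdots)$ in $T_{\overline G}$ (which holds for the same reason Lemma \ref{lem:overlineSW_qconv} holds, $S_W$ being $6\theta$-quasiconvex and $K$-equivariant up to the $K$-orbit — more precisely one uses that $\Phi$ restricted to $N_{R+C}(S_W)$ is injective for long fillings, via Lemma \ref{greendlinger}: any $k\in K\setminus\{1\}$ moves frontier points of $S_W$ by more than $100\delta$, hence moves any point of a bounded neighborhood of $S_W$ by a definite amount), so it lifts and reprojects to give $d_{T_W}(x,y)\leq d_{T_{\overline G}}(\iota x,\iota y)$. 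Injectivity falls out of the isometry property once $R>0$. \textbf{The main obstacle} I anticipate is the second half of this last step: showing that the restriction of the quotient map $X\to T_{\overline G}$ to a large (but bounded) neighborhood of $S_W$ is injective, uniformly over all sufficiently long fillings and all associated spiderwebs $W$. The clean way to get this is the Greendlinger-type Lemma \ref{greendlinger} combined with Lemma \ref{lem:frontier_translation} (which already says points on or outside the frontier of $S_W$ are moved $>100\delta$ by nontrivial elements of $K$), extended from the frontier to all of $N_R(S_W)$ by a short quadrangle argument using that frontier points have depth $\leq 500\theta$ while deep horoball re-entry forces depth $\geq 10^5\theta$; this is where I would spend most of the care.
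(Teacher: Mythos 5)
Your overall architecture is the same as the paper's: define $\iota$ by $x\mapsto \Phi(\Phi_W^{-1}(x))$, check well-definedness via the depth bounds and Corollary \ref{c:t_c big}, get the distance-nonincreasing direction by lifting $T_W$--geodesics that stay near $\overline S_W$, and get the reverse inequality from quasiconvexity of the image in $T_{\overline G}$ together with an injectivity statement proved with Lemma \ref{greendlinger}. However, the pivotal statement you plan to prove is false as formulated, and your proposed proof of it would fail. You assert that $\Phi$ is injective on $N_{R+C}(S_W)$ because ``any $k\in K\setminus\{1\}$ moves frontier points of $S_W$ by more than $100\delta$, hence moves any point of a bounded neighborhood of $S_W$ by a definite amount.'' Nontrivial elements of $K_W$ preserve $S_W$, so $\Phi$ is never injective on $N_{R+C}(S_W)$ once $K_W\neq 1$; worse, for $c\in\calC(W)$ an element of $K_c\leq K_W$ displaces points of $\sepH_c\subseteq S_W$ at large depth by a uniformly bounded (small) amount, so there is no uniform lower displacement bound on the interior of $S_W$, and Lemma \ref{lem:frontier_translation} (which only treats points outside $S_W$ or on its frontier) cannot be ``extended inward'' by a quadrangle argument. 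What you actually need is injectivity modulo $K_W$: if $g\in K$ and both $x$ and $gx$ lie in $N_{R_0}(S_W)$, then $g\in K_W$; equivalently, the induced map $b$ on $N_{R_0}(\overline S_W)$ is injective. This is the Claim in the paper's proof, and its proof is not a displacement estimate: one chooses $g'\in K_W g$ minimizing $d(x,g'x)$; if $g'\neq 1$, Lemma \ref{greendlinger} forces $[x,g'x]$ deep into some $\calH_c$ and supplies $k\in K_c$ with $d(x,kg'x)<d(x,g'x)$, while $6\theta$--quasiconvexity of $S_W$ (Lemma \ref{lem:saturation_quasiconvex}) keeps $[x,g'x]$ in $N_{R_0+10\theta}(S_W)$, forcing $\sepH_c\cap S_W\neq\emptyset$, hence $c\in\calC(W)$ and $K_c\leq K_W$, contradicting minimality within the coset $K_Wg$.

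A second, related gap is in the reverse inequality itself. Even granting the correct claim, ``lift the $T_{\overline G}$--geodesic through $\Phi$ and reproject by $\Phi_W$'' does not immediately work: the $\Phi$--lift starting at $\tilde x$ ends at $k\tilde y$ for some $k\in K$, and to reproject to a path ending at $y$ you must know $k\in K_W$, i.e.\ that the lift stays near $S_W$ itself and not near some translate $k'S_W$; quasiconvexity of $\Phi(S_W\setminus\cdots)$ in $T_{\overline G}$ is also not ``for the same reason as Lemma \ref{lem:overlineSW_qconv},'' since $S_W$ is not $K$--invariant. The paper handles both points downstairs: it first shows $b$ restricts to isometries of $10\delta$--balls centered in $N_{R+2\delta}(\overline S_W)$ onto balls of $T_{\overline G}$, deduces from this (plus injectivity of $b$) that the image $\hat S_W$ is $4\delta$--quasiconvex and that $N_{4\delta}(\hat S_W)\subseteq b(N_{R_0}(\overline S_W))$, and then pulls a $T_{\overline G}$--geodesic back through $b^{-1}$ to produce a path in $T_W$ from $x$ to $y$ of the same length. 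So your outline is recoverable, but the key injectivity step must be restated and proved as above rather than by extending frontier displacement bounds.
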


\begin{proof}
Our proof rests on the following claim.
\begin{claim*}
 For every $R_0$ and every sufficiently long filling the following holds. For every spiderweb $W$, whenever $g\in K$ and $x\in X$ are so that $x,gx\in X$ lie in the $R_0$--neighborhood of $S_W$, we have $g\in K_W$.
\end{claim*}
 
 Let us assume the claim and fix some $R\geq 10\delta$. We set $R_0=R+22\delta$ and assume 
that we are considering a filling sufficiently long that the conclusion of the claim holds and so that $t_c\geq R_0+500\theta+1$ for every $c\in \mc{C}$ (see Corollary \ref{c:t_c big}).

 Let $b\co  N_{R_0}(\overline S_W)\to T_{\overline G}$ be the map defined by $b(x)=\Phi(\Phi_W^{-1}(x))$. First of all, let us check that $\Phi_W^{-1}(x)$ is contained in the domain of $\Phi$, and that $\Phi(\Phi_W^{-1}(x))$ consists of a single point, so that $b$ is well-defined. The first property follows from the fact that we can lift any geodesic from $x$ to $\overline S_W$ to a path of the same length in $X$, showing that $\Phi_W^{-1}(x)$ is contained in $N_{R_0}(S_W)$, where the neighborhood is taken in $X$. If by contradiction we had $y\in \Phi_W^{-1}(x)\cap \calH_c^{(t_c,\infty)}$ for some $c$ we would then have $\calH_c^{[500\theta,\infty)}\cap S_W\neq\emptyset$, since $t_c\geq R_0+500\theta+1$, and hence $c\in\calC(W)$. But then clearly $\Phi_W^{-1}(x)\cap \calH_c^{(t_c,\infty)}=\emptyset$, a contradiction. The fact that $\Phi(\Phi_W^{-1}(x))$ consists of a single point just follows from the fact that if two points are in the same $K_W$--orbit then they are in the same $K$--orbit.

We will now show that $b$ is a locally isometric bijection onto its image. The fact that it is locally isometric easily follows from the fact that $\Phi_W$ and $\Phi$ are covering maps (and the fact that it is well-defined). Injectivity follows from the Claim, and the fact that $\Phi_W^{-1}(x)$ is contained in $N_{R_0}(S_W)$ for each $x\in N_{R_0}(\overline S_W)$, as we argued above. 

What is more, we claim that for any ball $B=B_{20\delta}(x)$ in $T_W$ centered at some $x\in N_{R+2\delta}(\overline S_W)$, $b|_B$ is a surjection onto the ball $B_{20\delta}(b(x))$. In particular, $b$ restricts to an isometry between balls of radius $10\delta$ with the same centers. The reason for surjectivity is simply that we can define an inverse by lifting to $X$ geodesics from $b(x)$ to other points in $B_{20\delta}(b(x))$ and push them to $T_W$ using $\Phi_W$, obtaining paths of length at most $20\delta$ which therefore have endpoints in $B$.

Let $\hat{S}=b(N_{R}(S_W))$.
From what we proved so far, it follows that any pair of points in $\hat{S}_W$ is connected by a $10\delta$--local geodesic contained in $b(N_{R+2\delta}(\overline S_W))$. Since any $10\delta$--local geodesic stays within $2\delta$ of any geodesic with the same endpoints (see \cite[III.H.1.13]{BH}), we get that $\hat{S}_W$ is $4\delta$--quasiconvex.  (We implicitly used $b(N_{R+2\delta}(\overline S_W))\subseteq N_{2\delta}(\hat{S}_W)$, which follows from the fact that $b$ is $1$--Lipschitz since it is locally isometric.)

Let us now that prove that $\iota=b|_{N_R(\overline S_W)}\co N_R(\overline S_W)\to \hat{S}_W$ is an isometry. Since it is $1$--Lipschitz, we are left to show that $d(x,y)\leq d(b(x),b(y))$ for each $x,y\in N_R(\overline S_W)$. This holds because any geodesic $\gamma$ from $b(x)$ to $b(y)$ is contained in $N_{4\delta}(\hat S_W)$, which in turn is contained in $b(N_{R_0}(\overline S_W))$ (this follows from the statement about $10\delta$--balls above). In particular, $x$ and $y$ are connected by a path of length at most $d(x,y)$, namely $b^{-1}(\gamma)$, as required.

Finally, to prove the ``moreover'' part one just needs to once again consider lifts of geodesics to $\Phi(S_W\setminus \bigcup \{\mathcal H^{(t_c,\infty)}_c\mid c\in \calC\})$.

 We now prove the claim.
\begin{proof}[Proof of Claim]
 Choose a filling sufficiently long that Lemma \ref{greendlinger} applies with $D=R_0+10^3\theta$.  We argue by contradiction, assuming that $x,gx$ provide a counterexample.
Since the $K$--orbit of $x$ is discrete, there exists $g'\in K_Wg$ so that $d(x,g'x)$ is minimal (notice that we still have $g'x\in N_{R_0}(S_W)$). Also, $g'\neq 1$ because we are assuming $g\notin K_W$. By Lemma \ref{greendlinger}, any geodesic $[x,g'x]$ intersects some horosphere $\calH^D_c$. Since $S_W$ is $6\theta$--quasiconvex (Lemma \ref{lem:saturation_quasiconvex}), such geodesic is contained in $N_{R_0+10\theta}(S_W)$, implying that $S_W$ intersects $\calH^{500\theta}_c$. In turn, this implies that we have $K_c<K_W$. But then, for $k\in K_c$ as in Lemma \ref{greendlinger}, we have $d(x,kg'x)<d(x,g'x)$, contradicting the minimality of $d(x,g'x)$.
\end{proof}
Having proved the claim, the proof of Proposition \ref{lem:neigh_embed} is complete.
\end{proof}

The following elementary lemma is useful in the proof of Theorem \ref{thm:linconn}.  The notation $A\sim_CB$ for quantities $A$ and $B$ indicates $A\in [B-C,B+C]$.
\begin{lemma}\label{lem:estimategp}
  Let $p_0, p_1$ be points in a $\delta$--hyperbolic space.  For $i\in \{0,1\}$, let $\alpha_i,\beta_i$ be geodesic rays based at $p_i$, so that $\alpha_0$ is asymptotic to $\alpha_1$ and $\beta_0$ is asymptotic to $\beta_1$.  Suppose further that the Gromov products $(p_0|\beta_1(t))_{p_1}$ and $(p_0|\alpha_1(t))_{p_1}$ are bounded above by a constant $C$ for every large enough $t$.  Then $(\alpha_0|\beta_0)_{p_0}\sim_{2C+8\delta} (\alpha_1|\beta_1)_{p_1}+d(p_0,p_1)$.  
\end{lemma}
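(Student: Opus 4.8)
The plan is to estimate all relevant Gromov products in terms of distances, using the standard coarse identities for $\delta$--hyperbolic spaces. Write $D = d(p_0,p_1)$. The key computation is to compare $(\alpha_0|\beta_0)_{p_0}$ with $(\alpha_1|\beta_1)_{p_1}$ by passing through the mixed Gromov product $(\alpha_0|\beta_0)_{p_1}$ as an intermediate quantity. First I would recall that for any three points $x,y,z$ and basepoints $p,q$ in a $\delta$--hyperbolic space one has
\[ (x|y)_p \sim_{2\delta} \min\bigl\{ (x|y)_q,\ (p|x)_q + \text{(correction)} \bigr\}, \]
but it is cleaner to use the ultrametric-type inequality $(x|z)_p \geq \min\{(x|y)_p,(y|z)_p\} - \delta$ together with the exact identity $(x|y)_p = (x|y)_q + \tfrac12\bigl(d(p,\cdot)\text{-terms}\bigr)$; since $\alpha_0,\beta_0$ are rays based at $p_0$, the honest change-of-basepoint identity gives, for rays evaluated at time $t$,
\[ (\alpha_0(t)|\beta_0(s))_{p_1} = (\alpha_0(t)|\beta_0(s))_{p_0} + \tfrac12\bigl(d(\alpha_0(t),p_1)-d(\alpha_0(t),p_0)\bigr) + \tfrac12\bigl(d(\beta_0(s),p_1)-d(\beta_0(s),p_0)\bigr). \]
Taking $\liminf$ as $s,t\to\infty$ and using that $d(\alpha_0(t),p_1)-d(\alpha_0(t),p_0) \to d(p_0,p_1) - 2(\alpha_0|p_0)_{p_1}$ (and similarly for $\beta_0$), I obtain $(\alpha_0|\beta_0)_{p_1} \sim_{2\delta} (\alpha_0|\beta_0)_{p_0} + D - (p_0|\alpha_0)_{p_1} - (p_0|\beta_0)_{p_1}$.

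Next I would use the hypothesis. Since $\alpha_0$ is asymptotic to $\alpha_1$ and $\beta_0$ to $\beta_1$, the boundary points they represent agree, so $(p_0|\alpha_0)_{p_1}$ and $(p_0|\alpha_1)_{p_1}$ agree up to $2\delta$ (Gromov products at infinity computed with different representative rays), and likewise for $\beta$. The hypothesis says $(p_0|\alpha_1(t))_{p_1}$ and $(p_0|\beta_1(t))_{p_1}$ are $\leq C$ for large $t$, hence (taking $\liminf$) $(p_0|\alpha_0)_{p_1} \leq C + 2\delta$ and $(p_0|\beta_0)_{p_1}\leq C+2\delta$; these quantities are also $\geq 0$. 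Plugging into the displayed estimate gives
\[ (\alpha_0|\beta_0)_{p_1} \sim_{2\delta + 2(C+2\delta)} (\alpha_0|\beta_0)_{p_0} + D, \]
that is $(\alpha_0|\beta_0)_{p_1} \sim_{2C+6\delta} (\alpha_0|\beta_0)_{p_0} + D$. Finally, since $\alpha_0$ is asymptotic to $\alpha_1$ and $\beta_0$ to $\beta_1$, the pairs $(\alpha_0,\beta_0)$ and $(\alpha_1,\beta_1)$ represent the same pair of boundary points, and the Gromov product at infinity based at the fixed point $p_1$ is independent of the representative rays up to $2\delta$: $(\alpha_0|\beta_0)_{p_1} \sim_{2\delta} (\alpha_1|\beta_1)_{p_1}$. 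Combining the two displays yields $(\alpha_0|\beta_0)_{p_0} \sim_{2C+8\delta} (\alpha_1|\beta_1)_{p_1} + D$, as claimed.

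\textbf{Main obstacle.} The computation is routine, but the bookkeeping of the additive constants is the only delicate point — in particular being careful that ``Gromov product of rays'' (defined as a $\liminf$) versus ``Gromov product of the boundary points they determine'' differ only by a controlled amount (the $2\delta$ from \cite[III.H.3.17(5)]{BH} or \cite[Lemma 5.6]{Vai:hyperbolic} cited in the preliminaries), and that the limits defining $d(\alpha_0(t),p_1)-d(\alpha_0(t),p_0)$ genuinely exist and equal $D - 2(p_0|\alpha_0)_{p_1}$ up to $O(\delta)$. One should also check that $(p_0|\alpha_0)_{p_1}\geq 0$ is used only trivially (Gromov products are nonnegative), so no sign issue arises. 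Everything else is an application of the four-point condition and the triangle inequality, and the final constant $2C+8\delta$ comes out with a little room to spare, so the exact accounting of $\delta$'s above can be loosened without affecting the statement.
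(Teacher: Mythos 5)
Your overall strategy—an exact change-of-basepoint identity for the Gromov product, the hypothesis to absorb the correction terms, and independence of the product at infinity from the chosen representative rays—is viable, and once repaired it proves the lemma by a route only mildly different from the paper's (the paper instead picks far-out points $a_i\in\alpha_i$, $b_i\in\beta_i$ with $d(a_0,a_1),d(b_0,b_1)\le 2\delta$ and compares the three distances in each Gromov product directly, which avoids any liminf bookkeeping). However, as written your argument contains a sign error that makes both displayed intermediate estimates false. Since $d(\alpha_0(t),p_0)=t$, the exact identity is
\[ d(\alpha_0(t),p_1)-d(\alpha_0(t),p_0)\;=\;2\,(p_0\mid\alpha_0(t))_{p_1}-d(p_0,p_1), \]
not $d(p_0,p_1)-2(p_0\mid\alpha_0)_{p_1}$; your formula is the one you would get for the product based at $p_0$, which is not the quantity the hypothesis controls. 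Consequently the change-of-basepoint computation gives
\[ (\alpha_0\mid\beta_0)_{p_1}\;\approx\;(\alpha_0\mid\beta_0)_{p_0}-d(p_0,p_1)+(p_0\mid\alpha_0)_{p_1}+(p_0\mid\beta_0)_{p_1}, \]
with the opposite signs from your display. Your intermediate claim $(\alpha_0\mid\beta_0)_{p_1}\sim_{2C+6\delta}(\alpha_0\mid\beta_0)_{p_0}+D$ is false in general: in a tree, with $p_1$ at distance $D$ from $p_0$ and $\alpha_1,\beta_1$ branching off at $p_1$ (so one may take $C=0$, $\delta=0$), one has $(\alpha_0\mid\beta_0)_{p_0}=D$ while $(\alpha_0\mid\beta_0)_{p_1}=0$; moving the basepoint forward by $D$ decreases the product, it does not increase it. Note also that your final sentence does not follow from your own intermediate claims: combining them literally yields $(\alpha_0\mid\beta_0)_{p_0}\sim_{2C+8\delta}(\alpha_1\mid\beta_1)_{p_1}-D$, so the stated conclusion with $+D$ appears only via a silent sign flip at the end.

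The fix is local. With the corrected identity, and using $0\le(p_0\mid\alpha_0)_{p_1},\,(p_0\mid\beta_0)_{p_1}\le C+O(\delta)$ (which follows from the hypothesis together with the fact that asymptotic rays compute products at infinity up to $2\delta$), you get $(\alpha_0\mid\beta_0)_{p_0}\approx(\alpha_0\mid\beta_0)_{p_1}+d(p_0,p_1)$ up to $2C+O(\delta)$, and then $(\alpha_0\mid\beta_0)_{p_1}\sim_{2\delta}(\alpha_1\mid\beta_1)_{p_1}$ finishes the argument, exactly as in the lemma. You should still redo the additive bookkeeping carefully—the identity is pointwise exact, but you pass to liminfs of a sum and you need a pointwise (not just liminf) bound on $(p_0\mid\alpha_0(t))_{p_1}$ for large $t$—to confirm you land within the stated $2C+8\delta$ rather than a slightly larger multiple of $\delta$.
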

\begin{proof}
  See Figure \ref{fig:gromovprod}.
  \begin{figure}[htbp]
    \centering
    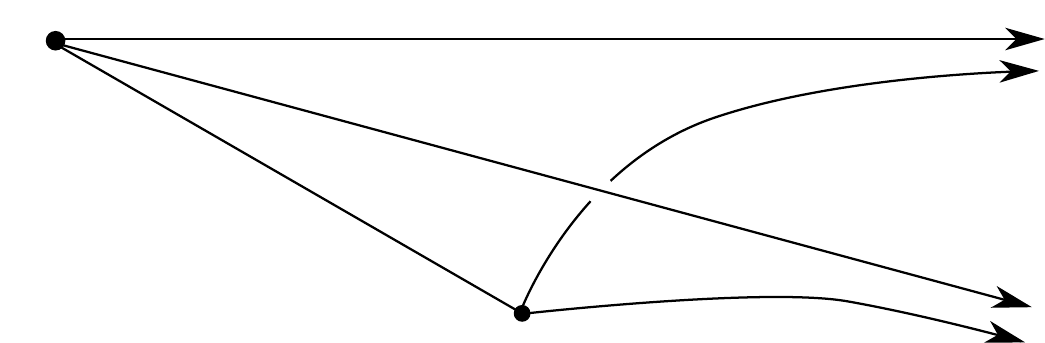
    \caption{Estimating the Gromov product at $p_0$ in terms of the one at $p_1$.}
    \label{fig:gromovprod}
  \end{figure}
  Choose points $a_i\in \alpha_i$ and $b_i\in \beta_i$ far away from $p_0$ and $p_1$ so that $d(a_0,a_1)\leq 2\delta$, $d(b_0,b_1)\leq 2\delta$, and so that $(a_i|b_i)_{p_i}\sim_{2\delta}(\alpha_i|\beta_i)_{p_i}$.  

  Now notice that $d(p_1,\alpha_0)$ and $d(p_1,\beta_0)$ are at most $C+2\delta$.  It follows that $d(a_0,p_0)\sim_{2C+6\delta} d(a_1,p_1)+d(p_0,p_1)$, and similarly $d(b_0,p_0)\sim_{2C+6\delta} d(b_1,p_1)+d(p_0,p_1)$.  Combining this with the fact that $d(a_0,b_0)\sim_{4\delta}d(a_1,b_1)$, we get the desired estimate.
\end{proof}

  Recall that Theorem \ref{thm:linconn} says that, for sufficiently long one-ended hyperbolic fillings and any spiderweb $W$ associated to such filling, the $\partial T_W$ have visual metrics $\rho_W$ (of uniform parameters) which are uniformly linearly connected.  We only expect uniformity over spiderwebs associated to a fixed filling, not uniformity over fillings.
\begin{proof}[Proof of Theorem \ref{thm:linconn}]
The idea here will be to build ``discrete paths'' joining any two points at infinity.  This means building, between any two rays to infinity, a sequence of interpolating rays satisfying the hypothesis of Lemma \ref{chain2}.  Given a pair of rays in $T_W$, there will be two cases, depending on whether the rays begin to diverge far from $\overline{S}_W$ or not.  In the first case, we will exploit the linear connectedness of $\partial X$; in the second the linear connectedness of $\partial \overline{G}$.

We must fix some constants before choosing a filling.  As before $\delta = 1500\theta$, $\epsilon = \frac{1}{10\delta}$ and  $\kappa$ are the constants (which depend only on $\delta$) from Theorem \ref{thm:existsvisual}.  Fix $\lambda > \ln (2\kappa^2)/\epsilon + 10\delta$ as in Lemma \ref{chain2}.

By hypothesis $\partial X$ is linearly connected.  Recall that Lemma \ref{lem:arc_to_rays} provides, for a hyperbolic space $Z$ and a basepoint $w\in Z$, a constant $R$ which governs the behavior of ``discrete paths'' of geodesic rays based at $w$, interpolating between two given rays.
\begin{claim*}
  There is a number $R_X$ so that the conclusion of Lemma \ref{lem:arc_to_rays} applies with $R=R_X$ and $w$ any vertex of $X$ at depth less than $201\delta$.  
\end{claim*}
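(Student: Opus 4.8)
The plan is to trace through the proof of Lemma~\ref{lem:arc_to_rays} to isolate exactly how the constant $R$ there depends on the basepoint, and then to remove that dependence using the fact that every bounded--depth vertex of $X$ lies a bounded distance from the $G$--orbit of $1$. Recall $X$ is $\delta$--hyperbolic (with $\delta=1500\theta$), so by Proposition~\ref{findvisualmetric} there are fixed visual parameters $\epsilon,\kappa$ (the ones fixed at the start of this proof, depending only on $\delta$) for which $\partial X$ carries a visual metric based at any given point of $X$. Inspecting the proof of Lemma~\ref{lem:arc_to_rays} with these fixed $\epsilon,\kappa$, the constant it produces is given by the explicit formula $R=\log(2\kappa^2\ell)/\epsilon+20\delta$, which is increasing in $\ell$, where $\ell$ is a linear connectedness constant for some visual metric on $\partial X$ based at the chosen basepoint, with parameters $\epsilon,\kappa$. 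Hence it suffices to exhibit a single number $\ell_0$ such that, for every vertex $w$ of $X$ of depth less than $201\delta$, some visual metric on $\partial X$ based at $w$ with parameters $\epsilon,\kappa$ is $\ell_0$--linearly connected; then $R_X=\log(2\kappa^2\ell_0)/\epsilon+20\delta$ works.

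Two elementary observations control the linear connectedness constant as the basepoint moves. First, the $G$--action is compatible with visual metrics: since $(g\xi\mid g\eta)_{gw}=(\xi\mid\eta)_w$ for all $g\in G$, if $\rho$ is a visual metric on $\partial X$ based at $w$ with parameters $\epsilon,\kappa$, then $(\xi,\eta)\mapsto\rho(g^{-1}\xi,g^{-1}\eta)$ is a visual metric based at $gw$ with the same parameters and the same linear connectedness constant. Second, change of basepoint is controlled: if $d(w,w')\le D$ then $|(\xi\mid\eta)_w-(\xi\mid\eta)_{w'}|\le D$ for all $\xi,\eta\in\partial X$, so any visual metric based at $w'$ with parameters $\epsilon,\kappa$ is $\kappa^2 e^{\epsilon D}$--bilipschitz to any visual metric based at $w$ with the same parameters; consequently an $\ell$--linearly connected metric of the second kind upgrades to a $\kappa^4 e^{2\epsilon D}\ell$--linearly connected metric of the first kind.

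To finish, note that every bounded--depth vertex is close to $G\cdot 1$. The space $X$ contains the Cayley graph $\Gamma$ of $G$ as its depth--$0$ subspace (Definition~\ref{def:cc}), and the vertex set of $\Gamma$ is the single orbit $G\cdot 1$; moreover a vertex $w$ of $X$ of depth $d$ sits at the top of a vertical segment of length $d$ whose bottom is a vertex of $\Gamma$, so $d(w,g\cdot 1)\le d<201\delta$ for a suitable $g\in G$. By the hypothesis of Theorem~\ref{thm:linconn}, $\partial X$ is linearly connected for some, and hence (Lemma~\ref{lem:LCQS}) every, visual metric; fix a visual metric $\rho_1$ based at $1$ with parameters $\epsilon,\kappa$, say $\ell_1$--linearly connected. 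The first observation produces an $\ell_1$--linearly connected visual metric based at $g\cdot 1$ with parameters $\epsilon,\kappa$, and the second, applied with $D=201\delta$, then produces a visual metric based at $w$ with parameters $\epsilon,\kappa$ which is $\ell_0$--linearly connected, where $\ell_0:=\kappa^4 e^{402\epsilon\delta}\ell_1$ does not depend on $w$. Setting $R_X=\log(2\kappa^2\ell_0)/\epsilon+20\delta$ proves the Claim.

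The delicate point --- the one I would be most careful about --- is the bookkeeping in the first paragraph: verifying that in the proof of Lemma~\ref{lem:arc_to_rays} the only genuinely basepoint-dependent quantity is the linear connectedness constant, so that the parameters $\epsilon,\kappa$ (and the shape of the formula for $R$) can be taken uniform over all the basepoints under consideration. Once that is pinned down, the rest is a routine comparison of bilipschitz constants together with the elementary vertical-segment estimate.
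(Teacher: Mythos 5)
Your argument is correct, but it takes a different (and more quantitative) route than the paper. The paper disposes of this Claim in one line: there are only finitely many $G$--orbits of vertices of $X$ of depth less than $201\delta$, so one applies Lemma \ref{lem:arc_to_rays} to one representative of each orbit, notes that the resulting constant transports along the orbit because $G$ acts by isometries (so Gromov products, rays, and visual metrics based at $w$ and at $gw$ correspond exactly), and takes the maximum of the finitely many constants. You instead open up the proof of Lemma \ref{lem:arc_to_rays}, observe that with $\epsilon,\kappa$ fixed by Proposition \ref{findvisualmetric} the only basepoint-dependent input is the linear connectedness constant of a visual metric based at $w$, and then uniformize that constant over all bounded-depth vertices via equivariance together with the change-of-basepoint estimate $|(\xi\mid\eta)_w-(\xi\mid\eta)_{w'}|\le d(w,w')$ and the fact that every vertex of depth less than $201\delta$ lies within $201\delta$ of $G\cdot 1$; all of these steps check out, including the bilipschitz bookkeeping $\kappa^2e^{\epsilon D}$ and the resulting $\kappa^4e^{2\epsilon D}$ degradation of the linear connectedness constant, and the formula $R=\log(2\kappa^2\ell)/\epsilon+20\delta$ does match the one in the lemma's proof (there written with $L=2\ell$). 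What your approach buys is an explicit value of $R_X$ and independence from any finiteness statement (it would survive in a setting with infinitely many orbits of bounded-depth vertices, so long as they stay a bounded distance from one orbit); what the paper's approach buys is brevity and the ability to use Lemma \ref{lem:arc_to_rays} as a black box, at the cost of leaving the equivariant transport of the constant implicit.
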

\begin{proof}
  There are finitely many $G$--orbits of vertices in $X$ of bounded depth.
\end{proof}
We fix such an $R_X$.

Now fix a filling $G \to \overline{G}$ so that all the following hold, for every spiderweb $W$ associated to the filling:
\begin{enumerate}
\item The truncated quotient $T_W$ is $\delta$--hyperbolic and $\delta$--visual (Theorem \ref{thm:trunc}).
\item The boundary $\partial T_W$ carries a visual metric $\rho_W$ based at $\overline{1}$ with parameters $\epsilon,\kappa$ (Theorem \ref{thm:existsvisual}).
\item The neighborhood $N_{R_X+\lambda+10^6\delta}(\overline S_W)$ isometrically embeds in $T_{\overline G}$ (Proposition \ref{lem:neigh_embed}).
\item The Assumptions \ref{assume:trunc_cover} hold.  In particular Lemma \ref{lem:frontier_translation} and Corollary \ref{cor:fiftydeltaisometry} hold.
\end{enumerate}
By assumption $\overline{G}$ is one-ended, so $\partial T_{\overline G} \cong \partial\overline G$ is linearly connected \cite[Proposition 4]{BonkKleiner05}.  We let $R_{\overline G}$ be the constant $R$ from Lemma \ref{lem:arc_to_rays} applied to a visual metric on $\partial T_{\overline G}$ based at $\overline 1$.

Finally we fix a spiderweb $W$ associated to this filling. Recall that we have a natural covering map $\Phi_W\co X\setminus \bigcup\{\mathcal H^{(t_c,\infty)}_c\mid c\in \calC(W)\}\to T_W$. The following lemma will allow us to move back and forth more easily between geodesic rays in $T_W$ and geodesic rays in $X$.
\begin{lemma}\label{lem:liftproject}
  Let $\gamma$ be a path in $X$ which avoids the $102\delta$--neighborhood of $S_W$, and let $\bar\gamma=\Phi_W\circ\gamma$ be the projection to $T_W$.  Then $\gamma$ is geodesic if and only if $\bar\gamma$ is geodesic.
\end{lemma}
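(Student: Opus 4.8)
The plan is to exploit the fact that $\Phi_W$ is a covering map together with the key geometric input that $\gamma$ stays far from the saturated spiderweb $S_W$, which is exactly the region where the truncation can shorten distances. I want to show the two conditions (geodesic in $X$, geodesic in $T_W$) are equivalent, and the natural way is to compare lengths of $\gamma$ with lengths of competitor paths upstairs and downstairs.

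\textbf{Step 1 (distance comparison away from $S_W$).} First I would recall Corollary \ref{cor:fiftydeltaisometry}: if $x \in X$ lies outside $N_{100\delta}(S_W)$, then $\Phi_W$ restricts to an isometry from the $50\delta$--ball around $x$ to a ball in $T_W$. More to the point, for two points $p, q \in X$ both avoiding $N_{102\delta}(S_W)$, I claim $d_X(p,q) = d_{T_W}(\Phi_W(p), \Phi_W(q))$ \emph{provided} they are close enough that a geodesic between them also avoids, say, $N_{100\delta}(S_W)$. The inequality $d_{T_W}(\Phi_W p, \Phi_W q) \le d_X(p,q)$ is immediate since $\Phi_W$ is $1$--Lipschitz (it is locally isometric). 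For the reverse, lift a $T_W$--geodesic from $\Phi_W(p)$ to a path in $X$ starting at $p$ of the same length; its endpoint lies in $\Phi_W^{-1}(\Phi_W(q))$, i.e.\ in the $K_W$--orbit of $q$. If this lifted path stays near $\Phi_W(S_W)$'s complement, Lemma \ref{lem:frontier_translation} forces the endpoint to actually be $q$ (distinct $K_W$--translates of points near or outside $S_W$ are more than $100\delta$ apart, so a short path cannot connect $q$ to a different translate). Hence $d_X(p,q) \le \mathrm{length} = d_{T_W}(\Phi_W p, \Phi_W q)$.

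\textbf{Step 2 (both directions of the equivalence).} Now suppose $\gamma$ is geodesic in $X$ and avoids $N_{102\delta}(S_W)$; I want $\bar\gamma = \Phi_W \circ \gamma$ geodesic in $T_W$. It suffices to check $\bar\gamma$ is a geodesic locally and globally: since $\bar\gamma$ has the same length as $\gamma$ between any two of its points, and by Step 1 that length equals $d_{T_W}$ between the corresponding endpoints (the relevant sub-competitors in $T_W$ lift to competitors in $X$, and $\gamma$ was already shortest there), $\bar\gamma$ realizes the $T_W$--distance between its endpoints. Conversely, if $\bar\gamma$ is geodesic in $T_W$, then for any two points $\gamma(s), \gamma(t)$ on $\gamma$ we have $d_X(\gamma(s),\gamma(t)) \ge d_{T_W}(\bar\gamma(s),\bar\gamma(t)) = |s-t| = \mathrm{length}(\gamma|_{[s,t]}) \ge d_X(\gamma(s),\gamma(t))$, using $1$--Lipschitzness of $\Phi_W$ for the first inequality; so equality holds and $\gamma$ is geodesic in $X$.

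\textbf{Main obstacle.} The delicate point is Step 1, specifically ensuring that a lifted $T_W$--geodesic between two points of $\gamma$ cannot ``take a shortcut'' by wandering back toward $S_W$ (where truncation and the quotient genuinely shorten distances) and returning. The buffer between the $102\delta$ in the hypothesis and the $100\delta$ in Corollary \ref{cor:fiftydeltaisometry} is what one needs: a lift of a $T_W$--geodesic is automatically a $50\delta$--local $X$--geodesic as long as it stays outside $N_{100\delta}(S_W)$, and $50\delta$--local geodesics are globally close to geodesics, so if the $T_W$--geodesic stayed $\ge 102\delta$ from $\Phi_W(S_W)$ then its lift stays $\ge 100\delta$ from $S_W$ and the argument runs cleanly; one should note that a $T_W$--geodesic between two points which are $\ge 102\delta$ from $\Phi_W(S_W)$ and connected by a path avoiding that neighborhood cannot itself enter $N_{100\delta}(\Phi_W(S_W))$ without being longer, since the competitor $\bar\gamma$ restricted between those points already avoids it and we may compare. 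I would phrase this carefully, perhaps noting that it is really a local statement: every point of $\bar\gamma$ has a $50\delta$--neighborhood on which $\Phi_W^{-1}$ is an isometry, so ``geodesic'' is a local condition that matches up under $\Phi_W$, and a locally geodesic path which is globally the $\Phi_W$--image of a genuine $X$--geodesic (hence quasigeodesic, hence not re-entering the bad region) is genuinely geodesic.
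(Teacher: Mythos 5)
Your overall strategy is the same as the paper's (use Corollary \ref{cor:fiftydeltaisometry} to make $\Phi_W$ a local isometry away from $S_W$, compare lifts and projections, and use Lemma \ref{lem:frontier_translation} to control deck transformations), and the easy direction via $1$--Lipschitzness is fine. But the two steps on which everything hinges are not actually proved, and the justifications you offer would not survive being written out.

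First, your claim that a $T_W$--geodesic between two points of $\bar\gamma$ ``cannot enter $N_{100\delta}(\Phi_W(S_W))$ without being longer, since the competitor $\bar\gamma$ already avoids it'' is not an argument: the existence of a competitor path avoiding a region in no way prevents a geodesic from passing through that region (indeed the whole danger is that the quotient/truncation creates shortcuts near $\overline S_W$). What actually forces the competitor geodesic to stay away is a local-to-global argument \emph{downstairs}: since $\gamma$ is an $X$--geodesic avoiding $N_{102\delta}(S_W)$, Corollary \ref{cor:fiftydeltaisometry} makes $\bar\gamma$ a $50\delta$--local geodesic in the $\delta$--hyperbolic space $T_W$, and then \cite[III.H.1.13]{BH} says any genuine geodesic $\bar\sigma$ with the same endpoints lies within $2\delta$ of $\bar\gamma$, hence outside $N_{100\delta}(\overline S_W)$. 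Without this step your ``Step 1'' distance equality has no proof. Second, to identify the endpoint of the lift you invoke Lemma \ref{lem:frontier_translation} as if it said ``a short path cannot join $q$ to a different translate,'' but the lifted path joins $p$ to $kq$, not $q$ to $kq$, and its length is nowhere bounded by $100\delta$; the lemma alone rules out nothing. The missing argument is the paper's: the lift $\sigma$ of $\bar\sigma$ is itself an $X$--geodesic (lift a shorter geodesic and project to contradict geodesity of $\bar\sigma$), $\gamma$ and $\sigma$ are geodesics from the same initial point staying far from $S_W$, so by $\delta$--slimness a geodesic $\eta$ between their terminal points $q$ and $kq$ stays outside $N_{99\delta}(S_W)$; then $6\theta$--quasiconvexity of $S_W$ (Lemma \ref{lem:saturation_quasiconvex}) together with Lemma \ref{quasiconvproj} bounds $\diam\,\pi_{S_W}(\eta)$ by a constant far smaller than $100\delta$, contradicting $d(\pi_{S_W}(q),k\,\pi_{S_W}(q))>100\delta$ from Lemma \ref{lem:frontier_translation} unless $k=1$. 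Only after these two points are in place do the length comparisons in your Step 2 close the argument.
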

\begin{proof}
  Our argument is based on the following claim.
  \begin{claim*}
    If $\bar\sigma$ is a $T_W$--geodesic lying outside the $100\delta$--neighborhood of $\overline S_W$, then any lift $\sigma$ of $\bar\sigma$ to $X$ is a geodesic.
  \end{claim*}
  \begin{proof}[Proof of Claim]
    By Corollary \ref{cor:fiftydeltaisometry}, $\sigma$ is a $50\delta$--local geodesic.  Let $\sigma'$ be a geodesic with the same endpoints.  The space $X$ is $\theta$--hyperbolic, so $\sigma'$ lies in a $2\theta$--neighborhood of $\sigma$, by \cite[III.H.1.13]{BH}.  In particular, $\sigma'$ lies in the domain of $\Phi_W$.  If $\sigma$ were not geodesic, $\sigma'$ would have strictly smaller length, and would project to a path $\bar\sigma'$ with the same endpoints as $\bar\sigma$, contradicting the assumption that $\bar\sigma$ was geodesic.
  \end{proof}
  Now let $\gamma$ be a path in $X$ avoiding the $102\delta$--neighborhood of $S_W$, and let $\bar\gamma$ be the projection of $\gamma$ to $T_W$.  It follows that $\bar\gamma$ avoids the $102\delta$--neighborhood of $\overline S_W$.

  One direction of the Lemma is immediate from the Claim; if $\bar\gamma$ is geodesic, then so is $\gamma$.

  In the other direction, suppose that $\gamma$ is geodesic.  Since the points of $\gamma$ lie outside $N_{100\delta}S_W$, we can apply Corollary \ref{cor:fiftydeltaisometry} to deduce that $\bar\gamma$ is a $50\delta$--local geodesic in $T_W$.  The endpoints of $\bar\gamma$ are therefore joined by a geodesic $\bar\sigma$ which lies in a $2\delta$--neighborhood of $\bar\gamma$, again using \cite[III.H.1.13]{BH}.  Thus $\bar\sigma$ lies outside $N_{100\delta} \overline{S}_W$.  Let $\sigma$ be a lift of $\bar\sigma$ with the same initial point as $\gamma$.  The Claim implies that $\sigma$ is a geodesic.  

We now claim that $\sigma$ has the same terminal point as $\gamma$.  Indeed, let $p$ be the terminal point of $\gamma$ and let $q$ be the terminal point of $\sigma$, and suppose $p\neq q$.  Since $p$ and $q$ project to the same point in $T_W$, there must be some $k\in K_W\setminus \{1\}$ so that $q = kp$.
Let $p'\in \pi_{S_W}(p)$, and let $q' = kp'\in \pi_{S_W}(q)$.  Lemma \ref{lem:frontier_translation} implies that $d(p',q')>100\delta$.  Let $\eta$ be a geodesic joining $p$ to $q$.  Then $\eta \subset N_\delta(\gamma\cup\sigma)$ lies outside the $99\delta$--neighborhood of $S_W$.  The set $S_W$ is $6\theta$--quasiconvex by Lemma \ref{lem:saturation_quasiconvex}, so we can apply Lemma \ref{quasiconvproj} to deduce that the diameter of $\pi_{S_W}(\eta)$ is at most $9\delta$, contradicting $d(p',q')>100\delta$.

Since $\sigma$ and $\gamma$ are geodesics with the same endpoints, they have the same length.  It follows that $\bar\gamma$ has the same length as the geodesic $\bar\sigma$, and is therefore geodesic in $T_W$.
\end{proof}

We now begin the main argument, which is a verification of the hypothesis of Lemma \ref{chain2} for the space $T_W$ with $S= \max\{ R_X + 100\delta , R_{\overline{G}} + 10^3\delta \}$.  Accordingly, we fix $\overline\gamma_1,\overline\gamma_2$ a pair of rays based at $\overline 1\in T_W$, and look for a sequence of interpolating rays $\overline\alpha_i$ as in Lemma \ref{chain2}.  Let $t_1 = (\overline\gamma_1|\overline\gamma_2)_{\overline 1}$.

\setcounter{case}{0}
\begin{case}
  $d(\overline\gamma_1(t_1),\overline S_W)\geq R_X+10^5\delta$.
\end{case}
Let $t_0 = \sup\{t\mid d(\overline\gamma_1(t),\overline S_W)\leq 200\delta\}$, and let $\overline x = \overline\gamma_1(t_0)$.  We note that the depth of $\overline x$ is bounded by $500\theta + 200\delta<201\delta$.

We let $\overline\gamma_1'$ be the restriction of $\overline\gamma_1$ to $[t_0,\infty)$.  Let $D = t_1-t_0$, and note that $D\geq R_X+(10^5-200)\delta$.  Let $\overline\gamma_2'$ be a broken geodesic following $\overline\gamma_1'$ for distance $D$, and then following a geodesic ray asymptotic to $\overline\gamma_2$.  Let $\overline T$ be the tripod $\overline\gamma_1'\cup\overline\gamma_2'$, and note that

\begin{claim}\label{claim:T_takes_off}
 All points of $\overline T$ are distance at least $200\delta$ from $\overline S_W$.
\end{claim}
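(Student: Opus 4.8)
The plan is to treat the three pieces of the tripod $\overline T$ in turn: the ray $\overline\gamma_1'$, the initial portion of $\overline\gamma_2'$ which coincides with $\overline\gamma_1'$ on $[t_0,t_0+D]$, and the branch $\overline\beta:=\overline\gamma_2'|_{[t_0+D,\infty)}$, which is a geodesic ray issuing from $\overline y=\overline\gamma_1(t_1)$ and asymptotic to $\overline\gamma_2$. (Throughout, $O(\delta)$ abbreviates a bound by some universal multiple of $\delta$; the point will be that everything has slack on the order of $10^5\delta$.)

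For the first two pieces the estimate is built into the choice of $t_0$. Since $\overline 1\in\overline S_W$, the function $t\mapsto d(\overline\gamma_1(t),\overline S_W)$ is continuous, vanishes at $t=0$, and equals $d(\overline\gamma_1(t_1),\overline S_W)\ge R_X+10^5\delta>200\delta$ at $t=t_1$; as $t_0$ is the supremum of the closed set $\{t\mid d(\overline\gamma_1(t),\overline S_W)\le 200\delta\}$, it follows that $d(\overline\gamma_1(t_0),\overline S_W)=200\delta$ and $d(\overline\gamma_1(t),\overline S_W)>200\delta$ for all $t>t_0$. Both $\overline\gamma_1'$ and the overlap part of $\overline\gamma_2'$ only use parameters $\ge t_0$, so each of their points lies at distance $\ge 200\delta$ from $\overline S_W$.

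The substantive piece is $\overline\beta$, and the main obstacle is to prevent $\overline\beta$ (equivalently $\overline\gamma_2$) from swinging back near $\overline S_W$ at some large parameter. First I would show that our standing assumption $d(\overline\gamma_1(t_1),\overline S_W)\ge R_X+10^5\delta$ forces $[\overline\gamma_2]\notin\mathcal F=\Lambda(\overline S_W)$: if $[\overline\gamma_2]$ belonged to the limit set of the $3\delta$--quasiconvex set $\overline S_W$ (Lemma \ref{lem:overlineSW_qconv}), then the whole ray $\overline\gamma_2$ would lie in an $O(\delta)$--neighborhood of $\overline S_W$; but $\overline\gamma_1$ and $\overline\gamma_2$ fellow travel to within $O(\delta)$ up to parameter $t_1=(\overline\gamma_1\mid\overline\gamma_2)_{\overline 1}$, so this would bound $d(\overline\gamma_1(t_1),\overline S_W)$ by $O(\delta)$, a contradiction. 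Consequently $\overline\gamma_2$ escapes every bounded neighborhood of $\overline S_W$, so a linear--progress estimate in the style of \eqref{eq:linear} --- applied to the geodesic ray $\overline\gamma_2$, the quasiconvex set $\overline S_W$, and a threshold of size $R_X+10^5\delta$ --- together with $d(\overline\gamma_2(t_1),\overline S_W)\ge R_X+10^5\delta-O(\delta)$ shows that $d(\overline\gamma_2(t),\overline S_W)\ge R_X+10^5\delta-O(\delta)$ for every $t\ge t_1-O(\delta)$, which is vastly more than $200\delta$.

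Finally I would transfer this to $\overline\beta$ via thinness of the geodesic triangle with vertices $\overline 1$, $\overline y=\overline\gamma_1(t_1)$, and the ideal vertex $[\overline\gamma_2]$, whose sides are $\overline\gamma_1|_{[0,t_1]}$, $\overline\gamma_2$, and $\overline\beta$. An elementary Gromov product computation gives $(\overline 1\mid[\overline\gamma_2])_{\overline y}=O(\delta)$, so the part of $\overline\beta$ that thinness places $O(\delta)$--close to $\overline\gamma_1|_{[0,t_1]}$ is confined to an $O(\delta)$--ball about $\overline y$, hence lies at distance $\ge d(\overline y,\overline S_W)-O(\delta)\ge 200\delta$ from $\overline S_W$; the remaining part of $\overline\beta$ is $O(\delta)$--close to $\overline\gamma_2(t)$ for parameters $t\ge t_1-O(\delta)$, where the previous paragraph already gives a distance far exceeding $200\delta$. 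All the inequalities involved are coarse with enormous slack ($200\delta$ against $\sim 10^5\delta$), so the additive $\delta$--bookkeeping is routine; the only load--bearing inputs are the quasiconvexity of $\overline S_W$, the identity $\mathcal F=\Lambda(\overline S_W)$, and the linear--progress behaviour of the distance to a quasiconvex set. Assembling the three pieces gives the claim.
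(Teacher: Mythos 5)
Your decomposition of $\overline T$ is fine: the two legs lying on $\overline\gamma_1$ at parameters $\ge t_0$ are handled exactly by the definition of $t_0$, and the final transfer from $\overline\gamma_2$ to the branch $\overline\beta$ via thinness of the ideal triangle is routine. The problem is the middle step, which is where the whole content of the claim lives. You assert that, because $[\overline\gamma_2]\notin\mathcal F=\Lambda(\overline S_W)$ and $d(\overline\gamma_2(t_1),\overline S_W)\ge R_X+10^5\delta-O(\delta)$, ``a linear--progress estimate in the style of \eqref{eq:linear}'' gives $d(\overline\gamma_2(t),\overline S_W)\ge R_X+10^5\delta-O(\delta)$ for all $t\ge t_1-O(\delta)$. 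But \eqref{eq:linear} is anchored at $R_\gamma$, the \emph{last} time the ray meets the fixed neighborhood of the quasiconvex set; it says nothing about parameters in $[t_1,R_\gamma]$ when $R_\gamma>t_1$. Neither $[\overline\gamma_2]\notin\Lambda(\overline S_W)$ nor the size of $d(\overline\gamma_2(t_1),\overline S_W)$ rules out, by themselves, the scenario in which $\overline\gamma_2$ re-enters a $200\delta$--neighborhood of $\overline S_W$ at some $t'>t_1$ and only afterwards makes its permanent escape; in that scenario $R_\gamma>t'$ and your cited estimate is silent exactly on the range of parameters the claim concerns. So as written, the crucial ``no return after $t_1$'' assertion is not justified: relating the anchoring time $R_\gamma$ to $t_1$ \emph{is} the point to be proved.

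The missing ingredient is the coarse convexity of $t\mapsto d(\overline\gamma_2(t),\overline S_W)$ along a geodesic, and this is precisely how the paper argues. If some $t'>t_1$ had $d(\overline\gamma_2(t'),\overline S_W)\le 200\delta$, then since $\overline\gamma_2(t_0)$ is within $\sim 201\delta$ of $\overline S_W$ (fellow-travelling with $\overline\gamma_1$ up to $t_1$), the $3\delta$--quasiconvexity of $\overline S_W$ (Lemma \ref{lem:overlineSW_qconv}) together with thin quadrilaterals forces the whole segment $\overline\gamma_2|_{[t_0,t']}$ into an $O(\delta)$--enlargement of that neighborhood, contradicting $d(\overline\gamma_2(t_1),\overline S_W)\ge R_X+10^4\delta$. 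This ``close--far--close'' contradiction is the paper's entire proof of the claim, and it also makes your preliminary step (showing $[\overline\gamma_2]\notin\mathcal F$) unnecessary. With that quasiconvexity argument inserted in place of the appeal to \eqref{eq:linear}, your proof goes through; without it, the key step is unsupported.
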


\begin{proof}[Proof of Claim]
 This is because otherwise there would be points $x_0,x_1,x_2$ on $\overline\gamma_2$, appearing in the given order, so that $x_0$ lies at distance at most $201\delta$ from $\overline S_W$ (just pick $x_0$ within $\delta$ of $\overline{x}$), $x_1$ lies at distance at least $R_X+10^4\delta$ from $\overline S_W$ (pick $x_1$ within $10\delta$ of $\overline\gamma_1(t_1)$) and $x_2$ lies at distance at most $201\delta$ from $\overline S_W$ (pick $x_2$ $\delta$--close to a point on $\overline\gamma_2'-\{\overline x\}$ contained in the $200\delta$--neighborhood of $\overline S_W$). The existence of such a triple is easily seen to contradict the fact that $N_{201\delta}(\overline S_W)$ is $2\delta$--quasiconvex, since $\overline S_W$ is $3\delta$--quasiconvex (Lemma \ref{lem:overlineSW_qconv}).
\end{proof}

Since $\overline T$ is simply connected and $\Phi_W\co X\setminus\bigcup\{\calH_c^{(t_c,\infty)}\mid c\in \calC(W)\} \to T_W$ is a covering map, we can lift $\overline T$ to a tripod $T\subset X$.  By Lemma \ref{lem:liftproject}, the legs of this tripod are geodesic.
Let $\gamma_1$ be the lift of $\overline\gamma_1'$, and let $\gamma_2$ be a geodesic ray starting at the same point $x$, asymptotic to the lift of $\overline\gamma_2'$.

  We claim that the Gromov product $(\gamma_1|\gamma_2)_x$ is within $10\delta$ of $D$.  Indeed, this Gromov product can be estimated to within $2\delta$ using points on the tripod $T$.  The tripod $T$ (respectively its image $\overline T$) is $\delta$--quasiconvex, and lies outside a $200\delta$--neighborhood of $S_W$ (respectively $\overline S_W$), so Lemma \ref{lem:liftproject} implies the projection is isometric on $T$.  It's not hard to see that for $s$, $t$ sufficiently large, we have $(\overline\gamma_1(t)|\overline\gamma_2(s))_{\overline x} \sim_{3\delta} (\overline\gamma_1| \overline\gamma_2)_{\overline 1}-t_0 = D$.

The depth of $\overline x$ was at most $201\delta$, and so the depth of $x$ is at most $201\delta$.  It follows that there is a discrete path $\{\alpha_1,\ldots,\alpha_n\}$ of rays based at $x$ interpolating between $\gamma_1$ and $\gamma_2$, and satisfying:
\begin{enumerate}
\item $\alpha_1=\gamma_1$ and $\alpha_n = \gamma_2$;
\item  $(\alpha_i|\alpha_{i+1})_x\geq (\gamma_1|\gamma_2)_x + \lambda+100\delta$ for all $i$; and
\item $(\gamma_1|\alpha_i)_x\geq (\gamma_1|\gamma_2)_x - R_X>10^4\delta$ for all $i$.
\end{enumerate}

A similar argument to the one that proves that any point on $\overline T$ is at distance at least $200\delta$ from $\overline S_W$ proves the following claim.
\begin{claim}
  No $\alpha_i$ meets a $102\delta$--neighborhood of $S_W$.
\end{claim}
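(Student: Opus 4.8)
The plan is to reproduce, up to the size of the constants, the argument given for Claim \ref{claim:T_takes_off}, with the interpolating rays $\alpha_i$ playing the role of the legs of $\overline T$ and $S_W\subseteq X$ playing the role of $\overline{S}_W\subseteq T_W$. First I would record two facts about the lifted leg $\gamma_1$ (the lift of $\overline\gamma_1'$): (a) $d_X(\gamma_1(s),S_W)\ge 200\delta$ for every $s$; and (b) $d_X(\gamma_1(D),S_W)\ge R_X+10^5\delta$, where $D=t_1-t_0$. Both are transcriptions of Claim \ref{claim:T_takes_off} --- which says every point of $\overline T$, in particular every point of $\overline\gamma_1'$, lies at distance $\ge 200\delta$ from $\overline{S}_W$ --- and of the Case~1 hypothesis $d(\overline\gamma_1(t_1),\overline{S}_W)\ge R_X+10^5\delta$, once one knows that along $\gamma_1$ the $T_W$--distance to $\overline{S}_W$ faithfully computes the $X$--distance to $S_W$. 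That last point is the only subtle one and I return to it in the final paragraph.

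Granting (a)--(b), I would argue by contradiction. For $i=1$ there is nothing to prove: $\alpha_1=\gamma_1$ misses $N_{102\delta}(S_W)$ by (a), since $200\delta>102\delta$; the case $i=n$ is the same because $\gamma_2$ stays uniformly close to the two legs of $T$. So fix a middle index $i$, suppose $\alpha_i$ meets $N_{102\delta}(S_W)$ at $y=\alpha_i(s^*)$, and set $p_i:=(\gamma_1\mid\alpha_i)_x$. The hypotheses on the discrete path give $p_i\ge(\gamma_1\mid\gamma_2)_x-R_X$, while $(\gamma_1\mid\gamma_2)_x\sim_{10\delta}D$ and $D\ge R_X+(10^5-200)\delta$. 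Two geodesic rays out of $x$ with Gromov product $p_i$ stay within $O(\theta)<\delta$ of one another up to parameter $p_i$, so by (a) every $\alpha_i(s)$ with $s\le p_i-10\delta$ lies at distance $>102\delta$ from $S_W$; hence $s^*>p_i-10\delta>D-R_X-20\delta$. On the other hand, setting $z:=\alpha_i(D-R_X-60\delta)$ --- a point within $\delta$ of $\gamma_1$, since $D-R_X-60\delta<p_i-10\delta$ --- fact (b) together with the $1$--Lipschitz dependence of $d_X(\cdot,S_W)$ along $\gamma_1$ gives $d_X(z,S_W)\ge(R_X+10^5\delta)-(R_X+60\delta)-\delta\ge 10^4\delta$.

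Now $x=\alpha_i(0)$, $z$, and $y$ occur in this order along the geodesic ray $\alpha_i$, with $d_X(x,S_W)\le 200\delta$ (this is exactly how $t_0$, hence $\overline x$ and $x$, were chosen), $d_X(y,S_W)\le 102\delta$, and $d_X(z,S_W)\ge 10^4\delta$. So $x,y\in N_{201\delta}(S_W)$; since $S_W$ is $6\theta$--quasiconvex (Lemma \ref{lem:saturation_quasiconvex}) and $6\theta<\delta$, the set $N_{201\delta}(S_W)$ is $2\delta$--quasiconvex, so the subsegment of $\alpha_i$ from $x$ to $y$ lies in $N_{203\delta}(S_W)$; as $z$ lies on it, $d_X(z,S_W)\le 203\delta$, contradicting $d_X(z,S_W)\ge 10^4\delta$. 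This is the same ``close--far--close along a geodesic violates quasiconvexity of a neighbourhood'' mechanism used for Claim \ref{claim:T_takes_off}.

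The main obstacle is the lifting bookkeeping behind (a) and (b): one must exclude that $\gamma_1$, the lift of the $T_W$--geodesic ray $\overline\gamma_1'$, plunges so deep into a horoball of $X$ that $d_X(\gamma_1(s),S_W)$ becomes small while $d_{T_W}(\overline\gamma_1'(s),\overline{S}_W)$ stays large. The resolution uses that the only horoballs truncated in $T_W$ are the $\calH_c$ with $c\in\calC(W)$, and for those $\sepH_c=\calH_c^{[500\theta,\infty)}\subseteq S_W$, so the (truncated) image of $\sepH_c$ lies in $\overline{S}_W$; since $\overline\gamma_1'$ stays $\ge 200\delta$ from $\overline{S}_W$ it never reaches depth $\ge 500\theta+200\delta$ in such a horoball, so any $\gamma_1(s)$ with $d_X(\gamma_1(s),S_W)$ small has depth $O(\delta)\ll t_c$, its nearest point in $S_W$ has bounded depth, is untruncated, and projects into $\overline{S}_W$ --- whence $d_{T_W}(\overline\gamma_1'(s),\overline{S}_W)\le d_X(\gamma_1(s),S_W)$, which is (a); fact (b) is the same with $200\delta$ replaced by $R_X+10^5\delta$, legitimate because Corollary \ref{c:t_c big} lets us assume every $t_c$ exceeds, say, $2(R_X+10^6\delta)$ for sufficiently long fillings. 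The remaining horoballs $\calH_c$ with $c\notin\calC(W)$ are not truncated at all, and $S_W$ reaches depth only $<500\theta$ inside them, so descending there merely increases the distance to $S_W$.
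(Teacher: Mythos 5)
Your proof is correct and takes essentially the same route as the paper's: transfer the $T_W$--distance estimates (Claim \ref{claim:T_takes_off} and the Case 1 hypothesis) to $X$--distances to $S_W$ via the covering map $\Phi_W$, then locate a far point along $\alpha_i$ using the Gromov product bound $(\gamma_1|\alpha_i)_x\geq(\gamma_1|\gamma_2)_x-R_X$ and contradict quasiconvexity of $S_W$ with a close--far--close triple. The only cosmetic differences are that the paper extracts a linear lower bound $d(\gamma_1(t),S_W)\geq t-t_0+198\delta$ from $3\delta$--quasiconvexity of $\overline S_W$ rather than walking back from the single value at $t_1$, and your extra largeness requirement on $t_c$ is unnecessary (though harmless), since a shortest geodesic from a point of the domain of $\Phi_W$ to $S_W$ can never enter a truncated region without first meeting $\sepH_c\subseteq S_W$.
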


\begin{proof}[Proof of Claim]
First of all, it follows from $3\delta$--quasiconvexity of $\overline S_W$ that for each $t\geq t_0$ we have $d(\overline\gamma_1(t),\overline S_W)\geq t-t_0+198\delta$. In fact, this is easily deduced from the fact that $\overline x$ lies within $\delta$ of any geodesic from $\overline\gamma_1(t)$ to $\overline S_W$.

Notice that if $p\in X-S_W$ then $d(p,S_W)=d(\Phi_W(p),\overline S_W)$, because we can project to $T_W$ a shortest geodesic from $p$ to $S_W$ and, vice versa, lift a shortest geodesic from $\Phi_W(p)$ to $\overline S_W$. In particular, for each $t\geq t_0$ we have $d(\gamma_1(t),S_W)\geq t-t_0+198\delta$.

In order to prove that $\alpha_i$ does not intersect the $102\delta$--neighborhood of $S_W$, we can now proceed similarly to Claim \ref{claim:T_takes_off} and argue that if that was not the case we could find 3 points along $\alpha_i$ so that the middle one is far away from $S_W$ but the other ones are close, contradicting quasiconvexity of $S_W$.
\end{proof}

It follows (using Lemma \ref{lem:liftproject} again) that the $\alpha_i$ project to geodesic rays $\overline\alpha_i'$ starting at $\overline x$.  We may prepend each such ray with the initial segment of $\overline\gamma_1$ terminating at $\overline x$, to obtain a broken geodesic $\overline\alpha_i''$ with Gromov product at $\overline x$ bounded above by $\delta$.  Let $\overline \alpha_1 = \overline\gamma_1$, and $\overline\alpha_n=\overline\gamma_2$.  For $i\notin\{1,n\}$, let $\overline\alpha_i$ be a geodesic ray beginning at $\overline 1$ and asymptotic to $\overline \alpha_i'$.  

Using Lemma \ref{lem:estimategp} for the second and last estimates we obtain
\begin{align*}
  (\overline\alpha_i|\overline\alpha_{i+1})_{\overline 1} & \sim_{2\delta}(\overline\alpha_i''|\overline\alpha_{i+1}'')_{\overline 1}\\
 & \sim_{10\delta}(\overline\alpha_i'|\overline\alpha_{i+1}')_{\overline x}+t_0\\
 &  = (\alpha_i|\alpha_{i+1})_x+t_0\\
 & \geq (\gamma_1|\gamma_2)_x+ \lambda + 100\delta + t_0 \\
 & \sim_{10\delta} (\overline\gamma_1|\overline\gamma_2)_{\overline x} + \lambda + 100\delta,
\end{align*}
  The total errors add up to less than $100\delta$, so we obtain 
\[ (\overline\alpha_i|\overline\alpha_{i+1})_{\overline 1} \geq (\overline\gamma_1|\overline\gamma_2)_{\overline x} + \lambda. \]
A similar computation yields, for each $i$,
\[ (\overline\gamma_1|\overline\alpha_i)_{\overline 1}\geq (\overline\gamma_1|\overline\gamma_2)_{\overline 1} - (R_X + 100\delta).\]

We have thus verified the hypothesis of Lemma \ref{chain2} in this case, with $S = S_1= R_X + 100\delta$.

\begin{case}
  $d(\overline\gamma_1(t_1),\overline S_W)< R_X+10^5\delta$.
\end{case}
Let $\overline\gamma'_i$ be the maximal initial subgeodesic of $\overline\gamma_i$ entirely contained in $N=N_{R+\lambda+10^6\delta}(\overline S_W)$.  Recall that by assumption $N$ is isometric to a subspace of $T_{\overline G}$, so let us now regard $N$ as a subspace of $T_{\overline G}$. Since $T_{\overline G}$ is $\delta$--visual, $\overline\gamma'_i$ is contained in the $2\delta$--neighborhood of some ray $\overline\gamma''_i$. There exists a sequence of rays $\overline\gamma''_1=\alpha''_1,\dots,\alpha''_n=\overline\gamma''_2$, all starting at $1\in \overline{G}$ so that $(\alpha''_i|\alpha''_{i+1})_1\geq (\overline\gamma''_1|\overline\gamma''_2)_1+\lambda+10^3\delta$ and $(\alpha''_i|\overline\gamma''_1)_1\geq (\overline\gamma''_1|\overline\gamma''_2)_1 -R_{\overline G}$. Let $\alpha'_i$ be the maximal initial subgeodesic of $\alpha''_i$ contained in $N$. We now switch back to thinking of $N$ as a subspace of $T_W$. Since $T_W$ is $\delta$--visual, there exist rays $\alpha_i$, starting at $1$, so that $\alpha'_i$ is contained in the $10\delta$--neighborhood of $\alpha_i$. We can take $\alpha_1=\overline\gamma_1,\alpha_n=\overline\gamma_2$.  It is now straightforward to check that $(\alpha_i|\alpha_{i+1})_1\geq (\overline\gamma_1|\overline\gamma_2)_1+\lambda$ and $(\alpha_i|\overline\gamma_1)_1\geq (\overline\gamma_1|\overline\gamma_2)_1 -R_{\overline G}-10^3\delta$.  

We have verified the hypothesis of Lemma \ref{chain2} in this case, with $S =S_2= R_{\overline G} + 10^3\delta$.

Taking $S$ to be the maximum of $S_1$ and $S_2$, we have verified the hypothesis of Lemma \ref{chain2} in both cases, and conclude using this lemma that $(\partial T_W,\rho_W)$ is linearly connected with constant independent of the spiderweb chosen.
\end{proof}

\section{Approximating boundaries are spheres}\label{sec:spheres}

\subsection{Statement and notation}\label{ss:setup}
In this section we fix $(G,\mc{P})$ relatively hyperbolic with $\mc{P}=\{P_1,\ldots,P_n\}$ where each $P_i$ is virtually $\mathbb{Z}\oplus\mathbb{Z}$. We let $X$ be a cusped space for the pair
and assume that $\partial(G,\mc{P})=\partial X$ is a $2$--sphere.  We also fix a Dehn filling $\pi\co G\to \overline{G}=G(N_1,\ldots,N_n)$ so that each $N_i$ is isomorphic to $\mathbb{Z}$, and suppose the filling is long enough to apply Theorem \ref{thm:trunc}.  For $\theta, \delta$ the constants in Theorem \ref{thm:trunc}, we consider a $\theta$--spiderweb $W$ associated to this filling (Definition \ref{def:spiderweb}).  The spiderweb is preserved by a finitely generated free group $K_W<\ker\pi$.  We denote the rank of $K_W$ by $k$.
The associated truncated quotient $T_W$ (Definition \ref{defn:trunc_quotient}) is $\delta$--hyperbolic by Theorem \ref{thm:trunc}.

In this section we describe the Gromov boundary of the truncated quotient:
\begin{prop}\label{Xiarespheres}
  With the above assumptions, $\partial T_W$ is homeomorphic to $S^2$.
\end{prop}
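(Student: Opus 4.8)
The plan is to identify $\partial T_W$ as obtained from $\partial X \cong S^2$ by a controlled cut-and-paste operation governed by the spiderweb data, and then to prove the result is still a $2$-sphere using a homological/Euler-characteristic argument together with a recognition theorem for $S^2$.

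\textbf{Step 1: Describe $\partial T_W$ set-theoretically.} By Theorem \ref{thm:trunc}.\eqref{trunc_cover} there is a regular covering map $(\partial X \setminus \Lambda(K_W)) \to \partial T_W \setminus \mathcal{F}$ with deck group $K_W$, and by Theorem \ref{thm:trunc}.\eqref{trunc_F codense} the set $\mathcal{F}$ is closed with open dense complement. Here $\mathcal{F}$ is a finite union of sets $\Lambda(\Sigma_c/K_c)$, and since each $N_i \cong \mathbb{Z}$, each $K_c$ is infinite cyclic and $\Sigma_c/K_c$ is (quasi-isometric to) a circle, so each $\Lambda(\Sigma_c/K_c)$ is a single point or a pair of points. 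Thus $\partial T_W$ is the end-compactification, by finitely many points of $\mathcal F$, of the free quotient $(\partial X \setminus \Lambda(K_W))/K_W$, where $\Lambda(K_W)$ is the limit set of a free group $K_W$ of rank $k$ acting as a convergence group on $S^2$, hence a Cantor set (when $k \geq 1$; if $k=0$ then $W$ is a point, $K_W$ trivial, $\mathcal F$ empty, and $\partial T_W \cong \partial X \cong S^2$, so we may assume $k \geq 1$).

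\textbf{Step 2: Identify the model more concretely.} Since $K_W$ acts on $S^2 = \partial X$ as a geometrically finite convergence group with limit set a Cantor set $\Lambda(K_W)$, its domain of discontinuity $\Omega = S^2 \setminus \Lambda(K_W)$ is an open surface on which $K_W$ acts freely, properly discontinuously, and cocompactly (after removing horoball-cusps; the peripheral structure of $K_W$ corresponds precisely to the conjugates $K_c$). The quotient $\Omega/K_W$ is an open surface with finitely many ends, and $\partial T_W$ is its compactification adding back the finitely many points of $\mathcal{F}$ at these ends. The plan is to compute the Euler characteristic: $\chi(\Omega) = \chi(S^2) - \chi(\text{Cantor set contribution})$; more usefully, present $\Omega/K_W$ as a compact surface with boundary minus some open arcs/points, and use that $K_W$ is free of rank $k$ with the parabolic/cusp subgroups accounted for. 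Carefully, the compact core of $\Omega/K_W$ is a compact surface with boundary whose fundamental group is $K_W \cong F_k$, hence (a planar surface) a sphere with $k+1$ holes; filling back the $\mathcal F$-points caps the relevant boundary circles and results in $S^2$ (this mirrors the manifold Dehn-filling picture). I expect this identification to need the homological argument the authors refer to: show $\partial T_W$ has the $\mathbb{Z}$-cohomology of $S^2$ — it is a $2$-dimensional cohomology manifold, or one shows $\check{H}^*(\partial T_W) \cong \check{H}^*(S^2)$ via a Mayer--Vietoris argument using the covering of Step 1 and the local structure near $\mathcal{F}$.

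\textbf{Step 3: Recognize $S^2$.} Once $\partial T_W$ is shown to be a Peano continuum (it is compact, connected by Corollary \ref{cor:approxconnected} applied with $\partial X \setminus \Lambda(K_W)$ connected — which holds since $S^2$ minus a Cantor set is connected — and locally connected, the latter following from results of \cite{GM-splittings} or directly from linear connectedness of visual metrics as in Theorem \ref{thm:linconn}), and that it has the right cohomology, one invokes a classical characterization: a Peano continuum that is a $2$-dimensional cohomology manifold (equivalently, a compact $2$-manifold-like homology manifold) without boundary is $S^2$ or can be shown to be $S^2$ once orientability/simple-connectivity is checked. Alternatively, and more in keeping with the paper, one shows $\partial T_W$ is planar (it embeds in $S^2$, being built from pieces of $S^2$) and has no local cut points (inherited from $\partial X$ via \cite{GM-splittings}), then applies the Kapovich--Kleiner characterization \cite[Theorem 4]{KK}: such a space is $S^2$ or a Sierpinski carpet; and the cohomological computation (that $\check H^2(\partial T_W) \neq 0$, or that $\partial T_W$ separates locally like $\mathbb{R}^2$) rules out the carpet.

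\textbf{Main obstacle.} The delicate point is Step 2: making precise and rigorous the claim that capping off the ends of $\Omega/K_W$ at the $\mathcal{F}$-points yields exactly $S^2$ rather than some other closed surface or a non-manifold. One must verify that (a) the ends are genuinely circle-ends (so that capping with a point produces a manifold point locally homeomorphic to $\mathbb{R}^2$), which uses that $P_i/N_i$ is virtually $\mathbb{Z}$ so the truncated horoballs $\Sigma_c/K_c$ are quasi-circles with $1$ or $2$ boundary- at-infinity points; (b) the surface $\Omega/K_W$ is planar and orientable with the correct number of boundary components matched to $\mathcal F$. This is essentially a bookkeeping of the Euler characteristic: $\chi(\partial T_W) = \chi(\Omega/K_W) + \#\mathcal{F} - (\text{ends})$, combined with $\chi(\Omega) = |K_W| \cdot \chi(\Omega/K_W)$ in the appropriate (equivariant / cusped) sense, and $\chi(\Omega)$ computed from $\chi(S^2) = 2$ minus the local contribution of the Cantor set $\Lambda(K_W)$. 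Getting the signs and the cusp-corrections exactly right, and upgrading "has the cohomology of $S^2$ and is a nice Peano continuum" to "is homeomorphic to $S^2$", is where the real work lies.
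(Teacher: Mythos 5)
Your overall skeleton (the covering map from Theorem \ref{thm:trunc}, the finiteness of $\mathcal F$, a homological computation, then a recognition step) does parallel the paper, but the heart of the argument is missing and the substitute you offer is incorrect. The paper's key input is Lemma \ref{lem:KW-module}: as a $K_W$--module, $H_1(\partial X\setminus\Lambda(K_W);\bZ/2)$ has rank at most $k-1$, proved by identifying $\Lambda(K_W)$ with $\partial(K_W,\mc{A})$, decomposing clopen subsets of this Cantor set into the pieces $\mc{B}(w,a_i,j)$, and using planar topology (disks around clopen pieces of a Cantor set in $S^2$) to see that finitely many loops generate the homology modulo the deck group. Your Step 2 replaces this with the claim that ``the compact core of $\Omega/K_W$ is a compact surface with boundary whose fundamental group is $K_W\cong F_k$, hence a sphere with $k+1$ holes.'' That is false: $\pi_1(\Omega/K_W)$ is an extension $1\to\pi_1(\Omega)\to\pi_1(\Omega/K_W)\to K_W\to 1$, and $\pi_1(\Omega)=\pi_1(S^2\setminus\Lambda(K_W))$ is an infinitely generated free group, so the quotient surface's fundamental group is not $F_k$; in fact the correct answer (which is exactly what has to be proved) is a $2k$--punctured sphere with free $\pi_1$ of rank $2k-1$, matching the $2k$ points of $\mc F$ — note your $(k+1)$--holed core does not even match your own count of $\#\mc F$. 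Without the module-rank bound there is no a priori control: a free group acting freely on $S^2$ minus a Cantor set could in principle have a quotient of infinite genus or infinitely many ends, so the Euler-characteristic ``bookkeeping'' cannot get started. Relatedly, your assertion that $K_W$ acts cocompactly on the domain of discontinuity after removing cusp neighborhoods is unjustified in the convergence-group setting and is essentially equivalent to the finite-type statement you are trying to establish; the paper deliberately avoids it, deriving finite type from the homology bound via Lemma \ref{lem:reduction_to_homology}.

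The final recognition step is also not secured in your proposal. The paper's Lemma \ref{lem:reduction_to_homology} does this cleanly: a compact Hausdorff space containing a dense surface $S$ with finite complement of size $m$ and $\dim H_1(S;\bZ/2)\le\max\{m-1,0\}$ is $S^2$, via the classification of noncompact surfaces and the universal property of the end compactification. Your alternatives — a cohomology-manifold recognition theorem, or Kapovich--Kleiner plus absence of local cut points — are not available here: the no-local-cut-point input from \cite{GM-splittings} applies to the boundary of the filled group $\overline G$, not to $\partial T_W$ ($T_W$ is not acted on cocompactly by a group), and you give no way to verify the cohomology-manifold hypotheses. So the proposal, as written, has a genuine gap precisely at the two places you flag as ``where the real work lies,'' and the one concrete mechanism you propose to fill the first of them rests on a wrong identification of $\pi_1(\Omega/K_W)$.
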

\subsection{Reduction to a homology computation}
Thanks to the following lemma, the proof of Proposition \ref{Xiarespheres} is reduced to a homology computation.

\begin{lemma}\label{lem:reduction_to_homology}
 Let $M$ be a compact Hausdorff space and let $S$ be a dense subset of $M$ homeomorphic to a surface with empty boundary.  Suppose that $m=\#(M\setminus S)$ is finite and that the dimension of $H_1(S,\mathbb Z/2)$ is at most $\max\{m-1,0\}$. Then $M$ is homeomorphic to $S^2$.
\end{lemma}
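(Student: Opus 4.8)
The plan is to first understand the surface $S$, then deform $M$ to a sphere by adding back the $m$ missing points one at a time. Since $S$ is a surface with empty boundary that sits densely inside a compact Hausdorff space with finite complement, $S$ must be a non-compact surface (if $m=0$ then $S=M$ is already a closed surface, and the hypothesis forces $H_1(S;\bZ/2)=0$, so $S\cong S^2$ and we are done; from now on assume $m\geq 1$). A non-compact surface with finitely many ``ends filled in'' has finite topological type, and in fact $M$ is an \emph{end compactification} of $S$: each point of $M\setminus S$ corresponds to one or more ends of $S$. The first step is to make this precise — I would show that a neighborhood basis of a point $p\in M\setminus S$ intersected with $S$ gives a neighborhood basis of a union of ends of $S$, so that $S$ is obtained from a compact surface-with-boundary $\overline{S}$ by deleting the boundary circles, with the $m$ points of $M\setminus S$ obtained by collapsing each boundary circle (or, a priori, several boundary circles) to a point.

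The key numerical input is the bound $\dim_{\bZ/2} H_1(S;\bZ/2)\leq m-1$. Writing $\overline{S}$ for the compact surface with $b$ boundary circles of which $S$ is the interior, and letting $g$ be the genus (orientable or not), one has $\dim_{\bZ/2}H_1(S;\bZ/2) = 2g + (b-1)$ in the orientable case and $g + (b-1)$ in the non-orientable case (using that $S$ is homotopy equivalent to a wedge of circles when $b\geq 1$, with first Betti number / $\bZ/2$-rank equal to $1-\chi(\overline S)$). Since the $m$ points of $M\setminus S$ are obtained from the $b$ boundary components by a surjection (each point swallows at least one boundary circle), we have $b\geq m$, hence $b-1\geq m-1\geq \dim_{\bZ/2}H_1(S;\bZ/2)\geq b-1$, forcing $b=m$ (each missing point corresponds to exactly one end/boundary circle) and $g=0$. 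So $\overline{S}$ is a sphere with $m$ open disks removed, and $M$ is obtained from $\overline S$ by collapsing each of the $m$ boundary circles to a point.

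The final step is to identify this quotient. Collapsing a single boundary circle of a sphere-with-$m$-holes to a point yields a sphere with $m-1$ holes; doing this $m$ times yields $S^2$ with... one needs to be slightly careful: collapsing all $m$ boundary circles of $S_{0,m}$ (the $m$-holed sphere) to $m$ distinct points. One can see this inductively: $S_{0,1}$ is a closed disk, collapsing its boundary circle gives $S^2$; and $S_{0,m}$ with one boundary circle collapsed is homeomorphic to $S_{0,m-1}$. After collapsing all $m$ circles we obtain $S^2$. (Equivalently: $M$ is the quotient of $S^2$ by collapsing $m$ disjoint closed disks to $m$ points, which is again $S^2$.) Hence $M\cong S^2$.

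\textbf{Main obstacle.} The delicate point is the first step: rigorously matching up the topology of $M$ near a point $p\in M\setminus S$ with the end structure of the surface $S$ — i.e. proving that $S$ is genuinely the interior of a \emph{compact} surface-with-boundary $\overline S$ and that $M$ is its end-compactification, with each $p\in M\setminus S$ corresponding to finitely many ends, each ``planar'' (neighborhoods are punctured disks or annuli). Compactness of $M$ and finiteness of $M\setminus S$ should force each such end to have a neighborhood that is an annulus (a higher-genus or infinite-type end would, via density of $S$ and compactness, contradict local structure at $p$, or would make $H_1(S;\bZ/2)$ infinite-dimensional, which is already excluded by hypothesis — this last observation actually does most of the work and rules out infinite type immediately). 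Once one knows $S$ has finite topological type with only annular ends, the homological count and the quotient identification are routine.
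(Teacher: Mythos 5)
Your plan is correct and follows essentially the same route as the paper: the $H_1$ bound forces $S$ to have finite type, $M$ is identified with a quotient of the end compactification of $S$, and the homology/end count forces genus zero and a bijection between the ends and the points of $M\setminus S$. The step you flag as the main obstacle is handled in the paper by invoking Freudenthal's universal property of the end compactification, which gives a continuous surjection $\overline S\to M$ restricting to the identity on $S$; your counting argument then makes this map a bijection, hence a homeomorphism from the compact $\overline S\cong S^2$ onto the Hausdorff space $M$.
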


\begin{proof}
When we refer to `homology' in this proof we always mean homology with $\mathbb Z/2$ coefficients.

 First of all, we claim that $S$ is a surface of finite type.  Indeed, this follows from the fact that surfaces of infinite type have infinite dimensional first homology, as one can deduce from the classification of non-compact surfaces given in \cite[Theorem 3]{RichardsClassification}. 
 
 Let $p$ be the number of punctures of $S$.  If $p=0$, then $M=S$ is a compact surface with $H_1(M;\bZ/2)=0$, so $M\cong S^2$.
 
 Now suppose $p>0$, and let $\overline{S}$ be the closed surface obtained filling in the punctures of $S$.  Note that $\overline{S}$ is equal to the end-compactification of $S$.
 
  Since $M\setminus S$ is finite, $S$ is open and $M\setminus S$ is totally disconnected. Also, by assumption $M$ is compact and Hausdorff and $S$ is dense in $M$, and hence the universal property of end-compactifications \cite[Satz 6]{FreudenthalCompactification} gives us a map $h\co \overline S\to M$ restricting to the identity on $S$.  Since $S$ is dense in $M$ the map $h$ is surjective.  In particular, $m \leq p$. Moreover, if $d = \dim_{\bZ/2}H_1(\overline{S};\bZ/2)$ and $r = \dim_{\bZ/2}H_1(S;\bZ/2)$, then $r = d+p-1$.  By assumption $r\leq m-1$, so we must have $d=0$ and thus $\overline S\cong S^2$.  Finally $p-m = r+1-m\leq m-1+1-m=0$, again by assumption.  This shows that $h$ also restricts to a bijection between $\overline{S}\setminus S$ and $M\setminus S$, and so $h$ is a homeomorphism.
\end{proof}

\subsection{Loops and Cantor sets in disks}
By a \emph{Cantor set} we mean a totally disconnected compact metrizable space with no isolated points.  This subsection is about Cantor sets in the plane or in $S^2$, and doesn't refer directly to our group-theoretic setup.  
We will see later that $\Lambda(K_W)$ is a Cantor set, and use the following lemmas to control how $\Lambda(K_W)$ sits in $\partial X$. 

\begin{lemma}\label{lem:disks_covering_Cantor}
Let ${\bf C}$ be a Cantor set contained in an open disk $D$.  Suppose that
$\{ U_i \}_{i\in I}$ is a finite collection of disjoint clopen subsets of ${\bf C}$ whose union is ${\bf C}$.

Then there exists a finite collection of closed subdisks $\{ D_i\}_{i \in I}$, so that for all distinct $j,k\in I$ we have $D_j \cap D_k = \emptyset$ and for all $j$ we have ${\bf C} \cap \mathring{D}_j = U_j$.
\end{lemma}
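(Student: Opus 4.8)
The plan is to induct on $|I|$, reducing to the case where we split one clopen piece into two. The base case $|I| = 1$ is trivial: take $D_1 = D$. For the inductive step, suppose the result holds for collections of size $m$ and we are given a partition into $m+1$ pieces $\{U_i\}$. Group the first two pieces together: $\{U_1 \cup U_2, U_3, \ldots, U_{m+1}\}$ is a partition of ${\bf C}$ into $m$ clopen pieces, so by induction we get disjoint closed subdisks $D_1', D_3, \ldots, D_{m+1}$ with ${\bf C} \cap \mathring{D}_1' = U_1 \cup U_2$ and ${\bf C} \cap \mathring{D}_j = U_j$ for $j \geq 3$. It then suffices to find two disjoint closed subdisks $D_1, D_2 \subset \mathring{D}_1'$ with ${\bf C} \cap \mathring{D}_1 = U_1$ and ${\bf C} \cap \mathring{D}_2 = U_2$. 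So the whole lemma comes down to the case $I = \{1,2\}$, working inside a single open disk, which I will now address.

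So assume ${\bf C} \subset D$ with $D$ an open disk, and ${\bf C} = U_1 \sqcup U_2$ a partition into two nonempty clopen pieces (the case where one is empty being trivial). Both $U_1$ and $U_2$ are themselves compact totally disconnected sets (Cantor sets, or finite, but that doesn't matter here), and they are disjoint compact subsets of the open disk. The key step is a separation statement in the plane: two disjoint compact totally disconnected sets in an open disk can be engulfed in two disjoint closed subdisks. First, since $U_1, U_2$ are disjoint and compact, $\epsilon := d(U_1, U_2) > 0$; also each $U_i$ is at positive distance from $\partial D$ (working in a metric compatible with the disk, or just using that $U_i$ is a compact subset of the open set $D$). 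Cover $U_1$ by finitely many small open round disks of radius $< \epsilon/3$, each contained in $D$ and disjoint from $U_2$; their union $V_1$ is an open set with $U_1 \subset V_1$, $\overline{V_1} \subset D$, $\overline{V_1} \cap U_2 = \emptyset$, and $\overline{V_1}$ is a compact planar set with finitely many complementary components. Do the same for $U_2$ with disks of radius $< \epsilon/3$ chosen to also avoid $\overline{V_1}$, obtaining $V_2$ with $\overline{V_1} \cap \overline{V_2} = \emptyset$.

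Now I need to turn the neighborhoods $\overline{V_i}$ — which are finite unions of round disks, hence compact connected-or-not planar sets with nice boundary — into genuine closed disks. Here one can argue as follows: $\overline{V_1}$ has finitely many connected components, each a finite union of overlapping closed round disks, hence each component is a closed topological disk (a finite union of round disks whose pairwise intersections are nonempty need not be a disk, but one can instead take a regular neighborhood, or simply note that the components are compact, connected, locally connected, simply-connected planar sets and invoke a standard fact, or — cleanest — replace $V_1$ by a single slightly larger connected open set containing $U_1$ whose closure is a closed disk: since $U_1$ is compact and totally disconnected it is contained in an open disk $D_1^{\circ}$ with $\overline{D_1^{\circ}} \subset D \setminus \overline{V_2}$, because one can connect up the finitely many round disks covering $U_1$ by thin corridors staying inside $D \setminus \overline{V_2}$, which is possible as $D \setminus \overline{V_2}$ is open and connected — or at least the relevant component is). Taking $D_1 = \overline{D_1^{\circ}}$ and symmetrically $D_2 = \overline{D_2^{\circ}}$ gives disjoint closed subdisks of $D$ with $U_i \subset \mathring{D}_i$. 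Finally, to get equality ${\bf C} \cap \mathring{D}_i = U_i$ rather than mere containment: we have ${\bf C} \cap \mathring{D}_1 \supseteq U_1$, and ${\bf C} \cap \mathring{D}_1$ cannot meet $U_2$ since $\mathring{D}_1 \cap D_2 = \emptyset$ and $U_2 \subset \mathring{D}_2$ — wait, we need $U_2 \cap \mathring D_1 = \emptyset$, which holds since $U_2 \subset \mathring D_2$ and $\mathring D_1 \cap \mathring D_2 = \emptyset$. Hence ${\bf C} \cap \mathring{D}_1 = U_1$, as desired.

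\textbf{Main obstacle.} The genuinely delicate point is the passage from "finite union of round disks containing a totally disconnected compact set" to "single closed topological disk disjoint from the other one." The clean way is: given the compact totally disconnected set $U_1$ and the disjoint compact set $\overline{V_2}$ inside the open disk $D$, the set $D \setminus \overline{V_2}$ is open and its component containing (a point of) $U_1$ — one must check $U_1$ lies in a single such component, which follows because $U_1$ is covered by finitely many small round disks each in $D \setminus \overline{V_2}$ and these can be chained together within $D \setminus \overline{V_2}$ (an open subset of the plane whose relevant region is connected since we may take the covering disks small and $D \setminus \overline{V_2}$ is "most of" the disk); then a small regular neighborhood of $U_1$ together with the chaining corridors, all inside that component, has closure a closed disk. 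Getting this engulfing argument stated cleanly — rather than hand-wavily — is where care is needed; everything else (the induction, the distance estimates, the final equality check) is routine.
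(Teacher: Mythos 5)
Your induction down to the two-piece case, and the final check that containment upgrades to the equality ${\bf C}\cap\mathring D_i=U_i$, are fine. The genuine gap is in the engulfing step, which is exactly where all the content of the lemma lives. Two specific problems. First, the components of $\overline{V_1}$ (a finite union of round closed disks) need not be simply connected: a chain of small disks can close up into an annulus, and its hole may contain points of $U_2$; so "take a regular neighborhood'' or "the components are simply-connected planar sets'' does not work, and filling holes is not allowed. Second, and more seriously, your "cleanest'' fix — connect the disks covering $U_1$ by thin corridors inside $D\setminus\overline{V_2}$ — fails because $U_1$ need not lie in a single component of $D\setminus\overline{V_2}$. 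Concretely: let $U_2$ be a Cantor set that is $\delta$-dense in the unit circle with $\delta$ tiny, and let $U_1$ consist of a small Cantor set near the origin together with one near $(2,0)$. Then $d(U_1,U_2)\approx 1$, but the union of radius-$(\epsilon/3)$ disks around $U_2$ contains a full annulus around the circle, which separates the two halves of $U_1$ inside $D$. The conclusion of the lemma is still true in this configuration (a disk around $U_1$ can thread through a gap of $U_2$), but any valid construction must choose the neighborhood of $U_2$ adapted to the gaps of $U_2$ — i.e., it must use total disconnectedness of each set in an essentially planar-topological way, not just the positive distance $\epsilon$. Nothing in your argument rules out the covering neighborhoods encircling pieces of the other set, and that is the whole difficulty.

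For comparison, the paper does not attempt a bare-hands construction: it invokes the classical tameness of Cantor sets in the plane (any homeomorphism of ${\bf C}$ onto the standard middle-thirds Cantor set extends to an ambient homeomorphism of the disk onto $\mathbb{R}^2$; Moise, Chapter 13), chooses the homeomorphism so that each $U_i$ maps onto a standard clopen block, takes the obvious disjoint round disks around those blocks, and pulls them back. If you want an elementary route instead, you would need a separation theorem of Zoretti/Moore type (a simple closed curve avoiding ${\bf C}$ and separating a prescribed clopen piece from the rest, and from the other $U_j$'s), which is essentially equivalent input; as written, your corridor argument does not supply it.
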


\begin{proof}
 Let ${\bf C}_{std}$ be the standard middle-third Cantor set in the plane. It is known that any homeomorphism $f\co {\bf C}\to {\bf C}_{std}$ extends to a homeorphism $\overline f\co D\to \mathbb R^2$ (see \cite[Chapter 13]{Moise77}). It is then easy to construct a homeomorphism $f$ so that the collection of clopen sets $\{f(U_i)\}$ admits a family of disks in $\mathbb R^2$ as in the statement, which can be then pulled back to $D$ using $\overline f$.
 \end{proof}
 
 \begin{lemma}\label{lem:loop_unique}
  Let ${\bf C}$ be a Cantor set contained in $S^2$.  Suppose that
$U$ is a clopen subset of ${\bf C}$. If $D_1, D_2$ are closed disks in $D$ with $\mathring D_i\cap {\bf C}= U$, then $\partial D_1$ is homologous to $\partial D_2$ in $H_1(S^2\setminus {\bf C})$.
 \end{lemma}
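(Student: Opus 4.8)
The statement asserts that two closed disks $D_1, D_2 \subset S^2$ with $\mathring D_i \cap \mathbf{C} = U$ have homologous boundaries in $H_1(S^2 \setminus \mathbf{C})$. The essential point is that $\partial D_i$ separates $S^2$ into two pieces, with $U$ lying entirely in the interior piece and the complementary clopen set $\mathbf{C} \setminus U$ lying entirely in the exterior. The plan is to show that $\partial D_1$ and $\partial D_2$ cobound a region in $S^2$ that misses $\mathbf{C}$ entirely, after possibly first perturbing the disks to a more convenient configuration.

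First I would reduce to the case where the disks are ``nested or disjoint''. Using that $U$ and $\mathbf{C} \setminus U$ are disjoint compact sets, and using (the proof technique of) Lemma \ref{lem:disks_covering_Cantor} or a direct application of the Schoenflies theorem, I would construct a single ``reference'' disk $D_0$ with $\mathring D_0 \cap \mathbf{C} = U$ which is small enough to be contained in $\mathring D_1 \cap \mathring D_2$ after an isotopy — or more precisely, I would argue it suffices to prove the lemma in the case $D_1 \subseteq \mathring D_2$ (and then apply that case twice, to the pairs $(D_0, D_1)$ and $(D_0, D_2)$, with $D_0$ chosen small). So fix $D_1 \subseteq \mathring D_2$ with $\mathring D_i \cap \mathbf{C} = U$ for $i = 1,2$.

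Next, consider the closed annular region $A = D_2 \setminus \mathring D_1$; its oriented boundary is $\partial D_2 - \partial D_1$ (with consistent orientations as boundaries of $D_1, D_2$). I claim $A \cap \mathbf{C} = \emptyset$: a point of $\mathbf{C}$ in $A$ lies in $D_2$, hence in $\mathring D_2 \cap \mathbf{C} = U$ (points of $\mathbf{C}$ on $\partial D_2$ are excluded since $\mathring D_2 \cap \mathbf{C} = U$ is clopen in $\mathbf C$ and $\mathbf C$ has no isolated points — more carefully, if $x \in \mathbf C \cap \partial D_2$ then every neighborhood of $x$ meets $\mathbf C$, but points just outside $D_2$ near $x$ are not in $U = \mathbf C \cap \mathring D_2$ and points just inside are, contradicting that $U$ is clopen, so in fact $\mathbf C \cap \partial D_2 = \emptyset$); similarly $\mathbf C \cap \partial D_1 = \emptyset$, and $x \notin \mathring D_1$ since $\mathring D_1 \cap \mathbf C = U$ and $x \in A$ means $x \notin \mathring D_1$. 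But then $x \in U \subseteq \mathring D_1$, contradiction. Hence $A \subseteq S^2 \setminus \mathbf{C}$, and $A$ provides a $2$-chain in $S^2 \setminus \mathbf{C}$ with $\partial A = \partial D_2 - \partial D_1$, proving $[\partial D_1] = [\partial D_2]$ in $H_1(S^2 \setminus \mathbf{C})$.

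\textbf{Expected main obstacle.} The genuinely delicate part is the reduction to the nested case: two disks $D_1, D_2$ with the same clopen trace on $\mathbf C$ need not be nested or disjoint, and their boundary circles can intersect in a complicated (even non-locally-finite) set since these are merely topological disks. The clean fix is to avoid analyzing $\partial D_1 \cap \partial D_2$ directly and instead route through a common small reference disk $D_0 \subseteq \mathring D_1 \cap \mathring D_2$ with $\mathring D_0 \cap \mathbf C = U$; producing such a $D_0$ requires knowing that $U$ is contained in the interior of the open set $\mathring D_1 \cap \mathring D_2$ (true, since $U \subseteq \mathring D_1$ and $U \subseteq \mathring D_2$) together with a Schoenflies-type construction of a disk neighborhood of the compact set $U$ inside the open set $\mathring D_1 \cap \mathring D_2$ that still separates $U$ from $\mathbf C \setminus U$ — essentially the content already extracted in the proof of Lemma \ref{lem:disks_covering_Cantor}. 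Once the reduction is in hand, the annulus argument above is routine.
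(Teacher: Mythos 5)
Your overall strategy (produce a cobounding region inside $S^2\setminus\mathbf{C}$ by interpolating a separating object that lies inside both disks) is the same as the paper's, and your nested-case annulus argument is fine. The genuine gap is in the reduction: a single closed disk $D_0\subseteq \mathring D_1\cap\mathring D_2$ with $U\subseteq\mathring D_0$ need not exist. The open set $\mathring D_1\cap\mathring D_2$ can be disconnected, and $U$, being totally disconnected, may meet several of its components, in which case no connected set inside $\mathring D_1\cap\mathring D_2$ contains $U$. Concretely: let $U=U_a\sqcup U_b$ be two small clumps of $\mathbf{C}$ near $(-1,0)$ and $(1,0)$, with $\mathbf{C}\setminus U$ far away; let $D_1$ be a thin disk neighborhood of the horizontal segment joining the clumps, and $D_2$ a thin disk neighborhood of an arc from $(-1,0)$ to $(1,0)$ dipping far below the axis. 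Then $\mathring D_i\cap\mathbf{C}=U$ for both $i$, but $\mathring D_1\cap\mathring D_2$ has two components, one containing $U_a$ and one containing $U_b$, so your reference disk does not exist, and Lemma \ref{lem:disks_covering_Cantor} (which works inside one disk containing the whole Cantor set) does not supply it. A smaller slip: your parenthetical claim that $\mathbf{C}\cap\partial D_2=\emptyset$ follows from clopenness of $U$ is false, since $\partial D_2$ may pass through a point of $\mathbf{C}\setminus U$ all of whose nearby $\mathbf{C}$-points lie outside $D_2$; this is harmless only because the conclusion of the lemma already presupposes $\partial D_i\subseteq S^2\setminus\mathbf{C}$.

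The repair is to interpolate a finite disjoint family of circles rather than a single circle, which is exactly what the paper does: choose a smooth $h\colon S^2\to[0,1]$ with $h^{-1}(0)=U$; since $\bigcap_{\epsilon>0}h^{-1}[0,\epsilon]=U\subseteq\mathring D_1\cap\mathring D_2$, compactness gives a small regular value $\epsilon$ with $h^{-1}[0,\epsilon]\subseteq\mathring D_1\cap\mathring D_2$. The level set $h^{-1}(\epsilon)$ is a closed $1$--manifold, and for each $i$ the region $D_i\cap h^{-1}[\epsilon,1]$ is a compact subsurface of $S^2\setminus\mathbf{C}$ with boundary $\partial D_i\cup h^{-1}(\epsilon)$, so $[\partial D_i]=[h^{-1}(\epsilon)]$ in $H_1(S^2\setminus\mathbf{C};\mathbb Z/2)$ and hence $[\partial D_1]=[\partial D_2]$. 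Your annulus computation is precisely the single-circle special case of this; if you replace your single $D_0$ by the multicurve $h^{-1}(\epsilon)$ (equivalently, by a finite disjoint union of reference disks), your argument goes through.
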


 \begin{proof}
   Let $h\co S^2\to [0,1]$ be a smooth function which is zero exactly on $U$.  Then for a sufficiently small regular value $\epsilon$, the set $h^{-1}[0,\epsilon]$ is contained in $D_1\cap D_2$.  The $1$--manifold $h^{-1}(\epsilon)$ is clearly homologous to both $\partial D_1$ and $\partial D_2$.
\end{proof}

\subsection{The particular Cantor set}
In this subsection we return to the situation set up in Subsection \ref{ss:setup} and verify that the limit set $\Lambda(K_W)$ in $\partial X$ is a Cantor set when the rank $k\geq 2$.  We also describe a nice basis for the topology on $\Lambda(K_W)$.  

Recall that the group $K_W$ is freely generated by parabolic elements $a_1,\ldots,a_k$.  In particular it is a free group whose Gromov boundary $\partial K_W$ can be identified with the set of all infinite freely reduced words in $a^{\pm 1}_1, \ldots , a^{\pm 1}_k$.  The collection of quasiconvex subgroups $\mc{A} = \{\langle a_1 \rangle,\ldots \langle a_k\rangle\}$ is malnormal in the free group $K_W$, so the pair $(K_W,\mc{A})$ is relatively hyperbolic.  Its Bowditch boundary $\partial(K_W,\mc{A})$ is the quotient of $\partial K_W$ obtained by identifying the pairs $\{ wa_i^{\infty}, wa_i^{-\infty} \}$ for each $i$ and each freely reduced $w$.  We can choose $w$ not to end with $a_i$ or $a_i^{-1}$ in such a description.  (See \cite[Theorem 1.1]{Tran:comparison_boundaries} for the description of the boundary of a relatively hyperbolic pair $(H,\mc{Q})$ where $H$ is hyperbolic, cf. \cite{Ger:Floyd,GerPot:Floyd,MOY:blowingupanddown,ManningBoundary}.)

\begin{lemma}\label{lem:LambdaKW=partialPFk}
 There is an equivariant homeomorphism $\partial(K_W,\mc{A})\to\Lambda(K_W)$.
\end{lemma}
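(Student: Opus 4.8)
The statement asserts that the limit set $\Lambda(K_W)\subseteq\partial X$, with its $K_W$--action, is equivariantly homeomorphic to the Bowditch boundary $\partial(K_W,\mc{A})$ of the relatively hyperbolic pair $(K_W,\mc{A})$, where $\mc{A}=\{\langle a_1\rangle,\dots,\langle a_k\rangle\}$. The natural strategy is to invoke the uniqueness of the Bowditch boundary: by Yaman's characterization (cited in the introduction), if $K_W$ acts on a compact metrizable space as a geometrically finite convergence group with the correct parabolic points and stabilizers, then that space is equivariantly homeomorphic to $\partial(K_W,\mc{A})$. So the plan is: (1) show $K_W$ acts as a convergence group on $\Lambda(K_W)$; (2) show this action is geometrically finite; (3) identify the parabolic points and check their stabilizers are exactly the conjugates of the $\langle a_i\rangle$; then (4) conclude by uniqueness.

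\textbf{Key steps.} First, note that $K_W$ acts on $X$ properly discontinuously and (since $W$ is a spiderweb, so $K_W$ acts cocompactly on bounded neighborhoods of $G$ inside $W$) the induced action on $\Lambda(K_W)\subseteq\partial X$ is a convergence action — this is a general fact for groups acting properly on Gromov hyperbolic spaces, restricted to the limit set, which here equals $\Lambda(W)=\Lambda(K_W)$ by Lemma \ref{lem:same_limit_set}. Second, the parabolic points: the only points of $\partial X$ fixed by infinite-order-torsion-free... more precisely, the elements $a_i$ are parabolic in the $G$--action on $\partial X$ (they generate filling kernels $K_c$), so each $a_i$ fixes a unique point $c_i\in\mc{C}$, and these $c_i$ lie in $\Lambda(K_W)$ since $\langle a_i\rangle<K_W$. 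The parabolic points for the $K_W$--action on $\Lambda(K_W)$ are precisely the $K_W$--translates of the $c_i$; one checks each such point is a bounded parabolic point (the horoball $\sepH_{c_i}$ contributes a compact quotient away from the point, using the spiderweb cocompactness axiom \ref{item:cocompact}), and that the stabilizer of $c_i$ in $K_W$ is exactly $\langle a_i\rangle$ — this uses the free product structure $K_W=\ast_{c\in C}K_c$ (axiom \ref{item:freeproduct}) together with the fact that $\mathrm{Stab}_G(c_i)$ intersects $K_W$ in $K_{c_i}=\langle a_i\rangle$. Third, conical limit points: every point of $\Lambda(K_W)$ not in a $K_W$--orbit of some $c_i$ is a conical limit point for the $K_W$--action; this follows because $K_W$ acts cocompactly on the complement in $W$ of the (saturated) horoballs, so the geodesic toward such a point stays boundedly close to a $K_W$--orbit. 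Together, (2) and (3) say the action is geometrically finite with peripheral structure matching $\mc{A}$.

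\textbf{Concluding step.} Now both $\Lambda(K_W)$ and $\partial(K_W,\mc{A})$ are compact metrizable spaces on which $K_W$ acts as geometrically finite convergence groups, with bounded parabolic points whose stabilizers (up to conjugacy) are exactly the members of $\mc{A}$, and with no other parabolics. By the uniqueness of the Bowditch boundary of a relatively hyperbolic pair (a standard consequence of Yaman's theorem, combined with the fact that a relatively hyperbolic group acts on a unique such space up to equivariant homeomorphism), there is a $K_W$--equivariant homeomorphism $\partial(K_W,\mc{A})\to\Lambda(K_W)$. Equivariance of the map is built into the uniqueness statement once one matches up the parabolic data.

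\textbf{Main obstacle.} The delicate point is verifying that the $K_W$--action on $\Lambda(K_W)$ is geometrically finite \emph{with the right peripheral structure} — specifically, that the stabilizer in $K_W$ of each parabolic point $c_i$ is exactly $\langle a_i\rangle$ and not larger, and that there are no ``accidental'' parabolic points coming from horoballs $\sepH_c$ with $c\in\calC(W)$ but $K_c$ not one of the chosen free factors. This is controlled by the free product structure (axiom \ref{item:freeproduct}: $K_W=\ast_{c\in C}K_c$ for a \emph{finite} subset $C$, even though $\calC(W)$ may be larger) together with malnormality of $\mc{A}$ in the free group $K_W$. One must argue that for $c\in\calC(W)\setminus K_WC$, the group $K_c$ is trivial or, more carefully, that the stabilizer of $c$ in $K_W$ is trivial, so $c$ is not a parabolic point of the $K_W$--action — and this should follow from the construction of spiderwebs in Section \ref{sec:spiderweb}, specifically the identification of the free factors with $K_W$--orbit representatives of the relevant parabolic points.
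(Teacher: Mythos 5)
Your proposal is correct in outline, but it takes a genuinely different route from the paper. The paper's proof is essentially a citation: spiderweb axioms \eqref{item:qc} and \eqref{item:relcocompact} verify Hruska's cocompactness criterion (QC-3) for relative quasiconvexity of $K_W$ in $(G,\mc{P})$, and Hruska's equivalence of (QC-3) with (QC-1) \cite{Hru-relqconv} then says precisely that $K_W$ acts geometrically finitely on $\Lambda(K_W)$ and that $\Lambda(K_W)$ is the Bowditch boundary of $K_W$ with the induced peripheral structure; the only remaining point is that this induced structure is $\mc{A}$. You instead propose to verify the dynamical characterization by hand --- convergence action, bounded parabolic points with stabilizers exactly the $K_c$, conicality of all other limit points --- and then invoke Yaman-type uniqueness \cite{Yaman}. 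This is viable, but the steps you sketch (conicality of non-parabolic limit points via cocompactness on the thick part of $W$, and the absence of ``accidental'' parabolics, i.e.\ that every $c\in\calC(W)$ with $K_W\cap\mathrm{Stab}_G(c)\neq 1$ lies in the $K_W$--orbit of the finite set $C$ of free factors) are exactly what Hruska's theorem packages for you, so your route amounts to reproving a special case of that equivalence; those verifications, while standard, are the real work and remain at the level of a sketch. Two smaller remarks: the stabilizer computation needs only $K\cap\mathrm{Stab}_G(c)=K_c$ (true for long fillings) together with $K_c\le K_W$ for $c\in\calC(W)$, not the full free product structure; and Yaman's theorem is stated for perfect compacta, so the elementary cases ($K_W$ trivial or a single cyclic parabolic factor) should be dispatched separately, though they are immediate. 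In short, both arguments rest on the same geometric finiteness statement and the same matching of peripheral structures (your ``main obstacle,'' which the paper handles in one clause); the paper buys brevity by citing Hruska, while your approach is more self-contained but substantially longer if written out in full.
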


\begin{proof}
 The spiderweb axioms \eqref{item:qc} and \eqref{item:relcocompact} from Definition \ref{def:spiderweb} imply that $K_W$ is relatively quasiconvex in $(G,\mc{P})$, using \cite[Definition 6.5 (QC-3)]{Hru-relqconv}.
 In particular, the limit set $\Lambda(K_W)$ is equivariantly homeomorphic to the relative boundary of $K_W$ endowed with the peripheral structure induced by $G$ (see e.g. the alternative definition of relative quasiconvexity \cite[Definition 6.2 (QC-1)]{Hru-relqconv}), which corresponds to the peripheral structure on $K_W$ used to define $\partial(K_W,\mc{A})$.
\end{proof}

\begin{defn}\label{wij}
Given a natural number $k$, an index $i \in \{ 1, \ldots , k\}$, a natural number $j$ and a word $w$ which does not end with $a_i$ or $a_i^{-1}$, let $\mc{B}(w,a_i,j)$ be the image in $\partial(K_W,\mc{A})$ of 
the set of all infinite freely reduced words beginning with $wa_i^j$ or $wa_i^{-j}$.
\end{defn}

\begin{lemma}\label{lem:wij_clopen}
The sets $\mc{B}(w,a_i,j)$ are clopen in $\partial(K_W,\mc{A})$.
\end{lemma}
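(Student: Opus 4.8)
The plan is to unpack the definition of $\mc{B}(w,a_i,j)$ through the two layers of structure: first as a subset of $\partial K_W$ (infinite freely reduced words), then its image under the quotient map $q\co \partial K_W\to\partial(K_W,\mc{A})$. Recall that $\partial K_W$ is a Cantor set (the free group has rank $k\geq 2$), with the standard topology in which the \emph{cylinder sets} $\mathrm{Cyl}(u)=\{$infinite reduced words beginning with $u\}$ form a clopen basis. So the set $V_{w,a_i,j}=\mathrm{Cyl}(wa_i^j)\cup\mathrm{Cyl}(wa_i^{-j})$ appearing in Definition~\ref{wij} is clopen in $\partial K_W$, being a finite union of cylinder sets.

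First I would argue that $\mc{B}(w,a_i,j)=q(V_{w,a_i,j})$ is open. Since $q$ is a quotient map, it suffices to check that $q^{-1}(q(V_{w,a_i,j}))$ is open in $\partial K_W$. The only identifications $q$ makes are $va_\ell^{\infty}\sim va_\ell^{-\infty}$ for reduced $v$ not ending in $a_\ell^{\pm1}$. So $q^{-1}(q(V_{w,a_i,j}))$ differs from $V_{w,a_i,j}$ only by possibly adding, for each boundary point of the form $va_\ell^{\pm\infty}$ lying in $V_{w,a_i,j}$, its partner $va_\ell^{\mp\infty}$. The key observation is that a point $va_\ell^{\infty}$ lies in $\mathrm{Cyl}(wa_i^{\pm j})$ exactly when $wa_i^{\pm j}$ is a prefix of the reduced word $va_\ell^{\infty}$; since $w$ does not end in $a_i^{\pm1}$, having $wa_i^{\pm j}$ as a prefix forces the partner point $va_\ell^{\mp\infty}$ to also have $wa_i^{\pm j}$ as a prefix whenever the ``divergence point'' $a_\ell^{\pm\infty}$ occurs strictly after position $|w|+j$; and if $\ell=i$ and the tail $a_i^{\pm\infty}$ begins within the block $a_i^{\pm j}$, one checks directly (a short case analysis on whether $v$ is a prefix of $wa_i^{\pm(j-1)}$ or $w$ is a prefix of $v$) that both partners land in $V_{w,a_i,j}$ or neither does. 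Carrying this out carefully shows $q^{-1}(q(V_{w,a_i,j}))$ is itself a finite union of cylinder sets, hence clopen; in particular open, so $\mc{B}(w,a_i,j)$ is open.

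For closedness, I would use that $\partial(K_W,\mc{A})$ is compact Hausdorff (it is the Bowditch boundary of a relatively hyperbolic pair, or via Lemma~\ref{lem:LambdaKW=partialPFk} it is $\Lambda(K_W)$, a compact subset of the metrizable space $\partial X$). Since $V_{w,a_i,j}$ is closed in the compact space $\partial K_W$, it is compact, so its continuous image $\mc{B}(w,a_i,j)=q(V_{w,a_i,j})$ is compact, hence closed in the Hausdorff space $\partial(K_W,\mc{A})$. Combining the two halves gives that $\mc{B}(w,a_i,j)$ is clopen.

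The main obstacle is the bookkeeping in the openness argument: one must verify that passing to the $q$-saturation does not ``leak'' outside a finite union of cylinders, and this requires being precise about which partner points $va_\ell^{\mp\infty}$ get added and checking they already start with $wa_i^{\pm j}$ (or $wa_i^{\mp j}$) — the hypothesis that $w$ does not end in $a_i^{\pm1}$ is exactly what makes the prefixes $wa_i^{j}$ and $wa_i^{-j}$ behave well under the identification. Alternatively, one can sidestep this entirely by working on the $\Lambda(K_W)$ side: using the equivariant homeomorphism of Lemma~\ref{lem:LambdaKW=partialPFk}, the set $\mc{B}(w,a_i,j)$ corresponds to $gP\cap\Lambda(K_W)$ for a suitable coset, or more simply one shows directly it equals $\Lambda(K_W)\cap \mathring D$ for a disk as in Lemma~\ref{lem:disks_covering_Cantor}; but the cylinder-set computation above is the most self-contained route, so that is the one I would write up.
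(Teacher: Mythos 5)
Your proof is correct and follows essentially the same route as the paper: closedness comes from compactness of the cylinder-set preimage plus Hausdorffness of the quotient, and openness rests on the same key combinatorial fact (the identifications $va_\ell^{\infty}\sim va_\ell^{-\infty}$ never move a point across the boundary of $V_{w,a_i,j}$, thanks to $w$ not ending in $a_i^{\pm1}$). The only cosmetic difference is that the paper deduces openness by applying the compact-image argument to the complement of the cylinder set, while you phrase it via saturation and the quotient topology; the underlying observation is identical.
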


\begin{proof}
 The subset $A$ of $\partial K_W$ of all infinite (freely reduced) words which start with $wa_i^j$ or $wa_i^{-j}$ is closed, whence compact, and hence so is its image $\mc{B}(w,a_i,j)$ in $\partial(K_W,\mc{A})$. Moreover, $A^c$ is also closed and hence so is its image $B$ in $\partial(K_W,\mc{A})$. It is easily seen that if the infinite freely reduced word $w'$ does not start with either $wa_i^j$ or $wa_i^{-j}$ then no word identified to $w'$ in $\partial(K_W,\mc{A})$ starts with either $wa_i^j$ or $wa_i^{-j}$, hence $B=\mc{B}(w,a_i,j)^c$, and $\mc{B}(w,a_i,j)$ is clopen.
\end{proof}

\begin{cor}\label{cor:isCantor}
 If $k\geq 2$ then $\partial(K_W,\mc{A})$ is a Cantor set.
\end{cor}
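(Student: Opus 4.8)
The plan is to verify the three defining properties of a Cantor set---totally disconnected, compact metrizable, and no isolated points---for $\partial(K_W,\mc{A})$ when $k\ge 2$, using the concrete description of this boundary given above and the basis $\{\mc{B}(w,a_i,j)\}$ of clopen sets.

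First I would note compactness and metrizability come essentially for free: $\partial K_W$ is the boundary of a finitely generated free group, hence a compact metrizable space (it is a Cantor set for $k\ge 2$), and $\partial(K_W,\mc{A})$ is a Hausdorff quotient of it (it is the Bowditch boundary of a relatively hyperbolic pair, hence compact metrizable by Bowditch's construction, or alternatively one checks the quotient by the closed equivalence relation identifying $wa_i^{\pm\infty}$ is still Hausdorff). So the content is in the other two properties. For total disconnectedness, I would show the sets $\mc{B}(w,a_i,j)$, together with their complements, separate points and form a basis of clopen sets. Given two distinct points $\xi\ne\eta$ of $\partial(K_W,\mc{A})$, lift them to infinite reduced words; since they are not identified, they must differ at some finite prefix in a way not absorbed by the peripheral identification, and one can then find some $\mc{B}(w,a_i,j)$ containing one but not the other (and being clopen by Lemma \ref{lem:wij_clopen}). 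Hence the connected component of any point is a singleton, so the space is totally disconnected.

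For the absence of isolated points, I would argue directly: given $\xi\in\partial(K_W,\mc{A})$ represented by a reduced word $u$, and any basic neighborhood, I can produce infinitely many distinct points in that neighborhood. If $u$ involves infinitely many occurrences of at least two distinct generators among $a_1,\dots,a_k$ one can perturb $u$ far out; in general, using $k\ge 2$, from any finite prefix $u'$ of $u$ one can append a long power $a_\ell^m$ of a generator $a_\ell$ different from the last letter of $u'$, and then continue with a generator different from $a_\ell$, producing reduced words that are not identified with $\xi$ and that agree with $u$ on as long a prefix as desired; these give distinct points of $\partial(K_W,\mc{A})$ converging to $\xi$. This uses that the peripheral identification only collapses $a_i^{+\infty}$ with $a_i^{-\infty}$ after a common prefix, so distinct such perturbations remain distinct in the quotient.

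The main obstacle, and the step requiring the most care, is bookkeeping the quotient: one must be sure that the perturbed words really represent \emph{distinct} points of $\partial(K_W,\mc{A})$ and not points identified under the relation $wa_i^{\pm\infty}\sim$, and similarly that $\mc{B}(w,a_i,j)$ genuinely separates the chosen points in the quotient rather than merely in $\partial K_W$. Lemma \ref{lem:wij_clopen} already handles the clopenness, so what remains is this injectivity-into-the-quotient argument, which follows from the explicit description of the identification (only pairs $wa_i^{\infty},wa_i^{-\infty}$ with $w$ not ending in $a_i^{\pm1}$ are identified). With that in hand, totally disconnected (via a separating clopen basis) plus no isolated points plus compact metrizable gives that $\partial(K_W,\mc{A})$ is a Cantor set, completing the proof of Corollary \ref{cor:isCantor}.
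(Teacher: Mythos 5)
Your proposal is correct and follows essentially the same route as the paper: total disconnectedness from the clopen sets $\mc{B}(w,a_i,j)$ of Lemma \ref{lem:wij_clopen}, a direct check (using $k\geq 2$) that there are no isolated points, and compactness and metrizability because the space is the boundary of a proper hyperbolic space. You merely spell out in more detail the separation and perturbation arguments that the paper leaves as easy observations.
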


\begin{proof}
The fact that $\partial(K_W,\mc{A})$ is totally disconnected follows from the fact that the $\mc{B}(w,a_i,j)$ are clopen. It is also easy to see that it does not have isolated points. Finally, $\partial(K_W,\mc{A})$ is compact and metrizable since it is (homeomorphic to) the boundary of a proper hyperbolic space.
\end{proof}

The following two lemmas follow directly from the definitions.

\begin{lemma}\label{lem:k_orbits}
For any $i$ and any $w$ that does not end with $a_i$ or $a_i^{-1}$ we have
\[	w . \mc{B}(1,a_i,1) = \mc{B}(w,a_i,1)	.	\]
\end{lemma}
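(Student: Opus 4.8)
The plan is to unwind Definition \ref{wij} together with the description of the $K_W$--action on $\partial(K_W,\mc{A})$, and to reduce everything to the observation that left multiplication by $w$ causes no cancellation when applied to an infinite reduced word that starts with $a_i^{\pm 1}$.

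First I would recall that the action of $K_W$ on $\partial(K_W,\mc{A})$ is induced, via the $K_W$--equivariant quotient map $q\co \partial K_W \to \partial(K_W,\mc{A})$ from the discussion preceding Definition \ref{wij}, from the usual action of $K_W$ on $\partial K_W$ by left concatenation and free reduction. As noted in the proof of Lemma \ref{lem:wij_clopen}, for any admissible word $w'$ the preimage $q^{-1}(\mc{B}(w',a_i,1))$ is exactly the set of infinite freely reduced words in $a_1^{\pm 1},\dots,a_k^{\pm 1}$ that begin with $w'a_i$ or $w'a_i^{-1}$.

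Next I would establish the combinatorial heart of the statement. Since $w$ does not end with $a_i$ or $a_i^{-1}$, whenever $u$ is an infinite reduced word beginning with $a_i$ (resp.\ $a_i^{-1}$) no cancellation occurs at the junction of $w$ and $u$, so $wu$ is reduced and begins with $wa_i$ (resp.\ $wa_i^{-1}$); conversely, every infinite reduced word beginning with $wa_i$ or $wa_i^{-1}$ equals $wu$ for $u=w^{-1}\cdot(wu)$, a reduced word beginning with $a_i$ or $a_i^{-1}$. Hence $w\cdot q^{-1}(\mc{B}(1,a_i,1)) = q^{-1}(\mc{B}(w,a_i,1))$, and applying $q$, which is surjective and equivariant, gives
\[ w\cdot \mc{B}(1,a_i,1) = q\bigl(w\cdot q^{-1}(\mc{B}(1,a_i,1))\bigr) = q\bigl(q^{-1}(\mc{B}(w,a_i,1))\bigr) = \mc{B}(w,a_i,1). \]
The only step requiring any care is the no-cancellation observation, which is precisely where the hypothesis on $w$ is used; the rest is bookkeeping.
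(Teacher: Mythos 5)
Your argument is correct: the paper offers no written proof here, stating only that the lemma ``follows directly from the definitions,'' and your unwinding of Definition \ref{wij} via the equivariant quotient map $q\co \partial K_W\to\partial(K_W,\mc{A})$ is exactly the direct verification intended. The no-cancellation observation at the junction of $w$ and a reduced word starting with $a_i^{\pm1}$ is indeed the one place the hypothesis that $w$ does not end in $a_i^{\pm1}$ is needed, and you have identified it correctly.
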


\begin{lemma}\label{lem:difference_bwij}
For any $w$ which does not end with $a_i$ or $a_i^{-1}$ and any $j > 1$, the set $\mc{B}(w,a_i,j)$ is equal to $\mc{B}(w,a_i,1)$ minus the union
\[\left( \bigcup_{j' < j, i' \ne i} \mc{B}	(wa_i^{j-j'},a_{i'},1) \right) \bigcup \left( \bigcup_{j' < j, i' \ne i} \mc{B}(wa_i^{-(j-j')}, a_{i'},1) \right).\]
\end{lemma}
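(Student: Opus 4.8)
The statement to prove is Lemma~\ref{lem:difference_bwij}, which asserts that for any $w$ not ending with $a_i^{\pm 1}$ and any $j>1$,
\[
\mc{B}(w,a_i,j) = \mc{B}(w,a_i,1) \setminus \left( \bigcup_{j'<j,\, i'\ne i} \mc{B}(wa_i^{j-j'},a_{i'},1) \cup \bigcup_{j'<j,\, i'\ne i} \mc{B}(wa_i^{-(j-j')},a_{i'},1)\right).
\]

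\begin{proof}
Recall from Definition~\ref{wij} that $\mc{B}(w,a_i,j)$ is the image in $\partial(K_W,\mc{A})$ of the set of infinite freely reduced words beginning with $wa_i^j$ or with $wa_i^{-j}$. The plan is to work at the level of infinite freely reduced words in $\partial K_W$, using the fact that the quotient map $\partial K_W \to \partial(K_W,\mc{A})$ identifies only the pairs $\{w'a_{i'}^\infty, w'a_{i'}^{-\infty}\}$, and that (as noted in the proof of Lemma~\ref{lem:wij_clopen}) membership of the image of a set of the form ``words starting with a given prefix'' is detected by any representative.

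\textbf{Step 1: Reduce to a statement about prefixes.}
For a freely reduced (finite or infinite) word $v$, write $\mc{C}(v)\subseteq \partial K_W$ for the set of infinite freely reduced words beginning with $v$, and let $q\co \partial K_W\to \partial(K_W,\mc{A})$ be the quotient map. By Lemma~\ref{lem:wij_clopen} and its proof, for a prefix $v$ not creating cancellation issues we have $q^{-1}(q(\mc{C}(v))) = \mc{C}(v)$ unless $v$ is (a power of a single generator times) a word where identification across the $a_{i'}^{\pm\infty}$ relation matters; in our situation all the prefixes $wa_i^{\pm m}$ and $wa_i^{\pm m}a_{i'}^{\pm 1}$ appearing are ``generic'' enough that $q$ is injective on the relevant sets except exactly at the identified endpoints $wa_i^{\pm\infty}$. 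Thus it suffices to prove the corresponding set identity one level up in $\partial K_W$ (together with a check on the two special points $wa_i^{\pm\infty}$), namely
\[
\mc{C}(wa_i^{j}) \cup \mc{C}(wa_i^{-j}) = \bigl(\mc{C}(wa_i) \cup \mc{C}(wa_i^{-1})\bigr) \setminus \!\!\bigcup_{\substack{j'<j\\ i'\ne i}}\!\! \bigl( \mc{C}(wa_i^{j-j'}a_{i'}) \cup \mc{C}(wa_i^{j-j'}a_{i'}^{-1}) \cup \mc{C}(wa_i^{-(j-j')}a_{i'}) \cup \mc{C}(wa_i^{-(j-j')}a_{i'}^{-1}) \bigr),
\]
after which the identity in $\partial(K_W,\mc{A})$ follows by applying $q$ and checking the two identified points separately.

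\textbf{Step 2: Prove the word-level identity.}
This is a direct combinatorial argument about freely reduced words. An infinite freely reduced word $\xi$ lies in $\mc{C}(wa_i)\cup\mc{C}(wa_i^{-1})$ iff $\xi$ has the form $w\,a_i^{\pm m}\,u$ with $m\ge 1$ maximal (so $u$ is empty or starts with a letter other than $a_i^{\pm 1}$, with the appropriate sign). Such a $\xi$ lies in $\mc{C}(wa_i^{j})\cup\mc{C}(wa_i^{-j})$ iff $m\ge j$. On the other hand, $\xi$ lies in one of the removed sets $\mc{C}(wa_i^{\pm(j-j')}a_{i'}^{\pm1})$ for some $j'<j$, $i'\ne i$ iff $1\le m\le j-1$ and the letter after $a_i^{\pm m}$ is some $a_{i'}^{\pm1}$ with $i'\ne i$ --- which, by maximality of $m$, is automatic whenever $1\le m\le j-1$ and $u$ is nonempty, and when $1\le m\le j-1$ with $u$ empty we get $\xi = wa_i^{\pm m}$, i.e. $\xi$ is the endpoint $wa_i^{\pm\infty}$ only when $m=\infty$, contradiction, so $u$ is automatically nonempty in the removed case. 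Hence the set on the right is exactly $\{\xi : m\ge j\} = \mc{C}(wa_i^{j})\cup\mc{C}(wa_i^{-j})$. (One must note $j>1$ is used so that $m\ge j$ forces $m\ge 2$, consistent with the maximal-power bookkeeping; for $j'=0$ no term appears since $j'<j$ and we need $j-j'\le m$-type conditions, but $j'$ ranges only over positive values less than $j$, consistent with the stated union.)

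\textbf{Step 3: Descend to $\partial(K_W,\mc{A})$ and handle identified points.}
Applying $q$ to both sides and using that $q$ commutes with the set operations up to the identifications $\{w'a_{i'}^\infty,w'a_{i'}^{-\infty}\}$, the only points requiring care are $q(wa_i^\infty) = q(wa_i^{-\infty})$ (a single point of $\partial(K_W,\mc{A})$). This point lies in $\mc{B}(w,a_i,j)$ for every $j$ (it is represented by words starting with $wa_i^{j}$), lies in $\mc{B}(w,a_i,1)$, and lies in none of the removed sets $\mc{B}(wa_i^{\pm(j-j')},a_{i'},1)$ since the word $wa_i^{\pm\infty}$ never has a letter $a_{i'}^{\pm1}$, $i'\ne i$, after the $a_i$-block; this matches both sides. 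Every other point of $\partial(K_W,\mc{A})$ has a unique preimage among the words considered, so the Step 2 identity transfers verbatim. This establishes the lemma.

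The main obstacle is purely bookkeeping: being careful about maximality of the power $m$ of $a_i$ and about the single identified boundary point $wa_i^{\pm\infty}$, so that the word-level identity descends correctly; there is no serious conceptual difficulty.
\end{proof}
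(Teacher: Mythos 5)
Your proof is correct and is essentially the argument the paper has in mind: the paper offers no written proof, stating only that the lemma "follows directly from the definitions," and your word-level bookkeeping (maximal $a_i$-power $m$, removal exactly when $1\le m\le j-1$, then descending through the quotient after checking the identified pair $wa_i^{\pm\infty}$) is precisely that direct verification. The only blemish is the claim in Step 3 that every other point has a unique relevant preimage — points whose identification happens deeper in the word have two preimages — but since both preimages always lie on the same side of each set involved (the sets are saturated), this does not affect the validity of the argument.
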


\begin{lemma}\label{lem:clopen_decomposition}
  Let $U\subseteq \partial(K_W,\mc{A})$ be clopen.  Then $U$ is a finite disjoint union of sets $\{\mc{B}(w_s,a_{i_s},j_s)\}_{s\in J}$.
\end{lemma}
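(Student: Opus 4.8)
The plan is to show that the sets $\mc{B}(w,a_i,j)$ form a basis for the topology of $\partial(K_W,\mc{A})$, and then use compactness of $U$ together with total disconnectedness to extract a finite disjoint subcover.

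First I would verify the basis claim. Recall $\partial(K_W,\mc{A})$ is the quotient of $\partial K_W$ (the space of infinite freely reduced words) by the identifications $wa_i^{\infty}\sim wa_i^{-\infty}$. A basis for $\partial K_W$ is given by cylinder sets $[u]$ consisting of all infinite reduced words beginning with a fixed finite reduced word $u$. Given such $u$, write $u = w a_i^{\pm j}$ where $a_i^{\pm j}$ is the maximal power of a single generator at the end of $u$ (so $w$ does not end in $a_i^{\pm 1}$, and if $u$ is a single power we take $w$ to be the empty word). Then $[wa_i^j]$ and $[wa_i^{-j}]$ are disjoint cylinders whose union maps onto $\mc{B}(w,a_i,j)$; moreover, as noted in the proof of Lemma \ref{lem:wij_clopen}, no word outside $[wa_i^j]\cup[wa_i^{-j}]$ is identified with one inside, so the image of a cylinder $[u]$ is exactly $\mc{B}(w,a_i,j)$ and is clopen (Lemma \ref{lem:wij_clopen}). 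Since the quotient map is continuous, open, and surjective (it identifies pairs of points, hence is open as it carries each cylinder to a clopen set), images of basic cylinders form a basis downstairs. Hence $\{\mc{B}(w,a_i,j)\}$ is a basis for $\partial(K_W,\mc{A})$.

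Next, given a clopen $U\subseteq\partial(K_W,\mc{A})$: since $U$ is open, it is a union of basic sets $\mc{B}(w,a_i,j)$; since $U$ is closed in the compact space $\partial(K_W,\mc{A})$, it is compact, so finitely many of these basic sets cover $U$, say $U = \bigcup_{s=1}^n \mc{B}(w_s,a_{i_s},j_s)$. It remains to make this union disjoint. Here I would use that any two of these basic sets are either disjoint or nested: two cylinders in $\partial K_W$ are nested or disjoint, and this property descends through the quotient because (again by the observation in Lemma \ref{lem:wij_clopen}) the preimage of $\mc{B}(w,a_i,j)$ is exactly the pair of cylinders $[wa_i^{j}]\cup[wa_i^{-j}]$, and one checks directly on prefixes whether $[w_sa_{i_s}^{\pm j_s}]$ is contained in, contains, or is disjoint from $[w_ta_{i_t}^{\pm j_t}]$. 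Discarding any $\mc{B}(w_s,a_{i_s},j_s)$ contained in another leaves a finite subfamily that is pairwise disjoint and still covers $U$; restricting the index set to those $s$, we get $U = \bigsqcup_{s\in J}\mc{B}(w_s,a_{i_s},j_s)$ as required.

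The main obstacle is the quotient-map bookkeeping: one must be careful that the identifications $wa_i^\infty\sim wa_i^{-\infty}$ do not merge a basic cylinder with part of another in a way that breaks the ``nested or disjoint'' dichotomy. This is handled cleanly once one records that the full preimage of $\mc{B}(w,a_i,j)$ in $\partial K_W$ is precisely $[wa_i^j]\cup[wa_i^{-j}]$ (with $w$ chosen not to end in $a_i^{\pm1}$), which is exactly the content already used in the proof of Lemma \ref{lem:wij_clopen}; the rest is elementary combinatorics of prefixes of freely reduced words. Alternatively, and perhaps more slickly, one can bypass the nesting discussion entirely: cover $U$ by basic sets, refine to a finite subcover by compactness, and then appeal directly to Lemmas \ref{lem:difference_bwij} to rewrite overlaps, but the nesting argument above is the most transparent.
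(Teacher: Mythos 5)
Your plan (show the sets $\mc{B}(w,a_i,j)$ form a basis, extract a finite subcover by compactness, then disjointify using a nested-or-disjoint dichotomy) can be made to work, but as written the central step has a genuine error. The quotient map $q\co \partial K_W\to\partial(K_W,\mc{A})$ is \emph{not} open, and the image of a single cylinder $[u]$ with $u=wa_i^{j}$ is \emph{not} $\mc{B}(w,a_i,j)$: the saturation of $[wa_i^{j}]$ is only $[wa_i^{j}]\cup\{wa_i^{-\infty}\}$, so $q([wa_i^{j}])$ misses $q(wa_i^{-j}x)$ for a generic tail $x$, and it is not even open, since $wa_i^{-\infty}$ is not an interior point of that saturation. (What Lemma \ref{lem:wij_clopen} gives is that the \emph{union} $[wa_i^{j}]\cup[wa_i^{-j}]$ is saturated, not that each cylinder has image $\mc{B}(w,a_i,j)$.) So "continuous, open, surjective, images of cylinders are the $\mc{B}$'s" does not establish the basis property. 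The property is in fact true, but it needs a direct argument with the saturated open set $q^{-1}(V)$: at a point whose fiber is a pair $\{va_m^{\infty},va_m^{-\infty}\}$ you must take $\mc{B}(v,a_m,j)$ with $j$ large enough that \emph{both} cylinders $[va_m^{\pm j}]$ lie in $q^{-1}(V)$ — this is exactly where saturation is needed — and at a point with a one-point fiber you take prefixes ending at block boundaries, so that both cylinders over the associated $\mc{B}$ set share a long prefix with the given word.

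The rest of your argument is sound: any two sets $\mc{B}(w,a_i,j)$, $\mc{B}(w',a_{i'},j')$ really are nested or disjoint (if one cylinder of the preimage $[w'a_{i'}^{j'}]\cup[w'a_{i'}^{-j'}]$ lies in a cylinder of $[wa_i^{j}]\cup[wa_i^{-j}]$, then the whole preimage does, using that $w,w'$ do not end in the relevant generator and that a positive power block cannot match a negative one), so discarding non-maximal sets disjointifies a finite cover. Note that this route is quite different from the paper's, which never needs a basis statement or a disjointification: there the clopen-ness of the saturated preimage $U'\subseteq\partial K_W$ yields a single uniform $n$ such that membership in $U'$ depends only on the length-$n$ prefix, and the decomposition is read off at once from the length-$n$ prefixes of words in $U'$ ending in a positive power. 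Repairing your basis argument as above — or simply replacing it by this uniform-prefix compactness argument applied to $U'$ — closes the gap.
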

\begin{proof}
  Let $U'$ be the preimage of $U$ in $\partial K_W$, and note that $U'$ is clopen.  
  \begin{claim*}
    There is an $n>0$ satisfying:  Whenever $v$ is an infinite freely reduced word which coincides with some $w\in U'$ on an initial subword of length $n$, then $v\in U'$
  \end{claim*}
  \begin{proof}
    Suppose not.  Then there is a sequence of pairs $\{(v_i,w_i)\}_{i\in \bN}$ so that $v_i$ coincides with $w_i$ on an initial subword of length $i$, $w_i\in U'$, but $v_i\notin U'$.  The common prefixes $u_i$ subconverge to an infinite word which is in the closure both of $U'$ and of its complement, contradicting the fact that $U'$ is clopen.
  \end{proof}
  Now let $J$ be the set of prefixes of words in $U'$ of length $n$ which end with a positive power of one of the generators $a_i$. 
Any $s\in J$ can be written uniquely as a freely reduced word $w_s a_{i_s}^{j_s}$.  Then $\left\{ \mc{B}(w_{s},a_{i_s},j_s) \right\}_{s \in J}$ satisfies the required properties. 
\end{proof}

\subsection{Proof of Proposition \ref{Xiarespheres}}
We use the notation set up at the beginning of the section.  Note that $\partial X\setminus\Lambda(K_W)$ has a $K_W$--action, which makes its homology into a $K_W$--module.  Recall that $K_W$ is free of rank $k$.
\begin{lemma}\label{lem:KW-module}
 As a $K_W$--module, $H_1(\partial X\setminus \Lambda(K_W);\bZ/2)$ has rank at most $\max\{k-1,0\}$.
\end{lemma}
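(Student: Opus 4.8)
The plan is to reduce the computation to combinatorics of clopen subsets of the Cantor set $\Lambda(K_W)$. First dispose of the easy cases. If $k=0$ then $K_W$ is trivial and $\Lambda(K_W)=\emptyset$; if $k=1$ then $K_W$ is infinite cyclic generated by a parabolic and $\Lambda(K_W)$ is a single point. In both cases $\partial X\setminus\Lambda(K_W)$ is $S^2$ or an open disk, so $H_1(\partial X\setminus\Lambda(K_W);\bZ/2)=0$ and there is nothing to prove. So assume $k\geq 2$; then by Lemma~\ref{lem:LambdaKW=partialPFk} and Corollary~\ref{cor:isCantor}, $\mathbf C:=\Lambda(K_W)\subseteq\partial X\cong S^2$ is a Cantor set, equivariantly homeomorphic to $\partial(K_W,\mc A)$.

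For each clopen $U\subseteq\mathbf C$ let $[U]\in H_1(\partial X\setminus\mathbf C;\bZ/2)$ be the class of $\partial D$, where $D$ is a closed disk in $S^2$ with $\mathring D\cap\mathbf C=U$; such $D$ exists and $[U]$ does not depend on the choice, by Lemmas~\ref{lem:disks_covering_Cantor} and \ref{lem:loop_unique}. I would record the following elementary properties, each proved by choosing disjoint disks via Lemma~\ref{lem:disks_covering_Cantor} and examining the complementary spheres-with-holes: $[\emptyset]=0$; $[\mathbf C]=0$ (a disk whose interior contains $\mathbf C$ has boundary bounding the complementary disk); $[U_1\sqcup U_2]=[U_1]+[U_2]$ for disjoint clopens $U_1,U_2$ (a pair of pants argument); and $g_*[U]=[gU]$ for $g\in K_W$, since $g$ acts on $S^2$ by a homeomorphism preserving $\mathbf C$ and so carries a disk around $U$ to one around $gU$.

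The key topological input is that the classes $[U]$, for $U\subseteq\mathbf C$ clopen, generate $H_1(\partial X\setminus\mathbf C;\bZ/2)$. This follows from Alexander duality in $S^2$ with $\bZ/2$ coefficients, which identifies $H_1(S^2\setminus\mathbf C;\bZ/2)$ with reduced \v{C}ech cohomology $\check H^0(\mathbf C;\bZ/2)$, i.e.\ with locally constant $\bZ/2$-valued functions on $\mathbf C$ modulo constants, via an isomorphism sending $\mathbf 1_U\mapsto[U]$. Alternatively one argues directly: a singular $1$-cycle in $S^2\setminus\mathbf C$ has compact support $K$ disjoint from $\mathbf C$; cover $\mathbf C$ by finitely many disjoint closed disks $D_1,\dots,D_m$ with each $\mathring D_i\cap\mathbf C$ clopen and $D_i\cap K=\emptyset$; then in the sphere-with-$m$-holes $S^2\setminus\bigcup\mathring D_i$ the cycle is a $\bZ/2$-sum of boundary curves $[\partial D_i]=[\mathring D_i\cap\mathbf C]$. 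I expect this to be the step requiring the most care — the direct version needs the disjointification of disks away from $K$, which is fiddly though standard — so I would most likely simply invoke Alexander duality.

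It remains to show that every $[U]$ lies in the $\bZ/2[K_W]$-submodule $M$ generated by $[\mc B(1,a_1,1)],\dots,[\mc B(1,a_k,1)]$. By Lemma~\ref{lem:clopen_decomposition} and additivity, it suffices to treat $U=\mc B(w,a_i,j)$. For $j>1$, Lemma~\ref{lem:difference_bwij} exhibits $\mc B(w,a_i,j)$ as $\mc B(w,a_i,1)$ with a disjoint union of sets $\mc B(wa_i^{\pm m},a_{i'},1)$ ($i'\neq i$) removed, so over $\bZ/2$ the class $[\mc B(w,a_i,j)]$ is a sum of classes $[\mc B(w',a_{i'},1)]$ with $w'$ not ending in $a_{i'}^{\pm1}$; combined with the $j=1$ case this shows every $[U]$ is a $\bZ/2$-linear combination of classes of the form $[\mc B(w',a_{i'},1)]$. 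By Lemma~\ref{lem:k_orbits} and equivariance, $[\mc B(w',a_{i'},1)]=w'\cdot[\mc B(1,a_{i'},1)]\in M$, so $M=H_1(\partial X\setminus\mathbf C;\bZ/2)$. Finally $\mathbf C=\bigsqcup_{i=1}^k\mc B(1,a_i,1)$, so additivity and $[\mathbf C]=0$ give $\sum_{i=1}^k[\mc B(1,a_i,1)]=0$; hence $[\mc B(1,a_k,1)]=\sum_{i=1}^{k-1}[\mc B(1,a_i,1)]$ lies in the submodule generated by the first $k-1$ classes, and therefore $H_1(\partial X\setminus\Lambda(K_W);\bZ/2)$ is generated as a $K_W$-module by $k-1$ elements.
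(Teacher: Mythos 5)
Your argument is correct, and its overall skeleton is the paper's: define a class for each clopen subset of the Cantor set $\Lambda(K_W)$ via Lemmas \ref{lem:disks_covering_Cantor} and \ref{lem:loop_unique}, reduce arbitrary clopen sets to the sets $\mc{B}(w,a_i,j)$ via Lemma \ref{lem:clopen_decomposition}, reduce $j>1$ to $j=1$ via Lemma \ref{lem:difference_bwij}, push everything to the $k$ classes $[\mc{B}(1,a_i,1)]$ via Lemma \ref{lem:k_orbits} and equivariance, and finally kill one generator using $\sum_i[\mc{B}(1,a_i,1)]=[\Lambda(K_W)]=0$. The one place where you genuinely diverge is the generation step, i.e.\ the claim that the disk-boundary classes $[U]$, $U$ clopen, generate $H_1(\partial X\setminus\Lambda(K_W);\bZ/2)$. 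The paper does this with bare hands: an arbitrary class is represented by a sum of simple loops (after perturbing to a self-transverse loop and resolving crossings over $\bZ/2$), a simple loop bounds a disk $D$ in $S^2$, the clopen set $D\cap\Lambda(K_W)$ is decomposed via Lemma \ref{lem:clopen_decomposition}, and Lemma \ref{lem:disks_covering_Cantor} applied inside $D$ expresses $\partial D$ as a sum of the loops $l_{w,a_i,j}$. You instead invoke Alexander duality with $\bZ/2$ coefficients, identifying $H_1(S^2\setminus\mathbf{C};\bZ/2)$ with reduced \v{C}ech $\check H^0(\mathbf{C};\bZ/2)$ and noting that every element there is the indicator of a clopen set; this is clean and correct, at the price of checking that the duality isomorphism really carries $\mathbf{1}_U$ to $[U]$ (a standard linking-number verification, which you rightly flag), whereas the paper's route stays entirely within the elementary planar topology already set up in Lemmas \ref{lem:disks_covering_Cantor}--\ref{lem:clopen_decomposition}. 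Your explicit additivity statement $[U_1\sqcup U_2]=[U_1]+[U_2]$ is a nice way to organize manipulations that the paper performs implicitly inside its disk arguments; your acknowledged fallback (covering $\mathbf{C}$ by disjoint disks missing the support of a cycle) is also fine but, as you note, requires the fiddly control of disk sizes that the paper's simple-loop argument avoids.
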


\begin{proof}
When we refer to `homology' in this proof we always mean homology with $\mathbb Z/2$ coefficients.

Notice that the cases $k=0,1$ are easy, so we can assume $k\geq 2$. Let $Z=\partial X\setminus \Lambda(K_W)$. We identify $\partial(K_W,\mc{A})$ with $\Lambda(K_W)$ (which we can do in view of Lemma \ref{lem:LambdaKW=partialPFk}). Recall that $\Lambda(K_W)$ is a Cantor set by Corollary \ref{cor:isCantor}.
 For each set $\mc{B}(w,a_i,j)$ as in Definition \ref{wij}, let $l_{w,a_i,j}$ be a loop in $\partial X$ bounding a disk that intersects $\Lambda(K_W)$ in $\mc{B}(w,a_i,j)$ (which exists by Lemma \ref{lem:disks_covering_Cantor} in view of the fact that $\mc{B}(w,a_i,j)$ is clopen, see Lemma \ref{lem:wij_clopen}). The element of $H_1(Z)$ represented by $l_{w,a_i,j}$ depends only on $w,a_i$, and $j$, by Lemma \ref{lem:loop_unique}. 
As a first step in the proof, let us show that such loops generate $H_1(Z)$. It suffices to prove that any simple loop $l$ in $Z$ is, homologically, a sum of loops $l_{w,a_i,j}$. This is because it suffices to consider smooth self-transverse loops, and each of those is homologically a sum of simple loops. Let $D$ be one of the disks in $\partial X$ bounded by $l$.  It follows from Lemma \ref{lem:clopen_decomposition} that $D\cap \Lambda(K_W)$ is a disjoint union of sets of the form $\mc{B}(w,a_i,j)$, which in turn implies, in view of Lemma \ref{lem:disks_covering_Cantor}, that homologically $l$ is a sum of loops $l_{w,a_i,j}$.
 
 We now prove that each loop $l_{w,a_i,j}$ is homologically a sum of loops of the form $l_{w',a_{i'},1}$. Consider some $l_{w,a_i,j}$. By Lemma \ref{lem:difference_bwij}, one of the disks $D$ bounded by $l_{w,a_i,1}$ has the property that $D\cap \Lambda(K_W)$ is a disjoint union of $\mc{B}(w,a_i,j)$ and sets $\mc{B}(w',a_{i'},1)$. By Lemma \ref{lem:disks_covering_Cantor}, $l_{w,a_i,j}$ is homologically a sum of loops $l_{w',a_{i'},1}$, as required.
 
 Homologically, each of these loops $l_{w',a_{i'},1}$ is in the $K_W$--orbit of the loop $l_{1,a_{i'},1}$ by Lemma \ref{lem:k_orbits}.  In particular, the $k$ loops $\{l_{1,a_i,1}\}$ generate $H_1(Z)$ as a $K_W$--module.
If $k\geq 2$, these loops can be chosen to encircle disjoint discs whose union contains $\Lambda(K_W)$, so we have $\sum l(1,a_i,1) = 0$ in $H_1(S^2\setminus \Lambda(K_W))$.  Any one of the generators can be written in terms of the others, so the rank is at most $k-1$.
\end{proof}

\begin{proof}[Proof of Proposition \ref{Xiarespheres}]
For sufficiently long fillings and any spiderweb $W$ with suitable parameter, by Theorem \ref{thm:trunc} there is a normal covering map $\partial X\setminus \Lambda(K_W)\to \partial T_W\setminus \mathcal F$ with deck group $K_W$, where $\mathcal F$ is the union of all limit sets of horosphere quotients $\Lambda(\Sigma_c /K_c)$ for $c\in \calC(W)$, which in our case is a finite set with $2k$ elements if $k$ is the rank of the free group $K_W$ (we assume $k\geq 1$). In particular, $S=\partial T_W\setminus \mathcal F$ is a $2$--manifold, and since $\partial T_W \setminus \mathcal F$ is open and dense in $\partial T_W$ by Theorem \ref{thm:trunc}.\eqref{trunc_F codense} and $\partial T_W$ is compact, in view of Lemma \ref{lem:reduction_to_homology} we are left to show that the dimension of $H_1(S,\mathbb Z/2)$ is at most $2k-1$. From the short exact sequence $$1\to \pi_1(\partial X\setminus \Lambda(K_W)) \to \pi_1(S) \to K_W\to 1,$$ we see that this dimension is the sum of $k$ and the rank of $H_1(\partial X\setminus \Lambda(K_W))$ as a $K_W$--module, so we are done by Lemma \ref{lem:KW-module}.
\end{proof}

\section{Ruling out the Sierpinski carpet} \label{ss:not Sierpinski}
In the next section, we will prove Theorem \ref{thm:boundary sphere} by first showing that the boundary $\partial \overline{G}$ is planar, using Lemma \ref{ivanovlemma} and a criterion of Claytor \cite{Claytor34}.  A result of Kapovich and Kleiner \cite[Theorem 4]{KK} (along with \cite[Theorem 1.2]{GM-splittings}) then implies that $\partial \overline{G}$ is either $S^2$ or a Sierpinski carpet.  In this section, we rule out the possibility that it is a Sierpinski carpet.

\begin{defn}
  Let $M$ be a metric space, let $f\co S^1\to M$ be continuous, and let $\epsilon>0$.  We say that $f$ \emph{has an $\epsilon$--filling} if there is a triangulation of the unit disk $D^2$ and a (not necessarily continuous) extension of $f$ to $\bar{f}\co D^2\to M$, so that each simplex of the triangulation is mapped by $\bar{f}$ to a set of diameter at most $\epsilon$.  
\end{defn}

\begin{defn}
  Say a metric space $M$ is \emph{weakly simply connected} if, for every continuous $f\co S^1\to M$ and every $\epsilon>0$, $f$ has an $\epsilon$--filling.
\end{defn}

The following two lemmas are easy.
\begin{lemma}\label{lem:weakly_1_conn_topological}
  For a compact metrizable space $M$, being weakly simply connected is independent of the metric.
\end{lemma}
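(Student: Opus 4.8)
The plan is to show that, for a compact metrizable space $M$, the property of being weakly simply connected does not depend on the choice of a compatible metric. The key point is that a continuous map $f\co S^1\to M$ is fixed once and for all, and the only thing that varies is the notion of ``diameter at most $\epsilon$'' used in the definition of an $\epsilon$--filling. So the statement is really a comparison between two metrics inducing the same topology on a compact space, and the classical fact that any two such metrics are uniformly equivalent.

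First I would let $d$ and $d'$ be two metrics on $M$ inducing the given (compact, metrizable) topology. Since $M$ is compact, the identity map $(M,d)\to(M,d')$ is a homeomorphism between compact metric spaces, hence uniformly continuous; likewise its inverse. Concretely, for every $\epsilon>0$ there is $\eta>0$ so that any subset of $M$ of $d$--diameter at most $\eta$ has $d'$--diameter at most $\epsilon$, and symmetrically. Now suppose $M$ is weakly simply connected with respect to $d$, and fix a continuous $f\co S^1\to M$ and $\epsilon>0$. Choose $\eta$ as above for this $\epsilon$. By hypothesis, $f$ has an $\eta$--filling with respect to $d$: a triangulation of $D^2$ and an extension $\bar f\co D^2\to M$ sending each simplex to a set of $d$--diameter at most $\eta$. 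By the choice of $\eta$, each such simplex is then sent to a set of $d'$--diameter at most $\epsilon$. Hence the same triangulation and the same extension $\bar f$ witness an $\epsilon$--filling of $f$ with respect to $d'$. Since $f$ and $\epsilon$ were arbitrary, $M$ is weakly simply connected with respect to $d'$, and by symmetry the property is independent of the metric.

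There is essentially no obstacle here; the one thing to be slightly careful about is that the extension $\bar f$ in the definition need not be continuous, so uniform continuity of $\bar f$ itself is not available and should not be invoked — but it is not needed, since we only transfer the diameter bounds on the images of simplices, which depend only on comparing $d$ and $d'$ on subsets of $M$, not on any regularity of $\bar f$. The underlying input is simply that a bijection between compact metric spaces that is continuous in both directions is uniformly continuous in both directions. I would present this as a short paragraph, citing compactness for the uniform equivalence of the two metrics and then noting that an $\eta$--filling for the finer-scale metric is automatically an $\epsilon$--filling for the coarser one once $\eta$ is chosen appropriately.
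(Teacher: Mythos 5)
Your argument is correct: uniform equivalence of any two compatible metrics on a compact space transfers the diameter bounds on simplex images, and, as you rightly note, no regularity of $\bar f$ is needed. The paper omits the proof as easy, and this is precisely the intended argument, so there is nothing to add.
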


\begin{lemma}
  Any simply connected metric space is weakly simply connected.
\end{lemma}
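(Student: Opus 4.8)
The plan is to exploit the fact that in a simply connected space every loop bounds a continuous disk, and then to triangulate that disk finely enough that each simplex has small image.

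First I would observe that since $M$ is simply connected it is path-connected with trivial fundamental group, so any continuous $f\co S^1\to M$ is null-homotopic and hence extends to a continuous map $F\co D^2\to M$ with $F|_{S^1}=f$. Because $D^2$ is compact, $F$ is uniformly continuous: given $\epsilon>0$ there is $\eta>0$ so that any two points of $D^2$ at distance less than $\eta$ have images at distance less than $\epsilon/2$, and hence any subset of $D^2$ of diameter at most $\eta$ has image of diameter at most $\epsilon$.

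Next I would choose a triangulation $\mathcal T$ of $D^2$ all of whose simplices have diameter (in the Euclidean metric on $D^2$) less than $\eta$; such triangulations exist — for instance by a sufficiently fine radial/concentric subdivision, or by iterating barycentric subdivision of a fixed triangulation — and may be taken so that $\partial D^2$ is the standard circle $S^1$. Setting $\bar f=F$ then produces the required $\epsilon$-filling: $\bar f$ is an extension of $f$, and by the uniform continuity estimate $\bar f$ maps each simplex of $\mathcal T$ to a set of diameter at most $\epsilon$. Since $\epsilon>0$ and $f$ were arbitrary, $M$ is weakly simply connected.

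The argument has no substantive obstacle; the only point requiring a word of care is what is meant by a ``triangulation of $D^2$'' and the existence of arbitrarily fine ones whose boundary is identified with $S^1$ (so that $\bar f$ genuinely extends $f$) — this could equally well be folded into the definition of $\epsilon$-filling. Note also that it does no harm that our extension $\bar f=F$ is in fact continuous: the definition only requires a possibly discontinuous extension, so we obtain slightly more than is asked for.
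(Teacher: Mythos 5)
Your argument is correct, and it is exactly the standard argument the paper has in mind: the authors state this lemma without proof (labelling it ``easy''), and extending the null-homotopic loop continuously over $D^2$, then using compactness/uniform continuity to choose a triangulation fine enough that each simplex has image of diameter at most $\epsilon$, is the intended route. The only (cosmetic) point is to take simplices of diameter strictly less than your $\eta$, which you in fact do, so there is no gap.
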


\begin{lemma}\label{lem:Sierpinski_not_1_connected}
  The Sierpinski carpet (with any metric) is not weakly simply connected.
\end{lemma}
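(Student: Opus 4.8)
The strategy is to exhibit, inside the standard Sierpinski carpet $\mathcal{S}\subset S^2$, a loop $f\co S^1\to \mathcal{S}$ and a scale $\epsilon>0$ so that no $\epsilon$--filling of $f$ exists. By Lemma~\ref{lem:weakly_1_conn_topological} it suffices to work with one convenient metric, so we take $\mathcal{S}$ with the metric induced from $S^2$ (or $[0,1]^2$). Recall that $\mathcal{S}$ is obtained from the square by deleting a dense collection of open subsquares (``holes''), in particular the central open square $Q_0$ of side $1/3$. Let $f\co S^1\to\mathcal{S}$ be a homeomorphism onto $\partial Q_0$, the boundary of that central hole, and let $\epsilon$ be small compared to $1/3$, say $\epsilon = 1/100$.

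\textbf{Key steps.}
First I would set up a topological obstruction in $S^2$: any extension $\bar f\co D^2\to S^2$ of $f$ (continuous or not) must, in a suitable sense, ``cover'' the hole $Q_0$; more precisely, I would use a degree or linking argument to show that for any triangulation of $D^2$ and any map $\bar f\co D^2\to S^2$ extending $f$, the union of those closed simplices whose image meets $Q_0$ must contain, in its image, a path connecting $\partial Q_0$ to the center of $Q_0$ — equivalently, the images of the simplices cannot all avoid the center $p_0$ of $Q_0$ while still ``filling in'' the loop $\partial Q_0$. The cleanest way to make this rigorous: pass to the straight-line (simplicial) extension $g\co D^2\to S^2$ of the vertex values of $\bar f$; then $g$ agrees with $f$ on $\partial D^2$ and hence has a well-defined winding number $1$ about $p_0$, so $p_0\in g(D^2)$, and in fact $p_0$ lies in the image of some simplex $\tau$ under $g$. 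Second, I would relate $g$ back to $\bar f$: the vertices of $\tau$ are mapped by $\bar f$ into $\mathcal{S}$, hence outside the open hole $Q_0$, so $\bar f(\partial\tau^{(0)})$ consists of points at distance $\ge \mathrm{dist}(p_0,\partial Q_0) = 1/6$ from $p_0$ in at least one coordinate direction is not quite enough — instead note that since $g$ is the affine interpolation and $p_0=g(x)$ for some $x\in\tau$, some vertex $v$ of $\tau$ satisfies $d(\bar f(v),p_0)\le$ (something), but more usefully: the three vertex images $\bar f(v_i)$ all lie in $\mathcal{S}\subset S^2\setminus Q_0$, yet their affine combination hits $p_0$, which forces two of the $\bar f(v_i)$ to lie on ``opposite sides'' of $Q_0$ and hence to be at mutual distance $\ge \mathrm{diam}(Q_0)=1/3 > \epsilon$. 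Therefore the simplex $\tau$ has $\mathrm{diam}(\bar f(\tau))\ge \mathrm{diam}(\{\bar f(v_i)\})\ge 1/3>\epsilon$, contradicting the defining property of an $\epsilon$--filling.

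\textbf{Main obstacle.}
The delicate point is the second step: controlling the (possibly very discontinuous) extension $\bar f$ using only its vertex values. Replacing $\bar f$ by its affine interpolant $g$ is legitimate for the winding-number computation since $g|_{\partial D^2}$ still equals $f$ (the triangulation of $\partial D^2$ is mapped by $\bar f$ to $\mathcal{S}$, and I would first perturb/choose $f$ and the triangulation so $f$ is already affine on $\partial D^2$, or simply note winding number only depends on the boundary restriction up to homotopy and $g|_{\partial D^2}\simeq f$ in $S^2\setminus\{p_0\}$). Then one must argue that if an affine $2$--simplex with vertices in the closed complement $K:=S^2\setminus Q_0$ of a round/square open region contains the center $p_0$ of that region, then the vertex set has diameter $\ge \mathrm{diam}(Q_0)$. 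For a round disk hole this is elementary convex geometry (the three vertices lie outside the open disk but their convex hull contains the center, so they cannot all lie in a half-plane through $p_0$, forcing two of them on ``antipodal'' sides at distance $\ge$ diameter); choosing $Q_0$ to be a round hole (the Sierpinski carpet is homeomorphic to the one built from round holes, and Lemma~\ref{lem:weakly_1_conn_topological} lets us choose whichever model we like) makes this estimate clean. Thus with $\epsilon<\mathrm{diam}(Q_0)$ we contradict the existence of an $\epsilon$--filling, proving $\mathcal{S}$ is not weakly simply connected.
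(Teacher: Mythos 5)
Your proposal is correct in outline and follows essentially the same route as the paper's proof: both take a loop around a hole of the carpet, upgrade the a priori discontinuous $\epsilon$--filling to a continuous map of the disk into the ambient plane/sphere (the paper produces a continuous disk whose image lies in the $\epsilon$--neighborhood of $S$, which is exactly what your piecewise-affine interpolation $g$ of the vertex values supplies), and then run a degree-type obstruction at the hole. The paper phrases the obstruction by quoting that a continuous disk spanning a simple closed curve must cover one of the two complementary components, and observes that for a peripheral circle neither component lies in a small neighborhood of $S$; you instead compute the winding number of $g|_{\partial D^2}$ about the center $p_0$ of the hole and locate a single simplex whose affine image contains $p_0$. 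These are the same mechanism in slightly different clothing, and your handling of the boundary homotopy (each boundary arc lies in a simplex whose $\bar f$--image has diameter at most $\epsilon$, so $f$ and $g|_{\partial D^2}$ are homotopic in the complement of $p_0$ once $\epsilon$ is smaller than the inradius) is fine.

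The one genuine misstep is the final quantitative claim: three points outside an open round hole of radius $r$ whose convex hull contains the center need not contain two points at mutual distance at least $\diam(Q_0)=2r$; an equilateral triangle inscribed in the bounding circle gives mutual distances only $\sqrt{3}\,r$. Fortunately your argument does not need that bound, and the repair is one line: by the $\epsilon$--filling condition the three vertex images $\bar f(v_i)$ are mutually within $\epsilon$, so their convex hull $g(\tau)$ has diameter at most $\epsilon$; on the other hand $g(\tau)$ contains $p_0$ while each $\bar f(v_i)$ lies in the carpet, hence outside the open hole, so $g(\tau)$ contains two points ($p_0$ and a vertex) at distance at least the inradius $r$. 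Taking $\epsilon<r$ gives the contradiction, with no convex-geometry case analysis and no need to switch to a round-hole model. With this adjustment the proof is complete and correct.
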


\begin{proof}
 By Lemma \ref{lem:weakly_1_conn_topological}, we just need to check this for a Sierpinski carpet $S$ embedded in the $2$--sphere and endowed with the induced metric. Suppose by contradiction that $S$ is weakly simply connected.  Then it is easily seen that, given any $\epsilon>0$ and any loop $\ell$ contained in $S$, we can find a continuous map $f\co D^2\to S^2$ whose image is contained in the $\epsilon$--neighborhood of $S$ and so that $f(\partial D^2)$ is $\ell$. However, the image of a continuous map $f\co D^2\to S^2$ so that $f(\partial D^2)$ is a simple loop $\ell$ contains one of the two connected components of $S^2\setminus \ell$. For $\ell$ a peripheral circle of $S$ and $\epsilon>0$ small enough, neither connected component of $S^2\setminus \ell$ is contained in the $\epsilon$--neighborhood of $S$, a contradiction.
\end{proof}

\begin{theorem}\label{thm:weakly_1_connected_limit}
  Suppose the compact metric space $Z$ is a weak Gromov--Hausdorff limit of $\{Z_n\}_{n\in \bN}$, and suppose that there is some $L\geq 1$ so that all the spaces $Z_n$ and $Z$ are $L$--linearly connected.  If the spaces $Z_n$ are weakly simply connected, then so is $Z$.
\end{theorem}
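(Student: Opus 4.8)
The plan is to fill a given loop in $Z$ by transporting it to an approximating space $Z_n$, filling it there via the weak simple connectivity of $Z_n$, pulling the filling back to $Z$ along a coarse inverse of the comparison map, and repairing the boundary with a finely triangulated collar. So fix a continuous $f\co S^1\to Z$ and $\eta>0$; we must produce an $\eta$--filling of $f$. Let $\pi_n\co Z\to Z_n$ be the $(\lambda,\epsilon_n)$--quasi-isometries witnessing weak Gromov--Hausdorff convergence, and fix coarse inverses $\psi_n\co Z_n\to Z$ (these exist since a quasi-isometry is coarsely onto); a routine estimate gives a constant $C_n=O(\lambda\epsilon_n)\to 0$ with $\diam\psi_n(A)\le\lambda\diam(A)+C_n$ for all $A\subseteq Z_n$ and $d(\psi_n\pi_n(x),x)\le C_n$ for all $x\in Z$. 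I would first choose $\eta_0$ small compared to $\eta$ (by a factor depending only on $\lambda$ and $L$); then, using uniform continuity of $f$, subdivide $S^1$ into arcs $I_1,\dots,I_m$ with successive endpoints $v_1,\dots,v_m$ and $\diam f(I_i)\le\eta_0$; then pick $n$ with $\lambda\epsilon_n+C_n\le\eta_0$; and finally pick a filling parameter $\epsilon\le\eta_0$.

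Next I would build a genuine loop $\ell_n\co S^1\to Z_n$ approximating $\pi_n\circ f$: set $w_i=\pi_n(f(v_i))$, note $d(w_i,w_{i+1})\le\lambda\diam f(I_i)+\epsilon_n$ is small, and use the $L$--linear connectedness of $Z_n$ (with its compactness, as in the remark after the definition of linear connectedness, to take the connecting set to be an arc) to join $w_i$ to $w_{i+1}$ by an arc $\gamma_i$ with $\diam\gamma_i\le L\,d(w_i,w_{i+1})$; concatenating these gives $\ell_n$ with $\ell_n(I_i)=\gamma_i$. This is essentially the only place the linear-connectedness hypothesis on the $Z_n$ enters. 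Since $Z_n$ is weakly simply connected, $\ell_n$ has an $\epsilon$--filling: a triangulation $(D^2,T)$ and a (possibly discontinuous) extension $\bar\ell_n\co D^2\to Z_n$ of $\ell_n$ with every simplex of diameter $\le\epsilon$. Composing with $\psi_n$ yields $g=\psi_n\circ\bar\ell_n\co D^2\to Z$ with every simplex of diameter $\le\lambda\epsilon+C_n\le 2\eta_0$, and a short estimate (bound $d(\ell_n(s),w_i)$ by $\diam\gamma_i$, then $d(\psi_n(w_i),f(v_i))$ by $C_n$, then $d(f(v_i),f(s))$ by $\diam f(I_i)$) shows the boundary loop $g|_{S^1}=\psi_n\circ\ell_n$ obeys $d(g(s),f(s))\le\eta_1$ for all $s$, where $\eta_1=O(\lambda^2L\eta_0)$.

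To finish I would replace $g|_{S^1}$ by $f$: realize $(D^2,T)$ on $\{|z|\le\tfrac12\}$ carrying $g$, and glue on the collar $A=\{\tfrac12\le|z|\le 1\}$. Triangulate $A$ compatibly with the scaled triangulation $T|_{S^1}$ on the inner circle and with a subdivision of the outer circle $S^1$ containing every $v_i$ and every angular position of an inner vertex and fine enough that $\diam f$ of each outer sub-arc is $\le\eta_0$; one can arrange that every simplex of $A$ lies angularly over a single arc $I_i$. Set $\bar f=f$ on $S^1$, $\bar f=g$ on the inner circle, and on the other cells of $A$ assign (discontinuously if necessary, and optionally using linear connectedness of $Z$ to make radial edges honest short arcs) values of $f$ and of $\psi_n\circ\ell_n$ near the relevant endpoint $f(v_i)$; since over each $I_i$ all points in play lie within $O(\lambda^2L\eta_0)$ of the single point $f(v_i)$, every simplex of $A$ has image of diameter $O(\lambda^2L\eta_0)$. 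Choosing $\eta_0$ at the start so small that all implied constants times $\eta_0$ are $\le\eta$, the disk $\{|z|\le 1\}$ with the combined triangulation and the map $\bar f$ (which extends $f$) is an $\eta$--filling of $f$. As $\eta$ was arbitrary, $Z$ is weakly simply connected.

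The step I expect to be the main obstacle is the collar construction: the boundary triangulation $T|_{S^1}$ is produced by weak simple connectivity of $Z_n$ and is not under our control, so one must interpolate between it and a subdivision adapted to $f$ while keeping every collar simplex angularly short, and one must order the choices ($\eta_0$, then the subdivision of $S^1$, then $n$, then $\epsilon$) so that all the additive errors ($C_n$, $\epsilon_n$, $\epsilon$, $\diam f(I_i)$, $\diam\gamma_i$) are simultaneously dominated by $\eta$. The remaining steps are routine; the only other subtlety is the use of compactness of the $Z_n$ to promote linear-connectedness sets to arcs, so that $\ell_n$ is a genuine loop to which weak simple connectivity applies.
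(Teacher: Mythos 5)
Your proposal is correct and follows essentially the same route as the paper: discretize $f$, push the marked points to $Z_n$, join consecutive images by arcs of controlled diameter using the $L$--linear connectedness of $Z_n$, fill the resulting loop by weak simple connectivity, and transport the filling back through a quasi-inverse, with the same ordering of constant choices. The one difference is that the collar construction you flag as the main obstacle is unnecessary: since an $\epsilon$--filling need not be continuous, the paper simply defines the extension to be $f$ on $S^1$ and $\psi_n\circ F'$ on $D^2\setminus S^1$, and the uniform bound $d(\psi_n\circ\ell_n(s),f(s))\le\eta_1$ on the boundary already keeps every simplex image (including those meeting $S^1$) small, so no re-triangulation or interpolation between the two boundary subdivisions is needed.
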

\begin{proof}
  Assume that $\{Z_n\}$ and $Z$ are as in the hypothesis of the theorem.  Then there are a $K\geq 1$ and some $(K,\epsilon_n)$--quasi-isometries $\psi_n\co Z_n\to Z$ and $\phi_n\co Z\to Z_n$ which are $\epsilon_n$--quasi-inverses of one another and for which $\lim\limits_{n\to\infty}\epsilon_n=0$.

  Let $f\co S^1\to Z$, and let $\epsilon>0$.  Choose some $\epsilon'<\frac{\epsilon}{10LK^2}$, and fix some $n$ so that $\epsilon_n<\epsilon'$.

We want to build an $\epsilon$--filling (in $Z$) from some $\epsilon'$--filling in $Z_n$.  We'll first approximate $f$ by a discrete map, push it to $Z_n$, fill, and then push the filling back to $Z$.

  Let $\Theta\subset S^1$ be a discrete set with at least three points.  We say that $\theta,\theta'\in\Theta$ are \emph{consecutive} if they bound a (necessarily unique) interval $I =: [\theta,\theta']$ in $S^1$ whose interior is disjoint from $\Theta$.  By refining $\Theta$ we can ensure the following:
  \begin{itemize}
  \item If $\theta, \theta'$ are consecutive (on $S^1$), and $x\in [\theta,\theta']$, then $d_Z(f(\theta),f(x))< \epsilon'$.
  \end{itemize}
  In particular, $f(S^1)$ lies in an $\epsilon'$--neighborhood of $f(\Theta)$, and $\phi_n f(S^1)$ lies in a $(K+1)\epsilon'$--neighborhood of $\phi_n f(\Theta)$.  More to the point, if $\theta, \theta'$ are consecutive elements of $\Theta$, then $d_{Z_n}(\phi_nf(\theta),\phi_nf(\theta'))<(K+1)\epsilon'$, and so there is an arc in $Z_n$ of diameter at most $L(K+1)\epsilon'$ joining $\phi_nf(\theta)$ to $\phi_nf(\theta')$.  
Concatenating these arcs, we obtain a continuous $f'\co S^1\to Z_n$.  

  \begin{claim*}
    For any $x\in S^1$, we have $d_Z(\psi_n f'(x),f(x))<\frac{\epsilon}{2}$.
  \end{claim*}

  Assuming the claim, we argue as follows.
  The map $f'$ has an $\epsilon'$--filling $F'\co D^2\to Z_n$.  Define a filling $F\co D^2\to Z$ of $f$ by:
\[ F(x) =
\begin{cases}
  f(x) & x\in S^1\\
  \psi_n F'(x) & x\in D^2\setminus S^1.
\end{cases}
\]
  Since $\epsilon_n<\epsilon'$, and since the difference between $\psi_n f'$ and $f$ is at most $\frac{\epsilon}{2}$ on $S^1$,  $F$ is a $\left((K+1)\epsilon' + \frac{\epsilon}{2}\right)$--filling of $f$.  But $\epsilon' < \frac{\epsilon}{2(K+1)}$, so $F$ is an $\epsilon$--filling.  Modulo the claim, the theorem is proved.

\begin{proof}[Proof of Claim]
  Note first that if $x\in \Theta$, then $f'(x) = \phi_n f(x)$, so $$d_Z(\psi_nf'(x),f(x))\leq \epsilon_n < \epsilon'<\frac{\epsilon}{2}.$$  

  Suppose now that $x\notin \Theta$.  There are consecutive $\theta, \theta'\in \Theta$ so that $x$ lies in $[\theta,\theta']$, and so that $d_Z(f(x),f(\theta))<\epsilon'$.  It follows that $d_{Z_n}(\phi_nf(x),\phi_nf(\theta))<(K+1)\epsilon'$.  

  From the construction of $f'$ we have $d_{Z_n}(f'(x), f'(\theta))\leq L(K+1)\epsilon'$.  Pushing back to $Z$ we get
  \begin{align*}
    d_Z(\psi_nf'(x), f(x)) & \leq d_Z(\psi_n f'(x),\psi_nf'(\theta))+d_Z(\psi_nf'(\theta),f(\theta))+d_Z(f(\theta),f(x))\\
& < K(L(K+1)\epsilon'+\epsilon') + \epsilon' + \epsilon'\\
& \leq 5LK^2\epsilon' < \frac{\epsilon}{2},
  \end{align*}
 and the claim is proved.
\end{proof}
With the claim proved, the proof of Theorem \ref{thm:weakly_1_connected_limit} is complete.  
\end{proof}

\section{Proof of Theorem \ref{thm:boundary sphere}} \label{s:proof of boundary sphere}
In this section we will prove Theorem \ref{thm:boundary sphere}, which we restate for the convenience of the reader.
\boundarysphere*

\begin{proof}
 First of all, notice that the $P_i$ have rank $2$, because they act properly discontinuously and cocompactly on the complement of the corresponding parabolic point in $\partial(G,\mc{P})$, which is homeomorphic to $\mathbb R^2$. Suppose that the filling is long enough that Theorem \ref{thm:trunc}, Theorem \ref{thm:existsvisual}, Theorem \ref{thm:linconn} and Proposition \ref{Xiarespheres}, as well as \cite[Theorem 1.2]{GM-splittings}, all apply. In particular, $\overline G$ is hyperbolic.  By Theorem \ref{exhaustion} there exists a sequence of spiderwebs $W_i$ and (visual) metrics $\rho_{W_i}$ on $\partial T_{W_i}$ so that
 \begin{itemize}
  \item each $\partial T_{W_i}$ is homeomorphic to a $2$--sphere (see Proposition \ref{Xiarespheres}),
  \item there exists $L$ so that each $\rho_{W_i}$ is $L$--linearly connected (see Theorem \ref{thm:linconn}),
  \item $(\partial T_{W_i}, \rho_{W_i})$ weakly Gromov--Hausdorff converges to a (visual) metric on $\partial T_{\overline{G}}$ (see Theorem \ref{thm:existsvisual}), which is homeomorphic to $\partial \overline{G}$.
 \end{itemize}

 It follows from \cite[Theorem 1.2]{GM-splittings} that $\partial \overline G$ is a Peano continuum without local cut points. Moreover, by Lemma \ref{ivanovlemma}, $\partial \overline{G}$ does not contain an embedded topological copy of any non-planar graph and hence, by \cite{Claytor34}, $\partial \overline G$ is planar. Since, as mentioned, $\partial \overline G$ does not contain local cut points, it must be a sphere or a Sierpinski carpet, by \cite[Theorem 4]{KK}. The latter is ruled out by Theorem \ref{thm:weakly_1_connected_limit} and Lemma \ref{lem:Sierpinski_not_1_connected}.
 \end{proof}
 
 \begin{rem}
  There is another possible variation of the argument above that does not use \cite[Theorem 1.2]{GM-splittings}. First of all, $\partial{\overline{G}}$ is connected because it is a weak Gromov--Hausdorff limit of connected spaces. Moreover, it does not have global cut points because it is the connected boundary of a hyperbolic group \cite{Sw:no_cut_points}. Hence, \cite{Claytor34} applies and $\partial \overline G$ is planar. At this point we would have to adapt the arguments in Section \ref{ss:not Sierpinski} to deal with a planar continuum properly contained in $S^2$.
 \end{rem}

\section{Proof of Corollary \ref{cor:relative Cannon}}\label{sec:corollary}

In this section we prove Corollary \ref{cor:relative Cannon}, that the Cannon Conjecture implies the Relative Cannon Conjecture.  To that end, suppose that the Cannon Conjecture is true and suppose that $(G,\mathcal{P})$ is a relatively hyperbolic pair, where $\mathcal{P} = \{ P_1 , \ldots , P_n \}$ is a finite collection of free abelian subgroups of rank $2$ and suppose further that the Bowditch boundary of $(G,\mathcal{P})$ is homeomorphic to $S^2$.

\begin{defn}
 A sequence $\{ \eta_i \co  G \to G_i \}$ of homomorphisms is {\em stably faithful} if $\eta_i$ is faithful on the ball of radius $i$ about $1$ in $G$.
\end{defn}

By choosing fillings kernels $K_{i,j} \unlhd P_j$ where $P_j / K_{i,j}$ is infinite cyclic, but the slope is growing, we obtain a stably faithful sequence of fillings $G \to G_i$.  The groups $G_i$ are all hyperbolic relative to a finite collection of infinite cyclic groups, and so are in fact hyperbolic.
By Theorem \ref{thm:boundary sphere}, we may assume that each $G_i$ is a hyperbolic group with $2$--sphere boundary.
By the Cannon Conjecture, each $G_i$ admits a discrete faithful representation into $\mathrm{Isom}(\mathbb H^3)$.  Pre-composing with the quotient maps $G \to G_i$, we get a stably faithful sequence of representations of $\rho_i \co  G \to \mathrm{Isom}(\mathbb H^3)$. 

There are two cases (after passing to a subsequence of $\{ \rho_i \}$): 
\begin{enumerate}
\item Up to conjugacy in $\mathrm{Isom}(\bH^3)$, the representations $\rho_i$ converge to a representation $\rho_\infty \co  G \to \mathrm{Isom}(\bH^3)$. 
\item The representations $\rho_i$ diverge in the $\mathrm{Isom}(\mathbb H^3)$--character variety of $G$.
\end{enumerate}
Suppose that the first case holds. The $\rho_i$ are stably faithful with discrete image.    

\begin{claim}\label{claim:find g,h}
Suppose that the limiting representation $\rho_\infty$ is not discrete and faithful. Then for every $\epsilon > 0$ there exists a pair $\{ g,h \}$ of noncommuting elements of $G$ and a point  $x \in \bH^3$ so that $\rho_\infty(g)$ and $\rho_\infty(h)$ move $x$ distance less than $\epsilon$.  
\end{claim}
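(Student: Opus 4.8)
The claim is a standard Margulis-lemma-style statement, so the plan is to combine the Margulis lemma for $\mathbb{H}^3$ with the stable faithfulness of the sequence $\{\rho_i\}$ and the assumption that $\rho_\infty$ fails to be discrete and faithful. First I would record what can go wrong with $\rho_\infty$: either $\rho_\infty$ is not injective, so there is some nontrivial $\gamma \in \ker\rho_\infty$, or $\rho_\infty$ is injective but has non-discrete image, so there is a sequence of distinct elements $\gamma_j \in G$ with $\rho_\infty(\gamma_j) \to 1$ in $\mathrm{Isom}(\mathbb{H}^3)$. In the first case, $\gamma \in \ker\rho_\infty$ means $\rho_\infty(\gamma)$ is the identity, and since $G$ is non-elementary (its Bowditch boundary is $S^2$) one can find $g' \in G$ not commuting with $\gamma$; then for \emph{any} $\epsilon$ and any $x$, $\rho_\infty(\gamma)$ moves $x$ distance $0 < \epsilon$, while $\rho_\infty(g')$ moves \emph{some} point (indeed every point, if $\rho_\infty(g')\neq 1$, but we only need one) — wait, this needs care: we need a \emph{single} $x$ moved little by \emph{both}. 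Since $\rho_\infty(\gamma)=1$ fixes every $x$, we just need $x$ with $d(x,\rho_\infty(g')x)<\epsilon$; such $x$ exists unless $\rho_\infty(g')$ is a hyperbolic or parabolic isometry with translation length bounded below — but then replace $g'$ by a high power $\gamma^k g'$ or choose $g',h$ among conjugates; the cleanest fix is to note that $\langle \gamma \rangle$ is infinite (as $\gamma \neq 1$ and $G$ is torsion-free by the abelian-peripheral hypothesis, or at least $\gamma$ has a non-torsion power), so pick $g = \gamma$, and for $h$ use $g'' \gamma g''^{-1}$ for suitable $g''\in G$ chosen so that $h$ does not commute with $g$ and $\rho_\infty(h)=1$ as well. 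That handles the non-faithful case with $x$ arbitrary.

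\textbf{Main case.} The substantive case is when $\rho_\infty$ is faithful but has non-discrete image. Then by definition of non-discreteness there is a sequence of distinct nontrivial elements $w_j \in G$ with $\rho_\infty(w_j) \to 1$. Fix a basepoint $x_0 \in \mathbb{H}^3$; then $\rho_\infty(w_j)$ moves $x_0$ distance tending to $0$. In particular for $j$ large, $\rho_\infty(w_j)$ moves $x_0$ less than $\epsilon$. Now I need a \emph{pair} of noncommuting such elements moving a common point little. Here is where I would use the Margulis lemma: the subgroup of $\mathrm{Isom}(\mathbb{H}^3)$ generated by elements moving $x_0$ less than the Margulis constant $\mu_3$ is virtually nilpotent, hence virtually abelian in dimension $3$. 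If \emph{every} pair $w_j, w_k$ with both moving $x_0$ less than $\min\{\epsilon,\mu_3\}/2$ commuted, then $\rho_\infty$ restricted to the subgroup generated by all such $w_j$ would be abelian; since $\rho_\infty$ is faithful, this subgroup $H \leq G$ would be abelian and infinite. But an infinite abelian subgroup of the non-elementary hyperbolic-or-relatively-hyperbolic group $G$ with $S^2$ Bowditch boundary is either cyclic (loxodromic) or conjugate into a peripheral $\mathbb{Z}^2$; in either case it is \emph{undistorted/discrete} in $G$, yet $\rho_\infty(H)$ accumulates at $1$, which I would push to a contradiction by noting that a faithful representation of $\mathbb{Z}$ or $\mathbb{Z}^2$ into $\mathrm{Isom}(\mathbb{H}^3)$ whose image accumulates at the identity forces the image to be a non-discrete subgroup of a one- or two-parameter subgroup, and then picking two elements of $H$ that move $x_0$ little but still, after conjugating $H$ by a generic element $g_0 \in G \setminus N_G(H)$, produce a noncommuting pair: take $g = w_j$ and $h = g_0 w_k g_0^{-1}$ for $j,k$ large, which move $x_0$ respectively $g_0 x_0$ — so instead choose the common point $x = x_0$ and require $g_0$ to nearly fix $x_0$, impossible in general, so more robustly choose $x$ to be any point, replace $h$'s "little displacement at $x$" requirement by moving $x$ at all, using that $\rho_\infty$ faithful and $H$ not normal gives a conjugate $w_k^{g_0}$ not commuting with $w_j$ while still $\rho_\infty(w_k^{g_0})$ is conjugate to $\rho_\infty(w_k)$, hence has translation length $\to 0$, hence moves its nearly-fixed point (and by conjugating everything back we may take that point to be $x$) less than $\epsilon$.

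\textbf{The hard part.} The genuinely delicate point — the main obstacle — is producing \emph{noncommuting} elements, not just small ones: the Margulis lemma only says that small-displacement elements generate a virtually abelian group, so one must use the faithfulness of $\rho_\infty$ together with the algebraic structure of abelian subgroups of $G$ (cyclic or peripheral $\mathbb{Z}^2$, and in particular almost malnormality of $\mathcal{P}$) to argue that if \emph{all} the available small elements pairwise commuted we would contradict non-discreteness of $\rho_\infty(G)$. Concretely: the set $S_\epsilon = \{ w \in G : d(\rho_\infty(w)x_0, x_0) < \epsilon \}$ is infinite (by non-discreteness), and if $\langle S_{\min\{\epsilon,\mu_3\}} \rangle$ is abelian and $\rho_\infty$ faithful then this abelian group is an infinite rank $\leq 2$ undistorted subgroup $H$ of $G$; but then $N_G(H) \neq G$ (else $H$ would be a finite-index or normal abelian subgroup of $G$, contradicting that $\partial(G,\mathcal{P}) = S^2$ makes $G$ non-elementary with no normal abelian subgroups), so pick $g_0 \notin N_G(H)$ and choose $w \in H$ of infinitely large "$\rho_\infty$-order" near $1$; then $g = w$ and $h = g_0 w^m g_0^{-1}$ for appropriate large $m$ do not commute (as $H$ is malnormal-ish / $g_0 H g_0^{-1} \cap H$ is "small"), and after conjugating the whole picture by an isometry taking $g_0 \cdot(\text{near-fixed point of }w^m)$ to $x_0$'s vicinity — or simply by letting $x$ be the point $\rho_\infty(g_0)^{-1}$ applied appropriately — both move a common point less than $\epsilon$. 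I would present the argument by first extracting an infinite abelian undistorted $H \leq G$ under the contrary hypothesis, invoking that $G$ is non-elementary to get $g_0 \notin N_G(H)$, and then doing the elementary $\mathbb{H}^3$ geometry to exhibit $g,h,x$; the bookkeeping of moving everything to a common basepoint $x$ is the only fussy part and is routine once the algebraic skeleton is in place.
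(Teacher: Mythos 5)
Your overall strategy is sound, and in the main (faithful but indiscrete) case it is genuinely different from the paper's. The paper stays on the $\mathrm{Isom}(\mathbb{H}^3)$ side: if the small elements $g_j$ do not eventually commute one is done; otherwise their images preserve a common point, geodesic, or ideal point, and since $\rho_\infty$ is faithful its image is non-elementary, so some $\rho_\infty(\gamma)$ fails to preserve that set and the pair $g_j,\,g_j^{\gamma}$ works. You instead transfer the dichotomy to $G$: if all small elements commute they generate an infinite abelian subgroup $H$, which is virtually cyclic or conjugate into a peripheral $\mathbb{Z}^2$, and you break commutativity by conjugating by a suitable $g_0$. Both routes use the same trick (a small element together with a conjugate of a small element); yours trades the $\mathbb{H}^3$ fixed-set case analysis for the relatively hyperbolic toolbox (centralizers of loxodromics, almost malnormality, no infinite normal abelian subgroups). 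Your non-faithful case (a kernel element and a non-commuting conjugate of it, both mapping to the identity) is essentially the paper's; the unstated justification for the existence of such a conjugate is that otherwise the normal closure of $\gamma$ would be an abelian normal subgroup, which $G$ does not have.

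Two points in your write-up need tightening, though both are fixable. First, "$g_0\notin N_G(H)$" is not the right hypothesis: what makes $w_j$ and $g_0 w_k g_0^{-1}$ fail to commute is that commutation would put $g_0 w_k g_0^{-1}$ in $C_G(w_j)$, hence inside the maximal elementary (respectively peripheral) subgroup $E$ containing $H$, whence $E\cap g_0 E g_0^{-1}$ is infinite and almost malnormality forces $g_0\in E$. So the conjugator should be chosen outside $E$ (possible since $G$ is non-elementary); in the parabolic case $E=P=N_G(H)$ and your condition agrees, but in the virtually cyclic case $N_G(H)$ can be smaller than $E$ and your condition does not obviously suffice. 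Second, the basepoint bookkeeping you agonize over is a non-issue: since conjugation by the fixed element $\rho_\infty(g_0)$ is continuous, $\rho_\infty(g_0 w_k g_0^{-1})\to 1$, so for $j,k$ large both $\rho_\infty(w_j)$ and $\rho_\infty(g_0 w_k g_0^{-1})$ move the \emph{same} basepoint $x_0$ by less than $\epsilon$; no relocation of $x$ or "conjugating everything back" is needed. (The appeal to the Margulis lemma inside the proof of the claim is also unnecessary: in your case division the small elements commute by hypothesis, so $H$ is abelian with no input from Margulis; the Margulis lemma is only needed afterwards, as in the paper, to contradict discreteness of the $\rho_i$.)
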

\begin{proof}[Proof of Claim \ref{claim:find g,h}]
If the limiting representation is not faithful then there are certainly non-commuting elements $g$ and $h$ in the kernel, which will suffice.

On the other hand, suppose that the limiting representation is faithful, but indiscrete. Then there are elements $g_j$ of $G$ which are not in the kernel of $\rho_\infty$ but so that $\rho_\infty(g_j)$ tends towards the identity.  Unless the $g_j$ eventually commute with each other, taking two elements far enough along the sequence will suffice for $g$ and $h$.  Thus we may suppose that all of the $g_j$ commute with each other, which implies that they all preserve some point at infinity, some geodesic or some point in the interior of $\mathbb H^3$.  Since $\rho_\infty$ is faithful, it is not elementary, so there is some $\gamma \in G$ so that $\rho_\infty(\gamma)$ does not preserve this set.  We can take $g_j$ and $g_j^\gamma$ for our $g$ and $h$ (for large enough $j$).
\end{proof}

The condition from Claim \ref{claim:find g,h} is an open condition, so for all but finitely many $i$ the elements $\rho_i(g)$ and $\rho_i(h)$ move some point in $\bH^3$ a distance smaller than $\epsilon$.  Since for large $i$ the elements $\rho_i(g)$ and $\rho_i(h)$ are nontrivial, for small enough $\epsilon$ this violates Margulis' Lemma and shows that the image $\rho_i$ is not discrete, which is a contradiction.  This implies that in the first case the representation $\rho_\infty$ is discrete and faithful,
so $G$ is Kleinian, as required.

It remains to rule out the second case, that the sequence $\rho_i$ diverges.
If it does, then by choosing basepoints appropriately and rescaling these representations limit to an action of $G$ on an $\mathbb R$--tree $T$ with no global fixed point.

A standard argument (see, for example, the proof of \cite[Theorem 6.1]{GM-splittings}) shows that arc stabilizers for the $G$--action on $T$ are metabelian and hence small.  Since small subgroups of $G$ are finitely generated, this means that arc stabilizers satisfy the ascending chain condition.  Therefore, by \cite[Proposition 3.2.(2)]{BF:stable} the action of $G$ on $T$ is stable.  It now follows from \cite[Theorem 9.5]{BF:stable} that $G$ splits over a small-by-abelian (and hence small) subgroup. 

However, all small subgroups of $G$ are elementary, but we know that $G$ admits no elementary splittings (by work of Bowditch \cite{Bowditch:boundary_geom_finite,Bowditch:connected_boundary,Bowditch:peripheral_splittings,bowditch12}, see \cite[Corollary 7.9]{GM-splittings}).  This implies that $G$ is Kleinian, as required.

\appendix

\section{$\delta$--hyperbolic technicalities} \label{app:technical}

In this appendix, we collect some technical results which are needed for the proofs in this paper, but which are proved using standard arguments and are probably well known to experts.  In Subsection \ref{ss:QI}, we prove that the various different approaches to building ``cusped'' spaces all result in quasi-isometric spaces.  The key application of this in our paper is Corollary \ref{cor:boundary_lin_conn}.  In Subsection \ref{ss:delta} we collect some results about $\delta$--hyperbolic geometry, which are all well known.  The sole innovation
is to record constants.

\subsection{Quasi-isometric horoballs and cusped spaces} \label{ss:QI}
  The combinatorial cusped space in Definition \ref{def:cc} is one of several ways to build a ``cusped space'' whose hyperbolicity detects the relative hyperbolicity of the pair $(G,\mc{P})$.  Each method begins with a Cayley graph for $G$, and attaches some kind of ``horoball'' to each left coset of an element of $\mc{P}$.  In \cite{bowditch12}, Bowditch glues `hyperbolic spikes' (the subset $[0,1] \times [1,\infty)$ in the upper half-space model of $\bH^2$) to each edge in the included cosets of Cayley graphs of the peripheral subgroups, with the lines $\{ 0 \} \times [1,\infty)$ and $\{ 1 \} \times [1,\infty)$ glued according to when edges share vertices.  The resulting cusped space is also Gromov hyperbolic if and only if $(G,\mc{P})$ is relatively hyperbolic, by \cite{bowditch12}.
 Another way of building horoballs on graphs is provided by Cannon and Cooper \cite{CannonCooper}. In this case, the horospheres are copies of $\Gamma$, but scaled at depth $d$ by $\lambda^d$ for some $\lambda \in (0,1)$ (Cannon and Cooper chose $\lambda = e^{-1}$, which is the most natural choice when comparing to the metric in $\bH^n$).

In order to be able to translate results proved with different cusped spaces to the other settings, it is convenient to notice that it is not only the case that all of these constructions provide characterizations of relative hyperbolicity, but that they provide $G$--equivariantly quasi-isometric cusped spaces, a fact well-known to experts.  This is what we explain in this subsection. Throughout the paper, the fact that we can use results from the literature proved using different models is justified by the results in this section.

We consider three types of horoballs, each depending on some scaling factor $\lambda$.

\begin{defn} [Combinatorial Horoball]
Let $\Gamma$ be a graph and $\lambda > 1$ a constant.  The {\em combinatorial horoball based on $\Gamma$ with scaling factor $\lambda$} is the graph as defined in Definition \ref{d:comb horo} except that horizontal edges are added between $(v,k)$ and $(w,k)$ when $0 < d_\Gamma(v,w) \le \lambda^k$.
We denote this space by $CH(\Gamma,\lambda)$.
\end{defn}
Note that Groves and Manning used $\lambda = 2$, as in Definition \ref{d:comb horo}.

\begin{defn} [Cannon--Cooper Horoball]
Let $\Gamma$ be a metric graph and let $\lambda > 1$.  We form the \emph{CC-horoball based on $\Gamma$ with scaling factor $\lambda$} to be a metric graph $\calH(\Gamma)$ whose vertex set is $\Gamma^{(0)}\times \bZ_{\geq 0}$, and with two types of edges:
\begin{enumerate}
\item A \emph{vertical} edge of length $1$ from $(v,n)$ to $(v,n+1)$ for any $v\in \Gamma^{(0)}$ and any $n\geq 0$;
\item If $\epsilon$ is an edge of length $l$ in $\Gamma$ joining $v$ to $w$, and $n\geq 0$, there is a \emph{horizontal} edge of length $\lambda^{-n}l$ joining $(v,n)$ to $(w,n)$.  
\end{enumerate}
We denote this space by $CCH(\Gamma,\lambda)$.
\end{defn}
Note that Cannon and Cooper used $\lambda = e$.

To define the Bowditch horoball, it is convenient to use the notion of a warped product of length spaces.
\begin{defn}[Warped product of length spaces] \cite{Chen99}
  Let $(B,d_B)$ (the \emph{base}) and $(F,d_F)$ (the \emph{fiber}) be two length spaces, and let $f\co B\to [0,\infty)$ be a continuous function (the \emph{warping function}).  Let $t\mapsto (\beta(t),\gamma(t))$ define a path $\sigma\co [0,1]\to B\times F$.
Define a length by first considering, for each partition $\tau = \{0=t_0< t_1<\cdots <t_{n(\tau)} = 1\}$, the $\tau$--length:
\[ l_\tau(\sigma) = \sum_{i=1}^{n(\tau)} \left(d_B^2(\beta(t_i),\beta(t_{i-1})) + f^2(\beta(t_i))d_F^2(\gamma(t_i),\gamma(t_{i-1})) \right)^{\frac{1}{2}}.\]
Any two partitions have an upper bound (their union), and there is a well-defined limit over partitions $l(\sigma)\in [0,\infty]$.  In fact it is not hard to see there is always a finite length path, and we get a length pseudometric $d_f$ on $B\times F$.  If $f$ has no zeros, this is a metric, and $B\times F$ with this metric is written $B\times_f F$.
\end{defn}

\begin{defn} [Bowditch Horoball] \label{d:Bow HB}
Suppose that $\Gamma$ is a metric graph and that $\lambda > 1$ is a constant.  The {\em Bowditch Horoball based on $\Gamma$ with scaling factor $\lambda$} is the warped product
\[	[0,\infty) \times_{\lambda^{-t}} \Gamma	.	\]
We denote this space by $BH(\Gamma,\lambda)$.
\end{defn}
Note that Bowditch used $\lambda = e$.

The following result is elementary and probably well known to many experts.  The proof is very similar to part of the proof of \cite[Theorem, $\S4.2$]{CannonCooper} (see also \cite[Proposition 3.2]{Durham-thesis} for more details).  Neither Cannon and Cooper nor Durham deal with a general graph, but this is irrelevant for the proofs.  We leave the details as an exercise for the reader.
\begin{prop} \label{p:Horoball QI}
Suppose that $\lambda_1, \lambda_2 > 1$ are constants and that $\Gamma$ is a graph.  The map
$\Gamma^{(0)} \times \bZ_{\geq 0} \to [0,\infty) \times_{\lambda_2^{-t}} \Gamma$ defined by
\[	(v,n) \mapsto \left(\frac{\ln(\lambda_1)}{\ln(\lambda_2)}n,v\right)	,\]
extends naturally to quasi-isometries
\[	CH(\Gamma,\lambda_1) \to BH(\Gamma,\lambda_2) 	,	\]
and
\[	 CCH(\Gamma,\lambda_1) \to BH(\Gamma,\lambda_2)	,	\]
by mapping edges in the left-hand spaces to geodesics in $BH(\Gamma,\lambda_2)$.
\end{prop}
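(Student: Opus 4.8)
The plan is to verify directly that the displayed map is a quasi-isometry. I would treat $\phi\co CH(\Gamma,\lambda_1)\to BH(\Gamma,\lambda_2)$ in detail; the case of $CCH(\Gamma,\lambda_1)\to BH(\Gamma,\lambda_2)$ is identical, and slightly easier, since horizontal lengths in $CCH$ are not rounded up to integers. Write $\mu=\ln(\lambda_1)/\ln(\lambda_2)$, so that $\phi(v,n)=(\mu n,v)$. First I would record what $\phi$ does to the two ``directions'' of a horoball. A vertical edge of $CH(\Gamma,\lambda_1)$ has length $1$ and maps to a vertical segment of $BH(\Gamma,\lambda_2)$ of length $\mu$, so $\phi$ rescales the (isometrically embedded) vertical direction by the fixed factor $\mu$. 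In the horizontal direction the crucial point is the numerical coincidence that $\phi$ carries the depth--$n$ horosphere of $CH(\Gamma,\lambda_1)$ into the depth--$\mu n$ horosphere of $BH(\Gamma,\lambda_2)$, on which the warping factor equals $\lambda_2^{-\mu n}=e^{-n\ln\lambda_1}=\lambda_1^{-n}$; this is precisely the reciprocal of the threshold $\lambda_1^{\,n}$ appearing in the horizontal--edge condition of $CH(\Gamma,\lambda_1)$. Hence a horizontal edge at depth $n$, joining $v$ to $w$ with $d_\Gamma(v,w)\le\lambda_1^{\,n}$, maps to two points of $BH(\Gamma,\lambda_2)$ at the common depth $\mu n$ whose warped distance is at most $\lambda_1^{-n}d_\Gamma(v,w)\le1$, hence joined by a path (a fortiori by a geodesic) of length at most $1$. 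Extending $\phi$ over edges as in the statement therefore produces a $\max\{1,\mu\}$--Lipschitz map, and it is coarsely surjective: the depths $\mu\,\bZ_{\ge0}$ are $\mu$--dense in $[0,\infty)$, and at every depth each point of $BH(\Gamma,\lambda_2)$ lies within $\tfrac12$ of a point over a vertex of $\Gamma$.

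What then remains is the reverse estimate, bounding $d_{CH}(x,y)$ above in terms of $d_{BH}(\phi(x),\phi(y))$. I would obtain this from a coarse distance formula, valid in each of the three models, of the shape
\[
  d\big((v,s),(w,t)\big)\;=_{C}\;|s-t|\;+\;2\,\max\Big\{0,\ \log_\lambda\!\big(1+d_\Gamma(v,w)\big)-\max\{s,t\}\Big\},
\]
where $s,t$ are the depths, $\lambda$ is the relevant scaling factor, $C=C(\lambda)$, and $A=_C B$ abbreviates $C^{-1}A-C\le B\le CA+C$. For $CH(\Gamma,2)$ this is essentially \cite[Lemma 3.10]{rhds}. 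For general $\lambda_1$, and for $CCH$ and $BH$, the upper bound comes from the ``go straight up, cross near depth $\log_\lambda d_\Gamma(v,w)$, then straight down'' path, while the lower bound comes from the standard facts that the depth coordinate is $1$--Lipschitz and that a horizontal move at depth $u$ covers $\Gamma$--distance at most $\lambda^{\,u}$ in $CH$ and $CCH$, respectively has length at least $\lambda^{-u}$ times the $\Gamma$--length it covers in $BH$. (Equivalently, one may first establish the familiar normal form for geodesics --- at most two vertical segments and one short horizontal segment --- in each model, and read the formula off.) Since $\phi$ multiplies depths by $\mu$ and matches the scaling factors as above, the formula for $CH(\Gamma,\lambda_1)$ is, up to the uniform errors, $\mu^{-1}$ times the formula for $BH(\Gamma,\lambda_2)$ evaluated at $\phi(x),\phi(y)$, which yields $d_{CH}(x,y)\le C'\,d_{BH}(\phi(x),\phi(y))+C'$ and finishes the argument.

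The main obstacle --- really bookkeeping rather than anything conceptual --- is the verification of this coarse distance formula (or of the geodesic normal form) in the $CCH$ and $BH$ models, together with tracking the resulting constants (which degenerate as $\lambda_1,\lambda_2\to1$, as they must). Since this is entirely parallel to the $CH$ case treated in \cite{rhds} and to the estimates of Cannon and Cooper \cite{CannonCooper}, I would, as the proposition statement already anticipates, leave the details to the reader.
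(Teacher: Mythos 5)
Your proposal is correct and is essentially the argument the paper itself points to: the paper leaves this proposition as an exercise, citing the distance estimates of Cannon--Cooper and Durham, and your sketch (the depth-rescaling coincidence $\lambda_2^{-\mu n}=\lambda_1^{-n}$ giving the Lipschitz bound and coarse surjectivity, plus the coarse distance formula in each horoball model for the reverse inequality) is exactly that standard route. No substantive gap beyond the bookkeeping you already acknowledge deferring, just as the paper does.
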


\begin{rem}
 If some care is not taken then different kinds of horoballs may not be quasi-isometric.  For example, if the warping function $f$ for $[0,\infty) \times_f \Gamma$ is taken to be doubly-exponential, the resulting horoball will still be Gromov hyperbolic, but will not be quasi-isometric to $BH(\Gamma,\lambda)$.
\end{rem}

From each kind of horoball, there is then an associated {\em cusped space}, obtained from the Cayley graph of $G$ by gluing the horoballs based on the Cayley graphs of $P \in \mc{P}$ onto the cosets in the same way is described in Definition \ref{def:cc} above.

The following is a straightforward application of Proposition \ref{p:Horoball QI}.

\begin{cor}
Suppose that $G$ is a group, and that $\mc{P}$ is finite collection of finitely generated subgroups of $G$.  The three kinds of cusped spaces obtained by gluing either combinatorial, Cannon--Cooper, or Bowditch horoballs to the Cayley graph of $G$ are all quasi-isometric via maps extending the identity map on the Cayley graph of $G$.

In particular, any one of them is Gromov hyperbolic if and only if any of the others is.
\end{cor}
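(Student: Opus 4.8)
The plan is to glue the horoball quasi-isometries of Proposition \ref{p:Horoball QI} along the Cayley graph. Fix a finite generating set $S$ for $G$ containing generating sets for the (finitely generated) elements of $\mc{P}$. Each of the three cusped spaces is then $\mathrm{Cay}(G,S)$ together with one horoball — combinatorial (with $\lambda=2$), Cannon--Cooper, or Bowditch (each with a fixed $\lambda>1$) — glued to each coset subgraph $g\Gamma_P$ (edges of length $1$) along its depth-$0$ horosphere, exactly as in Definition \ref{def:cc}. The key observation is that the quasi-isometry supplied by Proposition \ref{p:Horoball QI} extends the map $(v,n)\mapsto(\tfrac{\ln\lambda_1}{\ln\lambda_2}n,v)$, which at $n=0$ sends $(v,0)$ to $(0,v)$; hence it restricts to the \emph{identity} on the depth-$0$ horosphere, and — since its proof is uniform in the underlying graph — its quasi-isometry constants depend only on the two scaling factors, not on $\Gamma$ or on the coset. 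As there are only finitely many groups in $\mc{P}$, we obtain a single pair of constants $(\lambda,\epsilon)$ valid for all of the horoball quasi-isometries simultaneously, over every coset.

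First I would define, between the combinatorial cusped space $X_{CH}$ and the Bowditch cusped space $X_{BH}$, the map $F$ to be the identity on $\mathrm{Cay}(G,S)$ and, on each glued horoball $\mathrm{HB}_{CH}(g\Gamma_P)$, to agree with the quasi-isometry $f_{g,P}$ of Proposition \ref{p:Horoball QI}. This is well defined, since both descriptions give the identity on the overlap $g\Gamma_P$. Choosing quasi-inverses $h_{g,P}$ of the $f_{g,P}$ that likewise restrict to the identity on $g\Gamma_P$ — possible since any quasi-inverse already agrees with the identity there up to bounded error, and one may correct it by a bounded amount — the same recipe yields $\overline F\colon X_{BH}\to X_{CH}$, again the identity on $\mathrm{Cay}(G,S)$.

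Then I would check that $F$ is a quasi-isometry by a soft argument that avoids analyzing geodesics in the glued spaces. Every edge of $X_{CH}$ maps under $F$ to a path of length at most $\lambda+\epsilon$ in $X_{BH}$ (an edge of $\mathrm{Cay}(G,S)$ to itself; a horoball edge to the $f_{g,P}$-image of two points at distance $1$), so $d_{X_{BH}}(Fx,Fy)\le(\lambda+\epsilon)\,d_{X_{CH}}(x,y)$ for vertices $x,y$, and with a harmless additive constant for points in edge interiors; similarly for $\overline F$. Moreover $F$ is coarsely surjective, since it is onto $\mathrm{Cay}(G,S)$ and coarsely onto each Bowditch horoball. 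Now $\overline F\circ F$ is the identity on $\mathrm{Cay}(G,S)$ and equals $h_{g,P}\circ f_{g,P}$ on each $\mathrm{HB}_{CH}(g\Gamma_P)$, hence moves every point a uniformly bounded amount; likewise $F\circ\overline F$. Applying the Lipschitz-type bound for $\overline F$ to the relation $\overline F\circ F\approx\mathrm{id}$ yields the reverse inequality $d_{X_{CH}}(x,y)\le(\lambda+\epsilon)\,d_{X_{BH}}(Fx,Fy)+\mathrm{const}$, so $F$ is a quasi-isometry restricting to the identity on the Cayley graph. The Cannon--Cooper case is identical, and the Cannon--Cooper versus combinatorial case follows by composing through $X_{BH}$. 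Finally, all three cusped spaces are length metric spaces, so quasi-isometry invariance of Gromov hyperbolicity \cite{BH} gives the last sentence.

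The only place any care is needed — and the sole genuinely external input — is the uniformity in $\Gamma$ of the constants in Proposition \ref{p:Horoball QI}; granting that, the point of the argument is the ``soft'' routing of the distance estimates through the nearly-identity composition $\overline F\circ F$, which is exactly what lets one bypass a case analysis of how geodesics of the glued cusped spaces thread through the Cayley graph and the horoballs.
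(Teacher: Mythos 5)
Your argument is correct and is exactly the ``straightforward application'' of Proposition \ref{p:Horoball QI} that the paper intends: the paper records no further details, and your gluing of the horoball quasi-isometries along the depth-zero horospheres, followed by the soft estimate via the near-identity compositions $\overline F\circ F$ and $F\circ\overline F$, fills them in. Two small points of precision: the per-edge Lipschitz estimate applies verbatim only to maps out of the combinatorial cusped space, whose edges have length $1$; for $\overline F$, whose domain either is not a graph (Bowditch horoballs are warped products) or has arbitrarily short horizontal edges (Cannon--Cooper), you should instead note that pairs of points at distance at most $1$ have images at uniformly bounded distance and then chain along paths, using that all three cusped spaces are length spaces. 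Also, the uniformity of constants over cosets that you flag as the one external input is in fact automatic: left translation by $g$ identifies the horoball over $\Gamma(gP)$ isometrically with the one over $\Gamma(P)$ compatibly with the maps of Proposition \ref{p:Horoball QI}, so finiteness of $\mc{P}$ alone gives a single pair of quasi-isometry constants.
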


 Quasi-isometric proper Gromov hyperbolic spaces have quasi-symmetric boundaries (see for example \cite[Theorem 5.2.17]{BuyaloSchroeder}).

\begin{cor}
Suppose that $(G,\mc{P})$ is relatively hyperbolic.  Let $X_{CH}, X_{CCH}$ and $X_{BH}$ be the three cusped spaces for $(G,\mc{P})$ associated to the three kinds of horoballs.  Equip the (Bowditch) boundaries $\partial X_{CH}, \partial X_{CCH}$ and $\partial X_{BH}$ with visual metrics based at $1$.  These boundaries are quasi-symmetric.
\end{cor}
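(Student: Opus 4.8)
The plan is to deduce the statement immediately from the previous corollary together with the standard fact that a quasi-isometry between proper geodesic Gromov hyperbolic spaces induces a quasi-symmetry of their boundaries when these are equipped with visual metrics. So the proof will be essentially bookkeeping, with only one or two points requiring a sentence of justification.

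First I would check that all three cusped spaces $X_{CH}$, $X_{CCH}$, $X_{BH}$ are proper geodesic metric spaces. The Cayley graph of $G$ is locally finite because $G$ is finitely generated; the combinatorial and Cannon--Cooper horoballs are built on locally finite graphs and are themselves locally finite, while the Bowditch horoball is a warped product $[0,\infty)\times_{\lambda^{-t}}\Gamma$ over the (locally compact, since $\Gamma$ is locally finite) metric graph $\Gamma$, hence locally compact; since only finitely many horoballs meet any given ball, each cusped space is proper. Second, each of these spaces is Gromov hyperbolic: by Definition \ref{def:cc} the combinatorial cusped space is hyperbolic precisely because $(G,\mc{P})$ is relatively hyperbolic, and by the previous corollary the other two are quasi-isometric to it (alternatively, one invokes the corresponding characterizations of relative hyperbolicity via the Cannon--Cooper and Bowditch constructions).

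Next I would invoke the previous corollary to obtain quasi-isometries $X_{CH}\to X_{CCH}\to X_{BH}$ extending the identity on the Cayley graph of $G$, and then apply \cite[Theorem 5.2.17]{BuyaloSchroeder}, which says that a quasi-isometry between proper Gromov hyperbolic spaces induces a quasi-symmetry of their boundaries with respect to any visual metrics. Finally, since (as recorded in Subsection \ref{ss:visual metrics}) any two visual metrics on the boundary of a fixed hyperbolic space are quasi-symmetric to one another, and since a composition of quasi-symmetries is again a quasi-symmetry, the specific choices of visual metrics based at $1$ on $\partial X_{CH}$, $\partial X_{CCH}$, $\partial X_{BH}$ are immaterial; the three boundaries are therefore pairwise quasi-symmetric, as claimed.

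There is no real obstacle here. The only step deserving care is the properness check that licenses the application of \cite[Theorem 5.2.17]{BuyaloSchroeder}, and within that the Bowditch horoball (a warped product over a ray, rather than a graph) merits the one-line remark about local compactness; everything else is routine given that $G$ and all $P\in\mc{P}$ are finitely generated.
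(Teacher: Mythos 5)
Your proposal is correct and follows essentially the same route as the paper: the paper simply combines the preceding corollary (the three cusped spaces are quasi-isometric via maps extending the identity on the Cayley graph) with the fact that quasi-isometric proper Gromov hyperbolic spaces have quasi-symmetric boundaries, citing \cite[Theorem 5.2.17]{BuyaloSchroeder}. Your additional remarks on properness and on independence of the choice of visual metric are exactly the routine checks the paper leaves implicit.
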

 By Lemma \ref{lem:LCQS}, either all these boundaries are linearly connected, or none of them are.
The following is now an immediate consequence of \cite[Proposition 4.10]{MS1111.2499} and results of Bowditch (\cite{Bowditch:boundary_geom_finite,Bowditch:connected_boundary,Bowditch:peripheral_splittings}, see \cite[Theorem 7.3]{GM-splittings}, for example).
\begin{cor}\label{cor:boundary_lin_conn}
Suppose that $(G,\mc{P})$ is relatively hyperbolic, that $\mc{P}$ consists of finitely presented groups with no infinite torsion subgroups, and that $(G,\mc{P})$ has no nontrivial peripheral splittings.  Then the boundary of the cusped space of $(G,\mc{P})$ (with respect to any type of horoball) is linearly connected.
\end{cor}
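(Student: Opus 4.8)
The plan is to reduce to a single model of the cusped space and then combine the Mackay--Sisto result \cite[Proposition 4.10]{MS1111.2499} with results of Bowditch. First I would note that, by the corollary immediately preceding this statement, the boundaries of the combinatorial, Cannon--Cooper, and Bowditch cusped spaces for $(G,\mc{P})$ -- each equipped with a visual metric based at $1$ -- are quasi-symmetric to one another, and that linear connectedness is invariant under quasi-symmetry by Lemma \ref{lem:LCQS}. Hence it suffices to prove the statement for one model; I would fix the combinatorial cusped space $X$ and work with $\partial X = \partial(G,\mc{P})$ together with some visual metric.

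Next I would isolate the topological input required. The target result \cite[Proposition 4.10]{MS1111.2499} says, roughly, that if $(G,\mc{P})$ is relatively hyperbolic and $\partial(G,\mc{P})$ is a connected, locally connected, compact metrizable space with no global cut point, then any visual metric on $\partial(G,\mc{P})$ is linearly connected. So the substance of the argument is verifying those topological hypotheses from the standing assumptions. Since the elements of $\mc{P}$ are finitely presented with no infinite torsion subgroup and $(G,\mc{P})$ admits no nontrivial peripheral splitting, Bowditch's work on connectedness properties of boundaries of relatively hyperbolic groups and on peripheral splittings (\cite{Bowditch:boundary_geom_finite,Bowditch:connected_boundary,Bowditch:peripheral_splittings}, conveniently packaged in \cite[Theorem 7.3]{GM-splittings}) yields that $\partial(G,\mc{P})$ is connected and has no global cut point; and, again by Bowditch, a connected boundary with no global cut point is locally connected. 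Thus $\partial(G,\mc{P})$ is a Peano continuum without global cut points, exactly the hypothesis needed.

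With this in hand I would finish by quoting \cite[Proposition 4.10]{MS1111.2499} to conclude that $\partial X$ with its visual metric is linearly connected, and then transport this conclusion back to the Cannon--Cooper and Bowditch cusped spaces via the model-independence established in the first step.

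The main obstacle -- indeed essentially the only nontrivial point -- is the bookkeeping in the middle step: one has to be careful that ``no nontrivial peripheral splitting,'' together with the hypotheses on $\mc{P}$, is precisely the combination under which Bowditch's theorems simultaneously give connectedness, absence of global cut points, and local connectedness. In particular, one must make sure that splittings over finite subgroups relative to $\mc{P}$ (which would disconnect the boundary) are excluded, by tracking how such splittings interact with peripheral splittings in Bowditch's framework. Once the cited statements are lined up with matching hypotheses, the rest is immediate.
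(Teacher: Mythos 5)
Your proposal matches the paper's argument: the paper also deduces the corollary immediately from \cite[Proposition 4.10]{MS1111.2499} together with Bowditch's results (as packaged in \cite[Theorem 7.3]{GM-splittings}), using the preceding quasi-symmetry corollary and Lemma \ref{lem:LCQS} to make the conclusion independent of the choice of horoball model. Your write-up just spells out the same steps the paper leaves implicit.
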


\subsection{$\delta$--hyperbolic geometry} \label{ss:delta}

All lemmas in this section are well-known facts about $\delta$--hyperbolic spaces. We include proofs for completeness and to explicitly keep track of how the constants appearing in the construction of spiderwebs depend on $\delta$.

\begin{lemma}\label{qconvhull}
 Let $Y$ be a $\delta$--hyperbolic space and let $A\subseteq Y$ be any set. Then the union $Z$ of all geodesics connecting pairs of points in $A$ is $2\delta$--quasiconvex. 
\end{lemma}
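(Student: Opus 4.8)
The plan is to argue directly from the definition of quasiconvexity using the thin triangles property. Fix $x,y\in Z$ and a geodesic $[x,y]$; I must show every point of $[x,y]$ lies within $2\delta$ of $Z$. By construction there are points $a,b,c,d\in A$ with $x\in[a,b]$ and $y\in[c,d]$ for some chosen geodesics $[a,b],[c,d]\subseteq Z$.

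First I would reduce to controlling $[x,y]$ by a bounded chain of geodesics each having both endpoints in $A$. The natural route is to build a geodesic polygon: consider the geodesics $[a,b]$, $[b,c]$, $[c,d]$, $[d,a]$ (all lying in $Z$ since their endpoints are in $A$), together with $[x,b]\subseteq[a,b]$, $[c,y]\subseteq[c,d]$, and $[x,y]$. First apply thinness to the triangle with vertices $x$, $b$, $y$ (sides $[x,b]$, a geodesic $[b,y]$, and $[x,y]$): every point of $[x,y]$ is within $\delta$ of $[x,b]\cup[b,y]$, and $[x,b]\subseteq Z$. Then apply thinness to the triangle with vertices $b$, $y$, $c$ (sides $[b,y]$, $[b,c]\subseteq Z$, $[c,y]\subseteq Z$): every point of $[b,y]$ is within $\delta$ of $Z$. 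Combining, every point of $[x,y]$ is within $2\delta$ of $Z$, which is exactly $2\delta$-quasiconvexity.

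I would also handle the degenerate bookkeeping: if $A$ has fewer than two points the statement is trivial (or vacuous), and if $x$ or $y$ themselves lie in $A$ one simply omits the corresponding subgeodesic, only improving the constant. The one point requiring a word of care is that "the union of all geodesics connecting pairs of points in $A$" should be read as: for the purposes of the argument it suffices that $Z$ contains \emph{some} geodesic between each pair of points of $A$; since thinness of triangles holds for every choice of geodesic sides, the specific representatives chosen above are irrelevant.

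I do not anticipate a genuine obstacle here — this is a standard two-triangle diagram chase. The only thing to be careful about is not to lose a factor: routing $[x,y]$ through a single auxiliary vertex $b\in A$ (rather than through two vertices) is what keeps the constant at $2\delta$ rather than $3\delta$, so the order of triangles matters. Everything else is the definition of $\delta$-thinness applied twice.
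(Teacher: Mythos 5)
Your proof is correct and is essentially the paper's argument: both control the geodesic $[x,y]$ by the $2\delta$--slimness of the quadrilateral with the other three sides $[x,b]$, $[b,c]$, $[c,y]$ all lying in $Z$, obtained by applying $\delta$--thinness to two triangles. Nothing further is needed.
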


\begin{proof}
 For $i=1,2$, let $z_i\in [x_i,y_i]$ for some $x_i,y_i\in A$, and pick any $z\in[z_1,z_2]$. To prove $2\delta$--quasiconvexity we can just notice that $z$ is $2\delta$--close to either $[x_1,z_1]\subseteq [x_1,y_1]$, $[x_1,x_2]$ or $[x_2,z_2]\subseteq [x_2,y_2]$, and all such geodesics are contained in $Z$.
\end{proof}

\begin{defn}
For a set $W$, denote the power set of $W$ by $2^W$.  Suppose that $X$ is a metric space and $W \subset X$.  Let $\pi_W\co  X\to 2^W$ be closest point projection, so $\pi_W(x)$ is the set of all $x'\in W$ satisfying $d_X(x,x')=d_X(x,W)$.
\end{defn}

\begin{lemma}\label{quasiconvproj}
Let $Y$ be a $\delta$--hyperbolic space and let $W\subseteq Y$ be $Q$--quasiconvex.
Let $x,y\in X$, and let $x' \in \pi_W(x)$ and $y' \in \pi_W(y)$.
If $d_Y(x',y')>8\delta+2Q$ then $d_Y([x,y], W)\leq 2\delta+Q$ and $\max\{d_Y([x,y], x'),d_Y([x,y], y')\}\leq 6\delta+2Q$.
\end{lemma}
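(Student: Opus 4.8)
The plan is to reduce the lemma to two elementary facts and then run two short thinness arguments. Throughout we use the paper's convention that ``$\delta$--thin'' means the comparison map to the tripod has fibres of diameter $\le\delta$; in particular geodesic triangles are $\delta$--slim and geodesic quadrilaterals are $2\delta$--slim (split a quadrilateral by a diagonal into two $\delta$--slim triangles), and the two ``internal points'' on the two sides of a triangle meeting at a common vertex lie in a common fibre, hence within $\delta$ of each other.

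\textbf{Step 1 (projection divergence and a one-sided estimate).} First I would prove that if $x'\in\pi_W(x)$ then $(x\mid w)_{x'}\le\delta+Q$ for every $w\in W$. Consider the comparison tripod for the triangle $x',x,w$: the internal point $p$ on $[x',w]$ and the internal point $q$ on $[x',x]$ both lie at distance $(x\mid w)_{x'}$ from $x'$ and map to the centre, so $d(p,q)\le\delta$. Since $x',w\in W$ and $W$ is $Q$--quasiconvex there is $w'\in W$ with $d(p,w')\le Q$, hence $d(q,w')\le\delta+Q$; then minimality of $x'$ gives $d(x,x')\le d(x,w')\le d(x,q)+\delta+Q=d(x,x')-(x\mid w)_{x'}+\delta+Q$, so $(x\mid w)_{x'}\le\delta+Q$, and symmetrically for $y'$. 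Second, I would record the trivial one-sided estimate: for $s\in[x,x']$ and $w\in W$ one has $d(s,w)\ge d(x,w)-d(x,s)\ge d(x,x')-d(x,s)=d(x',s)$, so $d(s,W)\ge d(x',s)$; symmetrically $d(s,W)\ge d(y',s)$ for $s\in[y,y']$.

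\textbf{Step 2 (distance from $[x,y]$ to $W$).} Let $m$ be the midpoint of $[x',y']$, so $d(x',m)=d(y',m)=\tfrac12 d(x',y')>4\delta+Q$. By $2\delta$--slimness of the quadrilateral $x,x',y',y$, the point $m$ lies within $2\delta$ of $[x,x']\cup[x,y]\cup[y,y']$. If $m$ were within $2\delta$ of some $q\in[x,x']$, then on one hand $d(q,W)\le d(q,m)+d(m,W)\le 2\delta+Q$ (as $m\in[x',y']\subseteq N_Q(W)$), while on the other hand Step 1 gives $d(q,W)\ge d(x',q)\ge d(x',m)-2\delta>2\delta+Q$ --- a contradiction; the case ``$m$ within $2\delta$ of $[y,y']$'' is excluded identically. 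Hence $d(m,[x,y])\le2\delta$, so $d([x,y],W)\le d([x,y],m)+d(m,W)\le 2\delta+Q$.

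\textbf{Step 3 (distance from $[x,y]$ to $x'$ and $y'$).} In the comparison tripod for the triangle $x,x',y'$, the internal point on $[x,x']$ sits at distance $(x\mid y')_{x'}\le\delta+Q$ from $x'$ (Step 1) and is within $\delta$ of the internal point $I$ on $[x,y']$, so $d(I,x')\le 2\delta+Q$. Now $I$ lies on a side of the triangle $x,y,y'$, hence within $\delta$ of $[x,y]\cup[y,y']$. If $I$ were within $\delta$ of some $r\in[y,y']$, then $d(x',r)\le 3\delta+Q$, so $d(r,W)\le d(r,x')\le 3\delta+Q$, and Step 1 yields $d(y',r)\le 3\delta+Q$, whence $d(x',y')\le d(x',r)+d(r,y')\le 6\delta+2Q<8\delta+2Q$, contradicting the hypothesis. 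Thus $d(I,[x,y])\le\delta$ and $d([x,y],x')\le 3\delta+Q\le 6\delta+2Q$; the estimate for $y'$ follows by symmetry. The only real obstacle is constant bookkeeping: one must keep the slack small enough that the single inequality $d(x',y')>8\delta+2Q$ is exactly what is needed to eliminate the unwanted configurations in Steps 2 and 3; everything else is routine triangle-inequality estimation.
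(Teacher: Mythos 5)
Your proof is correct, and the constants all check out: Step 1's Gromov product bound $(x\mid w)_{x'}\le\delta+Q$ and the one-sided estimate $d(s,W)\ge d(x',s)$ for $s\in[x,x']$ are both valid, and Steps 2 and 3 use them consistently. Step 2 is essentially the paper's own argument for the first assertion: the paper picks a point $p$ on $[x',y']$ at distance more than $4\delta+Q$ from both endpoints, applies $2\delta$--slimness of the quadrilateral $x,x',y',y$, and rules out the sides $[x,x']$ and $[y,y']$ by contradicting minimality of $d(x,x')$ --- your use of the one-sided estimate is just a repackaging of that same computation. Where you genuinely depart is the second assertion. The paper gets it with no new work, simply by re-choosing $p$ at distance $4\delta+Q+\epsilon$ from $x'$ (resp.\ $y'$) and rerunning the same slimness argument, which gives $d([x,y],x')\le 6\delta+Q+\epsilon$. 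You instead prove the projection bound $(x\mid y')_{x'}\le\delta+Q$, locate the internal point $I$ on the diagonal $[x,y']$ of the triangle $x,x',y'$, and then push $I$ to $[x,y]$ via the triangle $x,y,y'$, using the hypothesis $d(x',y')>8\delta+2Q$ to exclude $[y,y']$. Your route is slightly longer but buys a sharper constant ($3\delta+Q$ instead of roughly $6\delta+Q$), and the auxiliary fact from Step 1 is a reusable standard lemma; the paper's route is shorter because it recycles the first half verbatim. Either way the stated bound $6\delta+2Q$ is comfortably met.
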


\begin{proof}
 Pick any point $p$ on a geodesic $[x',y']$ satisfying $d_Y(p,x'), d_Y(p,y')> 4\delta+Q$.  The quasi-convexity of $W$ ensures that $d(p,W)\le Q$. Slimness of geodesic quadrilaterals implies that $p$ is $2\delta$--close to some point $q$ lying on a geodesic $[x,x']$, $[y,y']$ or $[x,y]$.  We will rule out the first two possibilities, and deduce that $d([x,y],p)\leq 2\delta$ and so $d([x,y],W)\leq 2\delta + Q$.
 
 By symmetry, we can assume $q\in [x,x']$.  Since $d_Y(q,x')\geq d_Y(p,x')-2\delta>2\delta+Q$ and $d_Y(x,q)=d_Y(x,x') - d_Y(q,x')$, we have
 $$d_Y(x,W)\leq d_Y(x,q) + d_Y(q,W)< \Big( d_Y(x,x') - 2\delta-Q \Big) +2\delta+Q =d_Y(x,x'),$$
 contradicting the fact that $x' \in \pi_W(x)$.

  To obtain the second assertion, note that we could have chosen $p$ at distance $4\delta+Q+\epsilon$ from $x'$ or $y'$ for any sufficiently small $\epsilon>0$.  
\end{proof}

\begin{lemma}\label{geodbetweenqconv}
 Let $Y$ be a $\delta$--hyperbolic space and let $W_1,W_2$  be $Q$--quasiconvex subsets of $Y$. Also, let $\gamma$ be any geodesic from some point $p_1\in W_1$ to some point $p_2\in W_2$. Then any geodesic $\alpha$ from $W_1$ to $W_2$ is contained in $N_{Q+2\delta}(W_1)\cup N_{Q+2\delta}(W_2)\cup N_{2\delta}(\gamma)$.
\end{lemma}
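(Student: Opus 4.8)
The plan is to realize $\alpha$ as one side of a geodesic quadrilateral whose three other sides lie respectively in $N_Q(W_1)$, on $\gamma$, and in $N_Q(W_2)$, and then to invoke the slimness of geodesic quadrilaterals exactly as in the proof of Lemma \ref{quasiconvproj}.

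First I would fix the endpoints. Write $\alpha=[a_1,a_2]$ with $a_i\in W_i$ (here $\alpha$ is any geodesic with one endpoint in each $W_i$), and recall $\gamma=[p_1,p_2]$ with $p_i\in W_i$. Choose a geodesic $\sigma_1$ from $a_1$ to $p_1$ and a geodesic $\sigma_2$ from $p_2$ to $a_2$. Since $W_1$ is $Q$--quasiconvex and both endpoints of $\sigma_1$ lie in $W_1$, we get $\sigma_1\subseteq N_Q(W_1)$; likewise $\sigma_2\subseteq N_Q(W_2)$.

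Next, consider the geodesic quadrilateral with vertices $a_1,p_1,p_2,a_2$ (in cyclic order) and sides $\sigma_1$, $\gamma$, $\sigma_2$, $\alpha$. Cutting it along a diagonal into two geodesic triangles, each of which is $\delta$--slim (a consequence of $\delta$--thinness, as already used in the proof of Lemma \ref{quasiconvproj}), we conclude that each side of the quadrilateral lies in the $2\delta$--neighborhood of the union of the other three. Applying this to the side $\alpha$ gives
\[ \alpha\subseteq N_{2\delta}(\sigma_1\cup\gamma\cup\sigma_2)\subseteq N_{2\delta}\big(N_Q(W_1)\big)\cup N_{2\delta}(\gamma)\cup N_{2\delta}\big(N_Q(W_2)\big)\subseteq N_{Q+2\delta}(W_1)\cup N_{2\delta}(\gamma)\cup N_{Q+2\delta}(W_2), \]
which is exactly the assertion of the lemma.

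There is essentially no obstacle here. The only points needing a word of care are the slimness constant for quadrilaterals (it is $2\delta$, obtained by decomposing into two $\delta$--slim triangles) and the degenerate cases: if $W_1\cap W_2\neq\emptyset$ some of $\alpha,\gamma,\sigma_1,\sigma_2$ may reduce to a point, but the displayed inclusions remain valid verbatim.
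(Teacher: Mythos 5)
Your proposal is correct and follows essentially the same argument as the paper: both form the geodesic quadrilateral on the endpoints of $\alpha$ and $\gamma$, use $2\delta$--slimness of quadrilaterals, and note that the two connecting sides lie in $N_Q(W_1)$ and $N_Q(W_2)$ by quasiconvexity. The paper's proof is just a more compressed version of the same computation.
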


\begin{proof}
 Let $q_i$ be the endpoints of $\alpha$, with $q_i\in W_i$. Then any point in $\alpha$ is $2\delta$--close to either $\gamma$ or to a geodesic $[p_i,q_i]$ for some $i$, and each such geodesic is contained in $N_Q(W_i)$, so we are done.
\end{proof}

\begin{defn}
We say that a path $\alpha$ in a geodesic space is {\em $C$--tight} if it is
\begin{enumerate}
 \item $(1,C)$--quasi-geodesic, and
 \item for any $s\leq t\leq u$ in the domain of $\alpha$ any geodesic from $\alpha(s)$ to $\alpha(u)$ passes $C$--close to $\alpha(t)$.
\end{enumerate}
The path is \emph{$\lambda$--locally $C$--tight} if $\alpha|_I$ is $C$--tight for every interval $I$ of length at most $\lambda$.
\end{defn}
We remark that in this paper we consider a $(\lambda,\epsilon)$--quasi-geodesic to be a unit speed path $\sigma$ so that $d(\sigma(s),\sigma(t))\geq \lambda^{-1}|s-t|-\epsilon$, for all $s,t$.
The results we prove are also true if instead one considers quasi-isometric embeddings of an interval, but all the quasi-geodesics we need are continuous unit speed maps.

\begin{lemma}\label{concatatproj}
 Let $Y$ be a $\delta$--hyperbolic space, let $W\subseteq Y$ be $Q$--quasiconvex and let $x\in X$. If $w\in \pi_W(x)$, then for any $w'\in W$ the concatenation of geodesics $[x,w],[w,w']$ is $(4\delta+2Q)$--tight.  
\end{lemma}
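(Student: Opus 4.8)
The statement to prove is Lemma~\ref{concatatproj}: if $w\in\pi_W(x)$ then for any $w'\in W$ the concatenation $\alpha=[x,w]\cdot[w,w']$ is $(4\delta+2Q)$--tight. The plan is to verify the two clauses in the definition of ``$C$--tight'' with $C=4\delta+2Q$: first that $\alpha$ is a $(1,C)$--quasi-geodesic, and second the ``passing close'' condition for any $s\le t\le u$ in the domain.

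\emph{Step 1 (the quasi-geodesic estimate).} The only content is comparing $d(x,w')$ with the length $\ell=d(x,w)+d(w,w')$ of $\alpha$; the restriction of $\alpha$ to either geodesic subsegment is an honest geodesic, so nothing need be checked there. The upper bound $d(x,w')\le \ell$ is the triangle inequality. For the lower bound, let $m\in\pi_W(x)$ be $w$ itself; since $w\in W$ and $w'\in W$, quasiconvexity of $W$ puts a geodesic $[w,w']$ within $Q$ of $W$, but more to the point I want to say the Gromov product $(x\mid w')_w$ is small. Indeed, let $p$ be the point on $[w,w']$ at distance $(x\mid w')_w$ from $w$; by $\delta$--thinness of the triangle $x,w,w'$, $p$ lies within $\delta$ of $[x,w]$, say near a point $q\in[x,w]$. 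Then $d(x,q)\le d(x,w)-d(w,q)\le d(x,w)-(x\mid w')_w+\delta$, while $q$ is within $\delta+Q$ of $W$ (as $p$ is within $Q$ of $W$ by quasiconvexity), so $d(x,W)\le d(x,q)+\delta+Q\le d(x,w)-(x\mid w')_w+2\delta+Q$. Since $d(x,W)=d(x,w)$ we get $(x\mid w')_w\le 2\delta+Q$. Unwinding the definition of the Gromov product, $d(x,w')=d(x,w)+d(w,w')-2(x\mid w')_w\ge \ell-2(2\delta+Q)=\ell-C$. Reparametrizing $\alpha$ by arc length, this gives $d(\alpha(s),\alpha(t))\ge|s-t|-C$ for all $s,t$ (the worst case is $s,t$ on opposite segments, handled by the displayed inequality applied to the relevant subsegments; on a single segment it is exact), so $\alpha$ is $(1,C)$--quasi-geodesic.

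\emph{Step 2 (the ``tightness'' condition).} Take $s\le t\le u$ and let $\gamma$ be a geodesic from $\alpha(s)$ to $\alpha(u)$; we must show $\gamma$ passes within $C$ of $\alpha(t)$. If $s,t,u$ all lie in one of the two geodesic segments this is immediate with constant $0$. So assume $\alpha(s)\in[x,w]$ and $\alpha(u)\in[w,w']$ (the remaining distribution of $s,t,u$ reduces to this or to the trivial case). If $\alpha(t)=w$ itself, consider the triangle with vertices $\alpha(s),w,\alpha(u)$ whose sides are the subsegments of $[x,w]$, of $[w,w']$, and the geodesic $\gamma$; by $\delta$--thinness $w$ is within $\delta$ of $\gamma\cup[\alpha(s),w]\cup[w,\alpha(u)]$, and since $w$ is an endpoint of the latter two sides, $w$ is within $\delta\le C$ of $\gamma$. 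If $\alpha(t)$ lies strictly inside $[x,w]$ (the case strictly inside $[w,w']$ is symmetric), then $\alpha(t)$ is within $\delta$ of $\gamma\cup[w,\alpha(u)]$ by $\delta$--thinness of that same triangle; if it is within $\delta$ of $\gamma$ we are done, otherwise it is within $\delta$ of some point $r\in[w,\alpha(u)]\subseteq[w,w']$. But then, since $[x,w]$ is a geodesic and $\alpha(t)$ is between $\alpha(s)$ and $w$ on it, one uses $w\in\pi_W(x)$: $d(x,W)=d(x,w)\le d(x,\alpha(t))+d(\alpha(t),w)$ while $d(x,W)\le d(x,\alpha(t))+d(\alpha(t),r)+d(r,W)\le d(x,\alpha(t))+\delta+Q$ forces $d(\alpha(t),w)\le\delta+Q$, hence (combining with the $w$-case) $\alpha(t)$ is within $(\delta+Q)+\delta=2\delta+Q\le C$ of $\gamma$. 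That completes the verification.

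\textbf{Main obstacle.} There is no serious obstacle; this is a routine $\delta$--hyperbolic bookkeeping exercise. The one point requiring a little care is organizing Step~2 so that the projection hypothesis $w\in\pi_W(x)$ is invoked at exactly the right moment — namely to prevent the geodesic $\gamma$ from ``shortcutting'' the corner at $w$ by a large amount — and making sure the various distributions of $s\le t\le u$ across the two segments are all covered, including the degenerate ones; I would present the argument by reducing to the single essential case $\alpha(s)\in[x,w]$, $\alpha(u)\in[w,w']$ and noting the others are trivial or symmetric.
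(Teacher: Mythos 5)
Your Step 1 is fine (and is in the spirit of the paper's argument), but Step 2 contains an invalid inference at exactly the point where the projection hypothesis is supposed to do its work. In the sub-case $\alpha(t)=w$ you assert that, by $\delta$--thinness of the triangle with vertices $\alpha(s),w,\alpha(u)$, the vertex $w$ is within $\delta$ of the opposite side $\gamma$ ``since $w$ is an endpoint of the latter two sides.'' Thinness says that a point on one side is $\delta$--close to the union of the \emph{other two} sides; applied to the vertex $w$, which lies on both corner sides, it gives no information whatsoever about $d(w,\gamma)$. Indeed the statement is false in general: in a tree (a $0$--hyperbolic space) the distance from a vertex to the opposite side equals the Gromov product $(\alpha(s)\mid\alpha(u))_w$, which can be arbitrarily large. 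Here that Gromov product is small only because of $w\in\pi_W(x)$ and quasiconvexity of $W$ — i.e.\ precisely the estimate you proved in Step 1 — so the bound on $d(w,\gamma)$ must be routed through that estimate (which in turn requires the small but unstated remark that $w\in\pi_W(\alpha(s))$ for $\alpha(s)\in[x,w]$, needed also for your ``applied to the relevant subsegments'' in Step 1), not through thinness alone.

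The error also propagates into your constants. Patching the corner case with your Step 1 bound $(\alpha(s)\mid\alpha(u))_w\le 2\delta+Q$ gives $d(w,\gamma)\le 3\delta+Q$, and then your ``symmetric'' sub-case $\alpha(t)\in(w,w']$ (where one gets $d(\alpha(t),w)\le 2\delta+Q$, using that the nearby point $r'\in[x,w]$ satisfies $d(r',w)\le d(r',W)\le\delta+Q$) yields only $d(\alpha(t),\gamma)\le 5\delta+2Q$, which exceeds the claimed $4\delta+2Q$. The paper avoids both issues by working with the internal (tripod) points $p_1\in[\alpha(s),w]$ and $p_2\in[w,\alpha(u)]$: since $p_2$ lies on $[w,w']$ it is $Q$--close to $W$, hence $p_1$ is $(\delta+Q)$--close to $W$, and since $p_1$ lies on a shortest path from $x$ to $W$ this forces $d(p_1,w)=d(p_2,w)=(\alpha(s)\mid\alpha(u))_w\le\delta+Q$; this one computation simultaneously bounds $d(\alpha(t),\gamma)$ by $2\delta+Q$ for \emph{every} position of $\alpha(t)$ on the two corner sides, and the quasi-geodesic inequality then follows formally with constant $2(2\delta+Q)=4\delta+2Q$. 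To repair your write-up you should either adopt this internal-point estimate (which sharpens your Gromov product bound from $2\delta+Q$ to $\delta+Q$ and makes the case analysis close within $4\delta+2Q$), or redo the bookkeeping with a larger tightness constant.
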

\begin{proof}
 We prove that the concatenation satisfies the second condition in the definition of $C$--tight, with $C=2\delta + Q$.  We then note that such a concatenation must be $(1,2C)$--quasigeodesic.

 We can restrict to considering geodesics connecting some $z_1\in [x,w]$ to some $z_2\in [w,w']$.
 Choose $[z_1,w]\subseteq [x,w]$ and $[w,z_2]\subseteq[w,w']$, and let $p\in [z_1,z_2]$, $p_1\in [z_1,w]$, $p_2\in [w,z_2]$ be the internal points of the triangle $[z_1,z_2]\cup [z_1,w]\cup[w,z_2]$.  

  Since $p_2$ lies on $[w,w']$, we have $d(p_2,W)\leq Q$, and so $d(p_1,W)\leq \delta+Q$.  Since $p_1$ lies on a shortest path from $x$ to $W$, we have $d(p_1,w)\leq \delta+Q$.  Moreover $d(p_2,w)=d(p_1,w)\leq \delta + Q$.  It follows that any point on $[z_1,w]\cup [z_2,w]$ lies within $2\delta+Q$ of some point on $[z_1,z_2]$, as required.
\end{proof}

The proof of the following result is a  minor variation of the one from \cite[III.H.1.13]{BH}

\begin{lemma}\label{localtight}
 Let $Y$ be a $\delta$--hyperbolic space and let $\alpha$ be a $(6C+8\delta+1)$--local $C$--tight path. Then the Hausdorff distance between $\alpha$ and any geodesic with the same endpoints as $\alpha$ is at most $2C+4\delta$.
\end{lemma}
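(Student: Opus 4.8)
The plan is to run the classical local-to-global argument for local geodesics in a $\delta$--hyperbolic space, exactly as in \cite[III.H.1.13]{BH}, while carrying along the extra additive constant $C$ that appears because $\alpha$ is only $C$--tight (rather than genuinely geodesic) on short intervals. Fix a geodesic $\gamma$ with the same endpoints $p,q$ as $\alpha$, parametrize $\alpha$ by arc length on $[0,\ell]$, and write $k=6C+8\delta+1$ for the local scale. There are two things to prove: that $\alpha$ lies in a $(2C+4\delta)$--neighborhood of $\gamma$, and that $\gamma$ lies in a $(2C+4\delta)$--neighborhood of $\alpha$.

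For the first, I would pick $t_0$ maximizing $t\mapsto d(\alpha(t),\gamma)$ (possible since $[0,\ell]$ is compact) and set $x=\alpha(t_0)$. Choose $y=\alpha(\max\{0,t_0-k/2\})$ and $y'=\alpha(\min\{\ell,t_0+k/2\})$, so that the subpath of $\alpha$ from $y$ to $y'$ is defined on an interval of length at most $k$ and is therefore $C$--tight; in particular a geodesic $[y,y']$ passes within $C$ of $x$. Let $\bar y,\bar{y'}\in\gamma$ be nearest-point projections of $y,y'$ and consider the geodesic quadrilateral with sides $[y,y']$, $[y,\bar y]$, $[\bar y,\bar{y'}]\subseteq\gamma$, $[\bar{y'},y']$. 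Since quadrilaterals are $2\delta$--thin, the point $x'\in[y,y']$ with $d(x,x')\le C$ lies within $2\delta$ of one of the other three sides. If $x'$ is $2\delta$--close to $[\bar y,\bar{y'}]$ we are done, as then $d(x,\gamma)\le C+2\delta$. If $x'$ is $2\delta$--close to a point $z\in[y,\bar y]$, then using $d(x,\gamma)\le d(x,z)+d(z,\bar y)\le (C+2\delta)+\big(d(y,\bar y)-d(y,z)\big)$ and maximality ($d(y,\bar y)=d(y,\gamma)\le d(x,\gamma)$) forces $d(y,z)\le C+2\delta$, hence $d(x,y)\le 2C+4\delta$; but when $y=\alpha(t_0-k/2)$ the path $\alpha$ is $(1,C)$--quasigeodesic on the length-$k/2$ interval between $y$ and $x$, so $d(x,y)\ge k/2-C=2C+4\delta+\frac12$, a contradiction. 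The case of $[\bar{y'},y']$ is symmetric, and the degenerate configurations (when $t_0$ is within $k/2$ of an endpoint, or $\ell<k$, so that the relevant short side collapses and possibly $[y,y']=\gamma$) are handled the same way, giving $d(x,\gamma)\le C+2\delta$ in all cases.

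For the second inclusion, fix $z\in\gamma$ and split $\gamma$ at $z$ into subsegments $\gamma_1$ (from $p$ to $z$) and $\gamma_2$ (from $z$ to $q$). By the first inclusion the two closed sets $A_j=\{a\in\alpha:\ d(a,\gamma_j)\le C+2\delta\}$ cover $\alpha$; each contains an endpoint of $\alpha$ and $\alpha$ is connected, so there is $a\in A_1\cap A_2$. Choosing $w_j\in\gamma_j$ with $d(a,w_j)\le C+2\delta$ gives $d(w_1,w_2)\le 2C+4\delta$, and since $w_1,z,w_2$ occur in this order along the geodesic $\gamma$ we get $\min\{d(z,w_1),d(z,w_2)\}\le C+2\delta$; combined with $d(a,w_j)\le C+2\delta$ this yields $d(z,\alpha)\le d(z,a)\le 2C+4\delta$. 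The two inclusions together give the asserted Hausdorff distance bound.

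I expect the only real work to be the bookkeeping in the quadrilateral case analysis: checking that the "short side" cases genuinely produce the numerical contradiction $2C+4\delta<2C+4\delta+\frac12$, and verifying that the endpoint/degenerate configurations do not escape the argument. The geometric input is entirely standard and identical to \cite[III.H.1.13]{BH}; the main (minor) obstacle is simply tracking the tightness constant $C$ through each estimate.
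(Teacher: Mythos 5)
Your argument is correct and follows essentially the same route as the paper's proof: take a farthest point of $\alpha$ from $\gamma$, use local $C$--tightness to get a geodesic chord within $C$ of it, run the $2\delta$--slim quadrilateral case analysis against the nearest-point projections (with the $(1,C)$--quasigeodesic lower bound $k/2-C$ ruling out the short-side cases), and then recover $\gamma\subseteq N_{2C+4\delta}(\alpha)$ by the connectedness/splitting-at-$z$ argument. The constants check out (you in fact get the inclusion $\alpha\subseteq N_{C+2\delta}(\gamma)$, exactly what the second half needs), so there is nothing to fix.
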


\begin{proof}
Let us first show $\alpha\subseteq N_{C+2\delta}(\gamma)$, where $\gamma$ is a geodesic with the same endpoints.
 
 Let $[0,a]$ be the domain of $\alpha$. Let $p=\alpha(t)$ be any point on $\alpha$ at maximal distance from $\gamma$, and let $d=d_Y(\alpha(t),\gamma)$.  Suppose by contradiction that $d > 2C+4\delta$.  Let $R = 3C+4\delta+1/2$, so that $\alpha$ is $2R$--locally $C$--tight.
Using local $C$--tightness, there are points $x=\alpha(t_1)$ and $y=\alpha(t_2)$ satisfying $t_1\in (t-R,t), t_2\in (t,t+R)$ and so that 
$\min\{ d_Y(x,p), d_Y(y,p)\} > 2C+4\delta$.  
Choose a geodesic $[x,y]$.  By local tightness, there is a $p'\in [x,y]$ within $C$ of $p$.  

Let $x', y'\in \gamma$ be chosen so that $d_Y(x,x')$ and $d_Y(y,y')$ are minimal, and let $[x',y']$ be the subsegment of $\gamma$ joining them.  Consider a geodesic quadrilateral $[x,y]\cup [x',y']\cup[x,x']\cup[y,y']$.  The point $p'$ is within $2\delta$ of some $p''$ in one of the other three sides.  It cannot be $[x',y']$, or we would have $d=d_Y(p,\gamma)\leq d_Y(p,p'')\leq C+2\delta$.  Suppose on the other hand that $p''\in [x,x']$ (the argument for $[y,y']$ is identical).  Then $d_Y(p,\gamma) \leq C+2\delta + d - d_Y(p'',x)$.  However $d_Y(p'',x)\geq d_Y(x,p) - C - 2\delta > C + 2\delta$, so $d_Y(p,\gamma)<d$, a contradiction.
 
 Suppose now that there exists $p\in \gamma\setminus N_{C+2\delta}(\alpha)$ (otherwise we are done), and let us show $d_Y(p,\alpha)\leq 2C+4\delta$. Any point on $\alpha$ is $(C+2\delta)$--close to a point on one of the two sides of $p$ in $\gamma$, and hence there exists some $q\in \alpha$ that is $(C+2\delta)$--close to points $p_1,p_2$ on opposite sides of $p$. Since $d_Y(p_1,p_2)\leq 2C+4\delta$ and $\gamma$ is a geodesic, we have $d_Y(p,p_i)\leq C+2\delta$ for some $i$, and hence $d_Y(p,q)\leq 2C+4\delta$, as required.
\end{proof}

\end{document}